  \newcommand{\MT}{\mathbf{MT}}
  \DeclareMathOperator{\Sl}{SL}
\newcommand{\HL}{\textnormal{HL}}
\newcommand{\an}{\textnormal{an}}
    \newcommand\blfootnote[1]{%
  \begingroup
  \renewcommand\thefootnote{}\footnote{#1}%
  \addtocounter{footnote}{-1}%
  \endgroup
}
  \newcommand{\Addresses}{{
  \bigskip
  \footnotesize

\textsc{CNRS, IMJ-PRG, Sorbonne Universit\'{e}, 4 place Jussieu, 75005 Paris, France}\par\nopagebreak
  \textit{E-mail address}, G.~Baldi: \texttt{baldi@imj-prg.fr}

}}
\newcommand{\VV}{\mathbb{V}}
\newcommand{\V}{\mathbb{V}}
\newcommand{\Gad}{G^{\text{ad}}}
\newcommand{\Xad}{X^{\text{ad}}}
\newcommand{\ch}[1]{\check{{#1}}}
\newcommand{\piet}{\pi_1^{\acute{e}t}}
\newcommand{\Ag}{\mathcal{A}_g}
\newcommand{\Mg}{\mathcal{M}_g}
\newcommand{\Og}{\Omega \mathcal{M}_g}
\newcommand{\DT}{\mathbb{S}}
\theoremstyle{plain}
\newtheorem{thm}{Theorem}[section]
\newtheorem{conj}{Conjecture}[section]
\newtheorem{lemma}[thm]{Lemma}
\theoremstyle{plain}
\theoremstyle{plain}
\newcommand{\teic}{Teichm\"{u}ller }
\newcommand{\kob}{Kobayashi }
\newtheorem{prop}[thm]{Proposition}
\newtheorem{cor}[thm]{Corollary}
\theoremstyle{definition}
\newtheorem{defi}[thm]{Definition}
\newtheorem{rmk}[thm]{Remark}
\newtheorem{question}[thm]{Question}
\theoremstyle{remark}
\newtheorem*{exs}{Example}
\numberwithin{equation}{section}
\newcommand{\typ}{\textnormal{typ}}
\DeclareMathOperator{\Jac}{Jac}
\DeclareMathOperator{\PU}{PU}
\DeclareMathOperator{\Sh}{Sh}
\DeclareMathOperator{\Sp}{Sp}
\DeclareMathOperator{\End}{End}
\DeclareMathOperator{\Hom}{Hom}
\DeclareMathOperator{\Gal}{Gal}
\DeclareMathOperator{\Res}{Res}
\DeclareMathOperator{\Mod}{Mod}
\DeclareMathOperator{\NL}{NL}
\DeclareMathOperator{\codim}{codim}
\DeclareMathOperator{\Lie}{Lie}
\DeclareMathOperator{\Gl}{GL}
\newcommand{\Gm}{\mathbb{G}_m}
\newcommand{\Ga}{\mathbb{G}_a}
\newcommand{\Aut}{\operatorname{Aut}}
\newcommand{\N}{\mathbb{N}}
\newcommand{\Z}{\mathbb{Z}}
\newcommand{\Q}{\mathbb{Q}}
\newcommand{\R}{\mathbb{R}}
\newcommand{\Pp}{\mathbb{P}}
\newcommand{\A}{\mathbb{A}}
\newcommand{\Oo}{\mathcal{O}}
\newcommand{\F}{\mathcal{F}}
\newcommand{\Hh}{\mathbb{H}}
\newcommand{\C}{\mathbb{C}}
\newcommand{\Qbar}{\overline{\mathbb{Q}}}
\def\@settitle{\begin{center}%
  \baselineskip14\p@\relax
  \bfseries
  \uppercasenonmath\@title
  \@title
  \ifx\@subtitle\@empty\else
     \\[1ex]\uppercasenonmath\@subtitle
     \footnotesize\mdseries\@subtitle
  \fi
  \end{center}%
}
\def\subtitle#1{\gdef\@subtitle{#1}}
\def\@subtitle{}
\begin{document}

\newcommand{\adjunction}[4]{\xymatrix@1{#1{\ } \ar@<-0.3ex>[r]_{ {\scriptstyle #2}} & {\ } #3 \ar@<-0.3ex>[l]_{ {\scriptstyle #4}}}}

\title{What makes an algebraic curve special?}\blfootnote{\emph{2020
    Mathematics Subject Classification}. 14D07, 14C30, 14G35, 22F30,
  	30F60, 32G15.}\blfootnote{\emph{Key words and phrases}. Algebraic curves, moduli spaces, Hodge theory and
  Mumford--Tate domains, Functional transcendence, Typical and atypical
  intersections.}

\date{\today}

\author{Gregorio Baldi}

\begin{abstract}
A survey of special curves, special subvarieties of $\mathcal{M}_g$, and related topics. A large portion of the text discusses various possible interpretation of the word ``special'' in this context by giving also concrete examples. One highlight is the bi-algebraic viewpoint for atypical intersections appearing in Hodge theory as well as, more recently, in Teichm\"{u}ller theory.

\end{abstract}
\maketitle

\tableofcontents

\newpage

\section{Introduction}
Curves of genus $g$ and their moduli space, $\mathcal{M}_g$, are central objects in both classical and modern mathematics. See, for example, Mumford's famous outline \cite{zbMATH03494560}. Many mathematicians have a favorite curve, and there are many possible explanations for why they like such a curve. Often, it is either because the curve is unexpectedly linked to another area of mathematics, or because it has some surprising properties. Of course, there might be a non-trivial connection between the two explanations. Rarely does anyone claim to like the generic curve, or even the generic hyperelliptic curve.

The main topic of these notes is the question posed in the title:

\begin{question}
What makes a curve special? 
\end{question}

From the above informal discussion, we should at least keep in mind the following slogan:
\begin{center}
special curves should be \textbf{rare}.
\end{center}
Here, "rare" should be understood as meaning "there are only finitely many families of curves with such a property," or even better, "there are only finitely many such curves."

There are at least two angles (metric and algebro-geometric) that are related but arise from different communities that we will survey and we will highlight some of their common features. Along the way, we will popularize some conjectures, present various results, and showcase examples of families of curves, seeking similarities. In essence, the \emph{Zilber-Pink viewpoint}, in one form or another, will pervade our discussions. The text is organized in three parts, and we now survey the main players and cornerstone results.

For an introduction, we also refer to the volumes on the geometry of algebraic curves \cite{zbMATH03891516, zbMATH05798333}. For a more topological stand point, we recommend the book by Farb and Margalit \cite{zbMATH05960418}.

\subsection{Hodge theory}
From the Hodge-theoretic perspective, the $H^1$ of a curve naturally determines a principally polarized abelian variety, called the \emph{Jacobian}. Indeed, Riemann's theorem defines an equivalence between polarized abelian varieties over $\C$ and the category of polarizable integral Hodge structures of type $\{(1,0), (0,1)\}$. Algebraic cycles on self-products of a curve $X$ determine Hodge cycles on $\Jac(X)$, and it is natural to investigate Jacobians with extra algebraic/Hodge cycles. One notable example is provided by Jacobians whose endomorphism algebra is larger than expected. Among these, Jacobians with \emph{complex multiplication} occupy a special place. The theory of Complex Multiplication lies at the heart of Number Theory, and understanding such Jacobians is a fascinating challenge. We will explore this aspect, which will lead us to the study of sub-Shimura varieties of $\mathcal{A}_g$ (and their Hecke translates) that are generically contained in $\mathcal{M}_g$.

\subsection{\teic theory}
A simple yet powerful observation lies at the heart of this way of thinking, borrowed from the introduction of \cite{zbMATH06353727}. When $X$ is equipped with a (non-zero) holomorphic one-form $\omega \in \Omega(X)$, $X$ acquires a \textbf{geometric} character (contrasted with the algebro-geometric flavor described above). For example, such a form determines a Euclidean metric on $X$ (with singularities at the zeros of $\omega$) and a foliation $\mathcal{F}(\omega)$ by horizontal geodesics. Similarly, the Hodge bundle $\Omega \mathcal{M}_g$ exhibits features absent in the underlying moduli space of curves $\Mg$: a stratified linear structure and a natural action of $\operatorname{GL}_2(\R)^+$.

Indeed, while $\Omega \mathcal{M}_g$ is a rank $g$ vector bundle (possibly deprived of the zero section), it is stratified by the algebraic subsets $\Omega \mathcal{M}_g(\kappa)$ of holomorphic 1-forms with zeros of multiplicities given by $\kappa = (\kappa_0, \dots, \kappa_n)$, where $\sum_i \kappa_i = 2g - 2$ (thanks to the Riemann-Hurwitz formula). We refer to \cite{zbMATH02001031} for a discussion on connected components of the strata and to \cite{2022arXiv220411943C} for results concerning the Kodaira dimension of (projectivized) strata of Abelian differentials.

A holomorphic 1-form $\omega$ on $X$ provides a collection of "charts" on $X$ mapping to $\mathbb{C}$, where the transition maps are translations. These charts ramify at finitely many points corresponding to the zeros of $\omega$ and are locally described by $z \mapsto \int_{z_0}^z \omega$. These charts induce the so-called \textbf{period coordinates} on $\Omega \mathcal{M}_g(\kappa)$, which are modeled by the relative cohomology group $H^1(X, Z(\omega); \C)$. 

We will later discuss the flat picture and the $\operatorname{GL}_2(\R)^+$-action in that "global" setting. For the introduction, we take the period coordinates perspective: decompose $v \in H^1(X, Z(\omega); \C)$ into real and imaginary parts, $v = \operatorname{Re} v + i \operatorname{Im} v$ and set
\[
\begin{bmatrix}
a & b \\
c & d
\end{bmatrix}
\cdot
\begin{bmatrix}
\operatorname{Re} v \\
\operatorname{Im} v
\end{bmatrix}
=
\begin{bmatrix}
a \operatorname{Re} v + b \operatorname{Im} v \\
c \operatorname{Re} v + d \operatorname{Im} v
\end{bmatrix}.
\]

This is a vast topic, but here we will primarily focus on the algebro-geometric aspects of the theory. For an introduction to \teic dynamics from this angle, we refer to the notes \cite{zbMATH06863806}. We also note that, in the dynamical approach, Hodge theory appears in formulas and results about the Lyapunov exponents of a linear cocycle (the so-called \emph{Kontsevich–Zorich cocycle}), which encodes the topological/homological behavior of trajectories of translation flows and the tangent cocycle of the \teic flow \cite{kzhodge}. For more on this perspective, see \cite{zbMATH06436256}. Another interesting discussion about the relationship between the two viewpoints described so far can be found in the problem section at the end of the introduction of \cite{zbMATH06272629}.

\subsection{The bi-algebraic viewpoint}
Bi-algebraic geometry, as outlined in \cite[Sec. 3 and 4]{MR3821177}, has recently emerged due to its deep connections with the so-called \emph{André-Oort conjecture}. The basic idea can be summarized as follows. Let $V$ be a smooth quasi-projective complex variety, with universal covering $\widetilde{V}$. In many interesting cases, $\widetilde{V}$ is \textbf{not} algebraic. (For example, if $V$ is projective, are there examples of such $V$ with infinite fundamental group but not dominated by abelian varieties?) Nevertheless, $\widetilde{V}$ may admit holomorphic maps to algebraic varieties $W$. However, in the one dimensional case, $\widetilde{V}$ turns out to be algebraic if and only if it is bi-holomorphic to the projective line or the complex numbers. By fixing one such map $d: \tilde{V} \to W$, bi-algebraic geometry suggests that \emph{bi-algebraic} subvarieties (i.e., the Zariski-closed subvarieties of $V$ that can also be locally described algebraically in $W$) should play a special role. We will explain how this angle relates the previous two viewpoints.

We present a few examples:
\begin{itemize}
\item Let $V$ be a product of $n$ Riemann surfaces of higher genus, then $\widetilde{V} \cong \mathbb{H} \times \cdots \times \mathbb{H}$ (cf. \Cref{unifthm}), $W$ is the product of $n$ copies of $\mathbb{P}^1 $, and the map is induced by the standard inclusion $\mathbb{H}\to \mathbb{P}^1$;
\item If $V$ supports a family of abelian varieties, $\widetilde{V}$ comes with a holomorphic map to a Hermitian symmetric domain $D$. Even if $D$ is not algebraic itself (it is only semi-algebraic), it comes with various maps to algebraic varieties, as we will review in \Cref{sec71};
\item Another related example is given by $\Og$ where one has \emph{local} period coordinates. We will see later examples of the bi-algebraic viewpoint in this setting, even though the period coordinates do not cleanly relate to the universal cover.
\end{itemize}

\subsection{What is in this text?}
\begin{itemize}
\item Various explicit examples of families of curves whose Jacobians have endomorphism ring unusually large;
\item We survey the known results regarding sub-Shimura varieties generically contained in $\Mg$ as well as other results on the \emph{Hodge locus} of $\Mg$; 
\item An introduction to \teic geometry and \teic curves;
\item The Eskin-Filip-Wright finiteness theorem for atypical orbit closures;
\item An exposition of functional transcendence that tries to describe uniformly the Shimura and \teic sides;
\item Further considerations inspired by the Zilber-Pink philosophy;
\item Several classical and new questions on the topic.
\end{itemize}

\subsection{Notation}
Two types of curves will appear: $[X]\in \mathcal{M}_g$ and $C\subset \mathcal{M}_g$, and they should not be confused. The first is a smooth projective curve of genus $g$ (sometimes described by affine equations rather than their projective models), while the second may be neither smooth nor projective. Often, though not always, we will implicitly replace $\mathcal{M}_g$ by a finite covering. Our focus is mainly on objects defined over $\overline{\Q}$ and $\mathbb{C}$ (with a fixed embedding of the former into the latter).

Every curve here will be algebraic; nevertheless, it is worth noting that transcendental curves played a fundamental role in the development of mathematics and physics in the 17th century (e.g., spirals, catenaries, the brachistochrone, and other tautochrones). However, some proofs presented here may actually involve non-algebraic curves (specifically, \emph{definable curves}). This will be discussed only in \Cref{sectaoproof} and relates to the beautiful Pila–Wilkie theorem \cite{pilawilkie, zbMATH04183528}.

\subsection{Disclaimer}
We selected a few topics that can be unified by the role of periods and that are not commonly found together in other surveys. These notes may be viewed as a complement/update to the beautiful \cite{moonenoort}, with the addition\footnote{In \emph{op. cit.}, this is mentioned only once, following \cite[Qtn. 6.11]{moonenoort}, due to the work of M\"{o}ller \cite{zbMATH05919111}.} of a discussion on the Teichm\"{u}ller perspective on $\mathcal{M}_g$. The motivation for this came from the recent works \cite{2022arXiv220206031K, filipnotes, 2024arXiv240616628B}, which presented a perspective that integrates aspects of both theories.

Unsurprisingly, the literature on this topic is extensive, and we did not attempt the impossible task of surveying every result (although we ended up collecting a large number of references). We apologize to anyone we may have inadvertently omitted. These notes are an extended version of a talk given by the author during the Simons Symposium on Periods and $L$-values of Motives (2024), held at Schloss Elmau, based on \cite{2024arXiv240616628B} and, partially, on \cite{2021arXiv210708838B}. We hope that the emphasis we put on period coordinates viewpoint on \teic theory fits well a conference on Periods!

While preparing this text, we aimed to refine certain results throughout. The only new contributions can be found in \Cref{sectworktol}, \Cref{predsection}, \Cref{ftmerdiff}, and several questions that we have interspersed in the notes.

\subsection{Acknowledgements}
First of all we wish to thank J.-B. Bost and S. Zhang for organizing the Simons Symposium. Recently we gave also a talk based on \Cref{part1} in Berlin, during the conference \emph{Hodge theory, periods and special loci} (in memory of Tobias Kreutz) and we thank all the participants for remarks and questions.

We are grateful to Y. André, B. Farb, S. Filip, B. Klingler, J. Lam, C. Matheus, C. McMullen, E. Ullmo, D. Urbanik, and S. Zhang for various insightful discussions. We gained much knowledge on the \teic perspective during the study group on \emph{Orbit closures and their properties}, organized with C. Matheus and it is a pleasure to thank him as well as all the participants. Thanks to D. Lombardo, N. Khelifa, and C. McMullen for their comments on a previous draft.

Finally, in 2016 at Monte Verità, I attended a lecture series by Frans Oort on CM Jacobians. This took place the summer before I began my Ph.D. studies, and I have always kept in mind the setting and questions I first heard from Oort, along with some discussions we shared over lunches. It is therefore a pleasure to write these related notes and to thank him as well.

During the final stages of this work, the author was partially supported by the grant ANR-HoLoDiRibey of the Agence Nationale de la Recherche.

\newpage
\part{Hodge theoretic viewpoint}\label{part1}

\section{Curves and their Jacobians}\label{sec2}
Let $X$ be an irreducible smooth projective algebraic curve over $\C$, or simply a (compact) \emph{Riemann surface}. The first invariant one associates to a Riemann surface $X$ is its genus $g$. One of the most important theorems is the following:

\begin{thm}[Koebe, Poincaré 1907]\label{unifthm}
The only connected, simply connected Riemann surfaces are $\Pp^1_\C$, $\C$, and\  $\mathbb{B}^1_{\C}$.
\end{thm}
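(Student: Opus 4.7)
The plan is to follow the classical potential-theoretic route, separating the compact and non-compact cases and treating the latter via an exhaustion argument that produces either a Green's function or a ``dipole'' harmonic function. Simple connectedness enters decisively in two places: it forces the topological genus to vanish in the compact case, and it allows us to pass from locally defined harmonic conjugates to globally single-valued ones in the non-compact case.

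For the compact case, I would first observe that if $X$ is compact and simply connected then $H^1(X,\mathbb{R})=0$, so $X$ has genus $0$. Riemann--Roch then provides a meromorphic function $f$ with a single simple pole at a chosen base point $p$, and $f \colon X\to \mathbb{P}^1_{\mathbb{C}}$ is necessarily a degree-one holomorphic map, hence a biholomorphism.

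For the non-compact case, I would fix $p\in X$ with a parametric disk, exhaust $X$ by a nested sequence of relatively compact open sets $\Omega_n$ with smooth boundary, and apply Perron's method on each $\Omega_n$ to produce a Green's function $g_n$ with logarithmic singularity at $p$ and zero boundary values. By the maximum principle the $g_n$ form an increasing sequence, and Harnack's principle yields a dichotomy: either $g_n$ converges locally uniformly to a Green's function $g$ on $X$ (\emph{hyperbolic} case), or $g_n\to\infty$ off $p$ (\emph{parabolic} case), in which case I would instead construct a harmonic function with two logarithmic singularities of opposite sign. In the hyperbolic case, simple connectedness provides a single-valued harmonic conjugate $\tilde g$ on $X\setminus\{p\}$, and $f=e^{-g-i\tilde g}$ extends to a holomorphic map $X\to \mathbb{B}^1_{\mathbb{C}}$; an extremal/Schwarz-lemma argument based on the defining property of Green's function shows that $f$ is injective and surjective, hence a biholomorphism. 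In the parabolic case the dipole construction yields a holomorphic map $X\to \mathbb{C}$, which is a biholomorphism by an analogous extremal argument once one notes that $X$ admits no non-constant bounded subharmonic function.

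The main obstacle will be the potential-theoretic input: solvability of the Dirichlet problem on relatively compact subdomains and the Harnack monotonicity step that powers the hyperbolic/parabolic dichotomy. A cleaner modern alternative replaces the \emph{ad hoc} dipole construction with the $L^2$ Hodge-theoretic existence of harmonic differentials with prescribed singularities (Behnke--Stein, Nevanlinna), but either route requires non-trivial analytic groundwork; no purely algebraic argument is available, since the targets $\mathbb{C}$ and $\mathbb{B}^1_{\mathbb{C}}$ are transcendental objects.
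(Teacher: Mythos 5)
The paper does not prove this statement: it is quoted as the classical Koebe--Poincar\'e uniformization theorem and the reader is referred to the literature (the de Saint-Gervais volume) for a complete treatment, so there is no ``paper's proof'' to compare against. Judged on its own terms, your outline is the standard classical route and is correct in its architecture: genus zero plus Riemann--Roch in the compact case, and in the open case the exhaustion/Perron construction, the Harnack dichotomy between existence and non-existence of a Green's function, and the extremal (Schwarz-lemma) argument showing that the resulting map to $\mathbb{B}^1_\C$ or $\C$ is a biholomorphism. One small imprecision worth fixing: on $X\setminus\{p\}$, which is \emph{not} simply connected, the harmonic conjugate $\tilde g$ of the Green's function is genuinely multivalued, with period $2\pi$ around $p$ coming from the logarithmic singularity; simple connectedness of $X$ only kills all the \emph{other} periods. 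What is single-valued is $e^{-g-i\tilde g}$, precisely because the surviving period is an integer multiple of $2\pi$. With that correction, and granting the substantial analytic groundwork you explicitly flag (solvability of the Dirichlet problem on the exhausting domains, Harnack, and the extremal characterizations), the sketch is a faithful compression of the classical proof.
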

(Here we denote by $\mathbb{B}^1_{\C}$ the unit disc in $\C$. Notice that it is biholomorphic to the upper half plane $\mathbb{H}$. In the sequel we will use both descriptions.)

As a corollary the universal covering $\widetilde{X}$ of any connected Riemann surface is one of the three, and depends only on the genus: $g=0$ iff $\widetilde{X}\cong\Pp^1_\C$, $g=1$ when $\widetilde{X}\cong \C$ and higher genus in the remaining cases. For a complete and recent discussion on the above theorem, we refer to \cite{zbMATH05835819}.

We have already hinted to at least three ways of presenting $X$, and each of them hints already to a possible definition of \emph{special} (taking the nomenclature from \cite{zbMATH02204321}):
\begin{itemize}
\item (\emph{Fuchs viewpoint}) If $X\cong \Gamma \backslash \mathbb{H}$, one could say that $X$ is special the more special $\Gamma$ is, among the other lattices in $\operatorname{PSL}_2(\R)$;
\item (\emph{Jacobi viewpoint}) Considers the periods associated to $(X,\omega)$, where $\omega$ is a nonzero meromorphic differential form on X, $\int_\gamma \omega \in \C$;
\item (\emph{Riemann viewpoint}) By looking at the algebraic equations defining the curve $X$ (let's say, by imagining it embedded in some projective space).
\end{itemize}
We will start with the second perspective. To compare one curve to another, it is most useful to have a moduli space of such objects. We denote by $\Mg$ the moduli stack of curves of genus $g$ (over $\C$, but we notice here that it can be defined over $\operatorname{Spec}(\Z)$). It is a quasi-projective smooth irreducible Deligne-Mumford stack of dimension $\max{ \{g,3g-3\}}$.  

 One of the most famous period maps is the \textbf{Torelli map} \cite{zbMATH02622397} (see also, e.g. \cite{zbMATH03975085}):
\[
j : \mathcal{M}_g\to \mathcal{A}_g=[\operatorname{Sp}_{2g}(\Z)\backslash \mathbb{H}_g], \quad C \mapsto j(C)= (J(C),\theta),
\]
where $J(C)$ is the Jacobian of the curve $C$ with its principal polarization $\theta$. Here $\mathbb{H}_g$ is the Siegel upper half-space of genus $g$: the set of $g\times g$ symmetric matrices over the complex numbers whose imaginary part is positive definite. It is the symmetric space associated to the symplectic group $\operatorname{Sp}_{2g}(\R)$. $\mathcal{A}_g$ is the moduli space of principally polarized abelian varieties of dimension $g$ and has dimension $g(g+1)/2$.

 Denote by $\mathcal{T}_g$ the Zariski closure of the image of $j$ in $ \mathcal{A}_g$. From several perspectives, researchers have examined subvarieties of $\mathcal{M}_g$ that exhibit special properties. A few key examples include: Brill-Noether loci \cite{2018arXiv180905980L}, Teichmüller geodesics (arising from billiard dynamics \cite{zbMATH07666127}, initiated by Veech), and the Kodaira-Parshin construction.

\begin{rmk}
In the boundary $\mathcal{T}_g - j(\Mg)$ one finds the \emph{decomposed Jacobians}: A point of $\mathcal{T}_g$ is in the boundary iff the corresponding principally polarized abelian variety is isomorphic to the product of two principally polarized abelian varieties (of dimension >0, and as ppav). Cf. the end of Section 1.3 in \cite{moonenoort}.
\end{rmk}

\subsection{Why Jacobians}
With the goal in mind of defining special curves by associating some other variety/Hodge structure to a curve one might argue that there are many possibilities, apart from the Jacobians. Of course the Jacobians look very natural (see for example the Rmk.  below), and Farb \cite{zbMATH07807769} proved that the Torelli map is essentially unique in $\Hom (\mathcal{M}_g,\Ag)$ (improving upon a representation theoretic result of Korkmaz). This is one of the few theorems that uses $\Mg$ rather than a covering thereof, see indeed the various Prym constructions, like in \cite{zbMATH00015078}. Prym varieties form a special class of principally polarized abelian varieties, more general than Jacobians, and were discovered by Wirtinger in 1895. Unfortunately we'll not discuss this fascinating topic.

There are however other constructions of local systems/variations of Hodge structures on $\Mg$. For example, in the early 90’s, quantum representations of the mapping class group of a surface were discovered  as a byproduct of a more general structure called topological quantum field theory (TQFT) \cite{zbMATH07438861}.

\begin{rmk}
It is an old theorem of Matsusaka \cite{zbMATH03073579} that every abelian variety over an algebraic closed field is a quotient of a Jacobian. 
\end{rmk}

\subsection{Endomorphisms}
Abelian varieties with the largest endomorphism ring are called \emph{with Complex Multiplication}, or simply CM. We will come back to this topic several times, see \Cref{defcm} for an equivalent definition. Given an abelian variety $A$, we write $\End^0(A)=\End(A)\otimes \Q$.
\begin{defi}\label{cm22}
We say that an abelian variety $A$ has \textbf{complex multiplication} if $\End^0(A)$ contains a commutative, semisimple $\Q$-subalgebra of dimension $2\dim A$.
\end{defi}

\begin{defi}
Let $F$ be a totally real number field. We say that an abelian variety $A$ has \textbf{real multiplication} by $\mathcal{O}_F$, if there is an action of the ring of integers $\mathcal{O}_F$ on $A$ such that the Lie algebra of $A$ is free of rank one over $\mathcal{O}_F$ (which acts by functoriality)
\end{defi}
In particular, this requirement forces the dimension of $ A$ and the degree of $F$ to be equal. We will often be concerned with the case where $ \End^0(A)$ is equal to (and not bigger than) a totally real field of dimension $\dim(A$). On the other hand, we do not require always the full ring of integers of this field to act on $A$. We call this more general case \emph{real multiplication by $F$}. 

\begin{rmk}
Both CM abelian varieties and abelian varieties with real multiplication (RM) are dense in $\Ag$. We will see that this can drastically change when we restrict to Jacobians. 
\end{rmk}

We record here the following folklore and open ended question.

\begin{question}
 Every endomorphism of 
$\Jac(X)$, can be represented by a divisor on $X \times X$. What can be said on such divisor? Do they have a conceptual explanation, e.g. in the case of Jacobians with real or complex multiplication?
\end{question}

It's worth mentioning at least one example of a family with a rather special property--we will see more examples in a moment.
\begin{thm}[Earle {\cite{zbMATH03657936}}]
For each $n=2k+1$ odd, the families of hyperelliptic curves (for $t\neq 0,1$)
\begin{equation}
C_{t,n}: \{y^2=(x^n-1)(x^n-t)\}
\end{equation}
have the following property: there are two curves $Y_{1,t,k}, Y_{2,t,k} $ of genus $k$ such that $\Jac(C_{n,t})$ is isomorphic to the product of the two jacobians.

\end{thm}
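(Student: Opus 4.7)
The plan is to exhibit two commuting involutions on $C := C_{t,n}$ and exploit the resulting Klein four-group action to decompose $\Jac(C)$. Beyond the hyperelliptic involution $\iota:(x,y)\mapsto(x,-y)$, the curve carries a second involution
\[
\tau:(x,y)\longmapsto \bigl(s/x,\ s^{n/2}y/x^n\bigr), \qquad s := t^{1/n}
\]
(after fixing compatible roots, all of which exist over $\C$). The key point is that $x\mapsto s/x$ permutes the set of $n$-th roots of $1$ with the set of $n$-th roots of $t=s^n$, so that $\tau$ preserves $(x^n-1)(x^n-t)$ up to the factor $t/x^{2n}$ that is absorbed by the $y$-correction. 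A direct computation gives $\tau^2=\mathrm{id}$ and $\iota\tau=\tau\iota$, so $V=\{1,\iota,\tau,\iota\tau\}\simeq(\Z/2)^2$ acts on $C$.

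First I would analyse the two relevant quotient curves $Y_1:=C/\tau$ and $Y_2:=C/(\iota\tau)$; the third quotient $C/\iota$ is $\Pp^1$ because $\iota$ is hyperelliptic. The fixed locus of $\tau$ requires $x=s/x$ together with $s^{n/2}=x^n$; since $n$ is odd these equations pick out precisely the two points of $C$ above $x=+\sqrt{s}$, and an entirely analogous analysis yields two fixed points for $\iota\tau$ above $x=-\sqrt{s}$. With $g(C)=n-1=2k$, Riemann--Hurwitz then gives $2(2k)-2 = 2(2g(Y_i)-2)+2$, so $g(Y_1)=g(Y_2)=k$. A concrete hyperelliptic model of $Y_1$ can be extracted from the $\tau$-invariants $z:=x+s/x$ and $W:=y(x^n+s^{n/2})/x^n$: using the identity $(x^n-1)(x^n-s^n)=(x^n+s^{n/2})^2-(1+s^{n/2})^2 x^n$ and the Chebyshev-type relation $x^n+s^n/x^n=p_n(z)$, one obtains
\[
W^2 = \bigl(p_n(z)+2s^{n/2}\bigr)\bigl(p_n(z)-1-s^n\bigr),
\]
where the first factor is seen (via the substitution $z=s^{1/2}(\mu+\mu^{-1})$, which converts it into $s^{n/2}\mu^{-n}(\mu^n+1)^2$) to have $k$ double roots that can be absorbed into $W$, leaving an affine hyperelliptic equation of degree $2k+2$ of genus $k$.

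Next I would pass from curves to Jacobians via the $V$-action on $H^0(C,\Omega^1)$. Since $\iota$ is hyperelliptic it acts as $-1$ on this space, which therefore splits under $\tau$ into a $(+1)$-eigenspace of dimension $k$ (pulled back from $Y_1$) and a $(-1)$-eigenspace of dimension $k$ (pulled back from $Y_2$). By the Kani--Rosen decomposition, or equivalently by a direct manipulation of the idempotents in $\Q[V]$ together with the observation that $\Jac(C/V)=0$ because $C/V$ is dominated by $\Pp^1$, this yields an isogeny
\[
\pi_1^{*}\oplus \pi_2^{*}:\Jac(Y_1)\times \Jac(Y_2)\longrightarrow \Jac(C).
\]

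The main obstacle will be upgrading this isogeny to an isomorphism of principally polarized abelian varieties. Its kernel lies in $\pi_1^{*}\Jac(Y_1)\cap\pi_2^{*}\Jac(Y_2)$, where an element must be fixed by both $\tau$ and $\iota\tau$, hence by $\iota$; since $\iota^{*}=-\mathrm{id}$ on $\Jac(C)$, this forces the kernel into $\Jac(C)[2]$. Proving this kernel is actually trivial, or equivalently that the product principal polarization on $\Jac(Y_1)\times\Jac(Y_2)$ matches the canonical principal polarization on $\Jac(C)$, is the delicate step: one argues by comparing integral symplectic bases of $H^1(C,\Z)$ arising from the $V$-equivariant decomposition with those of $H^1(Y_i,\Z)$, using the explicit location of the Weierstrass points and the ramification points of $\pi_i$ described above. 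This verification is the heart of Earle's argument.
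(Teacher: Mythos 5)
The paper itself gives no proof of this statement (it is quoted from Earle's article), so I can only assess your argument on its own merits. Your setup is correct and is indeed the standard route: $\tau$ is a well-defined involution commuting with $\iota$, the fixed-point count and Riemann--Hurwitz give $g(Y_1)=g(Y_2)=k$, the explicit model $W^2=(p_n(z)+2s^{n/2})(p_n(z)-1-s^n)$ checks out, and Kani--Rosen does produce an isogeny $\pi_1^{*}\oplus\pi_2^{*}:\Jac(Y_1)\times\Jac(Y_2)\to\Jac(C)$.

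The final step, however, contains a genuine error. First, your ``equivalently'' is false: the product principal polarization can \emph{never} match the canonical one on $\Jac(C)$, because the theta divisor of the Jacobian of a smooth curve is irreducible while that of a product of ppavs is reducible (this is exactly the remark in the text following the theorem, characterizing the boundary of the Torelli locus). Second, and more concretely, the kernel of $\pi_1^{*}\oplus\pi_2^{*}$ is provably \emph{not} trivial: for a ramified double cover the transfer map satisfies $\langle\pi_i^{!}a,\pi_i^{!}b\rangle_C=2\langle a,b\rangle_{Y_i}$, the two sublattices $\pi_i^{!}H_1(Y_i,\Z)$ are orthogonal (they lie in opposite $\tau_*$-eigenspaces of the invariant symplectic form), so their direct sum has discriminant $2^{4k}$ inside the unimodular lattice $H_1(C,\Z)$ and hence index $2^{2k}$; the kernel of your isogeny therefore has order exactly $4^{k}$. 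So the strategy ``show this kernel is trivial'' cannot succeed. The actual content of Earle's theorem is the subtler fact that the quotient of $\Jac(Y_1)\times\Jac(Y_2)$ by this specific $2$-torsion subgroup is \emph{again} isomorphic to a product of Jacobians (equivalently, that $H_1(C,\Z)$ itself splits as a direct sum of two $\tau_*$-stable sublattices, each isomorphic as a polarized Hodge structure to some $H_1(Y_i',\Z)$); this is special to this family --- it fails for a general bielliptic genus-$2$ curve --- and is established by the explicit period-matrix computation that your sketch gestures at but does not carry out.
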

\begin{rmk}
We will be mainly concerned with abelian varieties over the complex numbers. However, Robert Coleman conjectured that for a given number field $k$ only finitely many rings, up to isomorphism, can be realised as the ring of $\End^0$ of an abelian variety defined over $k $. See Conjecture $C(e,g)$ from \cite{zbMATH05123079}.
\end{rmk}
\subsection{Albert classification}
The structure of $\End^0(A)$, for $A$ simple, is described by the Albert classification of division algebras with involution. To a principally polarized abelian variety $(A,\lambda)$ we associate the pair $(D=\End^0 (A),\dagger)$, $\dagger$ the Rosati involution. $D$ is a simple $\Q$-algebra of finite dimension and $\dagger$ is a positive involution.
Let $K$ be the center of $D$ (so that $D$ is a central simple $K$-algebra), and $K_0$ be the subfield of symmetric elements in $K$. Write $e=[K:\Q]$. We know that either $K_0 = K$, in which case $\dagger$ is said to be of the first kind, or that $K_0\subset K$ is a quadratic extension (second kind).

(If $Q$ is a quaternion algebra over a field $L$, its \emph{canonical involution} is the involution given by $q \mapsto Trd_{Q/L}(q)-q$.)
\begin{thm}[Albert] The pair $(D,\dagger)$ is of one of four types: 
\begin{itemize}
\item[Type I.] $K_0 =K=D$ is a totally real field and $\dagger=$identity
\item[Type II.]$K_0 = K$ is a totally real field, and $D$ is a quaternion algebra over $K$ with $D \otimes_{K,\sigma} \mathbb{R} \cong M_2(\mathbb{R})$ for every $\sigma\colon K \to \mathbb{R}$.  

Let $d \mapsto d^*$ be the canonical involution on $D$. Then there exists an element $a \in D$ such that $a^2 \in K$ is totally negative, and such that $d^\dagger = a d^* a^{-1}$ for all $d \in D$.  

We have an isomorphism $D \otimes_{\mathbb{Q}} \mathbb{R} \cong \prod_{\sigma\colon K \to \mathbb{R}} M_2(\mathbb{R})$ such that the involution $\dagger$ on $D \otimes_{\mathbb{Q}} \mathbb{R}$ corresponds to the involution $(d_1, \dots, d_e) \mapsto (d_1^t, \dots, d_e^t)$.  
\item[Type III.] $K_0 = K$ is a totally real field, and $D$ is a quaternion algebra over $K$ with $D \otimes_{K,\sigma} \mathbb{R} \cong \mathcal{H}$ for every embedding $\sigma\colon K \to \mathbb{R}$. (We write $\mathcal{H}$ for the Hamiltonian quaternion algebra over $\R$). $\dagger$ is the canonical involution on $D$.  

We have an isomorphism $D \otimes_{\mathbb{Q}} \mathbb{R} \cong \prod_{\sigma\colon K \to \mathbb{R}} \mathcal{H}$ such that the involution $\dagger$ on $D \otimes_{\mathbb{Q}} \mathbb{R}$ corresponds to the involution $(d_1, \dots, d_e) \mapsto (\bar{d}_1, \dots, \bar{d}_e)$.  
\item[Type IV.] $K_0$ is a totally real field, $K$ is a totally imaginary quadratic field extension of $K$. Write $a \mapsto \bar{a}$ for the complex conjugation of $K$ over $K_0$. If $v$ is a finite place of $K$, write $\bar{v}$ for its complex conjugate. The algebra $D$ is a central simple algebra over $K$ such that:  
\begin{itemize}
\item If $v$ is a finite place of $K$ with $v = \bar{v}$ then $\operatorname{inv}_v(D) = 0$;  
\item For any place $v$ of $K$ we have $\operatorname{inv}_v(D) + \operatorname{inv}_{\bar{v}}(D) = 0$ in $\mathbb{Q}/\mathbb{Z}$.  
\end{itemize}
If $m$ is the degree of $D$ as a central simple $K$-algebra, we have an isomorphism  
$$D \otimes_{\mathbb{Q}} \mathbb{R} \cong \prod_{\sigma\colon K_0 \to \mathbb{R}} M_m(\mathbb{C})$$
such that the involution $\dagger$ on $D \otimes_{\mathbb{Q}} \mathbb{R}$ corresponds to the involution  
$(d_1, \dots, d_e) \mapsto (\bar{d}_1^t, \dots, \bar{d}_e^t)$.  
\end{itemize}

\end{thm}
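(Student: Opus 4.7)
The plan is to treat $(D, \dagger)$ as a purely algebraic problem: $D$ is a finite-dimensional simple $\QQ$-algebra and $\dagger$ is a positive involution, meaning the quadratic form $x \mapsto \Tr_{D/\QQ}(x x^{\dagger})$ is positive definite. First I would pin down the center and its symmetric subfield. Since $D$ is simple the center $K$ is a field; $\dagger$ restricts to $K$ either as the identity (first kind) or as a non-trivial involution with fixed field $K_0$ of index $2$ (second kind). Applying positivity to elements of $K_0$, which are fixed by $\dagger$, yields that the form $x \mapsto \Tr_{K_0/\QQ}(x^2)$ is positive definite on $K_0$; this forces every embedding $K_0 \hookrightarrow \CC$ to be real, so $K_0$ is totally real. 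In the second-kind case, testing positivity on elements $y \in K$ with $y^{\dagger} = -y$ then forces $K$ to be a totally imaginary quadratic extension of $K_0$.

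Next I would analyze the base change $D_{\RR} := D \otimes_{\QQ} \RR$ together with the extended involution. Because $K_0$ is totally real, $D_{\RR}$ decomposes as a product of simple $\RR$-algebras indexed by the archimedean places of $K_0$, and by Wedderburn each factor is isomorphic to $M_n(\RR)$, $M_n(\CC)$, or $M_n(\mathcal{H})$. The classical classification of positive involutions on simple real algebras identifies, up to inner automorphism, the only positive involution on each factor: the transpose on $M_n(\RR)$, the conjugate transpose on $M_n(\CC)$, and the quaternionic conjugate transpose on $M_n(\mathcal{H})$. Matching the first/second-kind dichotomy with the three possible real shapes, and reading off the isomorphism class of $D_{\RR}$ as a $K_0 \otimes_{\QQ} \RR$-algebra, produces the four local descriptions of $(D_{\RR}, \dagger)$ given in the statement.

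Finally I would establish the global shape. In the first-kind case the decisive input is Albert's theorem that a central simple algebra over $K$ admitting an involution of the first kind has order dividing $2$ in $\Br(K)$. Combined with $D$ being a division algebra (the case of interest, where $A$ is simple), this forces $\dim_K D \in \{1, 4\}$, so either $D = K$ (Type I) or $D$ is a quaternion division algebra over $K$. Types II and III are then distinguished by whether this quaternion algebra is indefinite or definite at every real place of $K$, exactly matching the local shapes $M_2(\RR)$ versus $\mathcal{H}$ from the previous step. For the second kind (Type IV), the involution induces an isomorphism $D^{\mathrm{op}} \cong D$ twisted by the non-trivial element of $\Gal(K/K_0)$; at the level of local invariants this reads $\inv_v(D) + \inv_{\bar{v}}(D) = 0$ in $\QQ/\ZZ$, and compatibility with a positive involution at an archimedean place translates, at a finite place $v = \bar{v}$, into $\inv_v(D) = 0$.

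The main obstacle, and where I expect the serious work to concentrate, is the Brauer-theoretic bound in the first-kind case: showing that a division algebra admitting an involution of the first kind has degree at most $2$ over its center is what rules out a would-be fifth type, and it requires a genuinely cohomological argument about $2$-torsion in $\Br(K)$ rather than merely the positivity of the trace form. Once that bound is in hand, the remaining content of the theorem is an organized combination of the local classification at the archimedean places with the standard Hasse-principle description of $\Br(K)$.
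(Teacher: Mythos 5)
The paper does not prove this statement: it is Albert's classical classification of division algebras with positive involution, quoted without proof (the standard references being Albert's work and, e.g., Mumford's \emph{Abelian Varieties}, \S 21). Your outline follows the standard textbook route and is essentially correct: positivity of the trace form forces $K_0$ totally real and, in the second kind, $K$ a CM field; the real points reduce to the classification of positive involutions on $M_n(\RR)$, $M_n(\CC)$, $M_n(\mathcal{H})$; and the global shape comes from $2$-torsion in the Brauer group (first kind) and the local-invariant symmetry $\inv_v(D)+\inv_{\bar v}(D)=0$ (second kind). One point you should make explicit: the inference from ``$D$ has order dividing $2$ in $\Br(K)$ and is a division algebra'' to ``$\dim_K D \in \{1,4\}$'' is not formal — over a general field a division algebra of exponent $2$ can have index $4$ (biquaternion algebras) — and you need the Albert--Brauer--Hasse--Noether theorem that index equals exponent over number fields, or Albert's direct algebraic argument exploiting positivity. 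With that input named, the sketch is a faithful account of the standard proof.
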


\subsection{Automorphisms}\label{autsection}
We pause for a moment to discuss what can actually be said regarding automorphisms of a curve, rather than endomorphism of its Jacobian.

\begin{thm}
Let $X$ be a curve of genus $\geq 2$, then $\Aut(X)$ is finite.
\end{thm}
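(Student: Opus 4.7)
The plan is to realize $\Aut(X)$ as a closed algebraic subgroup of a projective linear group via the tricanonical embedding, and then to show it is zero-dimensional using the vanishing of global holomorphic vector fields.

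First, since $g \geq 2$, the line bundle $K_X^{\otimes 3}$ has degree $6g-6 \geq 2g+1$ and is therefore very ample; by Riemann--Roch, $h^0(K_X^{\otimes 3}) = 5g-5$. The tricanonical map is consequently a closed embedding
\[
\phi \colon X \hookrightarrow \Pp\bigl(H^0(X, K_X^{\otimes 3})^*\bigr) \cong \Pp^{5g-6}.
\]
Any $\sigma \in \Aut(X)$ acts functorially on the canonically defined space $H^0(X, K_X^{\otimes 3})$ by pullback of sections, and hence induces an element of $\PGL_{5g-5}(\C)$. This yields an injective homomorphism $\Aut(X) \hookrightarrow \PGL_{5g-5}(\C)$ whose image is the closed subgroup $G := \mathrm{Stab}(\phi(X))$ of projective transformations preserving the tricanonical model.

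Next I would show $G$ is finite. Being closed in $\PGL_{5g-5}$, it is a linear algebraic group of finite type over $\C$, so it suffices to check that $\dim G = 0$, equivalently $\Lie(G) = 0$. A standard deformation-theoretic computation identifies $\Lie(G)$ with a subspace of $H^0(X, T_X)$: each infinitesimal projective transformation preserving $\phi(X)$ restricts to a global holomorphic vector field on $X$, and this restriction is injective because $\phi(X)$ is non-degenerate in $\Pp^{5g-6}$. But $T_X = K_X^{-1}$ has degree $2-2g < 0$ in our range, so $H^0(X, T_X) = 0$. Hence $G$ is a zero-dimensional closed subscheme of $\PGL_{5g-5}$ of finite type over $\C$, and thus consists of finitely many points.

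The main obstacle is the Lie-algebraic identification $\Lie(G) \hookrightarrow H^0(X, T_X)$, which is classical deformation theory but demands some care. An appealing alternative, more in the spirit of the Fuchs viewpoint of \Cref{unifthm}, proceeds as follows: writing $X \cong \Gamma \backslash \mathbb{H}$, the Schwarz--Pick lemma forces every holomorphic automorphism of $X$ to be an isometry of the hyperbolic metric, so $\Aut(X) \subseteq \mathrm{Isom}(X)$. By Myers--Steenrod the latter is a compact Lie group, and since $X$ has strictly negative curvature it supports no nonzero Killing vector fields; thus $\mathrm{Isom}(X)$ is also discrete, hence finite.
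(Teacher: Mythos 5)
Your proposal is correct, but both of your arguments differ from the paper's. The paper also uses the hyperbolic uniformization and the Schwarz lemma to see that $\Aut(X)$ sits inside the compact group of isometries of the hyperbolic metric, but it then gets discreteness from the injectivity of the natural map $\Aut(X)\to \Mod(X)$ (a nontrivial automorphism of a higher-genus surface is never isotopic to the identity), so that a compact group mapping continuously and injectively into a discrete group must itself be finite. Your second argument replaces this topological input with a Riemannian one: Bochner's theorem that a compact negatively curved manifold carries no nonzero Killing fields, so $\mathrm{Isom}(X)$ has trivial Lie algebra. Both are fine; the paper's version is the one that connects to the mapping class group, which is thematically relevant later in the survey, while yours stays within differential geometry. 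Your primary argument is more substantially different: it is purely algebro-geometric, realizing $\Aut(X)$ as the stabilizer of the tricanonical model inside $\Pgl(H^0(X,K_X^{\otimes 3}))$ and killing its Lie algebra via $H^0(X,T_X)=0$ (the injectivity of $\Lie(G)\hookrightarrow H^0(X,T_X)$ follows, as you indicate, from irreducibility and non-degeneracy of $\phi(X)$, since a matrix all of whose eigenvectors span the ambient space is scalar). What this buys is independence from the complex-analytic uniformization: the argument works over an arbitrary algebraically closed field, where the vanishing of $H^0(X,T_X)$ still forces the automorphism group scheme to be finite. The cost is the deformation-theoretic identification of $\Lie(G)$, which you correctly flag as the delicate step.
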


\begin{proof}[Sketch of the proof]
Since $X$ is of higher genus, we saw before that it is uniformized by $\mathbb{H}$. Since the automorphism group of $\mathbb{H}$ preserves the metric of constant curvature $-1$ on $\mathbb{H}$, we obtain a hyperbolic metric on $X$. Schwarz Lemma implies that $\Aut(X) $ acts by isometries on $X$. In particular it is a compact group. To obtain finiteness, it is enough to show that it is discrete. This follows from the fact that the natural map $\Aut(X)\to \Mod(X)$ is injective, cf. \cite[Thm. 2.2]{zbMATH01414943} (here $\Mod(X)$ denotes the \emph{mapping class group} of $X$, cf. \cite{zbMATH05960418}).
\end{proof}

\begin{thm}[Hurwitz's automorphisms theorem, 1893]
If $\geq 2$, he group $\Aut(X)$ has cardinality at most $84(g-1)$. Moreover the equality is reached if and only if $X$ is a branched covering of $\mathbb{P}^1$ with three ramification points, of indices 2,3 and 7.

\end{thm}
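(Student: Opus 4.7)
The plan is to reduce the problem to an optimization over signatures of the quotient map $\pi\colon X \to Y := X/G$, where $G = \Aut(X)$ (finite by the preceding theorem). Since $G$ acts faithfully, $Y$ is a smooth projective curve; write $h$ for its genus and $e_1,\dots,e_r$ for the ramification indices over the branch points (well-defined, as points in a $G$-orbit share the same stabilizer order). Applying Riemann--Hurwitz to $\pi$ yields
\[
2g - 2 \;=\; |G|\cdot R, \qquad R := 2h - 2 + \sum_{i=1}^{r}\Bigl(1 - \tfrac{1}{e_i}\Bigr).
\]
Because $g \geq 2$, the left-hand side is a positive integer, so $R > 0$; maximizing $|G|$ is therefore the same as minimizing the positive value of $R$ over admissible signatures $(h;e_1,\dots,e_r)$.

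Next I would carry out the case analysis on $(h,r)$. If $h \geq 2$, then $R \geq 2$, far from extremal. If $h = 1$, then $R>0$ forces $r\geq 1$ and hence $R \geq 1/2$. If $h = 0$, the inequality $R > 0$ forces $r \geq 3$ and $\sum_i 1/e_i < 1$, so that $R = 1 - \sum_i 1/e_i$. The entire problem thus reduces to minimizing the strictly positive value of $1 - \sum_{i=1}^r 1/e_i$ over tuples $e_i \in \mathbb{Z}_{\geq 2}$ with $r \geq 3$. A short elementary optimization --- bounding successively $e_1 \leq 3$ (otherwise the sum is $\leq 3/4$), then $e_2$ inside $e_1 = 2$, then $e_3$ --- shows that this minimum is $1/42$, attained uniquely by $r=3$ and $(e_1,e_2,e_3) = (2,3,7)$; the next smallest values are $1/30$, $1/20$, $1/12$, etc., and then $1/6$ at $r=4$ with $(2,2,2,3)$, all strictly larger than $1/42$.

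Combining these bounds, $|G| = (2g-2)/R \leq 42\cdot(2g-2) = 84(g-1)$, with equality if and only if the signature of $\pi$ is $(0; 2, 3, 7)$, i.e.\ $Y \cong \mathbb{P}^1_{\C}$ and $\pi$ is branched over exactly three points with ramification indices $2$, $3$, $7$. This is precisely the claimed characterization of the equality case.

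The main obstacle is really the final combinatorial optimization: conceptually it is transparent, but one must verify carefully that $(2,3,7)$ is the unique extremizer and that no nearby signature beats $1/42$. Once this finite enumeration is in hand, the theorem follows immediately from Riemann--Hurwitz together with the finiteness of $\Aut(X)$ already established.
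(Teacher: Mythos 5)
The paper states Hurwitz's theorem without giving any proof, so there is nothing to compare against; judged on its own, your argument is the standard and correct one: pass to the quotient $\pi\colon X \to Y = X/\Aut(X)$ (legitimate since $\Aut(X)$ is finite by the preceding theorem), apply Riemann--Hurwitz, and minimize the positive quantity $R = 2h-2+\sum_i(1-1/e_i)$ over signatures. One small slip to fix in the write-up: in the case $h=0$ you assert that $R>0$ forces $\sum_i 1/e_i<1$ and that $R = 1-\sum_i 1/e_i$; this is only true for $r=3$, whereas in general $R = r-2-\sum_i 1/e_i$. Your subsequent enumeration is nonetheless consistent with the correct formula (you correctly obtain $1/6$ for the signature $(0;2,2,2,3)$, which would be negative under the formula you wrote), so this is a typographical imprecision rather than a gap; just state the general formula before specializing to $r=3$. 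The identification of $(0;2,3,7)$ as the unique minimizer with $R=1/42$, and hence the bound $|\Aut(X)|\leq 84(g-1)$ with equality exactly when the quotient map is a degree-$|\Aut(X)|$ cover of $\mathbb{P}^1$ branched over three points with indices $2,3,7$, is correct and complete.
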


\begin{exs}
The famous Klein quartic discovered in 1879, $\mathcal{K}= \{ x^{3}y+y^{3}z+z^{3}x=0\}\in \mathcal{M}_3$ has automorphism group $\operatorname{PSL}_2(\mathbb{F}_7)$ with cardinality $168=84(3-1)$.
\end{exs}
Modular curves are simply quotients of the complex upper half-plane $\mathbb{H}$ by the action of a congruence subgroup  $\Gamma$ of $\Sl_2(\Z)$. The resulting curve is however not compact. See \Cref{examples} for a more detailed discussion on this topic. For the time being we just point out that they naturally parametrize elliptic curves with extra structures, depending on the congruence subgroup $\Gamma$. We will say more on this in the appendix. There are also examples of compact Shimura curves that would naturally fit into the narrative. 
\begin{rmk}
The Klein quartic described before is a modular curve of level $7$.
\end{rmk}

The \emph{Fermat curve} is another example of highly studied curve:
\begin{equation}\label{fermat}
F_d:=\{ x^d+y^d+z^d=0\}.
\end{equation}
It is smooth, of genus $(d-1)(d-2)/2$ (and gonality $d-1$).

Of course many more example of famous algebraic curves are around-- one could look at Pierre de Fermat and his 1659 treatise on quadrature, for example.

\begin{rmk}
Extra automorphisms are rare, and several question can be asked on the structure of $\Aut(X)$. Perhaps the most natural one is how often the Hurwitz bound is achieved.
 Macbeath showed that this bound is attained for infinitely many values of $g$ and Accola proved that it fails to be attained infinitely often. A further interesting discussion is offered by Larsen in \cite{zbMATH01716778}. See also \cite{zbMATH06708998}.
\end{rmk}

\subsubsection{Cyclic automorphisms}
Irokawa and Sasaki \cite{zbMATH00828341} gave a complete classification of curves over $\C$ with an automorphism of order $N \geq 2g +1$: such curves are either hyperelliptic with $N = 2g +2$ and $g$ even, or are quotients of the Fermat curve of degree $N$ by a cyclic group of order $N$.

\subsubsection{Related works}
Let $X$ be a curve of genus $\geq 2$. Following  Rauch and Wolfart, we say that $X$ \emph{has many automorphisms} if it cannot be deformed non-trivially together with its automorphism group. M\"{u}ller and Pink \cite{zbMATH07536477} determine all complex hyperelliptic curves with many automorphisms and decide which of their jacobians have complex multiplication. For a study of superelliptic curves with many automorphisms and CM Jacobians, see also \cite{zbMATH07390224}. 

We refer to \cite[Prob. 2.19]{zbMATH05124673}, for an interesting related problem.

\subsection{Completely decomposable Jacobians, à la Ekedahl-Serre}
Ekedahl and Serre \cite{zbMATH00437625} produce many
examples of curves of suitable genera $g$ up to 1297, whose Jacobian is isogenous to a product of elliptic curves (aka \emph{completely decomposable Jacobians}). These curves are constructed either as modular curves or as coverings of curves of genus 2 or 3; and in the end have genus up to 1297.

\begin{exs}[{\cite[Thm. 2]{zbMATH06488053}}]
The genus 5 hyperelliptic curve 
\begin{displaymath}
y^2 =x(x^{10} +11x^5 -1)
\end{displaymath}
has Jacobian isogenous to the fifth power of the elliptic curve $y^2 =x(x^2+11x-1)$.
\end{exs}

 In the paper  \cite{zbMATH00437625}, they also ask:

\begin{question}[Ekedahl-Serre]For every $g$, is there a curve of genus $g$ whose Jacobian is completely decomposable?
Or, are the genera of curves with completely decomposable Jacobians bounded?
\end{question}

 We refer to the preprint \cite{arXiv:1703.10377} for some partial results in that direction, as well as to the work of Paulhus and Paulhus-Rojas \cite{zbMATH06815607} for the new values of $g$ for which there exist totally decomposable Jacobians of genus $g$. Another interesting results of theirs is:

\begin{thm}[{\cite[Thm. 3.3]{zbMATH06815607}}]\label{paul}
Let $g\in \{11-19, 21-29, 31, 33-35, 37, 40, 41, 43-47, 49, 52,$  $53, 55, 57, 61, 65, 57, 69, 73, 82, 91, 93, 95, 97, 109, 129, 145, 193\}$. Then there is a dimension one (or larger) family of completely decomposable Jacobians of curves of genus $g$. 
\end{thm}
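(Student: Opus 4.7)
The plan is to proceed case by case for each listed genus $g$, in each case exhibiting a curve $C$ of genus $g$ equipped with a faithful action of a finite group $G$ such that the induced isogeny decomposition of $\Jac(C)$ consists entirely of elliptic curves, and such that the parameter space of $G$-coverings of this topological type has positive dimension.

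The main tool will be the group-algebra / Kani--Rosen style decomposition of Jacobians under group actions. Concretely, if a finite group $G$ acts on a smooth projective curve $C$, the idempotents of $\mathbb{Q}[G]$ produce an isogeny decomposition
\[
\Jac(C) \sim \prod_{\chi} B_{\chi}^{n_{\chi}},
\]
indexed by the irreducible rational characters $\chi$ of $G$, where the dimension of each isotypic piece $B_\chi$ can be computed from the character and the ramification data (genus and monodromy type of the cover $C\to C/G$) by a Chevalley--Weil / Hurwitz-type formula. The aim is to find a signature (i.e., the genus $g'$ of $C/G$ together with the branching data $(m_1,\dots,m_r)$) and a surface-kernel epimorphism $\Gamma_{g';m_1,\dots,m_r}\twoheadrightarrow G$ such that every $B_\chi$ that appears has dimension one, and such that Riemann--Hurwitz yields precisely the target genus $g$.

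The concrete workflow for a given $g$ in the list would be:
\begin{enumerate}
\item search through Breuer's classification (or the Paulhus database) of finite group actions on Riemann surfaces of genus $g$ for a candidate signature $(g';m_1,\dots,m_r)$ and group $G$;
\item compute the dimensions of the isotypic components $B_\chi$ from the character table of $G$ and the signature, and verify that each nonzero $B_\chi$ has dimension $1$;
\item verify positive-dimensionality of the corresponding equisymmetric stratum in $\Mg$ via the formula
\[
\dim \mathcal{M}(g';m_1,\dots,m_r) \;=\; 3g'-3+r,
\]
so that one needs a signature with $3g'-3+r\geq 1$, i.e.\ either $g'\geq 1$, or $g'=0$ with at least four branch points.
\end{enumerate}

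The main obstacle is step (2): ensuring that \emph{all} rationally irreducible constituents of the Jacobian are one-dimensional is a rigid numerical constraint, because a single ``bad'' character contributing a factor of dimension $\geq 2$ breaks complete decomposability. For most groups this fails, so one must search carefully within families with rich rational representation theory (groups close to being direct products of small groups, groups with many one-dimensional characters after restriction to subgroups, etc.). In practice the list of $g$'s in the statement is precisely the set of genera for which such a candidate $(G,g';m_1,\dots,m_r)$ with an equisymmetric stratum of dimension $\geq 1$ can be exhibited. Once a valid triple is found, positive-dimensionality of the resulting locus of completely decomposable Jacobians is automatic from step (3), since the decomposition depends only on the $G$-action and hence persists on the whole stratum. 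One should finally remark that the isogeny classes of the elliptic factors $B_\chi$ vary with the moduli parameter in the stratum, so the family is not constant, confirming the qualitative content of the theorem.
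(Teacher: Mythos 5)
Your proposal follows essentially the same route as the cited source: the survey itself gives no proof of this statement (it is quoted from Paulhus--Rojas), and the text explicitly notes that their approach rests on Riemann surfaces with non-trivial automorphism groups together with computational tools, which is exactly the group-algebra (Kani--Rosen/Chevalley--Weil) decomposition plus equisymmetric-stratum dimension count that you describe. The only caveat is that what you give is the search blueprint rather than the proof itself --- the actual content of the theorem lies in exhibiting, for each listed $g$, a concrete triple $(G; g'; m_1,\dots,m_r)$ passing your step (2), which is the computational part carried out in the cited paper.
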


Contrary to the previous cases, the approach in \Cref{paul} is based on the use of Riemann surfaces with non-trivial automorphism group (and computational tools). For further results on this topic, we refer also to the monograph \cite{zbMATH07566633}.

\begin{rmk}
There are also various approaches to study questions of the following form. Consider the one-parameter family of hyperelliptic curve 
\begin{displaymath}
C_t=\{ y^2=f(x)(x-t)  \}
\end{displaymath}
for some polynomial $f\in \Z[X]$ of degree $2g$, $g\geq 1$. Let $K$ be a number field. What can be said on the locus of $t\in K$ such that $\Jac (C_t)$ is non-simple? Even if one knows that only finitely many such $t$ can appear, it is hard to understand how this finite number varies as $f $ changes. Various approaches to such question are discussed in \cite{zbMATH05588918}. Related are also \cite{zbMATH07794951} and \cite{zbMATH07243792}.
\end{rmk}
We can now believe that Completely decomposable Jacobians and we now try to see whether CM (and RM) are also rare phenomena for Jacobians.

\subsection{Real and Complex multiplication}\label{realcommult}

\begin{conj}[Coleman-Oort, arithmetic version {\cite[Conj. 6]{zbMATH04065160}}]\label{cofirstapp}
For sufficiently large genus $g$ (specifically, $g \geq 8$), $\mathcal{M}_g$ contains only finitely many curves with a CM Jacobian.
\end{conj}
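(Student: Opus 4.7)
The plan is to reduce \Cref{cofirstapp} to its purely geometric counterpart via the André--Oort theorem (proved by Pila--Tsimerman, Klingler--Ullmo--Yafaev, and Pila--Shankar--Tsimerman), and then attack the geometric statement using curvature properties of the Torelli map.

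\medskip
\noindent\textbf{Step 1 (Reduction via André--Oort).} Suppose for contradiction that $\Mg$ contains infinitely many curves with CM Jacobian. Their images under $j$ form an infinite set of special points of $\Ag$, whose Zariski closure has a positive-dimensional irreducible component $Z$. By the André--Oort theorem, $Z$ is a special subvariety of $\Ag$. Since every CM Jacobian lies in $j(\Mg)\subseteq \mathcal{T}_g$, the CM points on $Z$ are Zariski dense in $Z$, so $Z\subseteq \mathcal{T}_g$ and $Z\cap j(\Mg)$ is dense in $Z$. Hence $Z$ is a positive-dimensional special subvariety of $\Ag$ that is \emph{generically contained in the Torelli locus}. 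The conjecture therefore reduces to the \emph{geometric Coleman--Oort} statement: for $g\geq 8$, no such $Z$ exists.

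\medskip
\noindent\textbf{Step 2 (Second fundamental form attack).} To rule out such a $Z$, I would exploit the fact that a special subvariety of $\Ag$ is totally geodesic for the Hodge (Siegel) metric. If $Z$ were generically contained in $j(\Mg)$, then at a generic point the second fundamental form of the Torelli period map (pulled back to Teichm\"{u}ller space) would have to vanish in the directions tangent to $Z$. Following the program initiated by Colombo--Pirola--Tortora and pursued by de Jong--Zhang, M\"{o}ller, Viehweg--Zuo, and Liu--Sun--Yang, this second fundamental form admits an explicit description via the multiplication map $\sym^2 H^0(X,\Omega_X)\to H^0(X,\Omega_X^{\otimes 2})$ on each fibre. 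Combining non-degeneracy properties of this map in high genus (Max Noether, Petri) with Arakelov-type slope inequalities for the Hodge bundle along the boundary of $\overline{\Mg}$, one aims to force the only totally geodesic loci contained in $j(\Mg)$ to be points when $g\geq 8$.

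\medskip
\noindent\textbf{The main obstacle.} Step 2 is precisely the open geometric Coleman--Oort conjecture; Step 1 is essentially formal, but the curvature/slope inequalities currently available exclude only specific families of candidate special subvarieties (those coming from prescribed Shimura data or small Mumford--Tate groups), not all of them at once. Moreover the bound $g\geq 8$ is dictated by genuine positive-dimensional examples in low genus, notably the Deligne--Mostow cyclic cover families, so any successful argument must be delicate enough to collapse sharply at $g=7$ while ruling out everything for $g\geq 8$. I expect the genuine difficulty to lie in producing a uniform Hodge-theoretic obstruction---perhaps organised along the Mumford--Tate/Zilber--Pink perspective pervading these notes---that applies simultaneously to all conceivable sub-Shimura data of $\Sp_{2g}$.
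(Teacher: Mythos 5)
The statement you are asked about is a \emph{conjecture}, and the paper does not prove it: it only records the equivalence with the geometric formulation (\Cref{cogeom}) via the André--Oort theorem (\Cref{aothm}) and then surveys partial results. Your Step 1 is exactly that equivalence and is correct as stated (one minor care: the Zariski closure of the infinite set of CM points could a priori have all positive-dimensional components meeting the boundary $\mathcal{T}_g - j(\Mg)$, which is why the paper phrases the geometric version in terms of special subvarieties \emph{generically contained} in $\Mg$; your argument handles this correctly by density). So there is no disagreement with the paper up to this point.

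Step 2, as you yourself say, is the open geometric Coleman--Oort conjecture, so the proposal is not a proof and cannot be compared to one. It is worth being precise about what the second-fundamental-form and Arakelov-type methods you invoke actually deliver at present: the paper's \Cref{thmmoonenetal} (Moonen, building on Hain, de Jong--Zhang, and the Colombo--Frediani--Ghigi--Pirola school) shows only that for $g \geq 8$ any positive-dimensional special subvariety generically contained in $\mathcal{M}_g$ must be a curve, a compact surface, or a compact ball quotient, with $\dim \leq 2g-1$ or $2g$; Yeung's \Cref{yeungthm} excludes sub-Shimura varieties lying entirely in the non-hyperelliptic open Torelli locus; and Toledo's \Cref{toledothm} bounds the curvature of compact totally geodesic curves. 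None of these, separately or combined, rules out all candidates, and the paper explicitly warns (\Cref{coconj}, Remark after \Cref{geomao}) that Coleman--Oort does \emph{not} follow from André--Oort or Zilber--Pink --- those only give non-density of special subvarieties (\Cref{geomao}), not finiteness of CM points. Your closing paragraph diagnoses the true difficulty accurately: a uniform obstruction valid for all sub-Shimura data of $\operatorname{GSp}_{2g}$ that degenerates precisely at $g=7$ (where Rohde's families live) is what is missing, and it is missing from the literature as well.
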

\begin{rmk}
An example of a CM Jacobian is given by the Fermat \eqref{fermat} equation --see for example \cite{arXiv:2303.18056} (and references therein) for more on this, as well as \cite[Sec. 5]{2024arXiv240520394G}. Another family of examples will be discussed in \Cref{rmklombetal}. See also \cite[Table 1]{zbMATH07536477} and \cite{zbMATH05987975} for further interesting examples.
\end{rmk}

We will discuss more about this in the later sections, but for now, we just record the following nice result:
\begin{thm}[\cite{zbMATH06775945, zbMATH07740282}]\label{cmhype}
For $g\geq 8$, only finitely many genus $g$ hyperelliptic curves can have CM Jacobians
\end{thm}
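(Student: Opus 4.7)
The plan is to combine two ingredients: the André--Oort theorem for $\mathcal{A}_g$, which converts the finiteness of CM hyperelliptic Jacobians into a statement about positive-dimensional Shimura subvarieties, and an Arakelov / Viehweg--Zuo type bound on Higgs subsheaves of the Hodge bundle, which rules out such Shimura subvarieties inside the hyperelliptic Torelli locus once $g\geq 8$.

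First, let $\mathcal{H}_g\subset \mathcal{M}_g$ denote the hyperelliptic locus and suppose for contradiction that infinitely many $[X_i]\in\mathcal{H}_g$ have CM Jacobians. Then $j(X_i)\in\mathcal{A}_g$ are special points, and by the André--Oort theorem for $\mathcal{A}_g$ (Tsimerman, combined with Yuan's proof of averaged Colmez) the Zariski closure of $\{j(X_i)\}$ is a finite union of Shimura subvarieties of $\mathcal{A}_g$. Since the set is infinite, at least one irreducible component $S$ has positive dimension and is, by construction, contained in $\overline{j(\mathcal{H}_g)}\subset \mathcal{T}_g$. The theorem is thereby reduced to proving that \emph{no positive-dimensional Shimura subvariety of $\mathcal{A}_g$ is generically contained in $j(\mathcal{H}_g)$ when $g\geq 8$}.

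To exclude such an $S$, I would choose a smooth projective curve $\overline{B}$ with a generically finite morphism to $S$ whose image lies generically in $j(\mathcal{H}_g)$, and after shrinking obtain a smooth family of hyperelliptic curves $\pi:\mathcal{C}\to B\subset \overline{B}$. Its logarithmic Hodge bundle $E=E^{1,0}\oplus E^{0,1}$ on $\overline{B}$ comes with a Kodaira--Spencer map $\theta:E^{1,0}\to E^{0,1}\otimes \Omega^1_{\overline{B}}(\log \Delta)$. Since $B\hookrightarrow S$ realises a Shimura VHS, Simpson/Deligne theory gives a splitting $E^{1,0}=A\oplus U$ into an ample Arakelov-saturated piece $A$ satisfying the Arakelov equality and a flat unitary piece $U$ on which $\theta$ vanishes. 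On the other hand, for a hyperelliptic curve the canonical map factors through $\mathbb{P}^1$, so the multiplication map $\operatorname{Sym}^2 H^0(X,\omega_X)\to H^0(X,\omega_X^{\otimes 2})$ has image of dimension only $3g-3$ rather than $\binom{g+1}{2}$. Since $\theta$ factors fibrewise through this multiplication restricted to the Kodaira--Spencer direction, its kernel forces a large unitary subsheaf of $\operatorname{Sym}^2 E^{1,0}$. Plugging this into the Viehweg--Zuo slope inequality for Shimura VHS, in the hyperelliptic refinement, produces a numerical inequality linking $\dim S$, $g$, and $\operatorname{rank} U$ that admits no positive-dimensional solution once $g\geq 8$, delivering the desired contradiction.

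The main obstacle is this last quantitative step: one has to make the Arakelov / Viehweg--Zuo bound sharp enough for the cutoff to come out at $g=8$ and not a much larger value, and one must carefully control the logarithmic contribution at the boundary $\Delta$ so that degenerations of the hyperelliptic family (admissible double covers of $\mathbb{P}^1$ acquiring extra nodes) do not artificially inflate the unitary rank. A secondary subtlety is that passing from the possibly high-dimensional $S$ to a one-parameter test family $B$ should not weaken the slope inequality; this is standardly handled by choosing $B$ through a CM point and exploiting the rigidity of Shimura VHS.
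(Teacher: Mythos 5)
Your two-step strategy --- \Cref{aothm} to reduce finiteness of CM points to the non-existence of positive-dimensional special subvarieties generically contained in $j(\mathcal{H}_g)$, followed by an Arakelov/Viehweg--Zuo slope argument on the Higgs bundle that exploits the special structure of hyperelliptic families --- is precisely the route taken by the cited references (the text itself only quotes the result, so the ``paper's proof'' is the one in \cite{zbMATH06775945, zbMATH07740282}). That said, two points in your sketch need repair. First, the one concrete numerical input you state is wrong in a way that inverts the key mechanism: for a hyperelliptic curve the multiplication map $\operatorname{Sym}^2 H^0(X,\omega_X)\to H^0(X,\omega_X^{\otimes 2})$ has image of dimension $2g-1$, not $3g-3$; the target has dimension $3g-3$, and it is exactly the codimension-$(g-2)$ failure of Max Noether surjectivity (which would say the image \emph{is} all of the $(3g-3)$-dimensional target in the non-hyperelliptic case) that forces the large flat/unitary summand and drives the numerology producing the cutoff $g\geq 8$. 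Since you explicitly defer the quantitative step, getting this rank right is not optional --- it is the whole content of the exclusion.

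Second, the reduction from a possibly higher-dimensional special $S$ to a one-parameter family $B$ is more delicate than ``choose $B$ through a CM point'': the Arakelov equality and the decomposition $E^{1,0}=A\oplus U$ are properties of \emph{Shimura curves}, and a general curve traced on $S$ is not one. You either need to produce a one-dimensional \emph{special} subvariety of $S$ that is still generically contained in $j(\mathcal{H}_g)$ (possible, since special curves with a fixed sub-datum are dense in $S$ via Hecke translation and real approximation, so one can avoid the locus where $S$ leaves the open hyperelliptic Torelli locus), or invoke the structural results for higher-dimensional special subvarieties as in \Cref{thmmoonenetal}. With those two repairs your argument matches the proof in the literature.
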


See also \cite[Sec. 6]{moonenoort} for more conjectures and questions (due to Oort and Moonen-Oort). To mention one striking example (see also \cite[Prob. 4]{zbMATH01587304}):
\begin{question}\label{moonenoortquestion62}
Do we know the existence of, or can we construct, a curve $C$ of genus at least $4$, with $\Aut(C) = \{\operatorname{id}\}$  and CM Jacobian?
\end{question}

Versions of the above conjecture for the RM case will be discussed in \Cref{predsection}.

\subsection{Some examples (RM)}\label{secrm}
Ellenberg \cite{zbMATH01675790}, using branched covers of the projective line, constructed interesting families of curves whose Jacobians are acted on by large rings of endomorphisms. In his construction, the endomorphisms arise as quotients of double coset algebras of the Galois groups of these coverings. It is a generalisation of the work of Mestre and Brumer, who produce, for each odd prime $p$, families of curves X of genus $(p-1)/2$ such that $\End^0(\Jac(X))$ contains
the real cyclotomic field $\Q(\zeta+ \zeta^{-1})$, as well as work of Tautz-Top-Verberkmoes that we explain below. We refer to \emph{op. cit.} for more references and constructions (including the work of Hashimoto-Murabayashi, Bending, Shimada, etc.). See also \cite{zbMATH01545729}, for other related examples.

\begin{thm}[Tautz-Top-Verberkmoes {\cite{zbMATH00059649}}]\label{ttv}
Let $p$ be an odd prime, $\zeta_p$ a primitive $p$-th root of unity, and let $F\in \Z[x]$ be the minimal polynomial of $-\zeta_p-\zeta_p^{-1}$ , then 
\begin{equation}
C_t=C_{t,p}=\{y^2=x \cdot F(x^2-2)+t\} 
\end{equation}
defines a family of curves of genus $(p-1)/2$ whose jacobians contain the totally real field $\Q(\zeta_p +\zeta_p^{-1})$ in their endomorphism algebra.
\end{thm}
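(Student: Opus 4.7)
The plan is to identify the polynomial $xF(x^2-2)$ as (a normalized version of) the degree-$p$ Chebyshev polynomial $T_p^*$, characterized by the identity $T_p^*(z+z^{-1}) = z^p + z^{-p}$. The endomorphisms will then come from the extra symmetries hidden in this Chebyshev description.

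\textbf{Step 1 (Chebyshev identity).} First I would verify directly that $xF(x^2-2) = z^p + z^{-p}$ after the substitution $x = z+z^{-1}$. The computation uses the factorization $z^4 + (\zeta + \zeta^{-1})z^2 + 1 = (z^2+\zeta)(z^2+\zeta^{-1})$ applied to each factor of $F(z^2+z^{-2})$ (where $\zeta$ ranges over $\zeta_p^k$). One collects the product $\prod_{k=1}^{p-1}(z^2+\zeta_p^k) = (z^{2p}+1)/(z^2+1)$, and after multiplying by $x = (z^2+1)/z$ and the normalizing factor $z^{-(p-1)}$, one arrives at $z^p + z^{-p}$. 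A byproduct is that $\deg F = (p-1)/2$ and hence $\deg(xF(x^2-2)+t) = p$, so $C_t$ is hyperelliptic of genus $(p-1)/2$.

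\textbf{Step 2 (Auxiliary Galois cover).} Having rewritten $C_t$ as $y^2 = T_p^*(x) + t$, introduce the auxiliary curve
\[
\tilde C_t : y^2 = z^p + z^{-p} + t,
\]
equipped with the faithful action of the dihedral group $D_p = \mu_p \rtimes \langle \iota\rangle$, where $\mu_p$ acts by $(z,y)\mapsto (\zeta z, y)$ and $\iota$ by $(z,y)\mapsto (z^{-1},y)$. The quotient by $\iota$ recovers $C_t$ (since $x = z+z^{-1}$ is $\iota$-invariant), the quotient by $\mu_p$ gives an elliptic curve $\tilde E_t : y^2 = v + v^{-1} + t$ (with $v = z^p$), and the full quotient by $D_p$ is rational. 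A short Riemann--Hurwitz calculation gives $g(\tilde C_t) = p$.

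\textbf{Step 3 (Isotypic decomposition and endomorphism action).} Decompose $H^1(\tilde C_t,\Q)$ under $D_p$ using the three $\Q$-irreducible representations: the trivial $V_0$, the sign $V_1$, and the unique $(p-1)$-dimensional irreducible $W$ with $\End_{D_p}(W) = \Q(\zeta_p+\zeta_p^{-1})$. Computing fixed-space dimensions from the genera of the intermediate quotients $\tilde C_t/\mu_p$, $\tilde C_t/\iota$, and $\tilde C_t/D_p$ determines the multiplicities; one finds $m_0 = 0$, $m_1 = 2$, $m_W = 2$, consistent with $\dim \Jac(\tilde C_t) = p$. The isotypic decomposition of the Jacobian up to isogeny then reads $\Jac(\tilde C_t)\sim \tilde E_t \times B_W$, where $B_W$ has dimension $p-1$ and the commutant $\Q(\zeta_p+\zeta_p^{-1}) = \End_{D_p}(W)$ acts on $B_W$ by isogenies. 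Since this action commutes with $\iota$, it descends to the $\iota$-fixed part, which is precisely $\Jac(C_t)$.

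\textbf{Step 4 (Conclusion).} The $\iota$-invariant part of $B_W$ has dimension $(p-1)/2$ (as $\iota$ has trace zero and equal $\pm 1$-eigenspaces on $W$), matching $\dim \Jac(C_t)$. So $\Jac(C_t)$ is isogenous to the $\iota$-fixed piece of $B_W$, carrying an induced $\Q(\zeta_p+\zeta_p^{-1})$-action, which gives the desired inclusion $\Q(\zeta_p+\zeta_p^{-1}) \hookrightarrow \End^0(\Jac(C_t))$. The anticipated obstacle is the bookkeeping in Step 3: one must be careful to work up to isogeny, to confirm that the commutant action really survives the descent to $\Jac(C_t)$ (rather than collapsing on the $\iota$-fixed part), and to check that the resulting embedding preserves the field structure and not merely gives a subring.
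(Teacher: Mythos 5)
Your proposal is correct, and it rests on the same underlying geometry as the paper's sketch: the degree-two cover $\tilde C_t$ (which, after the substitution $Y=yz^{(p+1)/2}$, is exactly the curve $D_t:\{Y^2=z(z^{2p}+tz^p+1)\}$ used in the paper, with your $\iota$ being the paper's involution $\sigma$), and the central element $\zeta+\zeta^{-1}$ of the group algebra as the source of the real multiplication. Where you diverge is in the verification. The paper cites Tautz--Top--Verberkmoes for the identification $C_t\cong D_t/\langle\sigma\rangle$ and then checks by hand that $\zeta^*+(\zeta^{-1})^*$ stabilizes the $\sigma$-invariant differentials, exhibiting the explicit eigenbasis $\omega_j=(x^j-x^{p-1-j})\,dx/y$ with eigenvalues $\zeta_p^{(p+1)/2-j-1}+\zeta_p^{-((p+1)/2-j-1)}$; you instead prove the quotient identification directly via the Chebyshev identity $xF(x^2-2)=z^p+z^{-p}$ (your computation of this is correct) and replace the differential computation by the $\Q[D_p]$-isotypic decomposition of $H^1(\tilde C_t,\Q)$, using $\Q[D_p]\cong\Q\times\Q\times M_2(K)$ with $K=\Q(\zeta_p+\zeta_p^{-1})$ acting as the commutant of $W$ and hence commuting with $\iota$. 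Your multiplicity bookkeeping ($m_0=0$, $m_1=2$, $m_W=2$) checks out against the genera of the intermediate quotients, and the descent of the $K$-action to the $\iota$-fixed part of dimension $(p-1)/2$ is sound (injectivity of $K\to\End^0(\Jac C_t)$ is automatic since $K$ is a field and $1$ acts as the identity). The trade-off: your route is conceptually cleaner and self-contained on the quotient identification, while the paper's explicit eigenvalue computation additionally exhibits the eigenforms for the real multiplication, which is what gets used later when these $C_{t,p}$ reappear as Teichm\"uller curves.
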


\begin{rmk}
We will get back to this example after we discuss \teic curves in \Cref{teiccurves}.
\end{rmk}

\begin{proof}[Sketch of the proof]
To see where the above equation comes from, consider the family of curves
\begin{displaymath}
D_t= \{y^2=x(x^{2p}+tx^p+1)\}.
\end{displaymath}
We picked it because it has many automorphisms related to $\zeta_p$. Indeed $\Z/2\Z \times \Z / p \Z$ acts via the hyperelliptic involution and $\zeta :(x,y)\mapsto (\zeta_p x, {\zeta_p}^{p+1/2}y)$. There's also the involution $\sigma : (x,y)\mapsto (\frac{1}{x},\frac{y}{x^{p+1}})$ which commutes with the hyperelliptic involution (therefore the quotient is again hyperelliptic), but not $ \Z / p \Z$. One verifies that $C_{t,p}=D_t/\langle\sigma\rangle$ \cite[Prop 3.]{zbMATH00059649}. The point now is that $\Z/p\Z $ does not descend to automorphisms but define correspondences. Indeed $[\zeta]$ defines an automorphisms of $\Jac (D_t)$ and we show now that $[\zeta] + [\zeta]^{-1}$ restricts to $\Jac (C_t)$, as desired. By working with the tangent spaces, it is enough to show that $\zeta + \zeta^{-1}$, seen as an element of the group ring $\C[\Aut (D_t)]$ stabilises the space of $\sigma$-invariant differentials in $H^0(D_t,\Omega^1_{D_t/\C})$. A basis of such space is given by the differentials $\omega_j= (x^j-x^{p-1-j})\frac{dx}{y}$, for $0\leq j \leq (p-3)/2$. And one computes
\begin{displaymath}
(\zeta ^* + (\zeta^{-1})^*) \cdot \omega_j= \left(\zeta_p^{(p+1)/2-j-1} +{(\zeta_p^{-1})}^{(p+1)/2-j-1} \right) \cdot \omega_j.
\end{displaymath}
\end{proof}
\begin{question}
For $p\geq 17$, are there any CM points on the families $C_{t,p}$? Can the CM points be described?
\end{question}
A priori it is not fully clear that the set of CM points on $C_{t,p}$ is actually finite. (At each $t_0 \in \C -\overline{\Q}$ we will have necessarily $\End^0(J(C_{t_0}))=\Q(\zeta_p +\zeta_p^{-1})$, but at algebraic numbers it can have bigger endomorphism ring).
\subsection{Some examples (CM)}\label{seccm}

\begin{thm}[De Jong-Noot {\cite{zbMATH04210319}}]\label{dejongnoot}
The families

\begin{equation}
\{y^3=x(x-1)(x-t_1)(x-t_2)(x-t_3) \}\subset \mathcal{M}_4;
\end{equation}

\begin{equation}\label{eq22}
\{y^5=x(x-1)(x-t)\} \subset \mathcal{M}_4;
\end{equation}

\begin{equation}
\{y^7=x(x-1)(x-t)\} \subset \mathcal{M}_6;
\end{equation}
have infinitely many fibers with CM Jacobians.
\end{thm}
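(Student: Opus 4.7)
The plan is to exploit the cyclic automorphism carried by each family: every curve $C$ in the three families admits the order-$n$ automorphism $(x,y)\mapsto(x,\zeta_n y)$, with $n=3,5,7$ respectively. This equips $\Jac(C)$ with a faithful action of the cyclotomic ring $\Z[\zeta_n]$, giving it a PEL structure of Albert type IV. The strategy will then be to realize each family as dominating a PEL Shimura subvariety of $\Ag$ of matching dimension, and conclude via density of CM points in Shimura varieties.

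First I would decompose $H^1(C,\C)=\bigoplus_{\chi}H^1_\chi$ under the induced $\mu_n$-action and compute, via the Chevalley--Weil formula, the Hodge signature $(a_\chi,b_\chi)$ of each nontrivial eigenspace $H^1_\chi$ from the ramification data of the cyclic cover $C\to \Pp^1$. This is purely combinatorial: the branch points are $0,1,\infty$ plus the moduli points $t_i$, and the local monodromy exponents at each branch point are determined by the defining equation. In particular, $(n,d)=(3,6),(5,4),(7,4)$, where $d$ is the number of branch points, so the corresponding configuration spaces (after normalising by $\operatorname{PGL}_2$ to fix $0,1,\infty$) have dimensions $3,1,1$ respectively.

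Next I would package these signatures into a PEL Shimura datum, yielding a Shimura subvariety $S\subset \Ag$ whose associated Hermitian symmetric domain is a product $\prod_{\{\chi,\bar\chi\}}\mathbb{B}^{a_\chi,b_\chi}$ of unitary-type Hermitian symmetric spaces, indexed by pairs of conjugate characters of $\mu_n$. The family then receives a natural period map from its configuration-space base to $S$, lifting the Torelli morphism $j$ into this Shimura substratum of $\Ag$.

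The main technical hurdle is a dimension comparison: one must check that $\dim S$ equals the number of moduli in each case (namely $3,1,1$). Once this is verified, Griffiths transversality combined with a Kodaira--Spencer surjectivity computation, read off the signatures from the first step, shows that the period map is dominant onto $S$ and hence has Zariski-open image. Since CM points---the $0$-dimensional special subvarieties of $S$---are Zariski dense in every connected Shimura variety, the family must contain infinitely many fibers with CM Jacobian. The reason these particular tuples $(n,d)$ succeed, while a naive analogue with larger $n$ or more branch points would fail, is that they lie among the Deligne--Mostow cases in which the cyclic cover construction produces a ball quotient of the exact dimension of the parameter space; the dimension matching is precisely what fails in the generic situation, and is the step where the argument must be executed most carefully.
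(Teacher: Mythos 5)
Your proposal is correct and is exactly the argument of the cited source (the paper itself states the theorem without proof): decompose $H^1$ under the $\mu_n$-action, read off the eigenspace signatures from the branch data, and match the dimension of the resulting unitary PEL Shimura subvariety against the number of moduli. The dimension check you flagged does work out — the conjugate pairs of eigenspaces have signatures $(1,3)$ for $n=3$, and $(1,1),(0,2)$ resp. $(1,1),(0,2),(0,2)$ for $n=5,7$, giving $\sum a_\chi b_\chi=3,1,1$ as required — and for the dominance step one can bypass Kodaira--Spencer by noting the moduli map from the configuration space to $\mathcal{M}_g$ is generically finite, so the image already has full dimension in $S$.
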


\begin{rmk}
For a discussion about the corresponding Higgs bundle in \eqref{eq22} we refer to \cite[Example 4.4]{zbMATH06744545} ``An interesting Shimura curve in the Torelli locus''. See also the discussion about a question of Fujita in \emph{op. cit.} and references therein.
\end{rmk}

It is easy to observe that the curves in \eqref{eq22} have an automorphism of order 5 given by multiplying $y$ by a 5-th root of unity. This already implies $\End^0$ of their Jacobians contain $\Q[\zeta _5]$. An observation that we learned from a MathOverflow answer of Tim Dokchitser is that a large group of automorphisms of a curve forces presence of roots of unity in the endomorphism algebra of its Jacobian, and this is likely to produce products of abelian varieties with CM by abelian fields. We will come back to this point later in the text.

\begin{rmk}
Examples with $g = 5$ and $g = 7$ were later found by Rohde \cite{zbMATH05549716}. Moonen \cite{MR2735989} considered families of cyclic covers of $\mathbb{P}^1$, fixing the covering group and the local monodromies and varying the branch points. He proves that there are precisely twenty such families that give rise to a special subvariety in the moduli space of abelian varieties. See also \cite[Sec. 5]{moonenoort} for a complete discussion (especially Table 1 in \emph{op. cit.}).
\end{rmk}
To justify the $g\geq 8$ in \Cref{cofirstapp}, we conclude by presenting the two known examples of one dimension families with a dense set of CM points in $\mathcal{M}_7$:
\begin{thm}[Rohde]
The families 
\begin{displaymath}
\{y^9= (x-t_1)^3 (x-t_2)^5 (x-t_3)^5(x-t_4)^7  \}    \subset \mathcal{M}_7
\end{displaymath}
\begin{displaymath}
\{y^{12}= (x-t_1)^4 (x-t_2)^6(x-t_3)^7(x-t_4)^7   \}   \subset \mathcal{M}_7
\end{displaymath}
have infinitely many fibers with CM Jacobians.

\end{thm}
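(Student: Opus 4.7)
The plan is to follow the Moonen--Rohde strategy for cyclic covers of $\mathbb{P}^1$ branched over four points, showing that both families give rise to one-parameter Shimura subvarieties of $\mathcal{A}_7$ generically contained in the Torelli locus. Since CM points are dense in any Shimura variety and the families are one-dimensional, this yields infinitely many CM fibers.

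First I would fix notation and set up the group action. For the first family ($N=9$, exponents $(a_1,a_2,a_3,a_4)=(3,5,5,7)$) and the second ($N=12$, exponents $(4,6,7,7)$), one checks that $\sum_i a_i \equiv 0 \pmod N$, so the affine equations define smooth projective cyclic covers $C_t \to \mathbb{P}^1$ of degree $N$ branched precisely at $t_1,\ldots,t_4$, with local monodromy given by $\zeta_N^{a_i}$ around the $i$-th branch point. The Riemann--Hurwitz formula should confirm that $g(C_t) = 7$ in both cases. The group $\mu_N$ acts on $C_t$ via $(x,y)\mapsto (x,\zeta_N y)$, inducing an action of $\mathbb{Z}[\zeta_N]$ on $H^1(C_t,\mathbb{Q})$ and hence on $\mathrm{Jac}(C_t)$.

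Next I would decompose the polarized Hodge structure into isotypic components. Writing $H^1(C_t,\mathbb{C})=\bigoplus_{\chi} H^1(C_t,\mathbb{C})_\chi$, one computes the character-wise Hodge numbers by the Chevalley--Weil formula:
\begin{equation*}
h^{1,0}(\chi) \;=\; -1 + \sum_{i=1}^{4}\Big\{-\tfrac{\chi(a_i)}{N}\Big\},
\end{equation*}
where $\{\,\cdot\,\}$ denotes the fractional part and $\chi$ runs over nontrivial characters of $\mu_N$. The key verification is then Moonen's condition $(*)$: for every nontrivial character $\chi$, at least one of $h^{1,0}(\chi),h^{0,1}(\chi)$ equals $0$, equivalently, the eigenspace carries no variation. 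Accepting this purity statement, the generic Mumford--Tate group of the variation over the four-point configuration space coincides with the group defining a PEL-type Shimura datum (of type IV with reflex field a subfield of $\mathbb{Q}(\zeta_N)$), and the period map factors through a Shimura subvariety $S \hookrightarrow \mathcal{A}_7$.

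Finally I would cut down to a one-dimensional family. The moduli of four ordered points on $\mathbb{P}^1$ modulo $\mathrm{PGL}_2$ is one-dimensional (parametrized by the cross-ratio), so after quotienting out the projective automorphisms of the base, each family gives a one-dimensional curve $B$ in $\mathcal{M}_7$ whose image under the Torelli map lies in the Shimura subvariety $S$ constructed above; one checks $B$ surjects onto an irreducible component of $S$ by a dimension count (the Shimura subvariety produced by the PEL datum is itself one-dimensional). Since CM points are Zariski dense in any positive-dimensional Shimura subvariety, and every CM point in $S\cap j(\mathcal{M}_7)$ corresponds to a curve in the family with CM Jacobian, one obtains infinitely many such fibers.

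The main obstacle is the verification of Moonen's condition $(*)$: the combinatorial computation of all fractional parts $\{-\chi(a_i)/N\}$ for each of the $N-1$ nontrivial characters, and checking that the resulting pairs $(h^{1,0}(\chi),h^{0,1}(\chi))$ are indeed of the form $(k,0)$ or $(0,k)$. Without this purity, the generic Mumford--Tate group would be strictly larger than the PEL group, the image would not be contained in a proper Shimura subvariety, and density of CM points would fail. This is precisely where the particular exponent vectors $(3,5,5,7)$ and $(4,6,7,7)$ play a delicate role, and is the reason why Moonen's classification \cite{MR2735989} singles out only finitely many such admissible data.
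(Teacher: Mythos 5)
The paper states this theorem without proof (it is quoted from Rohde, cf.\ Moonen's classification \cite{MR2735989}), so your argument can only be judged on its own terms. The overall strategy — realize each family as a one-parameter family of $\mu_N$-covers of $\mathbb{P}^1$, decompose $H^1$ into character eigenspaces, show the family is a one-dimensional special subvariety, and invoke density of CM points — is exactly the standard Moonen--Rohde route. But two of your ``one checks'' steps fail as written.

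First, for the family $y^9=(x-t_1)^3(x-t_2)^5(x-t_3)^5(x-t_4)^7$ as printed, $\sum_i a_i=3+5+5+7=20\not\equiv 0\pmod 9$, so the cover is also branched at infinity, and Riemann--Hurwitz gives genus $11$, not $7$. (The corresponding entry in Moonen's table is $(9;\,3,5,5,5)$, with $\sum a_i=18$ and genus $7$; the exponent $7$ is presumably a typo in the survey.) You asserted both the divisibility and $g=7$ without computing them, and for the first family as stated both are false, so your smoothness/genus setup does not apply to it.

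Second, and more seriously, you have misstated Moonen's condition $(*)$. If \emph{every} nontrivial eigenspace satisfied $h^{1,0}(\chi)\,h^{0,1}(\chi)=0$, then every eigenspace would be rigid, the Shimura subvariety cut out by the $\mu_N$-action would be zero-dimensional, and the family would be isotrivial --- contradicting the fact that it is a nonconstant one-parameter family and making the conclusion about infinitely many distinct CM fibers collapse. The correct condition is that the special subvariety $S(\mu)$ determined by the eigenspace signatures has dimension equal to the number of moduli $N-3=1$: concretely, exactly one conjugate pair of eigenspaces has signature $(1,1)$ (contributing the single modulus, a copy of $\mathbb{H}$), while all remaining eigenspaces are definite. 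With that correction, and with the exponents of the first family repaired, the rest of your argument (the period map dominates the one-dimensional special subvariety, CM points are dense there, and they pull back to fibers with CM Jacobian) goes through.
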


\subsection{Schottky problem}
This is the problem of characterizing which principally polarized abelian varieties are Jacobians of curves. There are several solutions or conjectural solutions of this problem: there is an analytical approach as well as a geometric one. Another interesting Tannakian approach was recently proposed by Weissauer and Kr\"{a}mer, and we refer to \cite{2025arXiv250107512P} for recent results in genus $\leq 5$ (and references there in, for more about this topic).

 However, to the best of our knowledge, they are not clearly related to the questions motivating our text. We refer to \cite{zbMATH00884942} for more about this. See also the discussion \cite[Sec. 4.17]{moonenoort}. Interestingly the Schottky problem appears implicitly also in the \teic viewpoint that we will describe later, as indeed  \cite[Thm. 1.1]{zbMATH06353727} and \cite{zbMATH06296347} demonstrate.

As Oort writes (in the notes \emph{CM Jacobians}, from a 2012 conference in Bordeaux): \emph{In general it is hard to see from properties of a given curve whether its Jacobian is a CM abelian variety. On the other hand known criteria which decide whether a given principally polarized abelian variety is a Jacobian does not enable us, it seems, to single out from the large class of CM abelian variety many CM Jacobians:
\begin{itemize}
\item starting with a curve $ C$ it is hard to decide whether $\Jac(C)$ is a CM abelian variety;
\item starting with an abelian variety A (maybe CM, principally polarized, $g>3$) it is hard to decide whether it is the Jacobian of an algebraic curve.
\end{itemize}}
In this direction, see also the discussion from \cite[Sec. 4.4]{zbMATH05124673}.

We pause here to say some words on computing endomorphisms of Jacobians. If the curve is defined over a number field, then Costa-Mascot-Sijsling-Voight \cite{zbMATH07009723} gave an answer (and Lombardo in genus 2 \cite[Sec. 3]{zbMATH06989568}, initiated by van Wamelen for the CM case). Their algorithm, if terminates, gives the correct answer and, conditionally on the Mumford-Tate conjecture it terminates \cite{zbMATH07370428}.

Unfortunately the examples worked out in \cite[Sec. 8]{zbMATH07009723} are of small genus. Can this algorithm be used to address \Cref{moonenoortquestion62}? Implemented in MAGMA there is also an algorithm that computes the group of automorphism of a curve. See also \cite{zbMATH02194080} as well as the book \cite{zbMATH01505376}.

\subsection{Noether-Lefschetz Locus}\label{NLsection}

Debarre-Lazlo \cite{zbMATH00006589} classified the Noether-Lefschetz locus in $\mathcal{A}_g$, i.e. the locus of abelian varieties with Neron-Severi rank $>1$. They proved that the irreducible components NL are of two types:
\begin{enumerate}
\item For each integer $k \leq g$ , the locus of principally polarized abelian varieties containing 
an abelian subvariety of dimension $k$ such that the induced polarization is of a fixed degree.
\item For every divisor $n$ of $ g$, $n \neq g$, the irreducible components of the locus of Hilbert modular varieties (cf. \Cref{examples}).
\end{enumerate}

The Noether-Lefschetz locus is a special case of a more general concept, namely the Hodge locus which we will discuss. More about Hodge theory and the framework of Shimura varieties can be found in \Cref{notationshimura}.

\subsection{Hidden symmetries and the André--Oort conjecture}\label{sectionao}

The angle presented here traces back to Serre’s work on \emph{open image theorems} for abelian varieties and the related work of Mumford \cite{zbMATH03271259} and Tate from 1969. They sought to predict the image of the Galois group acting on the $\ell$-adic Tate module of an abelian variety $A$, introducing the notion of the Mumford-Tate group. We refer to \Cref{hodgesection} for an introduction to Hodge structures and related topics.

\begin{defi}
The \textbf{Mumford-Tate group} of a Hodge structure $V$, $\MT(V)$, is the Tannakian group associated to the category $\langle V \rangle^\otimes \subset \Q \mathrm{HS}$. Equivalently, it is the $\Q$-Zariski closure of the $\C^*$-action defining the Hodge decomposition, or the stabilizer in $\operatorname{GL}(V)$ of the Hodge classes in arbitrary tensor operations.
\end{defi}

Deligne \cite{MR0498551} proved that the image of the Galois group acting on Tate module is a subgroup of $\MT \otimes \Q_\ell$. Pohlmann \cite{zbMATH03319219}, in `68, proved the Mumford-Tate conjecture for abelian varieties with Complex Multiplication. We recall now the most general definition of CM:
\begin{defi}\label{defcm}
A Hodge structure $V$ is \textbf{CM} if its Mumford-Tate group $\MT(V)$ is commutative.
\end{defi}

In its most elementary form the André--Oort conjecture (often abbreviated by AO) asserts the following.
\begin{thm}[André \cite{zbMATH01251645}]
Let $V$ be an irreducible algebraic curve in the complex affine plane, which is neither horizontal nor vertical. Then $V$ is a modular curve $Y_0(N)$, for some $N>0$, if and only if it contains infinitely many points $(j',j'')\in \C^2$ such that $j'$ and $j''$ are $j$-invariants of elliptic curves with complex multiplication.
\end{thm}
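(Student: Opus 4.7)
The easy direction is classical: the modular curve $Y_0(N)$ parametrizes pairs $(E_1,E_2)$ of elliptic curves linked by a cyclic $N$-isogeny, so if $E_1$ has complex multiplication by an order in an imaginary quadratic field $K$, so does $E_2$. Since there are infinitely many CM $j$-invariants, this produces infinitely many CM points on every $Y_0(N)$.

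For the substantial converse I would deploy the Pila--Zannier strategy, foreshadowed in the excerpt by the reference to Pila--Wilkie. After specializing coefficients we may assume $V$ is defined over $\overline{\QQ}$. Work with the uniformization $\pi\colon \Hi^2\to \A^2$, $(\tau_1,\tau_2)\mapsto (j(\tau_1),j(\tau_2))$, and fix the standard fundamental domain $\mathcal{F}\subset \Hi^2$ for $\Sl_2(\ZZ)^2$. The restriction $j|_{\mathcal{F}}$ is definable in $\R_{\mathrm{an},\exp}$, so $Z:=\pi^{-1}(V)\cap \mathcal{F}$ is a definable set, and each CM point of $V$ lifts to finitely many imaginary quadratic points in $Z$ whose (logarithmic) height is controlled polynomially by the absolute value of the CM discriminant $|D|$ (via the $q$-expansion of $j$ on $\mathcal{F}$).

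Three ingredients then combine. \textbf{(a) Galois lower bound:} by the Brauer--Siegel estimate for class numbers, a CM point on $V$ of discriminant $D$ has Galois orbit of size $\gg |D|^{1/2-\varepsilon}$ over a fixed field of definition of $V$, and the orbit remains on $V$; so $Z$ contains $\gg T^{1-\varepsilon}$ quadratic points of height $\leq T$. \textbf{(b) Pila--Wilkie counting:} the transcendental part of the definable set $Z$ contains only $O(T^{\varepsilon})$ quadratic points of height $\leq T$, forcing, for large $T$, the existence of a positive-dimensional connected semi-algebraic arc $W\subset Z$. \textbf{(c) Modular Ax--Lindemann (Pila):} any maximal irreducible semi-algebraic $W\subset \pi^{-1}(V)$ must sit inside a \emph{weakly special} subvariety of $\Hi^2$; under the assumption that $V$ is neither horizontal nor vertical, this is necessarily of the form $\{(\tau_1,\tau_2):\tau_1=g\cdot\tau_2\}$ for some $g\in\operatorname{GL}_2^+(\QQ)$. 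The $\pi$-image of such a line is a component of a Hecke correspondence $T_g$, i.e. a union of irreducible modular curves $Y_0(N)$; since $V$ is irreducible and shares a positive-dimensional analytic piece with this union, $V=Y_0(N)$ for some $N$.

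The main obstacle is the quantitative calibration between (a) and (b): one must genuinely verify that the height of the imaginary quadratic lifts is bounded polynomially by $|D|$ on the fundamental domain, so that the Pila--Wilkie bound $O(T^{\varepsilon})$ can be matched against the Galois lower bound $\gg |D|^{1/2-\varepsilon}$. The deepest single input is the hyperbolic Ax--Lindemann theorem for $j$, whose proof is itself an o-minimal argument using the definability of $j$ on $\mathcal{F}$ together with another application of Pila--Wilkie. A secondary, conceptual obstacle is that Siegel's class number bound is ineffective, so the argument yields finiteness but not an explicit bound on the CM points lying on a non-modular $V$.
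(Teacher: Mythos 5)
Your argument is correct, and it is precisely the Pila--Zannier proof of this theorem (due to Pila): its three ingredients --- definability of the restricted uniformization in $\R_{\mathrm{an},\exp}$, the hyperbolic Ax--Lindemann theorem for $j$, and Pila--Wilkie counting played against Siegel/Brauer--Siegel lower bounds for Galois orbits of CM points --- are exactly the ones listed in \Cref{sectaoproof} as the general strategy for André--Oort, so relative to this survey your route is the expected one. The one caveat worth recording is that the proof attributed to André in the cited reference is genuinely different: André's original 1998 argument avoids o-minimality entirely, comparing the $q$-expansions of the two coordinates of $V$ along a branch at infinity with the class-number lower bound to force that branch to be modular. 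Both proofs share the ineffective Siegel input (effective versions came later, by Kühne and by Bilu--Masser--Zannier, via yet other methods), and your calibration of the heights of the quadratic preimages in the fundamental domain against $|D|$ is the standard and correct way to make the Galois lower bound and the Pila--Wilkie upper bound interact. Two small points of hygiene: the reduction to $\overline{\QQ}$ is cleaner than ``specializing coefficients'' --- an irreducible curve containing infinitely many algebraic points is itself defined over $\overline{\QQ}$, which is also what guarantees that the Galois orbits in (a) stay on $V$; and the image under $\pi$ of a single geodesic $\tau_1=g\cdot\tau_2$ with $g$ a primitive integer matrix of determinant $N$ is already the irreducible curve $Y_0(N)$, not a union, so the last identification is immediate.
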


The Mordell-Lang conjecture, proven by Faltings, includes the famous statement that a smooth projective curve of genus $>1$ has only finitely many $K$-points over any number field $K$, and the Manin-Mumford conjecture which asserts the following:
\begin{thm}[Raynaud {\cite{zbMATH03929174}}]\label{mmthm}
Let $A$ be an abelian variety over a field of characteristic $0$ and $E$ be a subset of torsion points. Then the Zariski closure of $E$ is \emph{special}. That is, it is a finite union of torsion cosets $a+B$ for some abelian subvariety $B$ of $A$ and a torsion point $a\in A$.
\end{thm}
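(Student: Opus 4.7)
The plan is to follow the Pila--Zannier strategy, which reduces the statement to a counting problem via the analytic uniformization of $A$ together with the Pila--Wilkie counting theorem. After replacing $E$ by its Zariski closure and passing to an irreducible component, it suffices to prove: if $V\subset A$ is a geometrically irreducible closed subvariety containing a Zariski-dense set of torsion points, then $V$ is a torsion coset $a+B$ for some abelian subvariety $B\subset A$.

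Realize $A(\C)$ as the quotient $\C^g/\Lambda$ via the exponential uniformization $\pi\colon \C^g\to A(\C)$, fix a fundamental parallelotope $F\subset \C^g\cong \R^{2g}$ for $\Lambda$, and set $\widetilde V = \pi^{-1}(V)\cap F$. Using a projective embedding of $A$ via Riemann theta functions, $\pi|_F$ is definable in the o-minimal structure $\R_{\an}$, so $\widetilde V$ is a definable set. Under the identification of $F$ with $[0,1)^{2g}$ given by a $\Z$-basis of $\Lambda$, torsion points of order dividing $N$ correspond exactly to rational points with denominator dividing $N$.

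The first key input is a \emph{Galois lower bound}: if $P\in V$ is torsion of exact order $N$, defined over a fixed number field $K$ over which $V$ and $A$ are also defined, then the entire $\Gal(\overline{\Q}/K)$-orbit of $P$ lies in $V$, and the classical Masser--Serre--W\"ustholz estimates force this orbit to have size at least $c\, N^{\delta}$ for some $\delta=\delta(A,K)>0$. Hence, for every $N$, the set $\widetilde V$ contains at least $c N^\delta$ rational points of height at most $N$. The second key input is the Pila--Wilkie theorem: for every $\varepsilon>0$, the number of rational points of height $\leq T$ on the transcendental part $\widetilde V^{\mathrm{trans}}$ (the complement of the union of all connected positive-dimensional semi-algebraic subsets of $\widetilde V$) is $O_\varepsilon(T^\varepsilon)$. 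Comparing the two bounds forces, for $N\gg 0$, a positive-dimensional connected semi-algebraic set $S\subset \widetilde V$ passing through many torsion points.

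A complex-analytic argument then identifies $S$: being semi-algebraic and contained in $\pi^{-1}(V)$, $S$ is contained in a real-affine subspace of $\R^{2g}$ whose complex span is a complex-linear subspace $W\subset \C^g$ tangent to $V$; since $W$ contains enough rational points to span it, $W$ descends to a complex subtorus $B=\pi(W)\subset A$, and $\pi(S)$ sits inside a torsion translate $a+B\subset V$. One then runs an induction on $\dim V$: the family of such torsion cosets $a+B\subset V$ obtained from varying torsion points of $V$ is Zariski-dense in $V$, and a Bogomolov-type observation (a subvariety swept out by a Zariski-dense family of translates of a fixed abelian subvariety $B$ must itself be a coset of $B$) concludes. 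The main obstacle is this ``semi-algebraic implies complex-linear'' step, where one must upgrade the merely real semi-algebraic output of Pila--Wilkie to a genuinely complex-algebraic subgroup of $A$; the Galois lower bound and the counting theorem are used only as black boxes, while this geometric upgrade is the point where the structure of $A$ as a complex abelian variety really enters.
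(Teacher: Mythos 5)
The paper states this theorem without proof, citing Raynaud, whose argument is $p$-adic (reduction modulo $p^2$ and study of the torsion in the formal group) and has nothing in common with your approach. What you propose is the Pila--Zannier proof of Manin--Mumford, which is precisely the origin of the strategy the paper sketches in \Cref{sectaoproof} for Andr\'e--Oort: definability of the restricted uniformization, a Galois lower bound (here Masser's polynomial lower bound for the degree of a torsion point of exact order $N$), the Pila--Wilkie counting theorem, and a functional-transcendence input. So your route is legitimate and genuinely different from the cited one; its advantage is that it generalizes to Shimura varieties, which is exactly why the paper emphasizes it. The reduction at the end via the Ueno/Kawamata locus and induction on $\dim V$ is also the standard way to close the argument, though you should note that the abelian subvarieties $B$ produced may a priori vary, so one argues that the union of all positive-dimensional cosets contained in $V$ is a proper closed subvariety unless $V$ is itself a coset.

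There is, however, one genuine gap, and it sits exactly where you locate "the main obstacle": the claim that a connected positive-dimensional semi-algebraic set $S\subset\pi^{-1}(V)$ "is contained in a real-affine subspace of $\R^{2g}$ whose complex span is a complex-linear subspace $W$" is false as stated. A semi-algebraic arc (say, a piece of a curved real-algebraic curve) is contained in no proper affine subspace, so no linear-algebra argument can extract $W$ from $S$ directly. The correct mechanism is the Ax--Lindemann--Weierstrass theorem for abelian varieties, which follows from Ax's 1972 functional Schanuel theorem: if $Y\subset\C^g$ is an irreducible complex-algebraic subvariety, then the Zariski closure in $A$ of $\pi(Y)$ is a translate of an abelian subvariety. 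One first passes from the real semi-algebraic set $S$ to a complex-algebraic $Y$ (e.g.\ by complexifying a real-analytic arc of $S$ and checking, via analytic continuation inside the analytic set $\pi^{-1}(V)$, that $\pi(Y)\subset V$), and only then does Ax's theorem produce the subtorus $B$ and the coset $a+B\subset V$. This functional transcendence step is a theorem with real content, not an "upgrade" that follows from the presence of rational points on $S$; without invoking it (or reproving it), the proof does not close.
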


In 1989, André \cite{zbMATH00041964}, and later Oort in 1995 \cite{zbMATH01054988}, proposed the André-Oort conjecture (AO) for Shimura varieties, partly inspired by non-abelian analogues of the Manin-Mumford conjecture and the distribution of CM points on moduli spaces. Shimura varieties, a class of varieties that include modular curves and, more generally, moduli spaces parametrizing principally polarized Abelian varieties of a given dimension (possibly with additional prescribed structures), were originally introduced by Shimura in the 1960s in his study of the theory of complex multiplication.

\begin{defi}\label{defspec} An irreducible component of a Shimura subvariety of a Shimura variety $S$, or of its image under a Hecke operator, is called a \emph{special subvariety} of $S$.
\end{defi}
See for example \cite[Sec. 3.6]{moonenoort} for more about special subvarieties. Often, by abuse of notation, we will call special subvarieties of a Shimura variety simply \emph{sub-Shimura varieties}.

\begin{thm}[AO for Shimura varieties, \cite{2021arXiv210908788P, MR3744855}]\label{aothm}
Let $S$ be a Shimura variety. An irreducible subvariety is special if and only if it contains a Zariski-dense set of CM points.
\end{thm}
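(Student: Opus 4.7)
The plan is to follow the Pila--Zannier strategy that underpins all successful proofs of André--Oort. The ``if'' direction is essentially by construction: CM points are dense in any connected Shimura variety (using strong approximation and the density of torus orbits in the adelic description), hence dense in any special subvariety, and the property is preserved by Hecke translates. The substance lies in the converse, so assume $V \subset S$ is irreducible and contains a Zariski-dense set $\Sigma$ of CM points. Replacing $S$ by the smallest special subvariety containing $V$, I would first reduce to the Hodge-generic case, so that the generic Mumford--Tate group along $V$ coincides with the group of the ambient Shimura datum and the target becomes $V = S$.

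Fix the uniformization $\pi \colon \mathcal{D} \to S = \Gamma \backslash \mathcal{D}$ by the associated Hermitian symmetric domain. The proof then rests on four deep ingredients. First, a definability theorem of Klingler--Ullmo--Yafaev (after Peterzil--Starchenko) asserts that the restriction of $\pi$ to a suitable semi-algebraic fundamental domain $\mathcal{F}$ is definable in the o-minimal structure $\mathbb{R}_{\mathrm{an},\exp}$, so that $Y := \pi^{-1}(V) \cap \mathcal{F}$ is a definable set. Second, one needs uniform polynomial lower bounds on Galois orbits of the form $|\Gal(\overline{\QQ}/E) \cdot s| \gg \Delta(s)^{\delta}$ for $s \in \Sigma$, where $\Delta(s)$ is a discriminant-type complexity of the CM data. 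Third, the Pila--Wilkie theorem controls the number of $\QQ$-rational points of bounded height in $Y$ outside its (real) algebraic part. Fourth, the hyperbolic Ax--Lindemann theorem (Klingler--Ullmo--Yafaev) identifies the maximal bi-algebraic subvarieties of $\pi^{-1}(V)$ with pre-images of weakly special subvarieties of $V$.

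The endgame is a counting contradiction. Each Galois conjugate of an $s \in \Sigma$ lifts to a pre-image in some $\Gamma$-translate of $\mathcal{F}$, producing a rational point of polynomially bounded height (in suitable Siegel-type coordinates on $\mathcal{F}$) inside $Y$. The Galois lower bound forces the number of such rational points to exceed the Pila--Wilkie polynomial upper bound, unless these pre-images accumulate on positive-dimensional semi-algebraic blocks of $Y$. By Ax--Lindemann the Zariski closure of such a block is the pre-image of a weakly special subvariety of $S$ contained in $V$, and a weakly special subvariety containing a CM point is automatically special. Letting $\Sigma$ vary and using Zariski-density together with the Hodge-generic reduction, one concludes that $V$ contains a special subvariety of dimension $\dim S$, hence $V = S$.

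The principal obstacle, and historically the missing step, is the Galois orbit lower bound. For $\Ag$ it was established by Tsimerman via Colmez-type height bounds for CM abelian varieties, which in turn rest on the averaged Colmez conjecture proven independently by Yuan--Zhang and by Andreatta--Goren--Howard--Madapusi Pera. The extension to general Shimura varieties (Pila--Shankar--Tsimerman and collaborators) requires transporting these estimates through the tower of Hodge-type, abelian-type and finally arbitrary Shimura data, a delicate integral $p$-adic and Hodge-theoretic argument which is where the real difficulty of \Cref{aothm} lies; once this input is in hand, the o-minimal/Pila--Wilkie machinery combined with the by-now standard Ax--Lindemann theorem closes the argument.
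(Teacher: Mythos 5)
Your proposal follows exactly the Pila--Zannier strategy that the paper itself sketches in \Cref{sectaoproof}: o-minimal definability of the uniformization on a fundamental set, hyperbolic Ax--Lindemann, Pila--Wilkie counting, and the Galois orbit lower bounds obtained by Tsimerman from the averaged Colmez conjecture (extended to general Shimura data by Pila--Shankar--Tsimerman). The paper only records this as a crude outline, and your more detailed version of the same argument is consistent with it.
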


More generally, the above questions are inspired by the following. Given a reductive group $ G \subset \operatorname{GSp}_{2g}$, what can be said of the set
\begin{displaymath}
H_G:=\{x \in \mathcal{M}_g : \MT(x) \text{  is  conjugate to } G\} ?
\end{displaymath}
Assuming the Hodge conjecture, the points in $H_G$ correspond to curves whose Jacobian (or a power of it) has some \emph{unexpected} algebraic cycle. We will come back on this perspective in \Cref{zpconj}.

\begin{rmk}\label{rmklombetal}
Principally polarized abelian varieties with Mumford-Tate group smaller than the very general one (i.e. $\operatorname{GSp}_{2g}$) can have extra Hodge classes related to divisor classes or not (cf. \Cref{NLsection}). The Hodge classes of latter case are called \emph{exceptional}. So far, we didn't say much this fascinating subject. Often a abelian variety supporting exceptional Hodge classes is called \emph{degenerate}. An example of a curve whose Jacobian is CM and degenerate is 
\begin{displaymath}
C_m : \{y^2 = x^m + 1\} \in \Mg,
\end{displaymath}
(for $g=\lfloor m-1/2 \rfloor$)
for $m$ is an odd composite number \cite{arXiv:2211.03909}. We wish to refer the reader to the interesting recent work of Gallese, Goodson, and Lombardo \cite{2024arXiv240520394G}, for the computations of several interesting arithmetic invariants of $\Jac (C_m)$. We will see other examples, of a different kind, of degenerate Jacobians in \Cref{questionserregross}.

\end{rmk}
To connect with \Cref{autsection}, Riemann surfaces of higher genus with automorphism group of cardinality $>2$ have necessarily Mumford-Tate group smaller than the very general one.

\begin{rmk}[D. Lombardo]
From the examples discussed so far, it seems that most of CM points on $\Mg$ actually lie on the hyperelliptic locus (cf. \Cref{cmhype}) and one can wonder how hard it is to construct, for every $g$, a non-hyperelliptic Jacobian with CM other than Fermat. Davide Lombardo pointed out to the author the following construction, and we reproduce here his argument. Fix $d$ and consider
\begin{displaymath}
 F_{a,b,c} : y^d = x^a(1-x)^b
\end{displaymath}
for $a,b,c$ positive integers whose sum is equal to $d$. Each $F_{a,b,c} $ is a quotient of the Fermat curve $F_d$ \eqref{fermat} and therefore have CM Jacobians. Following the work of Coleman \cite[Sec. IV]{zbMATH04132336}, one can determine which $ F_{a,b,c}$ are hyperelliptic, and this seldom happens. Take $d=2k+1$ odd such that $d=a+b+c$ for some distinct $a,b,c$ such that $(d,a)=(d,b)=(d,c)=1$, then $ F_{a,b,c}$ is not hyperelliptic \cite[Prop. 8]{zbMATH04132336}. 

One could still ask for examples of non-hyperelliptic CM points of $\Mg$ that are not quotients of Fermat.
\end{rmk}

\section{Coleman--Oort conjecture}\label{coconj}

 One of the most ambitious conjectures about the distribution of CM points on $\Mg$ is due to Coleman and Oort, which we have already encountered in \Cref{realcommult}. We state here a geometric reformulation:

\begin{conj}[Coleman-Oort, geometric version]\label{cogeom}
For sufficiently large genus $g$ (specifically, $g \geq 8$), there are no sub-Shimura\footnote{Here by sub-Shimura varieties we mean special subvarieties in the sense of \Cref{defspec} (i.e. we allow Hecke translates). We prefer this, admittedly looser, terminology, because later we will have a different definition of special subvarieties. Namely the pull-back along the Torelli map of any special subvariety from $\Ag$ to $\Mg$.} varieties $Z \subset \mathcal{A}_g$ of positive dimension that are generically contained in $\mathcal{M}_g$. 
\end{conj}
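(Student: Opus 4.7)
The plan is to argue by contradiction: assume there exists a positive dimensional special subvariety $Z\subset \Ag$ generically contained in $j(\Mg)$, and extract constraints on the associated family of curves that must fail once $g\geq 8$. Let $Y\subset \Mg$ be an irreducible component of $j^{-1}(Z)$ and let $\pi:\mathcal{C}\to Y$ denote the universal family. Because $Z$ is special, the variation of Hodge structure on $R^1\pi_*\Q$ has Mumford-Tate group strictly smaller than $\operatorname{GSp}_{2g}$; in particular the corresponding Higgs bundle is highly constrained, and after Viehweg-Zuo it saturates the Arakelov inequality. The task is to show that these Hodge-theoretic constraints are incompatible with $\mathcal{C}\to Y$ being a family of smooth projective curves of genus $g\geq 8$.

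My first attempt would be the second fundamental form approach of Pirola-Colombo-Frediani-Ghigi. Up to natural identifications, the second fundamental form of the Torelli map at a smooth non-hyperelliptic $[X]$ is the multiplication map $\operatorname{Sym}^2 H^0(X,\Omega_X)\to H^0(X,\Omega_X^{\otimes 2})$. The condition that $Z$ be tangent to $j(\Mg)$ to second order at a general point of $Y$ translates into the vanishing of this multiplication along a large subspace of $\operatorname{Sym}^2 H^0(X,\Omega_X)$, equivalently into the presence of many quadrics in the ideal $I_2(X)$ of the canonically embedded curve compatible with the $\MT$-isotypic decomposition. Since $\dim I_2(X)$ grows like $g^2/2$ while the constraints imposed by a positive dimensional $Z$ grow at least as fast, a careful bookkeeping (taking into account Noether's theorem and Petri's theorem on the generators of the canonical ring) should yield a numerical contradiction for $g\geq 8$; the hyperelliptic locus requires a separate treatment adapted to the $2:1$ cover, in the spirit of the proof of \Cref{cmhype}. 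A complementary route combines the Arakelov equality with Möller's structural results on Shimura families of Jacobians, pulling the saturation back to constraints on $\pi_*\omega_{\mathcal{C}/Y}$ and its decomposition under the Mumford-Tate group, so as to match the candidate $Z$ against Moonen's classification of cyclic families of \Cref{seccm}.

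The hard part is exactly the step I glossed over: after decades of work, each of these techniques has only handled restricted classes (hyperelliptic, cyclic covers of $\PP^1$, low genus, or families with an extra automorphism), and no uniform argument covers every candidate $Z$ when $g\geq 8$. My own bet would be on exploiting the bi-algebraic perspective advertised in this survey: granted Ax-Schanuel for $\Ag$, the existence of $Z$ would manifest as an atypical intersection between $j(\Mg)$ and a weakly special subvariety $W\subset \Ag$ of the appropriate codimension, and a Zilber-Pink style argument would then force $j(\Mg)$ itself to lie in a proper weakly special subvariety, contradicting the fact that the monodromy of $\Mg$ is the full symplectic group. Turning this heuristic into a proof amounts to a uniform Zilber-Pink statement inside $\Ag$, which is where I expect the genuine obstacle to lie.
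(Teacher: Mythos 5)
This statement is a \emph{conjecture} in the paper, not a theorem: the paper offers no proof of it, and it remains open. So there is no ``paper's own proof'' to compare against, and your proposal should be judged as an attempt at an open problem. On that score you are candid that the decisive step is missing, and indeed it is: the second-fundamental-form bookkeeping you sketch does not close. The known output of that method (Colombo--Frediani--Ghigi--Pirola et al., combined with Hain and de Jong--Zhang, cf. \Cref{thmmoonenetal}) is only a dimension bound $\dim S \leq 2g-1$ (or $2g$) together with a trichotomy --- $S$ a curve, a compact surface, or a compact ball quotient --- and these residual cases are precisely what nobody has been able to exclude for all $g \geq 8$. Your assertion that ``the constraints imposed by a positive dimensional $Z$ grow at least as fast'' as $\dim I_2(X)$ is the entire content of the conjecture and is not established by Noether or Petri.

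Your closing ``bet'' on the bi-algebraic/Zilber--Pink route is explicitly addressed and rejected in the paper: the remark following \Cref{geomao} states that Coleman--Oort is \textbf{not} a consequence of AO nor of the Zilber--Pink conjecture. The reason is structural, not technical. Ax--Schanuel and ZP-type arguments yield that the union of maximal atypical special subvarieties generically contained in $\Mg$ is not Zariski dense, i.e.\ that there are only \emph{finitely many} maximal ones (\Cref{geomao} and the corollary of \cite{MR3744855}); they cannot show that this finite set is \emph{empty}. A positive-dimensional special $Z$ generically contained in $\Mg$ would simply be one of the finitely many allowed exceptions, exactly as the genuine examples in genus $4 \leq g \leq 7$ (\Cref{dejongnoot}, Rohde's families) already are. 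Also note that such a $Z$ need not saturate the Arakelov inequality nor force $j(\Mg)$ into a proper weakly special subvariety: atypicality of $Z$ inside $j(\Mg)$ is compatible with $\Mg$ having full symplectic monodromy. So the obstacle is not merely ``a uniform Zilber--Pink statement inside $\Ag$''; even the full ZP conjecture would leave \Cref{cogeom} open.
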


Thanks to \Cref{aothm}-- the solution of  the André-Oort conjecture-- the above geometric question is equivalent to an arithmetic one stated in \Cref{cofirstapp}.

It is worth mentioning also the work by Toledo \cite{zbMATH04057672}, who seems to be the first to have considered such type of geometric questions. Indeed, in 1987, he wrote:
\begin{center}
 \emph{\dots ask if the image of $\Mg$ contains any straight lines of the symmetric geometry, i.e., any complex totally geodesic curves.}
\end{center}
(It's easy to see that sub-Shimura curves are totally geodesic. See \Cref{weakspe} and the discussion there for more about \emph{totally geodesic subvarieties}).)
Toledo's motivation came from differential geometry and curvature properties of the Torelli map, and we will discuss his result later, see \Cref{toledothm}. 
\begin{rmk}
Coleman was advertising his conjecture on CM Jacobians \cite[Conj. 6]{zbMATH04065160} in `87. The link between Coleman's conjecture and Toledo's question is the AO conjecture, as we explained before. The first appearance of a problem of AO type was in `89 in the work of André \cite{zbMATH00041964}, and, a posteriori, also in `88 in the work of Wolfart \cite{MR931211}. It is interesting to observe how many different perspectives and motivations lead mathematicians at the end of the 80s to consider these kind of questions. Understanding the precise link between them, and finding a framework for actually proving some of these conjecture kept the community occupied for several decades! To conclude, the first formulation of a AO for \emph{mixed} Shimura varieties appears in 2001 \cite[Lecture 3]{andre2001shimura} (see also later work of Gao \cite{zbMATH06801925} as well as \Cref{mixedshim} for a brief discussion about mixed Shimura varieties).
\end{rmk}
The most general question in this spirit is the following \cite[Qtn. 6.11]{moonenoort}. 
  
 \begin{question}
  Does the Torelli locus contain any totally
geodesic subvarieties (for the symmetric metric on $\Ag$) of positive dimension?  
  \end{question}
  
Related is also the following formulated by Farb, with different motivations and after his earlier work with Masur \cite{zbMATH01307806}:
\begin{question}[{\cite[Qtn. 2.5]{zbMATH05124673}}]
Does there exist some $g$ for which $\pi_1 (\Mg)$ contains a subgroup $\Gamma$ isomorphic to a cocompact (resp. noncocompact) lattice in $\operatorname{SO}(m,1)$ with $m \geq 5$ (resp. $m \geq 4$)? A cocompact lattice in $\operatorname{SU}(n, 1)$, $n\geq 2$? Must there be only finitely many conjugacy classes of any such fixed $\Gamma$ in $\pi_1 (\Mg)$?
\end{question}

We will later see a proof of the following, which asserts that such subvarieties are rare.
\begin{thm}[Geometric AO for $\Mg$ {\cite{MR3177266}}]\label{geomao}
If $g\geq 4$, the collection of totally
geodesic subvarieties generically contained in $\Mg$ is not Zariski dense in $\Mg$.
\end{thm}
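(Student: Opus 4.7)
The plan is to combine Moonen's theorem identifying totally geodesic subvarieties (through a CM point) with sub-Shimura varieties, together with the Viehweg--Zuo theory of logarithmic Higgs bundles, and then to exploit the special structure of the Torelli morphism to show that the relevant subvarieties must lie in a fixed proper closed subset of $\mathcal{M}_g$.

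First, I would reduce to the case of Shimura subvarieties. A totally geodesic subvariety $Z \subset \mathcal{A}_g$ containing a CM point is, by a theorem of Moonen, a component of a special subvariety in the sense of \Cref{defspec}. Density of CM points in $\mathcal{A}_g$ (cf. the beginning of \Cref{realcommult}) combined with the fact that Hecke translates of a totally geodesic subvariety cover an open neighborhood let one assume that $Z$ itself is special, attached to a reductive subgroup $H \hookrightarrow \mathrm{GSp}_{2g}$.

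Second, after choosing a smooth toroidal compactification $\bar{Z}$ with boundary $S$ and a semistable extension $\bar{f}:\bar{\mathcal{X}}\to \bar{Z}$ of the pulled-back family of curves, I would analyze the logarithmic Higgs bundle
$$\big(E = E^{1,0} \oplus E^{0,1},\ \theta: E^{1,0} \to E^{0,1} \otimes \Omega^1_{\bar{Z}}(\log S)\big)$$
associated to the VHS on $R^1\bar{f}_* \mathbb{Q}$. The Shimura structure on $Z$ gives a canonical orthogonal decomposition $E^{1,0} = A \oplus N$, where $A$ is the flat/unitary summand annihilated by $\theta$ and $N$ is the summand on which the Kodaira--Spencer map is injective. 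The ranks of $A$ and $N$ are controlled by the representation theory of $H$ acting on $\mathrm{Lie}(\mathrm{GSp}_{2g})/\mathrm{Lie}(P)$, where $P$ is the relevant parabolic. Applying Viehweg--Zuo's Arakelov inequality to $(N,\theta|_N)$, and using the fact that $Z \hookrightarrow \mathcal{M}_g$ is generically an immersion (so $\theta$ has rank $\dim Z$), I would extract a lower bound $\mathrm{rk}\,A \geq g-c(\dim Z)$ for an explicit constant depending only on the symmetric geometry.

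Third, for $g\geq 4$ the bound above is strong enough to force the generic fiber $C_z$ to have a large algebra of holomorphic differentials invariant under a non-trivial group action. By a Noether/Royden-type analysis (every automorphism of $H^0(C_z,\Omega^1)$ respecting the canonical image comes from an automorphism of $C_z$, once $g\geq 3$), this translates into honest nontrivial automorphisms of $C_z$, putting $Z$ inside the locus of curves with automorphism group of cardinality $>1$, or into the hyperelliptic locus (cf. \Cref{autsection}). Either locus is a proper closed subvariety of $\mathcal{M}_g$ for $g \geq 4$, so the union of all such $Z$ is not Zariski dense. The main obstacle is calibrating the Arakelov inequality finely enough to produce the rank bound uniformly from $g=4$ (rather than just in the asymptotic regime $g\gg 0$) while controlling the boundary contribution of $S$ to the parabolic degree, and ruling out the a priori possibility that the flat summand $A$ comes from an isotrivial factor not induced by a curve automorphism; this is exactly the technical heart of the Viehweg--Zuo--M\"{o}ller program.
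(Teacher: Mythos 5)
Your proposal does not follow the paper's route and, as written, has two genuine gaps. First, the reduction to Shimura subvarieties fails: the theorem concerns \emph{all} totally geodesic (equivalently, weakly special) subvarieties generically contained in $\Mg$, and by \Cref{thmweaksp} such a subvariety is special only if it happens to contain a CM point. Density of CM points in $\Ag$ plus Hecke translation does not let you replace an arbitrary totally geodesic $Z$ by a special one; the non-special weakly special subvarieties move in positive-dimensional families (the $D_1\times\{x_2\}$ picture with $x_2$ varying), and it is precisely their possible density that must be excluded. Second, the step ``Arakelov inequality $\Rightarrow$ $\mathrm{rk}\,A\geq g-c(\dim Z)$ $\Rightarrow$ nontrivial automorphisms of the generic fibre'' is unsubstantiated, and you concede as much when you call ruling out an isotrivial flat factor not induced by automorphisms ``the technical heart.'' A unitary summand of the Higgs bundle only gives a fixed part of the family of Jacobians, not automorphisms of the curves; converting one into the other is the open part of the Viehweg--Zuo--M\"{o}ller program toward \Cref{cogeom}, which the paper stresses is strictly stronger than the statement at hand and not a consequence of it. Note also that for $4\le g\le 7$ the de Jong--Noot and Rohde families (\Cref{dejongnoot}) are special curves genuinely generically contained in $\Mg$, so any correct argument can only prove non-density of the union, not that each such $Z$ is forced into the automorphism or hyperelliptic locus by a uniform rank bound starting at $g=4$.

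The paper's proof is soft and entirely different: for $g\ge 4$ one has $\dim\mathcal{T}_g=3g-3<g(g+1)/2=\dim\Ag$ while $\Mg$ is Hodge generic, so \emph{every} positive-dimensional weakly special subvariety generically contained in $\Mg$ is automatically atypical; the geometric Zilber--Pink theorem (\Cref{thm:mixedgeomzp}, deduced from the Ax--Schanuel theorem in families) then says that the maximal such subvarieties arise from a finite set of Hodge-theoretic data, whence their union is contained in a proper Zariski-closed subset. No curvature, Higgs bundle, or Arakelov input is used.
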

\begin{rmk}
We remark here that the Coleman-Oort conjecture is \textbf{not} a consequence of AO, nor the more general Zilber-Pink conjecture (or just ZP, for brevity) that we will later discuss. CO can be thought as an (very!) effective version of \Cref{geomao}. Albeit effective approaches to the geometric ZP conjecture exist, it still seems difficult to implement them to obtain evidences in favour of the CO conjecture (especially for \emph{all} $g$).
\end{rmk}

There are actually many proofs of the above. It should be compared to the more surprising (and for which essentially one proof is known):
  \begin{thm}[Corollary of \cite{MR3744855}]
If $g\geq 4$, the collection of genus $g$ curves with CM Jacobian is not Zariski dense in $\Mg$.
\end{thm}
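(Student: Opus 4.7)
The plan is to reduce the statement to the André--Oort theorem (\Cref{aothm}) applied to $\Ag$, using the Torelli morphism $j\colon \mathcal{M}_g \to \Ag$, and to extract the needed contradiction from the dimension gap between $\Mg$ and $\Ag$ once $g \geq 4$.

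First I would argue by contradiction: suppose the set $\Sigma \subset \Mg$ of points $[X]$ for which $\Jac(X)$ has complex multiplication is Zariski dense in $\Mg$. By the definition of CM recalled in \Cref{cm22} and \Cref{defcm}, $[X] \in \Sigma$ if and only if $j([X]) \in \Ag$ is a CM point of $\Ag$ in the Shimura-variety sense. Since $j$ is generically injective (this is the classical Torelli theorem, already invoked in \Cref{sec2}), the image $j(\Sigma)$ is a set of CM points of $\Ag$ that is Zariski dense in the Torelli locus $\mathcal{T}_g = \overline{j(\Mg)} \subset \Ag$.

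Next I would apply \Cref{aothm}: an irreducible subvariety of a Shimura variety containing a Zariski dense set of CM points is special. Applied to $\mathcal{T}_g \subset \Ag$, this forces $\mathcal{T}_g$ to be a special subvariety of $\Ag$, i.e.\ (up to a Hecke translate, which is harmless because $\mathcal{T}_g$ passes through the full moduli interpretation) associated to a Shimura sub-datum $(G,X_G) \hookrightarrow (\operatorname{GSp}_{2g}, \mathbb{H}_g)$. The key Hodge-theoretic input I would then use is that a very general $[X] \in \Mg$ has $\MT(H^1(\Jac(X))) = \operatorname{GSp}_{2g}$; this is well known and can be deduced from the fact that the geometric monodromy of the universal Jacobian on $\Mg$ is Zariski dense in $\operatorname{Sp}_{2g}$ (equivalently, the image of $\pi_1(\Mg)$ in $\operatorname{Sp}_{2g}(\Z)$ is surjective via the action of the mapping class group on $H_1$). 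Since any Hodge-generic point of a special subvariety has Mumford--Tate group equal to the generic Mumford--Tate group of the ambient datum, the very general Jacobian lying in $\mathcal{T}_g$ forces $G = \operatorname{GSp}_{2g}$, hence $\mathcal{T}_g = \Ag$.

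Finally I would invoke the dimension count $\dim \mathcal{T}_g \leq \dim \Mg = 3g-3$ versus $\dim \Ag = g(g+1)/2$: for $g \geq 4$ we have $3g-3 < g(g+1)/2$, so $\mathcal{T}_g \subsetneq \Ag$, a contradiction. The main obstacle I anticipate is not in this inequality but in justifying the generic Mumford--Tate computation cleanly (so that the application of \Cref{aothm} indeed yields $\mathcal{T}_g = \Ag$ rather than some intermediate special subvariety); once that generic-MT input is granted, the argument proceeds essentially formally from \Cref{aothm}. Note that this strategy genuinely requires $g \geq 4$, since for $g = 2, 3$ one has $\mathcal{T}_g = \Ag$ and the final step breaks down, consistent with the fact that CM abelian varieties are dense in $\Ag$ (\Cref{realcommult}).
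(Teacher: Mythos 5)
Your proof is correct and is exactly the argument the paper intends: the statement is presented as an immediate corollary of the André--Oort theorem (\Cref{aothm}), with Zariski density of CM points forcing $\mathcal{T}_g$ to be special, hence equal to $\Ag$ by Hodge genericity, contradicting $3g-3 < g(g+1)/2$ for $g\geq 4$ (the same Hodge-genericity-plus-codimension reasoning the paper spells out in its sketch of \Cref{thmmumf}).
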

In fact a stronger result in proven in \emph{op. cit.}: there are only finitely many maximal special subvarieties of $\Ag$ generically contained in $\Mg$.

\subsection{Known cases}\label{knowncas}
The current state of this area is nicely summarized in Moonen’s article \cite{zbMATH07740282}, which builds on the work of Hain \cite{MR1722540} (which builds on the work of Farb and Masur mentioned above \cite{zbMATH01307806}), and de Jong–Zhang \cite{zbMATH05263283}. There has also been extensive and interesting work by the ``Italian school'' (Colombo, Frediani, Ghigi, Penegini, Pirola, Porru, Tamborini, Torelli, among others). Notably, they offer an extensive study of the differential-geometric properties of the Torelli map, particularly through the second fundamental form (see, e.g., \cite{zbMATH07308524}). Combining the two strands of research:

\begin{thm}\label{thmmoonenetal}
Let $g \geq 8$. Assume there are infinitely many smooth complex curves of genus $g$ whose Jacobians are CM abelian varieties. Then there exists a special subvariety $S \subset \mathcal{T}_g$ of positive dimension such that $S \cap \mathcal{M}_g \neq \emptyset$, and at least one of the following holds: 
\begin{enumerate}
\item $\dim(S) = 1$;
\item $\dim(S) = 2$ and $S$ is compact;
\item $S$ is a compact ball quotient.
\end{enumerate}

Moreover, any special subvariety generically contained in $\mathcal{M}_g$ satisfies $\dim(S) \leq 2g - 1$ if $g$ is even, and $\dim(S) \leq 2g$ if $g$ is odd.
\end{thm}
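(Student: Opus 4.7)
The plan is to combine the resolution of the André--Oort conjecture (\Cref{aothm}) with structural restrictions coming from boundary monodromy (Hain, de Jong--Zhang) and from the second fundamental form of the Torelli map.

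First, I would extract a positive-dimensional special subvariety from the hypothesis. The infinitely many CM Jacobians yield infinitely many CM points of $\mathcal{A}_g$ lying in the quasi-projective variety $j(\mathcal{M}_g) \subseteq \mathcal{T}_g$. Taking the Zariski closure in $\mathcal{A}_g$, one of its finitely many irreducible components must still contain a Zariski-dense set of CM points, and \Cref{aothm} then forces this component to be a special subvariety $S \subseteq \mathcal{T}_g$ with $\dim S \geq 1$. Because the dense CM points correspond to smooth curves, the open locus $j(\mathcal{M}_g) \subseteq \mathcal{T}_g$ meets $S$, giving $S \cap \mathcal{M}_g \neq \emptyset$.

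Second, I would narrow down the possibilities for $S$ via the Hain--de Jong--Zhang analysis of boundary monodromy. If $S$ is not compact, its closure in a suitable toroidal compactification of $\mathcal{A}_g$ crosses a boundary stratum with nontrivial unipotent local monodromy dictated by the Hermitian type of $S$. However, the component of $S$ meeting $\mathcal{M}_g$ can be tracked across the Deligne--Mumford boundary $\overline{\mathcal{M}}_g \setminus \mathcal{M}_g$, so by Picard--Lefschetz these unipotent elements must lie in the subgroup of $\mathrm{Sp}_{2g}(\mathbb{Z})$ generated by Dehn twists along disjoint simple closed curves on a genus $g$ surface. A case analysis based on the Albert classification of $\mathrm{End}^0$ of the generic Jacobian on $S$ shows that such a match is possible only when $\dim S = 1$. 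Once the non-compact case is handled, de Jong--Zhang's analysis (for $g\geq 8$) rules out every remaining positive-dimensional possibility except the compact surfaces of dimension $2$ and the compact ball quotients, yielding the stated trichotomy.

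Third, for the ``Moreover'' statement I would invoke the second fundamental form of the Torelli map. Any special subvariety generically contained in $\mathcal{M}_g$ is totally geodesic in $\mathcal{A}_g$ for the $\mathrm{Sp}_{2g}(\mathbb{R})$-invariant metric; hence at a generic $[X]$ its tangent space is contained in the kernel of the second fundamental form
\[
\mathrm{II}_{[X]} : \mathrm{Sym}^2 H^1(X, T_X) \longrightarrow \mathrm{Hom}\bigl(H^0(X, \Omega_X),\, H^1(X, \mathcal{O}_X)\bigr).
\]
The Hodge-theoretic computations of the Italian school identify this kernel in terms of the canonical image of $X$ and bound its dimension by $2g - 1$ when $g$ is even and by $2g$ when $g$ is odd, giving the desired dimension estimate. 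The main obstacle is the second paragraph: matching the unipotent radicals arising from cusps of a hypothetical $S$ with the very rigid subgroup generated by vanishing-cycle Dehn twists is delicate, requiring a careful interplay between the Albert type of the endomorphism algebra of the generic Jacobian on $S$ and the combinatorics of simple closed curves on a genus $g$ surface.
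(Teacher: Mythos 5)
First, a caveat on the comparison: the paper does not prove \Cref{thmmoonenetal} at all --- it states it as a synthesis of results quoted from the literature (Hain, de Jong--Zhang, Moonen, and the second-fundamental-form work of the ``Italian school''). So the question is whether your sketch would stand on its own, and it does not. Your first step is fine: extracting a positive-dimensional irreducible component of the Zariski closure of the CM points, noting that each component of that closure carries a dense set of CM points, and applying \Cref{aothm} to conclude it is a special subvariety $S \subset \mathcal{T}_g$ meeting $j(\mathcal{M}_g)$ is exactly the right (and standard) reduction.

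The genuine gap is in your second and third paragraphs, which contain the entire content of the theorem but only assert it. The claim that ``a case analysis based on the Albert classification \dots shows that such a match is possible only when $\dim S = 1$'' is precisely the hard part of the trichotomy, and no such analysis is given; likewise the statement that de Jong--Zhang ``rules out every remaining possibility'' and that the Italian school's computations ``bound [the] dimension by $2g-1$ \dots and by $2g$'' are citations dressed as deductions. Worse, the mechanism you propose for the non-compact case is not sound as stated: $S$ is only \emph{generically} contained in $j(\mathcal{M}_g)$, so a cusp of $S$ may be approached entirely through $\mathcal{T}_g \setminus j(\mathcal{M}_g)$ (decomposable Jacobians), and there is then no way to ``track the component across the Deligne--Mumford boundary'' and identify the local unipotent monodromy with Dehn twists on a smooth genus-$g$ surface via Picard--Lefschetz. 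The arguments that actually establish the trichotomy proceed differently: Hain's restriction to ball quotients (with the dimension-two and hyperelliptic exceptions) rests on rigidity of homomorphisms from higher-rank lattices to mapping class groups, building on Farb--Masur, together with an analysis of how a locally symmetric family can degenerate, not on matching Albert types against curve systems. If you want a self-contained proof you would need to either reproduce those arguments or explicitly reduce to the precise statements of the cited theorems, checking in each case that their hypotheses (e.g.\ on $g$, on compactness, on the position relative to the hyperelliptic locus) are met.
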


Ball quotients, which are locally Hermitian spaces of the form $\Gamma \backslash \mathbb{B}^n$ (where $\Gamma$ is neat), are particularly notable examples of Shimura varieties. Picard modular varieties, discussed in more details in \Cref{examples}, belong to this class. As far as we know, not much is known about which types of ball quotients can arise in $\mathcal{M}_g$. 

A recent theorem of Yeung, combined with the above works, elucidates the study Shimura varieties on the complement of the hyperelliptic locus $\mathcal{H}_g$ in the open Torelli locus (he actually describes also the case $g=2,3,4$). See also \cite{zbMATH07629821} for another approach. 

\begin{thm}[Yeung {\cite[Thm. 1 and Thm. 2]{zbMATH07574854}}]\label{yeungthm}
Let $g\geq 5$, the set $j(\Mg - \mathcal{H}_g)$ contains no sub-Shimura varieties of $\Ag$.

\end{thm}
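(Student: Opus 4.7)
My approach combines the classification results already quoted in \Cref{thmmoonenetal} with a differential-geometric obstruction coming from the second fundamental form of the Torelli embedding, whose non-degeneracy outside the hyperelliptic locus is the main engine.

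First I would suppose, for contradiction, that there is a positive-dimensional sub-Shimura variety $S \subset \Ag$ generically contained in $j(\Mg - \mathcal{H}_g)$. Since the differential of $j$ is injective away from $\mathcal{H}_g$ (infinitesimal Torelli), pulling back yields a subvariety $Z \subset \Mg - \mathcal{H}_g$ with $j(Z) = S$ and $j_{*} T_Z = T_S|_Z$. Applying \Cref{thmmoonenetal} immediately trims the list of possibilities for $S$ to: $\dim S = 1$, or $\dim S = 2$ with $S$ compact, or $S$ a compact ball quotient; in all cases $\dim S \leq 2g$.

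The central technical step is the analysis of the second fundamental form
\[
\rho \colon \sym^2 T_{\Mg} \longrightarrow N_{\Mg/\Ag}
\]
of the Torelli map at a smooth point $[C] \in Z$. Because $S$ is totally geodesic in $\Ag$ for the Siegel metric (as every sub-Shimura variety is), the pull-back of $\rho$ to $Z$ must vanish identically on $T_Z$. Following the description developed by Colombo--Pirola--Tortora and further by the Italian school, $\rho$ at $[C]$ is governed by the second Gaussian map on the canonical bundle of $C$, and the vanishing condition translates into producing non-trivial elements of $\sym^2 H^0(C, K_C)$ lying in the kernel of this Gaussian map and varying holomorphically along $Z$.

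The main obstacle, and where most of the work should go, is proving that for non-hyperelliptic $C$ of genus $g\geq 5$ this Gaussian map is sufficiently non-degenerate to prevent a positive-dimensional family $Z \subset \Mg - \mathcal{H}_g$ from satisfying the vanishing. I would handle the $\dim S = 1$ and $\dim S = 2$ cases via explicit rank estimates for the second Gaussian map on the canonical system of a non-hyperelliptic curve, and treat the compact ball-quotient case separately, combining the compactness of $S$ with the known boundary behaviour of $\Mg$ and the Hain--de Jong--Zhang analysis (already used to establish \Cref{thmmoonenetal}) to exclude a compact ball quotient sitting in the open non-hyperelliptic Torelli locus. Stitching these obstructions together contradicts the existence of $S$ and yields the theorem.
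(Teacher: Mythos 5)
There is a genuine gap, and it sits exactly at the step you describe as "where most of the work should go." Vanishing of the second fundamental form of the Torelli map along a totally geodesic $Z$ is indeed the standard starting point of the Colombo--Pirola--Frediani--Ghigi line of work, but what that machinery is known to deliver is only an \emph{upper bound} on $\dim S$ (the bounds $\dim S \leq 2g-1$ or $2g$ quoted in \Cref{thmmoonenetal}); it has never been shown to exclude one-dimensional or low-dimensional special subvarieties. For a curve $Z$, the vanishing condition amounts to a single quadric in the kernel of the second Gaussian map varying along $Z$, and the second Gaussian map of a non-hyperelliptic curve of genus $g\geq 5$ does in general have a large kernel, so no "explicit rank estimate" of the kind you invoke is available or expected to close the argument. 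Ruling out Shimura curves in the Torelli locus by such infinitesimal methods is essentially the hard open part of the Coleman--Oort circle of problems, so your proposal defers the entire difficulty to a step that cannot currently be carried out.

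The paper's proof avoids this by exploiting the hypothesis that $S$ is actually \emph{contained} in the open non-hyperelliptic Torelli locus (not merely generically contained, as you assume --- the paper explicitly flags this distinction), which unlocks Teichm\"{u}ller-theoretic tools: Hain's result forces $S$ to be a ball quotient; Yeung's observation shows such an $S$ is totally geodesic for the \emph{Kobayashi} metric of $\Mg$; Antonakoudis's theorem that no ball of dimension $>1$ embeds isometrically in $\mathcal{T}_g$ reduces to $\dim S = 1$; the resulting curve is then a Shimura--Teichm\"{u}ller curve, and the M\"{o}ller and Aulicino--Norton classification (\Cref{reicshcurves}) says there are none for $g\geq 5$. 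None of these inputs appear in your argument, and the differential-geometric route you propose does not substitute for them.
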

The above theorem is really about sub-Shimura varieties contained in $j(\Mg - \mathcal{H}_g)$, rather than the weaker notion of \emph{generically contained}. We will comment more on this in \Cref{teichshi}.

To take care of the Shimura curves, the author proves that they are indeed Shimura-\teic curves and uses a result of M\"{o}ller \cite{zbMATH05919111} (in genus $>5$) and Aulicino and Norton in genus 5 \cite{zbMATH07347328}. We will get back to this point after we discuss more about \emph{\teic curves}.

\subsection{Work of Toledo}\label{sectworktol}

Toledo \cite{zbMATH04057672} proved that most of compact totally geodesic curves in the Siegel moduli space $\mathcal{A}_g$ cannot lie in the image of the Torelli morphism. The meaning of "most" is in terms of the holomorphic sectional curvature of Siegel space:

\begin{thm}\label{toledothm}
Let $g\geq 3$, and $C \subset \Mg$ a smooth compact curve of genus $h$ such that $j(C)\subset \Ag$ is totally geodesic of curvature $-1/l$. Then $l\leq (g-1)/3$.
\end{thm}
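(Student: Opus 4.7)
The plan is to combine a numerical identity for the degree of the Hodge bundle (forced by the totally geodesic hypothesis) with Noether's formula and the Miyaoka--Yau inequality applied to the total space of the universal curve $f\colon \calC\to C$. Since $C\subset \Mg$ (and not merely its Deligne--Mumford compactification), $\calC$ is a smooth projective surface, so no boundary contributions appear anywhere.

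Step~1: the degree of the Hodge bundle. Totally geodesic complex curves in the Siegel space $\mathbb{H}_g$ are classified Lie-theoretically by symplectic representations of $\Sl_2(\R)$; the constant holomorphic curvature $-1/l$ corresponds to the representation $\mathrm{std}^{\oplus l}\oplus \mathbf{1}^{\oplus 2(g-l)}$. The pullback Hodge bundle then splits, possibly after a finite étale cover of $C$, as $\mathbb{E}|_C\cong \LL^{\oplus l}\oplus \OO_C^{\oplus (g-l)}$ with $\LL$ a Hodge line bundle coming from the uniformizing $\Sl_2(\R)$-orbit. Applying Gauss--Bonnet to the pullback metric on $C$ (which has constant curvature $-1/l$) gives $\deg \LL=h-1$, and in passing forces $h\geq 2$ (otherwise the area would be non-positive). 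Hence
\[
\lambda\,:=\,\deg f_*\omega_{\calC/C}\,=\,l(h-1).
\]

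Step~2: Noether plus Miyaoka--Yau. For a smooth fibration the relative Noether formula gives $\omega_{\calC/C}^2=12\lambda$, and combined with the standard identities $\omega_\calC^2=\omega_{\calC/C}^2+8(g-1)(h-1)$ and $c_2(\calC)=e(\calC)=4(g-1)(h-1)$ one obtains
\[
c_1^2(\calC)\,=\,4(h-1)\bigl(3l+2g-2\bigr),\qquad c_2(\calC)\,=\,4(g-1)(h-1).
\]
Next I would verify that $\calC$ is minimal of general type: $\omega_\calC=\omega_{\calC/C}+f^*\omega_C$ is the sum of a relatively ample class and an ample pullback, hence nef, and $\omega_\calC^2>0$ makes it big; $\calC$ contains no $(-1)$-curve because the fibers have genus $g\geq 3$. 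The Miyaoka--Yau inequality $c_1^2(\calC)\leq 3c_2(\calC)$ then reads, after dividing by $4(h-1)>0$,
\[
3l+2g-2\,\leq\,3(g-1),\qquad\text{i.e.,}\quad l\leq \frac{g-1}{3}.
\]

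The real content of the argument is Miyaoka--Yau. The main subtlety I expect in writing this up carefully is actually Step~1: matching the Gauss--Bonnet area computation to the Chern class $c_1(\mathbb{E})$ requires care about normalizations of the Siegel metric, and the Higgs-bundle decomposition $\mathbb{E}|_C\cong \LL^{\oplus l}\oplus \OO_C^{\oplus (g-l)}$ holds only up to a finite étale cover (since the monodromy on the isotrivial summand need not be trivial, only finite). Both issues are technical rather than conceptual, and neither affects the final numerical bound because the relevant quantities scale compatibly under finite covers.
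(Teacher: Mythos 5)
Your argument is correct and is essentially the paper's proof: both compute the Chern numbers of the total space of the universal curve over $C$, identify $\deg f_*\omega_{\calC/C}=l(h-1)$ from the totally geodesic hypothesis via Gauss--Bonnet, and conclude with the Miyaoka--Yau inequality $c_1^2\leq 3c_2$ (your relative Noether formula is just the Grothendieck--Riemann--Roch step the paper invokes). Your Step~1, making the $\Sl_2(\R)$-representation/Higgs-bundle decomposition of $\mathbb{E}|_C$ explicit, supplies detail the paper's sketch only gestures at, but it is not a different route.
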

\begin{proof}[Sketch of the proof]
By restricting the universal family of curves $\mathcal{C}_g\to \Mg$ to $C$ we obtain a surface $S$ and a fibration $\pi : S \to C$ in smooth genus $g$ curves. We compute the Chern numbers of $S$:
\begin{displaymath}
c_2(S)=c_1(C)c_1(\text{fibre})=4(g(C)-1)(g-1)
\end{displaymath}
for $c_1(S)^2$ we use Grothendieck-Riemann-Roch. So we just need to compute $ch^1(R^1\pi_* \mathcal{O}_S)$. To see that it is equal to $-l(g(C)-1)$ we use the  Gauss-Bonnet formula and the fact that $j(C)$ is totally geodesic of curvature $-1/l$.

Since $S$ is necessarily minimal and of general type, the Miyaoka-Yau inequality gives
\begin{displaymath}
c_1^2= 4(g(C)-1)(2g-2+3l) \leq 3c_2^2=12(g(C)-1)(g-1),
\end{displaymath}
which implies the desired bound.
 \end{proof}

\begin{rmk}\label{rmktoledo}
It seems that one can improve the above bound to $l< (g-1)/3$, since the equality in MY implies that $S$ is a ball quotient (as one obtains from Yau's proof of the Calabi's conjecture). But a result of Liu \cite{zbMATH00980882} shows that a compact complex-hyperbolic surface $S$ does not admit nonsingular holomorphic fibrations over complex curves. Unfortunately it seems that the result is not widely accepted by the community. However, another proof appears also in \cite{zbMATH05816763}.
\end{rmk}

\begin{cor}
For $g\leq 4$, $\Mg$ contains no compact totally geodesic curves. 
\end{cor}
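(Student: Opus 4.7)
The plan is to combine \Cref{toledothm} with a universal lower bound on the curvature parameter $l$ of any totally geodesic holomorphic curve in $\mathcal{A}_g$, and then to use the strict-inequality improvement mentioned in \Cref{rmktoledo} to eliminate the borderline case $g=4$.

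The first step is to establish $l \geq 1$ for every totally geodesic holomorphic curve in $\mathcal{A}_g$. Passing to universal covers, such a curve lifts to a totally geodesic holomorphic disc $\mathbb{H} \hookrightarrow \mathbb{H}_g$, which in turn comes from an $\mathbb{R}$-Lie algebra embedding $\mathfrak{sl}_2(\mathbb{R}) \to \mathfrak{sp}_{2g}(\mathbb{R})$. Computing the pullback of the (Toledo-normalized) Siegel metric shows that the induced Gaussian curvature on $\mathbb{H}$ is of the form $-1/l$ with $l \in \{1,\ldots,g\}$; in particular $l \geq 1$, with equality attained on the most strongly curved totally geodesic discs in $\mathbb{H}_g$.

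The second step is to apply \Cref{toledothm}, whose proof in fact extends to $g=2$ (the total space $S$ is still minimal and of general type, since its fibres have genus $\geq 2$). For $g \in \{2,3\}$ one obtains $l \leq (g-1)/3 < 1$, which directly contradicts Step~1, and we are done. For $g=4$, the inequality $l \leq 1$ combined with Step~1 forces $l=1$. Unwinding the proof of \Cref{toledothm}, this is exactly the equality case in the Miyaoka--Yau inequality $c_1(S)^2 \leq 3 c_2(S)$. By Yau's theorem (see \Cref{rmktoledo}), the surface $S$ would then be a compact complex-hyperbolic ball quotient; but Liu's theorem \cite{zbMATH00980882} (or the alternative approach in \cite{zbMATH05816763}) forbids such a surface from admitting a smooth holomorphic fibration over a curve, whereas the structural morphism $\pi : S \to C$ is precisely such a fibration. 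This contradiction rules out $g=4$ as well.

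The main obstacle is really Step~1: one must verify that the parameter $l$ appearing in the statement of \Cref{toledothm} (defined via the Chern number computation in its proof) coincides with the $l$ bounded below through the geometric comparison of metrics on $\mathbb{H}$ and $\mathbb{H}_g$, and in particular that it is a positive integer. Once the two normalizations are aligned, the rest of the argument is numerical, together with the rigidity input from Liu.
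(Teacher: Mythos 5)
Your proposal is correct and follows the same skeleton as the paper's argument (Toledo's inequality $l\leq (g-1)/3$ plus the implicit lower bound $l\geq 1$, which you rightly make explicit via the classification of totally geodesic holomorphic discs in $\mathbb{H}_g$ by the rank of the tangent direction). The one genuine divergence is how the borderline case $l=1$ (equivalently $g=4$) is excluded: the paper simply invokes Toledo's own argument in \cite[Sec.\ 2]{zbMATH04057672}, which rules out curvature $-1$ complex geodesics directly, whereas you go through the equality case of Miyaoka--Yau, Yau's rigidity (so $S$ would be a ball quotient), and Liu's non-fibration theorem \cite{zbMATH00980882}. That route is exactly the one sketched in \Cref{rmktoledo}, and it does close the argument; but note that the paper explicitly cautions that Liu's result ``is not widely accepted by the community,'' so your proof of the $g=4$ case rests on a contested input (mitigated by the alternative proof in \cite{zbMATH05816763}), while the paper's citation to Toledo avoids this entirely. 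A minor further difference: for $g\leq 2$ you extend Toledo's theorem, which is unnecessary since $\mathcal{M}_1$ and $\mathcal{M}_2$ are affine and contain no compact curves at all --- this is why the paper says the statement is only interesting for $g=3,4$.
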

\begin{proof}
The result is interesting only for $g=3,4$, in this range $\Mg$ contains indeed many compact curves. In \cite[Sec. 2]{zbMATH04057672}, Toledo rules out the case $l=1$, so this follows from the above theorem.
\end{proof}

\subsection{Related works}
We collect here some related work
\begin{itemize}
\item Another  fruitful method for excluding Shimura curves is the use of the so called \emph{Arakelov inequalities}. We refer to \cite{zbMATH05635169} for details on this powerful technique. 
\item See also \cite{2024arXiv240306217L, zbMATH05702783} for further investigations.
\item In \cite{2024arXiv240106577D}, de Gaay Fortman and Schreieder show, among other things, that for a very general principally polarized abelian variety of dimension at least four, no power is isogenous to a product of Jacobians of curves. 
\item In \cite{zbMATH06108421}, Andreatta discusses an analogue of the Coleman-Oort conjecture for the locus of degenerate irreducible curves.
\end{itemize}

\newpage

\part{Teichm\"{u}ller viewpoint}\label{part2}

\section{Dynamical and metric perspective}
We now introduce the second main source of special subvarieties of $\Mg$ (and related moduli spaces like $\mathcal{M}_{g,n}$ and $\Og$). For an introduction of this circle of ideas, we refer to \cite{zbMATH07666127}.

\subsection{Metric geometry} This is a large world with contributions of many people. We simply refer to the recent survey \cite{2024arXiv240500761C}.

Let $X$ be a smooth quasi-projective variety. The \emph{Kobayashi pseudometric} $d_K=d_{K,X}$ is the largest pseudometric on a (smooth quasi-projective) variety $X$ such that
\begin{displaymath}
d_K(f(x),f(y))\leq \rho (x,y),
\end{displaymath}
for all holomorphic maps $f: \Delta \to X$ (where $\rho (x,y)$ is the Poincaré metric)--cf. Kobayashi's book \cite{zbMATH01161515}.

\begin{defi}
A complex manifold $Y$ is said to be \textbf{Kobayashi hyperbolic} if the Kobayashi pseudo-metric is in fact a metric: $d_{K,X}(p,q)>0, \forall p\neq q$.
\end{defi}
\begin{defi}\label{kobgeod}
A subvariety $Y \subset X$ is called \textbf{K-totally geodesic} if $d_{K,Y}$ is equal to the restriction to $Y$ of $d_{K,X}$.
\end{defi}
 The definition makes sense also if $d_K$ is not a metric, and for example, for abelian varieties (where $d_K=0$) the K-totally geodesic subvarieties are translate of sub-abelian varieties \cite{zbMATH07056564}.

We can represent $\Mg$ (or better its complex points) via the uniformization $\Mod(g) \backslash \mathcal{T}_g$. The \teic space $ \mathcal{T}_g$ is inhomogeneous, for example $\Aut(\mathcal{T}_g)=\Mod(g)$ is discrete. Nevertheless there are a lots of isometric holomorphic map  $F:\mathbb{H} \to  \mathcal{T}_g$. We call such curves \emph{complex \teic geodesics} ($\C$T-geodesic). Indeed every two points can be connected by a $\C$T-geodesic. The projection of the image of $F$ to $\Mg$ is almost always dense. If it lies in an algebraic curve $C\subset \Mg$, we call $C$ a \emph{\teic curve}. We will now explain that they are rare objects and what is known about their existence.
\begin{rmk}\label{rmkroy}
Royden, in the 70s, proved that the \teic metric and the Kobayashi metric are the same \cite{zbMATH03352015, zbMATH03472497}. The result that the holomorphic automorphism group of the \teic space is exactly the mapping class group is a consequence of this fact. Furthermore $\Mg$ does not admit any nontrivial automorphisms or correspondences (as long as $g\geq 3$), \cite[Thm. 6.1]{mochizuki}.
\end{rmk}From now on, we see moduli space $\Mg$ both as metric space and an algebraic variety. The metric comes from the \teic distance (of \kob) between $[X],[Y]\in \Mg$.

It is proved in \cite[Thm. 1.2]{zbMATH06260641} that, unless $g=1$, the Torelli map $j: \Mg \to \Ag$ is not an isometry for the Kobayashi metric.
\subsection{Flat uniformization and $\operatorname{GL}_2(\R)^+$-action}
\begin{thm}
Let $g\geq 1$. Every $[X]\in \Mg$ can be presented can be presented as the quotient of a polygon  $\mathbb{P}\subset \R^2$ by translations.
\end{thm}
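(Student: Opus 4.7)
The plan is to use the existence of a nonzero holomorphic $1$-form on $X$ to equip $X$ with a translation surface structure, and then cut $X$ along a suitable system of saddle connections to realize it as an identified polygon.

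First, I would fix a nonzero $\omega \in H^0(X,\Omega^1_X)$, which exists since $\dim H^0(X,\Omega^1_X)=g\geq 1$. The zero set $Z(\omega)$ is finite, with multiplicities summing to $2g-2$ by Riemann--Hurwitz. As recalled in the introduction, the local primitives $z\mapsto \int_{z_0}^z \omega$ give an atlas on $X\setminus Z(\omega)$ whose transition maps are translations of $\C \cong \R^2$. Pulling back the Euclidean metric gives a flat metric on $X$ with conical singularities (of cone angle $2\pi(\kappa_i+1)$) exactly at the points of $Z(\omega)$.

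Second, I would construct a geodesic triangulation $\mathcal T$ of $X$ whose vertex set equals $Z(\omega)$ (adding an arbitrary marked point if $\omega$ has no zeros, i.e.\ $g=1$), and whose edges are straight segments in the flat metric, i.e.\ saddle connections. The existence of such a triangulation is standard for translation surfaces: one starts from the Delaunay (or Voronoi-dual) decomposition with centers at $Z(\omega)$ and refines it into geodesic triangles. Each triangle $T$ develops isometrically to a Euclidean triangle $\widetilde T \subset \R^2$ via the chart induced by $\omega$.

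Third, I would choose a spanning tree $\mathcal T^*$ of the \emph{dual} graph of $\mathcal T$, and cut $X$ along all edges of $\mathcal T$ whose dual edge is \emph{not} in $\mathcal T^*$. The resulting surface is simply connected (it retracts onto $\mathcal T^*$) and inherits a flat structure with translation charts; hence, developing it into $\R^2$ using $\omega$ yields an honest polygon $\mathbb{P}\subset\R^2$. The boundary identifications that recover $X$ pair opposite segments coming from the same cut edge, and these pairings are translations because the transition maps of the flat atlas are translations. This exhibits $X$ as the quotient of $\mathbb{P}$ by translation-identifications, as desired.

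The main obstacle is the geometric, not the formal, step: producing a geodesic triangulation with vertices at $Z(\omega)$. Once one has this, the cutting-by-spanning-tree procedure is routine and the translational nature of the identifications is automatic from the construction of the charts. A classical way to handle this obstacle is to use the Delaunay decomposition with respect to the cone points, which always exists on a compact translation surface and subdivides into geodesic triangles; alternatively, for the case $g=1$ one can simply observe that $X\cong \C/\Lambda$ is the quotient of a fundamental parallelogram, which makes that case immediate.
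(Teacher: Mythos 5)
The paper does not actually prove this statement: it records it as a known fact, illustrates it only in genus $1$ (the fundamental parallelogram of a lattice), and immediately passes to the refined version for $(X,\omega)\in\Og$. So there is no proof in the text to compare against; your route (choose $\omega\neq 0$, pass to the flat structure, Delaunay-triangulate with vertices at $Z(\omega)$, cut along the complement of a spanning tree of the dual graph, develop) is exactly the standard argument one would supply, and all of its ingredients are correct: the existence of a geodesic triangulation with vertex set $Z(\omega)$ is indeed a theorem (Delaunay decompositions of flat surfaces with conical singularities), the slit-open surface is a disk, and the boundary identifications are translations because the linear holonomy of the flat structure is trivial.

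There is, however, one genuine gap in the last step. The developing map on the simply connected tree of triangles is a local isometry but need not be injective: a chain of triangles can spiral and overlap itself in $\R^2$ (this already happens for square-tiled surfaces, where a tree of unit squares can wrap around and cover a square twice). So what your construction honestly produces is an \emph{immersed} polygon, or equivalently a presentation of $X$ as a finite union of embedded Euclidean triangles with translation identifications --- not automatically a single embedded polygon $\mathbb{P}\subset\R^2$ as the statement literally asks. To close the gap you must either (a) weaken the conclusion to ``finite union of polygons'', which is the form in which the fact is usually stated and proved in the literature, or (b) add an argument that some choice of triangulation and spanning tree yields an injective development; this is not automatic and is the only non-routine point of the whole proof. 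Since the paper itself is informal here, option (a) together with an explicit remark about the subtlety would be the cleanest fix.
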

\begin{rmk}
For example, in genus 1, one can write $X=\C/L$ for some lattice $L$. Alternatively, if we choose a parallelogram $\mathbb{P}\subset\C$ that is a fundamental domain for the action of $L$, we construct $X$ by gluing together opposite sides of $\mathbb{P}$.
\end{rmk}
The problem of the above theorem is that there are many different ways to present $X$ in this way. The better version is the following:
\begin{thm}
Every $(X,\omega)\in \Og$ can be presented as $(X,\omega)= (\mathbb{P},dz)/ \sim$.
\end{thm}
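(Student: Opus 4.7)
The plan is to exploit the translation surface structure that $\omega$ defines on $X$ and to \emph{cut and unfold} $X$ along a suitable finite graph of saddle connections until the result is a simply connected planar region.

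First, I would recall the translation atlas: away from the zero set $Z(\omega)$, the local primitives $p \mapsto \int_{p_0}^p \omega$ provide charts $U_i \to \C$ whose transition maps are translations, and pulling $dz$ back through them recovers $\omega$ on $X \setminus Z(\omega)$. At a zero of order $k$ one obtains a cone point of angle $2\pi(k+1)$. This equips $X$ with a well-defined singular flat metric and an unambiguous notion of straight-line (geodesic) segment joining two points of $Z(\omega)$.

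The main step is the existence of a geodesic triangulation of $X$ by saddle connections, i.e.\ a triangulation whose vertex set equals $Z(\omega)$ and whose edges are straight Euclidean segments joining zeros (with no zeros in their interior). I would obtain this either by taking the Delaunay decomposition of $(X, \omega)$ and subdividing its convex cells into triangles, or more concretely by iteratively adding saddle connections: compactness of $X$ together with a recurrence argument for the geodesic flow guarantees that saddle connections exist in many directions, and one extends a partial collection until every complementary region is a Euclidean triangle. This existence statement is the true technical heart of the proof; the rest is a formal consequence.

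Given such a triangulation $\mathcal{T}$, choose a spanning tree $T$ of its dual graph (whose vertices are the triangles and whose edges are the shared sides), pick a root triangle $\Delta_0$, and develop it into $\C$ as a Euclidean triangle using the atlas above. Traversing $T$, for each triangle adjacent to one already developed, develop it across the common edge using the translation transition maps. This inductive procedure produces a connected planar figure $\mathbb{P}$ whose triangles are indexed by $\mathcal{T}$ and whose boundary edges come in pairs, one pair per edge of $\mathcal{T}$ not in $T$. Each such pair is glued by a translation of $\C$ precisely because all transition functions are translations, so collapsing $\mathbb{P}$ by these gluings recovers $X$; since $dz$ on $\mathbb{P}$ matches the local primitives by construction, it descends to $\omega$. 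The main obstacle is thus the saddle-connection triangulation step: one must exhibit enough saddle connections to cut $X$ into genuine Euclidean triangles rather than into more complicated polygons, and the cleanest route is to invoke the Delaunay decomposition of a compact translation surface.
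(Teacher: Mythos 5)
The paper states this result without proof (it is presented as folklore in the flat-uniformization discussion, with only the parenthetical remark about cone singularities), so there is no argument of the author's to compare against; your proposal supplies the standard one, and it is essentially correct. The translation atlas, the Delaunay/saddle-connection triangulation with vertex set $Z(\omega)$ (Masur--Smillie), and the development of the triangles along a spanning tree of the dual graph is exactly the expected route, and you correctly identify the triangulation step as the technical heart. Two small caveats are worth recording. First, in the degenerate case $g=1$ the form $\omega$ has no zeros, so a triangulation with vertex set $Z(\omega)$ does not exist; one must adjoin an arbitrary marked point (or simply note that a flat torus is a parallelogram quotient directly). Second, and more substantively, the developed figure $\mathbb{P}$ produced by your spanning-tree procedure need not be an \emph{embedded} subset of $\R^2$: distinct triangles of the development can overlap in the plane, so in general $\mathbb{P}$ is only an immersed polygon (equivalently, an abstract union of Euclidean triangles glued along the tree edges, carrying a tautological $1$-form $dz$). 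The theorem as stated in the survey is informal enough that this reading is the intended one, but if one insists on $\mathbb{P}\subset\R^2$ as an honest planar polygon, your argument does not deliver that and the stronger claim requires additional work; you should state explicitly that the quotient is taken of the immersed/abstract polygon.
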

(The induced flat metric on $X$ has, in general, isolated singularities of negative curvature corresponding to vertices of $\mathbb{P}$.)

As alluded to in the introduction, on the Hodge bundle (without the zero section) $\Og$ there exists a natural action of $\operatorname{GL}_2(\R)^+$. We explain it in the flat picture. Write $\R^2=\C$ with $z=x+iy$. Given an element $g\in \operatorname{GL}_2(\R)^+$ we consider
\begin{displaymath}
g \cdot (X,\omega)=(X_g,\omega_g)=(g(\mathbb{P}),dz)/ \sim
\end{displaymath}
where $g$ naturally acts on $\mathbb{P}\subset \R^2$.

In fact, the action is most interesting when restricted to $\operatorname{SL}_2(\R)$, and we now explain it more geometrically. By considering the $K\cdot A \cdot K$ decomposition, it is enough to describe the action of  $a_t=  \begin{bmatrix}
    e^t & 0  \\
    0 & e^{-t}
  \end{bmatrix}$ and of the rotation by $\theta$, $k_\theta \in \operatorname{SO}(2)$:
  \begin{itemize}
  \item The action of $a_t $ is by stretch and shrink, as the following picture shows:
  \begin{center}
 \includegraphics[width=0.5\textwidth]{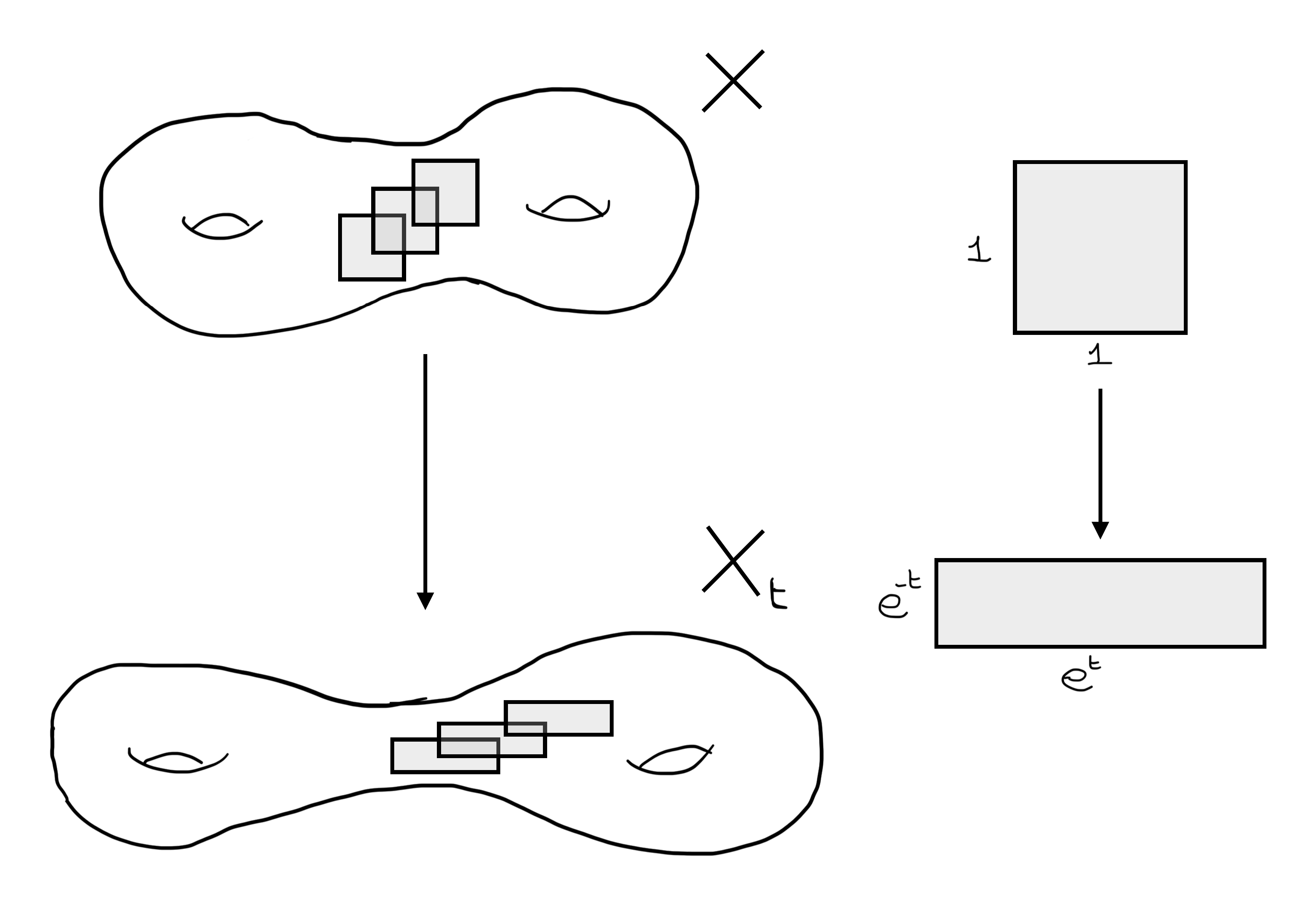}
\end{center}
\item The action of $k_\theta$ is given by
\begin{displaymath}
k_\theta \cdot (X,\omega)=(X, e^{i \theta } \omega)
\end{displaymath}
(the two $X$s appearing above have the same complex structure).
  \end{itemize}

\subsection{\teic curves}\label{teiccurves}
Consider the digram:
\begin{center}

\begin{tikzpicture}

  \matrix[matrix of math nodes, row sep=2em, column sep=2em, text height=1.5ex, text depth=0.25ex] (m) {
    \Gl_2(\R)^+ & \Gl_2(\R)^+ \cdot (X,\omega)\subset \Og \\
   \Gl_2(\R)^+/SO(2)\cong \mathbb{H}  & \mathcal{M}_g \\
  };

  % Add arrows
  \path[->]
    (m-1-1) edge (m-1-2) % A -> B
    (m-1-1) edge (m-2-1) % A -> C
    (m-1-2) edge (m-2-2) % B -> D
 (m-2-1) edge node[above] {$f$} (m-2-2); % C -> D with label f
\end{tikzpicture}
\end{center}
Denote by $\operatorname{SL}(X,\omega)\subset \operatorname{SL}_2(\R)$ the stabilizer of $(X,\omega)$, we observe that $f$ factors through $V:=\operatorname{SL}(X,\omega)\backslash \mathbb{H} $.
\begin{question}\label{questionteich}
When is $V$ algebraic? More generally, when does $f$ factorise through a strict algebraic subvariety?
\end{question}
We refer to the image of $f$ as a \emph{complex geodesic}. We remark that $\operatorname{SL}(X,\omega)$ is always a discrete subgroup of $ \operatorname{SL}_2(\R)$, which is in fact often trivial. For this theory, the $V$s that are not Zariski dense certainly play a \emph{special} role.

 \begin{rmk}
  It is a deep theorem of Eskin, Mirzakhani, and Mohammadi, that all $\Gl_2(\R)^+$-orbit closures
are invariant varieties (see \Cref{properties}), but one almost never needs to appeal to this theorem to show a given variety is invariant.
\end{rmk}
\subsubsection{Square-tiled surfaces} Consider a translation surface $(X,\omega)$ (cf. the next section for a detailed discussion) such
that its period point has rational coordinates in the relative cohomology. We can assume $\omega$ has entries in $\Z[i/N]$. Fixing a reference point $0\in X$, we have a covering map
\begin{displaymath}
X \to \C/ \Z + i \Z , p \mapsto  N \cdot \int_0^p \omega
\end{displaymath}
which has degree $N [\omega] \cap [\overline{\omega}]$.

In this case, the orbit   $\Gl_2(\R)^+ \cdot (X, \omega)$ is a \teic  curve, with field of affine definition $\Q$. Indeed, the stabilizer of $(X, \omega)$ is a finite index subgroup of $\operatorname{SL}_2(\Z)$.

\subsubsection{The regular n-gon (after Veech)} We present some of the results of Veech \cite{zbMATH04107186}.

\begin{thm}[Veech]
For $(X,\omega)= (\mathbb{P}_n,dz)/\sim$ (the regular $n$th-gon), $\operatorname{SL}(X,\omega)$ is always a lattice. Furthermore
\begin{displaymath}
\operatorname{SL}(X,\omega)=
\begin{cases}
\Delta(2,n,\infty) & \text{if } n=2g+1 \text{ is odd}, \\
\Delta(n/2,\infty,\infty) & \text{if } n =4g, 4g+2\text{ is even}.
\end{cases}
\end{displaymath}

\end{thm}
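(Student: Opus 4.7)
The plan is to exhibit explicit generators of $\operatorname{SL}(X,\omega)$, show they generate the claimed triangle group, and then pin down equality by a maximality/area argument. Throughout I view $\operatorname{SL}(X,\omega)$ in $\operatorname{PSL}_2(\R)$ since $-I$ acts trivially on any translation-surface structure.

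First I would set up $(X,\omega)$. For $n$ even, opposite sides of $\mathbb{P}_n$ are already parallel and are glued by translation, producing a genus $g$ translation surface with a single cone point (check via Euler characteristic). For $n=2g+1$ odd, parallel side pairs do not exist inside a single copy, so I would pass to the doubled polygon $\tilde{\mathbb{P}}_n = \mathbb{P}_n \sqcup (-\mathbb{P}_n)$ and glue each side of one copy to the parallel side of the other.

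Next I would read off an elliptic element from the dihedral symmetry of $\mathbb{P}_n$. For odd $n$, rotation by $\pi/n$ exchanges the two copies of $\tilde{\mathbb{P}}_n$ and respects the side pairings, hence descends to an affine automorphism of $(X,\omega)$ whose derivative $R_{\pi/n}$ has order $n$ in $\operatorname{PSL}_2(\R)$. For even $n$, rotation by $2\pi/n$ is already a direct symmetry of $\mathbb{P}_n$ and yields an element of order $n/2$ in $\operatorname{PSL}_2(\R)$ (because $R_\pi=-I$). I would then produce a parabolic from the horizontal cylinder decomposition of $(X,\omega)$: all maximal horizontal cylinders share a common modulus, so a single composite of powers of the individual Dehn twists lies in $\operatorname{SL}(X,\omega)$. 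The crucial computation, expressing cylinder widths and heights in terms of $\sin(k\pi/n)$, identifies this parabolic with $T_\lambda = \begin{pmatrix} 1 & \lambda \\ 0 & 1 \end{pmatrix}$ for $\lambda = 2\cos(\pi/n)$. For even $n$ the elliptic rotation of the previous step conjugates $T_\lambda$ to a parabolic fixing a distinct cusp, giving the two parabolic generators needed for $\Delta(n/2,\infty,\infty)$; for odd $n$ the rotation permutes the cylinders of a single decomposition among themselves, so only one cusp appears.

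Finally I would verify that the subgroup $\Gamma$ generated by these elements is the claimed triangle group by a short trace calculation: in the odd case the product of the order-$n$ rotation with $T_\lambda$ has trace $\lambda = 2\cos(\pi/n)$ and hence order $2$ in $\operatorname{PSL}_2(\R)$, yielding the $(2,n,\infty)$ presentation; the even case is analogous. At this point $\Gamma$ is a lattice of computable co-area. To upgrade this to $\Gamma = \operatorname{SL}(X,\omega)$ I would invoke that $\operatorname{SL}(X,\omega)$ is \emph{a priori} a discrete subgroup of $\operatorname{PSL}_2(\R)$ containing $\Gamma$, together with the classical maximality of Hecke-type triangle groups among lattices in $\operatorname{PSL}_2(\R)$: any strict overgroup would have strictly smaller co-area, contradicting maximality. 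The main obstacle will be the explicit trigonometric bookkeeping that pins down the Dehn-twist matrix as $T_{2\cos(\pi/n)}$; once this numerical identification is in hand the rest is assembly of the pieces plus a standard maximality appeal.
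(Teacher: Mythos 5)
The paper itself gives no proof of this theorem (it is quoted directly from Veech's 1989 paper), so your proposal has to be judged as a reconstruction of Veech's argument, whose overall architecture it does follow: the elliptic generator from the rotational symmetry, the parabolic from the common-modulus horizontal cylinder decomposition, and then an argument that the group generated is all of $\operatorname{SL}(X,\omega)$. Two computational slips first: the horizontal twist is $\left(\begin{smallmatrix}1&\lambda\\0&1\end{smallmatrix}\right)$ with $\lambda=2\cot(\pi/n)$, not $2\cos(\pi/n)$ (the group is only \emph{conjugate} to the Hecke group in its standard normalization, where $2\cos(\pi/n)$ appears); and an elliptic element of $\operatorname{PSL}_2(\R)$ of trace $2\cos(\pi/n)$ has order $n$, not order $2$ --- to exhibit the order-$2$ vertex of $\Delta(2,n,\infty)$ you need a product of trace $0$. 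These are fixable bookkeeping errors.

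The genuine gap is the closing ``maximality appeal'', and it is fatal in the even case. By Singerman's classification of inclusions of triangle groups (concretely: bisect the hyperbolic triangle with angles $\pi/(n/2),0,0$ along its axis of symmetry), $\Delta(n/2,\infty,\infty)$ sits with index $2$ inside $\Delta(2,n,\infty)$, so it is \emph{not} a maximal Fuchsian group. Hence ``$\operatorname{SL}(X,\omega)$ is discrete and contains $\Gamma$'' does not force $\operatorname{SL}(X,\omega)=\Gamma$: you must actively exclude a hidden affine automorphism whose derivative is an order-$2$ elliptic interchanging the two cusps, and this exclusion is exactly the delicate half of the even case. That the issue is real is visible already for the octagon: the Veech group of a genus-$2$ surface has trace field of degree at most $2$ over $\Q$, while $\Delta(2,8,\infty)$ has trace field $\Q(\cos(\pi/8))$ of degree $4$, so the answer genuinely is the smaller group and something must rule the larger one out (the trace-field bound does it for some $n$ but not all, so in general one argues directly that the two cylinder decompositions are not affinely equivalent). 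For odd $n$ your appeal is sound in principle --- $\Delta(2,n,\infty)$ is maximal, and by Greenberg--Singerman any Fuchsian overgroup of a triangle group is again a triangle group --- but this should be stated and justified rather than waved at as ``classical maximality of Hecke-type triangle groups''.
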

In particular, the above theorem produces three examples of \teic curves in $\mathcal{M}_2$ (i.e. for $n=5,8,10$). See also \cite[Sec. 3]{zbMATH07666127} for more about billiards.

\begin{rmk}
C. McMullen kindly pointed out to us that the examples $C_{t,p}$ from \Cref{ttv} are \teic curves associated to billiards in a regular $p$-gon. Indeed any Riemann surface parametrized by such \teic curve has a factor with RM by the trace field $K$ of the Veech group (in this case the triangle group $\Delta(2,p,\infty)$), so $K =\Q(\zeta_p +\zeta_p^{-1})$. Since $p=2g+1 $, we have $g = [K:\Q]$ so in fact the whole $\Jac(X)$ has RM.
\end{rmk}

\subsubsection{Triangle group series of Bouw-M\"{o}ller \cite{zbMATH05779356}.}
There is a two-parameter family of \teic curves with Veech groups $\Gamma_{m,n}$ commensurable to triangle reflection groups $\Delta (m,n,\infty)$. See also \cite{zbMATH06180410}.

See also \cite[Sec. 7]{zbMATH01963976} for a related discussion regarding the case $m=2$ and $3$, as well as explicit algebraic equations.

\subsection{Classification in genus 2, after McMullen}\label{genus2mc}
Since there is already a beautiful account in \cite{zbMATH07666127, zbMATH01963976}, we simply describe some of the results of McMullen that classify \teic curves in genus two.
\begin{thm}[McMullen]
There are infinitely many \teic curves in $\mathcal{M}_2$
\end{thm}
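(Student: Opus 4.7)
My plan is to construct, for each positive real quadratic discriminant $D$ (i.e. $D > 0$, $D \equiv 0, 1 \pmod 4$, non-square), a Teichm\"{u}ller curve $W_D \subset \mathcal{M}_2$ --- a \emph{Weierstrass curve} --- by exploiting that in genus $2$ an eigenform for real multiplication with a double zero is automatically stabilized by a lattice in $\operatorname{SL}_2(\mathbb{R})$. Since the real quadratic orders $\mathcal{O}_D$ are pairwise non-isomorphic for distinct $D$, this will produce infinitely many pairwise distinct Teichm\"{u}ller curves.

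The construction proceeds as follows. Inside $\Omega \mathcal{M}_2$, consider the locus $\Omega E_D$ of $(X, \omega)$ such that $\operatorname{Jac}(X)$ carries real multiplication by $\mathcal{O}_D$ (in the sense of \Cref{realcommult}) and $\omega$ is an eigenform for this action. Because the Hilbert modular surface parametrizing $\mathcal{O}_D$-polarized abelian surfaces is $2$-dimensional in $\mathcal{A}_2$ and the $\mathcal{O}_D$-action splits $H^0(X, \Omega^1_X)$ into two $1$-dimensional eigenlines, $\Omega E_D$ is a rank-one bundle over a $2$-dimensional base inside $\mathcal{M}_2$, hence $3$-dimensional. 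The Weierstrass locus is $\Omega W_D := \Omega E_D \cap \Omega\mathcal{M}_2(2)$; requiring a double zero drops the dimension by one, and its projection $W_D \subset \mathcal{M}_2$ is a curve, the candidate Teichm\"{u}ller curve.

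The algebraic core of the argument is to verify $\operatorname{GL}_2(\mathbb{R})^+$-invariance of $\Omega W_D$, so that an orbit in $\Omega W_D$ projects to a complex geodesic lying entirely in $W_D$. Invariance has three ingredients: (i) the stratum $\Omega\mathcal{M}_2(2)$ is $\operatorname{GL}_2(\mathbb{R})^+$-invariant by construction, since the action preserves the order of zeros; (ii) the real multiplication condition is preserved because $\operatorname{GL}_2(\mathbb{R})^+$ acts on $H^1(X;\mathbb{R})$ through an $\mathbb{R}$-linear transformation mixing $\operatorname{Re}\omega$ and $\operatorname{Im}\omega$, which commutes with the $\mathbb{Q}$-linear $\mathcal{O}_D$-action; (iii) the eigenform condition is preserved for the same reason, as the eigenspace decomposition of $H^1(X;\mathbb{R})$ under $\mathcal{O}_D \otimes \mathbb{R}$ is respected.

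The hard part, and the main obstacle, is showing that for each $(X,\omega) \in \Omega W_D$ the Veech group $\operatorname{SL}(X,\omega)$ is a lattice in $\operatorname{SL}_2(\mathbb{R})$, so that the image of $f$ in \Cref{questionteich} is algebraic. This is McMullen's central technical input: using the $\mathcal{O}_D$-action one produces enough affine symmetries of $(X,\omega)$ --- concretely, parabolic elements realised by cylinder decompositions corresponding to eigen-directions of $\mathcal{O}_D$ and to Weierstrass data --- to force $\operatorname{SL}(X,\omega)$ to have finite covolume. Once this lattice property is established, the orbit $\operatorname{GL}_2(\mathbb{R})^+\cdot (X,\omega)$ descends to an algebraic curve, which by construction coincides with (a component of) $W_D$. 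To conclude, I would distinguish the $W_D$ for varying $D$ by noting that the trace field of the Veech group along $W_D$ recovers $\mathbb{Q}(\sqrt{D})$, and the generic endomorphism ring of the Jacobian on $W_D$ is exactly $\mathcal{O}_D$; hence $W_D = W_{D'}$ forces $D = D'$, and the infinitude of real quadratic discriminants yields infinitely many distinct Teichm\"{u}ller curves. (A minor subtlety, handled by McMullen's spin invariant, is that $W_D$ may decompose into two components when $D \equiv 1 \pmod 8$; this does not affect the infinitude conclusion.)
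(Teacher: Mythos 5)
Your construction is exactly the one the paper sketches in \Cref{genus2mc} (and it is McMullen's): form $\Omega W_D = \Omega E_D \cap \Omega\mathcal{M}_2(2)$, check $\operatorname{GL}_2(\R)^+$-invariance, conclude that the projection $W_D$ is a union of \teic curves, and separate the curves for distinct $D$ by the trace field / endomorphism ring. The invariance argument (items (i)--(iii)) and the final separation step are fine. Two points, however, deserve attention.

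First, a genuine gap: you never show that $\Omega W_D$ is \emph{non-empty} for infinitely many $D$. Your dimension count (``requiring a double zero drops the dimension by one'') is only a statement about expected codimension; a priori the $3$-dimensional locus $\Omega E_D$ could miss the divisor $\Omega\mathcal{M}_2(2)$ entirely, in which case the construction produces no curve at all. McMullen closes this by exhibiting explicit points of $\Omega W_D$ for every discriminant $D \geq 5$, namely the eigenforms arising from L-shaped polygons (equivalently, billiards in L-shaped tables) with side lengths in $\Q(\sqrt{D})$; some such explicit input is unavoidable, and without it the theorem does not follow.

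Second, you present the lattice property of $\operatorname{SL}(X,\omega)$ as requiring the construction of enough parabolic affine symmetries via cylinder decompositions. That is the Veech/Calta route; McMullen's point is precisely that one can avoid it. Since $\Omega W_D$ is a \emph{closed}, $\operatorname{GL}_2(\R)^+$-invariant subvariety of complex dimension $2$, equal to the dimension of a single orbit, every orbit is open in $\Omega W_D$; hence each component is a single closed orbit, and Smillie's theorem (a closed $\operatorname{SL}_2(\R)$-orbit in a stratum has finite volume) gives the lattice property for free. I would replace your paragraph on parabolics with this soft argument; combined with the non-emptiness input above, the proof is complete.
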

In fact, in $\Omega \mathcal{M}_2$ the invariant subvarieties are:
\begin{itemize}
\item $\Omega \mathcal{M}_2(2)$ (which is Zariski closed);
\item  $\Omega \mathcal{M}_2(1,1)$ (open dense);
\item $\Omega E_D$, the locus of $(X,\omega)$ such that $ \Jac(X)$ has real multiplication by $\mathcal{O}_D=\Z[x]/x^2+bx+c $ with $D:=b^2-4c \geq 4$ and congruent to 0 or 1 mod 4, and $\omega$ is an eigenform for the real multiplication (i.e. $T^*\omega = \lambda \omega$);
\item $\Omega W_D = \Omega \mathcal{M}_2(2)\cap \Omega E_D$ which is invariant since it is the intersection of two invariants. 
\end{itemize}
Finally $\Omega W_D $ projects to $\mathcal{M}_2$, with image $W_D$, a finite union of \teic curves. Determining the number of connected components is hard, in that directions McMullen proves:

\begin{thm}[McMullen]
$W_D$ is irreducible when $D\neq d^2$, except it has two components when $D>9$ and congruent to $0$ or $1$ mod $8$.
\end{thm}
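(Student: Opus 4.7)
The plan is to reduce the problem to a combinatorial one by parametrizing cusps of $W_D$ via integral ``splitting prototypes'' and then distinguishing components by a spin invariant. First, I would show that every $(X,\omega) \in \Omega W_D$, after applying a suitable element of $\Gl_2(\R)^+$, can be presented as two flat tori glued along a horizontal saddle connection in a standard form; such a configuration is encoded by a normalized quadruple $(a,b,c,e)\in\Z^4$ subject to constraints like $b,c>0$, $0\leq a<\gcd(b,c)$, $c+e<b$, together with the discriminant relation $D=e^2+4bc$ coming from the RM order $\mathcal{O}_D$. This produces a finite set $\mathcal{P}_D$ of prototypes that surjects onto the set of cusps of $W_D$, so that the connected components of $W_D$ are in bijection with orbits on $\mathcal{P}_D$ under the natural horocycle/Dehn-twist action.

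Next, I would define a spin invariant $\epsilon:\mathcal{P}_D\to \Z/2\Z$, concretely realized as the Arf invariant (parity of the associated theta characteristic) of the $\Z/2$-quadratic form on $H_1(X;\Z/2)$ canonically determined by the double zero of $\omega$. A cohomological computation shows $\epsilon$ is constant on connected components of $W_D$ and that, in prototype coordinates, $\epsilon(a,b,c,e)$ is a simple explicit function of $(a,b,c,e)\bmod 2$. One then checks the congruence-class trichotomy: $\epsilon$ is identically $0$ on $\mathcal{P}_D$ unless $D\equiv 0$ or $1\pmod 8$, and in the latter case both values $0,1$ are actually attained. For $D\equiv 0,1\pmod 8$ with $D>9$, an explicit construction exhibits prototypes of each spin; the small cases $D\leq 9$ in this congruence class are precisely those where one spin class turns out to be empty.

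Finally, the core of the argument is the converse: the group action is transitive on each level set $\epsilon^{-1}(0)$ and $\epsilon^{-1}(1)$. This would be proved by an algorithmic ``Euclidean reduction'' on prototypes, analogous to Gauss's reduction of indefinite binary quadratic forms: iteratively applying horocyclic/Dehn-twist moves to decrease $|a|$, $|e|$, and then $(b,c)$, one reduces any $(a,b,c,e)\in\mathcal{P}_D$ to a canonical prototype depending only on $(D,\epsilon)$. The main obstacle will be executing this reduction cleanly while verifying that $\epsilon$ is preserved at every step, and ensuring the reduction algorithm terminates on the fundamental domain; a secondary subtlety is that the hypothesis $D\neq d^2$ is genuinely needed, since for square discriminants the RM order $\mathcal{O}_D$ splits over $\Q$, the eigenform locus $\Omega W_D$ acquires decomposed Jacobians, and extra components appear that are not detected by $\epsilon$ alone.
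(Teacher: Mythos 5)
The paper itself contains no proof of this theorem: it is quoted verbatim from McMullen's \emph{Teichm\"{u}ller curves in genus two: discriminant and spin}, so there is no in-text argument to compare against. Your outline is, in its architecture, a faithful reconstruction of McMullen's actual proof — splitting prototypes $(a,b,c,e)$ with $D=e^2+4bc$ parametrizing cusps, an invariant constant on components, and a combinatorial reduction showing transitivity on each level set — but two of its ingredients are not right as stated.

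First, the spin invariant cannot be the Arf invariant of the theta characteristic determined by the double zero of $\omega$. That is the Kontsevich--Zorich parity of the spin structure, and it is constant on the entire stratum $\Omega \mathcal{M}_2(2)$ (which is connected and consists entirely of hyperelliptic curves, the zero of $\omega$ always sitting at a Weierstrass point), hence constant on every $W_D$; it separates nothing. McMullen's invariant essentially uses the real multiplication: basing the six Weierstrass points at the zero $P_0$ of $\omega$ embeds them in $\Jac(X)[2]$, which is a module over $\mathcal{O}_D/2\mathcal{O}_D$, and the invariant is (roughly) the parity of the number of Weierstrass points lying in a distinguished $\mathcal{O}_D$-submodule. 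Without the $\mathcal{O}_D$-action there is no mechanism by which the invariant could see $D \bmod 8$ at all. Second, the trichotomy you assert — both spin values attained exactly when $D\equiv 0$ or $1\pmod 8$ — is not what the prototype computation yields: for even discriminants, and for $D\equiv 5\pmod 8$, the spin is constant and $W_D$ is connected; two components occur only for $D\equiv 1\pmod 8$ with $D>9$. (The ``$0$ or'' in the statement as quoted in the survey appears to be a slip, but your proof sketch builds it in as a claim about the prototypes, where it is false.)

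A further caution on the last step: proving transitivity on each spin class is the hard core of McMullen's paper and is not a routine Euclidean reduction. The horocycle/Dehn-twist action stabilizes a cusp and only normalizes the data within a single prototype; connecting \emph{different} cusps of the same component requires re-splitting the surface in a new direction (McMullen's ``butterfly moves''), and showing that these moves connect any two prototypes of equal spin requires genuine number-theoretic input beyond a monotone descent on $|a|$, $|e|$, $(b,c)$. Your remark on why $D\neq d^2$ is needed is correct in spirit: for square discriminants the order splits, the surfaces become square-tiled, and the component count is different and is handled by a separate argument.
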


The conditions about RM appearing above are the first hint of the link between the fist two parts of text.
\subsubsection{Explicit example in genus 2, after Kumar and Mukamel}
The authors give the first explicit algebraic models of \teic curves of positive genus. This is  based on the study of certain Hilbert modular forms and the use of Ahlfors’s variational formula to identify eigenforms for real multiplication. We record here just one example of their results:

\begin{thm}[{\cite[Thm. 4.1]{zbMATH07004492}}]
The Jacobian of the curve
\begin{displaymath}
y^2=x^5-2x^4-12x^3-8x^2+52x+24
\end{displaymath}
admits real multiplication by $\Z[\sqrt{3}] $ with eigenforms $dx/y$ (which has a double zero) and $x \cdot dx/y$.
\end{thm}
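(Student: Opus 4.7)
The statement splits cleanly into two parts: (i) the holomorphic differential $\omega_1 := dx/y$ has a double zero, so that $(C, \omega_1)$ lies in the minimal stratum $\Omega \mathcal{M}_2(2)$; and (ii) there exists $T \in \End(\Jac(C))$ with $T^2 = 3 \cdot \id$ for which both $\omega_1$ and $\omega_2 := x \cdot dx/y$ are eigenforms. I would first dispense with (i) by a direct local computation at the point at infinity. Since $\deg f = 5$ is odd, the hyperelliptic map $x \colon C \to \mathbb{P}^1$ has a unique ramification point $\infty$ with local parameter $t$ satisfying $x = t^{-2}$; expanding $y^2 = f(x) = t^{-10}(1 + O(t^2))$ gives $y = \epsilon\, t^{-5}(1 + O(t^2))$ for some sign $\epsilon$. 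Substituting $dx = -2 t^{-3}\, dt$ yields $\omega_1 = -2\epsilon\, t^{2}(1 + O(t^2))\, dt$, so $\mathrm{div}(\omega_1) = 2 \cdot \infty$, while $\omega_2 = -2\epsilon\,(1 + O(t^2))\, dt$ has order $0$ at $\infty$ and therefore vanishes simply at the two points above $x = 0$.

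For part (ii), my plan is to invoke McMullen's classification from \Cref{genus2mc}. The locus $\Omega W_{12} = \Omega \mathcal{M}_2(2) \cap \Omega E_{12}$ consists exactly of pairs $(X, \omega)$ such that $\Jac(X)$ has real multiplication by $\mathcal{O}_{12} = \Z[\sqrt{3}]$, $\omega$ is an eigenform for this action, and $\omega$ has a double zero. Combined with (i), it thus suffices to show $(C, \omega_1) \in \Omega W_{12}$. A concrete route is to compute the Igusa-Clebsch invariants of $C$ and verify that $\Jac(C)$ lies on the Humbert surface $H_{12} \subset \mathcal{A}_2$ of abelian surfaces with real multiplication by $\Z[\sqrt{3}]$; together with (i) this places $(C, \omega_1)$ in $\Omega W_{12}$ and forces $\omega_1$ to be an eigenform.

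For the eigenform property of $\omega_2$, I note that any eigenvector for the other eigenvalue must be of the form $a\omega_1 + b\omega_2$ with $b \neq 0$ (since $\omega_1$ itself lies in the complementary eigenspace). Showing $\omega_2$ is itself an eigenform --- i.e.\ $a = 0$ --- amounts to verifying that the second eigenform vanishes at the two points above $x = 0$. This is exactly the content that the Kumar-Mukamel approach, via Ahlfors's variational formula and explicit Hilbert modular forms, is designed to exhibit: their construction produces the equation in a normal form in which the standard hyperelliptic basis $\{\omega_1, \omega_2\}$ diagonalises the $\Z[\sqrt{3}]$-action. Equivalently, one would verify directly that the endomorphism $T$ acts as $\mathrm{diag}(\sqrt{3}, -\sqrt{3})$ on $H^0(C, \Omega_C^1)$ in this basis.

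The hardest step will be the rigorous exhibition of $T$ itself. Numerical detection is routine: compute the period matrix of $C$ to high precision and use an LLL search for a matrix $M \in M_2(\Z[\sqrt{3}])$ with $M^2 = 3 \cdot \id$ preserving the complex structure on $H_1(C, \R)$ modulo the period lattice. Certifying that such $M$ lifts to an honest algebraic endomorphism of $\Jac(C)$ is the principal obstacle; it can be addressed by running the Costa-Mascot-Sijsling-Voight algorithm recalled earlier, by exhibiting an explicit correspondence on $C \times C$ inducing $T$, or by matching $C$ against a known algebraic model of the Hilbert modular surface for $\Q(\sqrt{3})$.
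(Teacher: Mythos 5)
First, note that the paper does not prove this statement at all: it is quoted verbatim from Kumar--Mukamel, whose own derivation is constructive (they produce the equation from explicit Hilbert modular forms and Ahlfors's variational formula, so the eigenform property is built into the construction rather than verified afterwards). Your part (i) is correct and complete: the local computation at the unique point over $x=\infty$ of the degree-$5$ model does show $\mathrm{div}(dx/y)=2\cdot\infty$ and $\mathrm{div}(x\,dx/y)=P_1+P_2$ with $x(P_i)=0$.

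Part (ii), however, contains a genuine logical gap. You claim that verifying $\Jac(C)\in H_{12}$ (real multiplication by $\Z[\sqrt{3}]$), ``together with (i),'' places $(C,\omega_1)$ in $\Omega W_{12}$ and \emph{forces} $\omega_1$ to be an eigenform. This does not follow. By definition $\Omega W_{12}=\Omega\mathcal{M}_2(2)\cap\Omega E_{12}$, and membership in $\Omega E_{12}$ already requires $\omega_1$ to be an eigenform; you cannot extract that conclusion from RM plus the double zero. Concretely: on a genus $2$ curve a holomorphic form has a double zero iff its divisor is $2P$ with $P$ a Weierstrass point, so there are six such forms up to scale (one per Weierstrass point), while the RM determines only two eigenlines in $H^0(C,\Omega^1_C)$. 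Generically on the (two-dimensional) Humbert surface none of the six Weierstrass forms is an eigenform --- indeed $W_{12}$ is a \emph{curve} inside $H_{12}$ precisely because asking an eigenform to have a double zero is one extra codimension --- and even on $W_{12}$ the eigenform's double zero could a priori sit at a Weierstrass point other than $\infty$ in your chosen model. So the eigenform property of $dx/y$ is an independent assertion that must be checked, e.g.\ by exhibiting the endomorphism $T$ and computing its action on $H^0(C,\Omega^1_C)$. That is exactly the step you defer: the final paragraph lists candidate methods (LLL on periods plus certification, the Costa--Mascot--Sijsling--Voight algorithm, an explicit correspondence, matching against a model of the Hilbert modular surface) without executing any of them. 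As it stands the proposal is a plausible verification strategy with its decisive step missing and one incorrect inference; it is not yet a proof.
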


\subsection{Shimura-\teic curves and applications}\label{teichshi}
We continue with the link from \Cref{coconj}. Looking for special behaviours of families of curves in $\Mg$ one is naturally brought to the following. A curve $C\subset \Mg$ is \emph{Shimura-\teic} (ST-curves) if it has both properties, i.e. it is a \teic curvea and, when considered in $\Ag$, Shimura curves. 

We start by recalling the work of M\"{o}ller \cite{zbMATH05919111} plus Aulicino and Norton for $g=5$ \cite{zbMATH07347328}:
\begin{thm}\label{reicshcurves}
For $g = 2$ and $g\geq 5$ there are no ST-curves.
In both $\mathcal{M}_3$ and in $\mathcal{M}_4$ there is only one ST-curve:
\begin{displaymath}
\{y^4=x(x-1)(x-t)  \} \subset \mathcal{M}_4,
\end{displaymath} 
\begin{displaymath}
\{y^6 =x(x-1)(x-t)\} \subset \mathcal{M}_4.
\end{displaymath}

\end{thm}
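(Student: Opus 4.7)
The plan is to exploit the interplay between the two rigid structures available to an ST-curve: the tautological splitting of the VHS coming from the Teichm\"{u}ller origin, and the decomposition of the Hodge bundle forced by the Shimura origin. The strategy follows M\"{o}ller's Hodge-theoretic framework, supplemented by the fine monodromy analysis of Aulicino--Norton in the borderline genus.

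First, I would recall the general structure. If $C \subset \Mg$ is a Teichm\"{u}ller curve generated by $(X,\omega)$, then along the pullback $\mathbb{V} = R^1\pi_*\Q$ of the weight-$1$ VHS there is a tautological decomposition
\[
\mathbb{V} \otimes \R \;=\; \mathbb{L} \oplus \mathbb{M},
\]
where $\mathbb{L}$ is a rank-$2$ $\R$-sub-VHS (generated by the classes of $\mathrm{Re}(\omega),\mathrm{Im}(\omega)$) having \emph{maximal Higgs field}, and $\mathbb{M}$ is the complementary sub-VHS. Moreover, if $K$ denotes the trace field of the Veech group $\operatorname{SL}(X,\omega)$, then $K$ acts on $\mathbb{V}$ and $\mathbb{L}$ is precisely one of the Galois-conjugate $K$-eigencomponents; in particular $[K:\Q]$ divides $g$, and the Jacobian of the generic fibre has real multiplication by $K$ acting on a $K$-eigen-subbundle that contains $\omega$.

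Next, I would impose the Shimura condition. A Shimura curve $C \subset \Ag$ forces (via the Deligne--Simpson rigidity, or equivalently the Arakelov equality following Viehweg--Zuo) every irreducible $\R$-direct summand of $\mathbb{V}\otimes\R$ to either be unitary (hence a constant $\R$-VHS up to finite cover) or to have maximal Higgs field. Combined with the Teichm\"{u}ller decomposition, this means that in addition to $\mathbb{L}$, every Galois conjugate $\mathbb{L}^\sigma$ for $\sigma\in\Gal(K/\Q)$ must also have maximal Higgs field: otherwise the conjugates would give non-maximal pieces inside $\mathbb{M}$, violating the Shimura constraint. Consequently the ``variable'' part of $\mathbb{V}$ has rank exactly $2[K:\Q]$, and the remaining part of $\mathbb{M}$ is unitary. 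This is the key numerical reduction: it forces $g \leq 2[K:\Q]$ plus a unitary summand whose rank must fit into the available cohomology, and it forces the eigenform realisation of $\omega$ to be compatible with a CM structure on the unitary part.

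From here I would run the classification. The equality of ranks plus the RM/CM compatibility forces the generic fibre of the family over $C$ to be an abelian cover of $\PP^1$ with a very restricted ramification pattern (this is where the Bouw--M\"{o}ller / De Jong--Noot type examples appear). Going through the Moonen--Oort list of cyclic covers of $\PP^1$ producing special subvarieties, only the two families $\{y^4=x(x-1)(x-t)\}$ and $\{y^6=x(x-1)(x-t)\}$ satisfy the combined Shimura and Teichm\"{u}ller constraints; all others either fail the maximal-Higgs requirement on some conjugate eigenspace or fail to carry an eigenform with a single zero of the correct multiplicity (the Teichm\"{u}ller condition). For $g=2$, the classification of Teichm\"{u}ller curves (McMullen, see \Cref{genus2mc}) via the Weierstrass curves $W_D$ directly shows none of them is Shimura: the complementary summand $\mathbb{M}$ has rank $2$ with RM, and the Shimura condition would force $K=\Q$, contradicting $D>4$. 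For $g\geq 6$ the rank constraint $g\leq 2[K:\Q]$ together with the bound $[K:\Q]\leq g$ and parity/ramification constraints from the cyclic cover structure eliminates all candidates.

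The main obstacle is the genus $5$ case, which sits exactly at the boundary of M\"{o}ller's numerical exclusion and is not handled by the generic argument. Here I would invoke the Aulicino--Norton analysis: one lists the possible strata of $\Omega\mathcal{M}_5$ that could support an ST-curve, uses the full force of the cylinder decomposition together with the explicit form of the period matrix under the Veech group action, and shows by a case-by-case analysis of the possible monodromies and orders of symmetries that no candidate $(X,\omega)$ simultaneously satisfies the Veech dichotomy and has a Jacobian lying on a Shimura curve. This step is the most delicate because the Hodge-theoretic inequalities become equalities, and ruling it out requires genuinely finer invariants than those used in the other genera.
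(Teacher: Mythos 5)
First, a point of comparison: the paper does not prove this statement at all --- it is recalled as a citation to M\"{o}ller (for $g=2$, $g\geq 6$, and the genus $3,4$ classification) and to Aulicino--Norton (for $g=5$), so there is no in-text proof to match your sketch against. Judged against the actual arguments in those references, your sketch identifies the right ingredients (M\"{o}ller's tautological splitting $\V\otimes\R=\mathbb{L}\oplus\mathbb{M}$ with $\mathbb{L}$ maximal Higgs, the Viehweg--Zuo/Arakelov dichotomy ``unitary or maximal Higgs'' for Shimura curves, and the delicacy of $g=5$), but the central deduction is inverted in a way that breaks the proof. You claim the Shimura condition forces every Galois conjugate $\mathbb{L}^\sigma$ to be maximal Higgs, so that the ``variable part'' has rank $2[K:\Q]$. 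M\"{o}ller proves the opposite: $\mathbb{L}$ is the \emph{unique} maximal Higgs rank-two subsystem of the VHS over a Teichm\"{u}ller curve, and the nontrivial conjugates $\mathbb{L}^\sigma$ are also not unitary (their Kodaira--Spencer map is nonzero). Hence the Shimura dichotomy leaves no room for any nontrivial conjugate at all, forcing $K=\Q$, an arithmetic Veech group, a square-tiled generating surface, and $\mathbb{M}$ unitary of rank $2g-2$ (equivalently: all nontautological Kontsevich--Zorich exponents vanish). The classification then becomes that of square-tiled surfaces with completely degenerate Lyapunov spectrum, which yields exactly the Eierlegende Wollmilchsau ($y^4=x(x-1)(x-t)$, genus $3$) and the Ornithorynque ($y^6=x(x-1)(x-t)$, genus $4$), with the genus bound and the $g=5$ exclusion (Aulicino--Norton) coming from this degenerate-spectrum analysis --- not from your inequality $g\leq 2[K:\Q]$, which, combined with $[K:\Q]\leq g$, excludes nothing for $g\geq 6$.

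Two further consequences of the inversion. Your genus $2$ argument only treats the Weierstrass curves $W_D$ with $D$ not a square; the square-tiled Teichm\"{u}ller curves in genus $2$ (trace field $\Q$, including $W_{d^2}$ and the stratum $\Omega\mathcal{M}_2(1,1)$ examples) survive your ``$K=\Q$ is contradicted'' step and must instead be excluded by the degenerate-spectrum criterion (the sum of exponents in genus $2$ is known to be strictly larger than $1$). And the appeal to the Moonen--Oort list of special families of cyclic covers is not how the two examples are isolated in the actual proof: they arise as the unique square-tiled surfaces whose complementary part $\mathbb{M}$ has finite monodromy, which is a statement about the affine/monodromy structure of the translation surface rather than about which cyclic covers of $\PP^1$ give Shimura subvarieties.
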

The following natural analogue seems to be open, cf. \cite[Prob. 6 (page 789)]{zbMATH06272629}. We call an irreducible (Zariski closed) subvariety $W\subset \mathcal{M}_g$ \emph{generically Shimura} if the Zariski closure of $j(W)\subset \mathcal{A}_g$ is a Shimura subvariety (or an Hecke translate of such), where $j$ is the Torelli map.
\begin{question}
Are there \teic curves $V\subset \mathcal{M}_g $ that are generically Shimura?
\end{question}
Clearly \Cref{cogeom} implies that the answer should be negative, for $g>>0$. Nevertheless, the above question could be a good starting point in favour of the Coleman--Oort conjecture.

Now we give a sketch of the proof of \Cref{yeungthm}. A key input is the following:

\begin{thm}[Antonakoudis, {\cite[Thm. 1.1]{zbMATH06748171}}]
There is no holomorphic embedding of complex unit ball of dimension $>1$ into $\mathcal{T}_g$ which is isometric with respect to the Kobayashi metrics.
\end{thm}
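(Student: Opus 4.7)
The plan is to derive a contradiction by analyzing the infinitesimal structure of a hypothetical holomorphic Kobayashi-isometric embedding $f : \mathbb{B}^n \hookrightarrow \mathcal{T}_g$ with $n \geq 2$, exploiting the rigidity of complex geodesics in $\mathcal{T}_g$. By Royden (\Cref{rmkroy}), the Kobayashi metric on $\mathcal{T}_g$ equals the Teichm\"{u}ller metric, so the complex geodesics $\Delta \hookrightarrow \mathcal{T}_g$ are exactly the Teichm\"{u}ller disks, and at each $X \in \mathcal{T}_g$ the set of Teichm\"{u}ller disks through $X$ is canonically parametrized by $\mathbb{P}(Q(X))$, with $Q(X)$ the space of holomorphic quadratic differentials on $X$. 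The infinitesimal Kobayashi norm on $T_X \mathcal{T}_g$ is then the \emph{Teichm\"{u}ller norm}, the sup-norm dual to the $L^1$-norm on $Q(X)$.

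First, I would use that a Kobayashi isometry of complex manifolds is infinitesimally isometric, so $df_p : T_p\mathbb{B}^n \to T_{f(p)} \mathcal{T}_g$ is a linear isometry of normed spaces onto its image; in particular it maps the smooth, strictly convex Euclidean unit ball of $T_p \mathbb{B}^n$ onto a complex $n$-dimensional affine slice of the Teichm\"{u}ller unit ball. Moreover, every complex line $L \subset \mathbb{B}^n$ through $p$ is a complex geodesic, so $f(L)$ is a Teichm\"{u}ller disk encoded by a unit quadratic differential $\phi_L \in Q(f(p))$, well-defined modulo $U(1)$. Letting $L$ vary yields a holomorphic map
$$\Phi_p : \mathbb{P}(T_p\mathbb{B}^n) \cong \mathbb{P}^{n-1} \longrightarrow \mathbb{P}(Q(f(p))),$$
and the isometry condition forces: for all pairs $L_1, L_2$, the Kobayashi angle between $f(L_1)$ and $f(L_2)$ at $f(p)$ agrees with the Bergman angle between $L_1$ and $L_2$ at $p$. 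Since $U(n)$ acts transitively on pairs of tangent lines at $p$ with prescribed Bergman angle, this produces a highly symmetric family of Teichm\"{u}ller disks through $f(p)$ whose pairwise Kobayashi geometry mimics that of affine complex lines in $\mathbb{C}^n$.

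The decisive step, which I expect to be the chief obstacle, is to contradict the existence of such a configuration. The strategy is to study the facial structure of the Teichm\"{u}ller unit ball in $T_{f(p)}\mathcal{T}_g$: its smooth, strictly convex boundary points correspond to quadratic differentials with uniquely ergodic vertical foliation, whereas non-uniquely-ergodic differentials force non-trivial flat faces coming from periodic Jenkins--Strebel directions. A complex $n$-dimensional Euclidean affine slice of this ball with $n \geq 2$ would require $\Phi_p$ to produce a genuinely two-real-parameter holomorphic family of extremal quadratic differentials at $f(p)$ respecting the Bergman angle structure while avoiding all flat faces of the unit ball --- incompatible with Teichm\"{u}ller's uniqueness theorem for extremal quasi-conformal mappings and with the combinatorial structure of the $L^1$-ball in $Q(f(p))$. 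Making this local incompatibility precise, via a careful analysis of Royden's norm and its extremal rays, is the heart of the argument.
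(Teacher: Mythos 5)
The paper does not prove this statement: it is quoted as a black box from Antonakoudis \cite[Thm. 1.1]{zbMATH06748171} and used only as an input to the sketch of Yeung's theorem, so there is no internal proof to measure yours against. Judged on its own terms, your setup is correct and standard: Royden's identification of the Kobayashi and Teichm\"uller metrics, the passage from a global Kobayashi isometry to an infinitesimal one, and the fact that each complex line through $p$ maps to a complex geodesic of $\mathcal{T}_g$, hence a Teichm\"uller disk, so that $\mathrm{im}(df_p)$ is an $n$-dimensional subspace of $T_{f(p)}\mathcal{T}_g$ on which the Teichm\"uller norm is Hermitian. (One inaccuracy: the map $\Phi_p$ is not holomorphic, since the passage from a tangent direction $[\bar q/|q|]$ to the generating quadratic differential $q$ is not complex-analytic; only the induced linear embedding $\mathbb{P}(T_p\mathbb{B}^n)\to\mathbb{P}(T_{f(p)}\mathcal{T}_g)$ is.)

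The decisive step is missing, and the route you propose for it is the wrong one. You hope to contradict, at the single point $f(p)$, the existence of a Euclidean $n$-dimensional affine slice of the Teichm\"uller unit ball by appealing to its facial structure. But no purely local convexity argument of this kind is available: the dual of the $L^1$-norm on the finite-dimensional space $Q(X)$ is not obviously (and, to my knowledge, not actually) incapable of admitting a two-dimensional Hermitian quotient, and the flat faces attached to Jenkins--Strebel or non-uniquely-ergodic directions do not preclude a round slice through the origin. Citing ``Teichm\"uller's uniqueness theorem'' and ``the combinatorial structure of the $L^1$-ball'' here is a placeholder rather than an argument. Antonakoudis's actual proof is global, not infinitesimal: it exploits the distance geometry of \emph{pairs} of complex geodesics through a common point --- in a bounded symmetric domain every complex geodesic is a holomorphic retract and pairs of geodesics satisfy precise asymptotic distance identities, whereas for two distinct Teichm\"uller disks through $X$ the corresponding behaviour, controlled by Teichm\"uller's theorem and the extremal-length geometry of the two generating quadratic differentials, is incompatible with those identities. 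To complete the proof you would need to replace the local heuristic by a comparison of this global type; as written, the heart of the argument is absent.
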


\begin{proof}[Sketch of the proof of \Cref{yeungthm} (after Yeung)] If we consider the Torelli map as a map between stacks, it is known to be an immersion apart from the hyperelliptic locus (where it is 2 to 1).
\begin{enumerate}
\item Any symmetric variety $S$ in $j(\Mg - \mathcal{H}_g)\subset \Ag$ has to be of real rank 1 as a locally symmetric space (Hain \cite{MR1722540}, cf. also \Cref{thmmoonenetal}). Therefore $S$ has to be a complex ball quotient.
\item Yeung observes that a ball quotient in $j(\Mg - \mathcal{H}_g)\subset \Ag$ which is a sub-Shimura variety of $\Ag$ is necessarily totally geodesic in $\Mg$ for the Kobayashi metrics, see \cite[Lem. 2]{zbMATH07574854}
\item Antonakoudis result recalled above implies that the complex dimension of $S$ is 1 and hence $M$ is a hyperbolic Riemann surface. 
\item Such a Riemann surface $S$ has to be a Shimura-\teic in the above terminology, since the Kobayashi metric, which is well-known to be the same as the \teic metric on \teic spaces (cf. \Cref{rmkroy}), is the same as the natural hyperbolic metric on $S$ as it is a totally geodesic curve in $\Ag$
\end{enumerate}
Now we conclude thanks to \Cref{reicshcurves}. 

\end{proof}
(M\"{o}ller already notices that his results, and the work of Viehweg-Zuo, implies that there is no Shimura curve in $\Ag$ for $g\geq 6$ that lies entirely in $\Mg$).

\begin{rmk}
Mok \cite[Thm. 5.1]{zbMATH07629821} proved that in a Hermitian symmetric space of noncompact type, a totally geodesic complex submanifold is automatically given by holomorphic isometric embedding with respect to the Kobayashi metric.
\end{rmk}

\section{Translation surfaces and their stratified moduli space}

A \emph{translation surface} is a pair $(X,\omega)$, where $X$ is a curve and $\omega$ is a non-zero global section of the cotangent bundle of $X$. If $g \geq 1$ is the genus of $X$, then $\omega$ has $2g-2$ zeros, counted with multiplicities. The Hodge bundle $\Omega \mathcal{M}_g \to \mathcal{M}_g$ is naturally equipped with a stratification by smooth (though not necessarily irreducible, cf. \Cref{irrcomp}) algebraic subvarieties $\Omega \mathcal{M}_g(\kappa)$, indexed by the multiplicities of zeros of $\omega$. 

A holomorphic 1-form $\omega$ on $X$ provides a collection of ``charts'' on $X$ mapping to $\mathbb{C}$, where the transition maps are translations. These charts ramify at finitely many points corresponding to the zeros of $\omega$ and are locally described by $z \mapsto \int_{z_0}^z \omega$. These charts induce the so-called \emph{period coordinates} on $\Omega \mathcal{M}_g(\kappa)$. As a result, there is an action of $\operatorname{GL}_2(\mathbb{R})^+$ on $\Omega \mathcal{M}_g(\kappa)$, locally represented as a diagonal action on a product of copies of $\mathbb{C} \cong \mathbb{R}^2$ (that was also described in the flat picture in the previous section). While this action seems outside the scope of algebraic geometry, it turns out to be intimately related to Hodge theory. As explained before, cf. \Cref{questionteich}, of particular interest in the proposed investigation are the $ \operatorname{GL}_2(\R)^+$-invariant varieties\footnote{It turns out that this class coincides with the orbit closures $\mathcal{N} := \overline{\operatorname{GL}_2(\mathbb{R})^+ \cdot (X,\omega)} \subset \Omega \mathcal{M}_g(\kappa)$, where $\overline{(\cdot)}$ denotes the topological closure, also known as \emph{affine invariant submanifolds} thanks to a deep theorem of \cite{zbMATH06487150}.}.

 These submanifolds are, in fact, algebraic subvarieties of $\Omega \mathcal{M}_g(\kappa)$ and project to interesting subvarieties of $\mathcal{M}_g$. The fundamental invariants associated with $\mathcal{N}$ are three natural numbers: $r$, $d$, and $t$, called the cylinder rank, degree, and torsion corank of $\mathcal{N}$, respectively, as we will see in more details in the next section.

A remarkable constraint on the distribution of orbit closures was discovered by Eskin, Filip, and Wright (for a beautiful overview and further references we simply refer to Filip's notes \cite{filipnotes}):

\begin{thm}[{\cite[Thm. 1.5]{zbMATH06890813}}]\label{thmEFWoriginal}
In each (irreducible component of each) stratum $\Omega \mathcal{M}_g(\kappa)$, all but finitely many orbit closures have rank 1 and degree at most 2. 
\end{thm}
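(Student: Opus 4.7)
The plan is to recast the statement as a Zilber--Pink-style finiteness of atypical components of the Hodge locus on an irreducible component of $\Og(\kappa)$, and then to invoke bi-algebraic finiteness adapted to the period coordinates that structure the stratum. Three ingredients are essential: (a) Filip's algebraicity theorem, which realises each affine invariant submanifold $\mathcal{N} \subset \Og(\kappa)$ as a quasi-projective subvariety carrying a natural sub-variation of Hodge structure inside the tautological Hodge bundle; (b) Wright's cylinder deformation theorem, which gives local rigidity and, crucially, ties the invariants $(r,d,t)$ to algebraic features of $\mathcal{N}$ --- in particular, $d$ is the degree of the totally real trace field of real multiplication cut out by the affine equations of $\mathcal{N}$; and (c) a form of finiteness of maximal atypical intersections for the period map on the stratum, in the spirit of the results surveyed in \Cref{part1}.

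First I would translate the conditions $r \geq 2$ or $d \geq 3$ into Hodge-theoretic language. The rank $r$ equals (up to a factor of two) the dimension of the $\operatorname{GL}_2(\R)^+$-invariant symplectic summand that period coordinates cut from the ambient relative cohomology, while $d$ measures the splitting of that summand by the trace field of real multiplication. Both conditions force the generic Mumford--Tate group of the restricted sub-VHS on $\mathcal{N}$ to be strictly smaller than its counterpart on the ambient irreducible component of the stratum. Equivalently, $\mathcal{N}$ is contained in a proper component $H$ of the Hodge locus of $\Og(\kappa)$.

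Second, I would show that for $r\geq 2$ or $d \geq 3$ such a component $H$ is necessarily \emph{atypical}. A typical-dimension count, using the expected codimension of the Mumford--Tate constraint together with bigness of the monodromy on the stratum, would show that the existence of $\mathcal{N}$ with these large invariants violates the expected-codimension bound unless it is absorbed by a proper atypical locus. At this point I would deduce finiteness from a Baldi--Klingler--Ullmo-type finiteness of maximal atypical components (invoking the template of \Cref{part1}). A local rigidity argument via cylinder deformation would then conclude the proof: each maximal atypical locus can contain only finitely many orbit closures, since deforming cylinders on one $\mathcal{N}$ would push it inside the locus and force equality with the ambient maximal atypical component.

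The hard part, without question, is transferring the bi-algebraic finiteness machinery to $\Og$ rather than to a Shimura variety or to a smooth base of a polarised variation of Hodge structure. The stratum carries a natural VHS, but the period map is only locally defined (period coordinates are modelled on relative cohomology, not absolute), and the $\operatorname{GL}_2(\R)^+$-action does not sit neatly on the Hodge side. One probably needs either a stratum-tailored Ax--Schanuel theorem or a direct dynamical substitute, presumably invoking the Eskin--Mirzakhani measure classification together with semisimplicity of the Hodge-theoretic decomposition of the Kontsevich--Zorich cocycle. This step, rather than the Hodge-theoretic translation, is where I expect the genuine novelty to reside, and where a clean Shimura-style argument seems to break down.
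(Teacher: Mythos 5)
Your overall strategy---recast the statement as finiteness of maximal atypical intersections and run a Zilber--Pink-style argument on the stratum---is the one the paper follows (\Cref{orbitclosurefinthm}, after \cite{2024arXiv240616628B}), and you correctly identify Filip's algebraicity theorem and a functional-transcendence input as the two pillars. But two of your steps have genuine gaps. First, the translation of ``$r\geq 2$ or $d\geq 3$'' into ``the generic Mumford--Tate group of the restricted sub-VHS drops'' is not how atypicality can be measured here. The conditions characterizing an orbit closure in \Cref{mainthmalgfilip} are of three kinds: real multiplication (a genuine Hodge-theoretic condition, seen in $\mathcal{A}_g$), twisted torsion of the Abel--Jacobi map (a condition on the \emph{mixed} Hodge structure on relative cohomology, seen only in the mixed Shimura variety $\mathcal{A}_{g,n}$), and the eigenform condition on $\omega$ itself, which is an algebraic condition on the periods of the one-form and is \emph{not} a condition on Hodge structures at all---it cannot be detected in any period domain, only in the eigenform bundle $E[\mathcal{N}]$ of diagram \eqref{Filipdiagram}. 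Accordingly the paper's atypicality (\Cref{defatyfili}) is a codimension count in $E[\mathcal{M}]$, not in a Hodge locus, and the verification that $r\geq 2$ or $d\geq 3$ forces that inequality is an explicit computation with $\dim\mathcal{N}=2r+t$ and $\dim E[\mathcal{N}]$; a Mumford--Tate/monodromy count on a pure VHS would miss the torsion and eigenform contributions entirely (a rank-one, degree-one Teichm\"{u}ller curve need not have smaller Mumford--Tate group than the stratum, and conversely the eigenform locus is invisible to $\MT$).

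Second, the step you flag as the place ``where a clean Shimura-style argument seems to break down'' is precisely the step that must be supplied, and it is neither a stratum-tailored period-map Ax--Schanuel nor a dynamical substitute via Eskin--Mirzakhani: it is the Ax--Schanuel theorem for flat principal bundles (\Cref{thm:newAS}, Bl\'{a}zquez-Sanz--Casale--Freitag--Nagloo; cf.\ Bakker--Tsimerman), applied in families (\Cref{protogeoZP}) to the automorphic torsor over $\mathcal{A}_{g,n}$. Working in the torsor $P$ rather than in $S\times D$ is exactly what allows the eigenform condition on periods of $\omega$ to be imposed algebraically. The remaining pieces---the reduction to non-Zariski-density via the isolation property (\Cref{isolation}), the over-parametrization of all candidate loci by an algebraic family (\Cref{orbitoverparamlem}), and the final quasi-finiteness lemma (\Cref{orbitclnofamlem}), which replaces your cylinder-deformation rigidity argument---are then as you anticipate in spirit. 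So the architecture is right, but the two load-bearing ingredients, the correct atypicality functional and the torsor Ax--Schanuel, are respectively misidentified and left unresolved.
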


We will come back on this in \Cref{diagramatyp}, and give a sketch of its proof following \cite{2024arXiv240616628B}.

\subsection{Properties of invariant subvarieties}\label{properties}

Given $(X,Z)$ a pair of a Riemann surface and a finite set of points $Z=\{z_1, \dots, z_n\} \subset X$ (corresponding to the zeros of $\omega$), we have a short exact sequence:

\begin{equation}\label{eqrelcoho}
0 \to \tilde{H}^0(Z; \Z) \to H^1(X,Z; \Z)\to H^1(X;\Z)\to 0.
\end{equation}
The middle term is \emph{relative cohomology} and the the last term the \emph{absolute cohomology}. Note that the form $\omega$ naturally induces a class inside both $H^1(X,Z(\omega); \Z)$ and $H^1(X;\Z)$ by integrating over homological cycles and using homology-cohomology duality.

Fix a base point $(X_0, \omega_0 )\in \Omega \mathcal{M}_g (\kappa)$, and look at the developing map/period coordinates, where $\widetilde{-}$ denotes the universal covering
\begin{equation}\label{def:dev}
\operatorname{Dev}: \widetilde{\Omega \mathcal{M}_g (\kappa)}\to H^1(X_0,Z(\omega_0); \C).
\end{equation}
Concretely, the map sends $(X',\omega')$ to $\omega' \in H^1(X',Z(\omega'); \C)$ and then applies the flat transport to obtain an element of $H^1(X_0,Z(\omega_0); \C)$.
By a theorem of Veech, locally, $\operatorname{Dev}$ is biholomorphic.

We have a version of \eqref{eqrelcoho} for flat bundles above $\Omega \mathcal{M}_g (\kappa)$ (see also \cite[3.1.13]{filipnotes}):
\begin{equation}
\label{bundextseq}
0 \to W_0 \to H^1_{rel}\to H^1 \to 0.
\end{equation}
The notation is justified by the fact that $W_0$ denotes the weight-zero local subsystem for the mixed VHS structure on $H^{1}_{rel}$.

We recall a special case of a theorem of Eskin, Mirzakhani, and Mohammadi \cite[Thm. 2.1]{zbMATH06487150} see also \cite[Thm. 1]{filipnotes}:
\begin{thm}[Linearity/topological rigidity]\label{linearity}
Each orbit closure $\mathcal{N} \subset \Omega \mathcal{M}_g(\kappa)$ is locally in period coordinates a \emph{linear manifold}, i.e. any sufficiently small open $U\subset \mathcal{N}$ is such that $\operatorname{Dev}(U)$ is an open set inside a linear subspace.
\end{thm}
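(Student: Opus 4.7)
The plan is to deduce the linearity of orbit closures from a corresponding \emph{measure classification theorem} for $\operatorname{SL}_2(\R)$- (or $P$-)invariant, ergodic probability measures on the stratum, showing that any such measure is supported on an affine invariant submanifold and is Lebesgue-affine in period coordinates. The passage from measures to orbit closures then goes through an equidistribution statement: for any $(X,\omega)$, its $P$-orbit equidistributes to a specific affine measure whose support is exactly the desired $\mathcal{N}$. I would work throughout on the hyperbolic slice $\Omega_1 \mathcal{M}_g(\kappa)$ of area-one surfaces, and focus attention on the upper-triangular subgroup $P\subset \operatorname{SL}_2(\R)$ together with the geodesic flow $a_t=\operatorname{diag}(e^t,e^{-t})$ and the horocycle $u_t=\bigl(\begin{smallmatrix}1&t\\0&1\end{smallmatrix}\bigr)$.

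First, I would set up the measure classification: show that any $P$-invariant ergodic probability measure $\nu$ on $\Omega_1\mathcal{M}_g(\kappa)$ is \emph{affine}, meaning that, locally in the period coordinates \eqref{def:dev}, $\operatorname{supp}(\nu)$ is an $\R$-affine subspace whose linear part is $\C$-linear and defined over a number field, and $\nu$ itself is (a multiple of) Lebesgue along this subspace. The heart of the argument is a Margulis-Ratner-style \emph{exponential drift} on the unstable foliation of $a_t$: by comparing nearby $u_t$-trajectories, renormalizing via $a_t$, and using Ledrappier-Young-type entropy inputs together with the linearity of the Kontsevich-Zorich cocycle along $W_0$ in \eqref{bundextseq}, one produces extra translation invariance of $\nu$ in directions transverse to the $P$-orbit. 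Iterating this until the invariance group saturates gives the affine structure.

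Second, from the measure classification I would deduce equidistribution: the $U$-orbit averages $\tfrac{1}{T}\int_0^T (u_t)_*\delta_{(X,\omega)}\,dt$ converge (in the weak-$*$ topology) to a unique affine $P$-invariant probability measure $\nu_{(X,\omega)}$, whose affine support $\mathcal{N}$ contains the $\operatorname{SL}_2(\R)$-orbit of $(X,\omega)$ and is closed. Standard but non-trivial arguments then promote $P$-orbit closures to $\operatorname{SL}_2(\R)^+$-orbit closures and, by the $K$-action, to $\operatorname{GL}_2(\R)^+$-orbit closures. Since $\mathcal{N}$ is locally affine in period coordinates, \Cref{linearity} follows.

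The main obstacle is the measure classification step, specifically the exponential drift: since $\Omega \mathcal{M}_g(\kappa)$ is \emph{not} a homogeneous space, there is no global group generating the unstable foliation, and one must carry out the drift purely in period coordinates while controlling how nearby leaves are identified by parallel transport. A crucial auxiliary ingredient is uniform integrability/recurrence estimates for functions measuring short saddle connections, to tame the non-compactness of $\Omega_1\mathcal{M}_g(\kappa)$ and justify the Birkhoff averaging used at every stage. A fully self-contained proof would amount to reproducing the Eskin-Mirzakhani-Mohammadi program; at this point in the notes I would simply invoke \cite{zbMATH06487150}.
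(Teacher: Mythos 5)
The paper does not prove this statement at all: it is recalled verbatim as a special case of \cite[Thm. 2.1]{zbMATH06487150}, so your plan of ultimately invoking Eskin--Mirzakhani--Mohammadi is exactly what the text does, and your sketch of their program (affine measure classification via exponential drift and entropy, then equidistribution, then orbit closures as supports of affine measures) is a faithful outline of the cited proof. One small correction: the equidistribution input in \emph{op.\ cit.} is for averages over the full upper-triangular group $P$ (i.e.\ $a_t u_s$-averages), not pure horocycle averages $\tfrac{1}{T}\int_0^T (u_t)_*\delta_{(X,\omega)}\,dt$, whose convergence is not known in this generality; as stated that intermediate step would be an open problem rather than a consequence of the measure classification.
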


\begin{rmk}
The very first investigations of the notion of \emph{linear manifold} that comprises \teic curves, Hilbert modular surfaces and the ball quotients of Deligne and Mostow (that we will review in \Cref{digballquotients}) can be found in the article of M\"{o}ller  \cite{zbMATH05323603}.
\end{rmk}

Let $\mathcal{N} \subset \Omega \mathcal{M}_g(\kappa)$ be an orbit closure, and denote by $T \mathcal{N}$ its tangent bundle. The linearity of orbit closures, as recalled in \Cref{linearity}, allows one to realize $T \mathcal{N}$ as a local subsystem of $H^1_{rel}$. As a consequence we have another short exact sequence of real bundles above $\mathcal{N}$:
\begin{equation}
0 \to W_0 (T \mathcal{N})\to T \mathcal{N}\to H^1 (T \mathcal{N})\to 0 .
\end{equation}
To explain the notation: $H^1 (T \mathcal{N})$ is just the image of $T \mathcal{N} \subset H^1_{rel}$ in $H^1$  (the absolute cohomology), and $W_0(T \mathcal{N}) = T \mathcal{N} \cap W_0$.

\begin{thm}[Isolation property, {\cite[Thm. 2.3]{zbMATH06487150}, see also \cite[Thm. 4.1.8.]{filipnotes}}]\label{isolation}
For every sequence of orbit closures $(\mathcal{N}_i)$ there are $\operatorname{SL}_2(\mathbb{R})$-invariant measures $\mu_i$, and after passing to a subsequence $(\mathcal{N}_i,\mu_i)$ there is another linear immersed submanifold $\mathcal{M}$, with finite $\operatorname{SL}_2(\mathbb{R})$-invariant measure $\mu$ such that $\mathcal{N}_i\subset \mathcal{M}$ and $\mu_i \to \mu$.
\end{thm}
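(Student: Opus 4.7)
The plan is to build on the full classification of $\operatorname{SL}_2(\R)$-invariant ergodic probability measures on strata of Abelian differentials due to Eskin-Mirzakhani: every such measure is \emph{affine}, in the sense that it is supported on an affine invariant submanifold and given there by Lebesgue measure in period coordinates (up to normalization). Together with \Cref{linearity}, this sets up a canonical bijection between orbit closures and affine ergodic probability measures; in particular, to each $\mathcal{N}_i$ we associate its canonical affine probability measure $\mu_i$.

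First I would extract a weak-$*$ subsequential limit $\mu_i \to \mu$. Since $\Omega \mathcal{M}_g(\kappa)$ is non-compact, mass could a priori escape to the boundary, so this step relies on the quantitative non-divergence estimates of Athreya (building on Eskin-Masur) which provide, for every $\varepsilon > 0$, a compact $K_\varepsilon \subset \Omega \mathcal{M}_g(\kappa)$ to which any $\operatorname{SL}_2(\R)$-invariant probability measure assigns mass at least $1-\varepsilon$. This forces $\mu$ to be itself a probability measure. By construction $\mu$ is $\operatorname{SL}_2(\R)$-invariant, so the Eskin-Mirzakhani classification identifies $\mu$ with the affine measure on some affine invariant submanifold $\mathcal{M}$, which by \Cref{linearity} is the desired linear immersed submanifold.

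The remaining, and genuinely difficult, step is to upgrade weak-$*$ convergence to the set-theoretic containment $\mathcal{N}_i \subset \mathcal{M}$ for all sufficiently large $i$ in the subsequence. A priori one only controls that the supports $\operatorname{supp}(\mu_i) = \mathcal{N}_i$ accumulate onto $\mathcal{M}$: individual $\mathcal{N}_i$ could still stick out transversely to $\mathcal{M}$ inside a strictly larger affine manifold. The strategy is to combine two ingredients: (i) \emph{local rigidity}, namely that in a neighbourhood of any point an affine invariant submanifold is cut out in period coordinates by linear equations whose coefficients lie in a number field governed by the Hodge-theoretic (real multiplication) data of $\mathcal{N}_i$; and (ii) a \emph{discreteness principle}, that only countably many affine invariant submanifolds of a given dimension meet a fixed compact set, so accumulation of affine invariant submanifolds at a point is forced to occur by honest inclusion.

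The main obstacle is precisely the passage from measure convergence to containment of supports: one must show that the sequence $\mathcal{N}_i$ cannot accumulate transversely to $\mathcal{M}$. This is the hardest ingredient of \cite{zbMATH06487150}, and it leans essentially on the quantitative equidistribution and recurrence machinery developed in the proof of \Cref{linearity}, applied relatively to the pair $(\mathcal{N}_i, \mathcal{M})$. Once this rigidity is in place, the linear equations defining $\mathcal{N}_i$ must be compatible with those of $\mathcal{M}$, and $\operatorname{SL}_2(\R)$-invariance together with linearity in period coordinates propagates this from a local statement to the global inclusion $\mathcal{N}_i \subset \mathcal{M}$ and the convergence $\mu_i \to \mu$.
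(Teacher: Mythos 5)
The paper does not prove this statement at all: it is quoted verbatim as an external result of Eskin--Mirzakhani--Mohammadi (their Theorem~2.3), so there is no in-paper argument to compare yours against. Judged against the actual proof in the cited source, your overall architecture is the right one: attach to each $\mathcal{N}_i$ its canonical affine probability measure via the Eskin--Mirzakhani measure classification, use non-divergence (Eskin--Masur/Athreya recurrence) to rule out escape of mass in the weak-$*$ limit, identify the limit as an affine measure on some invariant submanifold $\mathcal{M}$, and then establish eventual containment $\mathcal{N}_i \subset \mathcal{M}$.

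The one place where your sketch does not actually close the argument is the containment step. Your ingredient (ii) --- that only countably many affine invariant submanifolds meet a fixed compact set --- does not force accumulation to happen ``by honest inclusion'': a countable family can perfectly well accumulate onto $\mathcal{M}$ transversely (compare how countably many rational points accumulate on an irrational one), and ingredient (i) only constrains each $\mathcal{N}_i$ locally, not its position relative to $\mathcal{M}$. What actually does the work in the source is a quantitative isolation mechanism: for each proper affine invariant submanifold $\mathcal{M}$ one constructs an $\operatorname{SO}(2)$-invariant Margulis-type function $f_{\mathcal{M}}$ that blows up exactly on $\mathcal{M}$ and satisfies a drift (Foster--Lyapunov) inequality under the geodesic-flow averaging operator away from $\mathcal{M}$. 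Integrating this inequality against any affine measure whose support is not contained in $\mathcal{M}$ gives a uniform upper bound on $\int f_{\mathcal{M}}\, d\mu_i$, which is incompatible with $\mu_i \to \mu_{\mathcal{M}}$; hence all but finitely many $\mathcal{N}_i$ must lie inside $\mathcal{M}$. If you want your outline to be faithful to the known proof, you should replace the countability heuristic with this drift-function argument; as written, the key step is asserted rather than derived.
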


We can now recall the following theorem of Filip \cite{zbMATH06575346} (which builds on the earlier work \cite{zbMATH06641855}, as well as earlier work of Wright \cite{zbMATH06323273} and M\"{o}ller \cite{zbMATH05015436}), see also \cite[Thm. 2 and Thm. 4.4.2]{filipnotes}. We denote by $K=K_{\mathcal{N}}\subset \R$ the smallest field over which $\mathcal{N}$ is $K$-linear in the sense of \Cref{linearity}. We have:

\begin{thm}[S. Filip]\label{mainthmalgfilip}
Let $\mathcal{N}\subset \Omega \mathcal{M}_g(\kappa)$ be an orbit closure. There exist, up to isogeny, a factor $\mathcal{F}\subset \mathcal{J}$ of the relative Jacobian over $\mathcal{N}$, and a subgroup $\mathcal{S}$ of the free abelian group on the zeros of $1$-forms, such that:
\begin{enumerate}
\item $\mathcal{F}$ admits real multiplication by $K$ (i.e. $K=\operatorname{End}^0(\mathcal{F}_{\mathcal{N}})$), which is, in particular, a totally real number field;
\item The Abel-Jacobi map, possibly twisted by real multiplication, $AJ: \mathcal{S}\to \mathcal{F}$ assumes torsion values;
\item For each $(X,\omega)\in \mathcal{N}$, $\omega$ is an eigenform for the real multiplication on the fibre of $\mathcal{F}$ above $X$ (see also \cite[\S 4.2.17]{filipnotes}).
\end{enumerate}
Furthermore, these conditions, together with a dimension bound, characterize $\mathcal{N}$.
\end{thm}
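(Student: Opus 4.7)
The plan is to exploit the linearity of $\mathcal{N}$ in period coordinates (\Cref{linearity}) to realize $T\mathcal{N}$ as a flat sub-bundle of $H^1_{rel}$, defined over the real number field $K = K_{\mathcal{N}} \subset \R$. Projecting via the exact sequence \eqref{bundextseq} yields a flat real sub-bundle $H^1(T\mathcal{N}) \subset H^1$. The crucial, and most delicate, step is to upgrade this flat real sub-bundle to a sub-variation of Hodge structure; once this is done, the corresponding abelian subvariety $\mathcal{F} \subset \mathcal{J}$ is produced via Riemann's theorem, and the remaining structure falls out.

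For the Hodge-theoretic upgrade I would consider $\Gal(\Qbar/\Q)$-conjugates of the defining equations of $T\mathcal{N}$, producing flat sub-bundles $\sigma(T\mathcal{N})$ whose union spans a $K$-stable sub-bundle that decomposes, after complexification, according to the embeddings $K \hookrightarrow \C$. Since the tautological form $\omega$ lies in the $(1,0)$ part of $T\mathcal{N}$ and $\int \omega \wedge \overline{\omega} > 0$, the symplectic polarization on $H^1$ restricts non-degenerately to each Galois piece; this positivity forces every embedding of $K$ to land in $\R$, so $K$ is totally real, and simultaneously certifies $H^1(T\mathcal{N})$ as a polarizable sub-VHS of $H^1$. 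The resulting abelian subvariety $\mathcal{F} \subset \mathcal{J}$ inherits an action of $K$, giving item (1), and item (3) is then immediate: $\omega$ sits in the eigenspace of $K \otimes \C$ corresponding to the distinguished real embedding $K \subset \R$.

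For item (2), I would analyze the kernel $W_0(T\mathcal{N})$ of the projection $T\mathcal{N} \to H^1(T\mathcal{N})$, which by \eqref{eqrelcoho} parametrizes linear combinations of the zeros of $\omega$. The $K$-linearity of the defining relations for $T\mathcal{N}$ translates, after twisting by the $K$-action on $\mathcal{F}$, into $K$-linear identities among differences of zeros whose projection to the absolute periods of $\mathcal{F}$ vanishes; this is precisely the statement that the images of a finitely generated subgroup $\mathcal{S} \subset \Z^{Z(\omega)}$ under the (possibly twisted) Abel-Jacobi map lie in the torsion of $\mathcal{F}$. The characterization in the final clause would follow by verifying that the locus cut out by (1)--(3) together with the predicted dimension is algebraic and $\operatorname{GL}_2(\R)^+$-invariant, hence coincides with $\mathcal{N}$ once one checks containment and matches dimensions, using \Cref{isolation} to rule out a larger invariant subvariety sneaking in.

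The main obstacle is the VHS upgrade: a priori, the real-flat sub-bundle $H^1(T\mathcal{N})$ need not be preserved by the Hodge filtration $F^1 \subset H^1 \otimes \C$, and such compatibility is not automatic from topological rigidity. Overcoming it requires a detailed analysis of how the Kontsevich-Zorich cocycle interacts with the Hodge norm along $\operatorname{SL}_2(\R)$-orbits, coupled with an invariance-of-the-Hodge-form argument along the $\operatorname{SO}(2)$-action. This is the technical heart of Filip's theorem \cite{zbMATH06575346, zbMATH06641855}, and builds on the earlier special cases treated by M\"{o}ller and Wright \cite{zbMATH05015436, zbMATH06323273} via more elementary means.
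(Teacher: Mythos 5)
The paper does not actually prove this statement: it is quoted as a theorem of Filip, with the proof living entirely in \cite{zbMATH06575346, zbMATH06641855} (and, for special cases, in the earlier work of M\"{o}ller and Wright). So there is no in-paper argument to compare yours against; what can be judged is whether your write-up stands on its own. It does not. You correctly identify the architecture of Filip's argument --- realize $T\mathcal{N}$ as a flat $K$-sub-bundle of $H^1_{rel}$ via \Cref{linearity}, pass to the absolute part, upgrade to a sub-VHS, read off real multiplication and the eigenform condition from the Galois-conjugate decomposition, and extract the torsion condition from $W_0(T\mathcal{N})$ --- but the decisive step, the compatibility of the flat real sub-bundle $H^1(T\mathcal{N})$ with the Hodge filtration, is exactly the content of the semisimplicity and rigidity of the Kontsevich--Zorich cocycle, and you import it by citation rather than proving it. The same is true of item (2): the claim that the relevant extension classes (equivalently, the twisted Abel--Jacobi images of $\mathcal{S}$) are torsion requires showing that these classes are simultaneously flat, holomorphic, and algebraic along $\mathcal{N}$, which is another substantial argument you only gesture at. As submitted, this is a faithful plan of the known proof, not a proof.

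Two smaller points. First, your positivity argument for total realness is in the right spirit (it is essentially the Rosati-positivity argument going back to M\"{o}ller's work on Teichm\"{u}ller curves), but as phrased it presupposes the VHS upgrade you have not yet established, so the logical order needs care. Second, for the final characterization clause, \Cref{isolation} is not really the relevant tool: the converse direction is handled by checking that the locus cut out by conditions (1)--(3) is linear in period coordinates of dimension $2r_{\mathcal{N}}+t_{\mathcal{N}}$ and hence $\operatorname{GL}_2(\R)^+$-invariant, so that $\mathcal{N}$ is an irreducible component of it (this is the point of view taken in \Cref{diagramatyp}, where $\mathcal{N}$ is exhibited as a component of $(\Omega\varphi)^{-1}(E[\mathcal{N}])$).
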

(See \cite[\S 4.5]{filipnotes} for more details on the \emph{twisted torsion} condition.)

\begin{cor}\label{thmstrorbit}
Each orbit closure is algebraic, that is a Zariski closed subvariety $\Omega \mathcal{M}_g$. In fact, orbit closures are defined over $\overline{\Q}$, and their Galois conjugates are again orbit closures. 
\end{cor}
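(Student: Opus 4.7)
The plan is to derive the corollary directly from the characterization of orbit closures provided by Theorem~\ref{mainthmalgfilip}, by checking that each of the three conditions listed there cuts out a Zariski closed subvariety of $\Omega \mathcal{M}_g$ defined over $\overline{\Q}$, after which the dimension bound selects $\mathcal{N}$ as a specific irreducible component.

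For condition (1), the locus in $\mathcal{A}_g$ parametrizing principally polarized abelian varieties admitting a factor with real multiplication by a fixed totally real field $K$ is a finite union of Hecke translates of Hilbert modular (or more generally PEL-type) Shimura subvarieties; such subvarieties are Zariski closed and defined over $\overline{\Q}$. Pulling back along the Torelli map and the forgetful map $\Omega \mathcal{M}_g \to \mathcal{M}_g$ preserves both algebraicity and the $\overline{\Q}$-structure. For condition (2), over this RM locus the Abel-Jacobi image of the prescribed subgroup $\mathcal{S}$ of zeros defines a section of the relative abelian scheme $\mathcal{F} \to \mathcal{N}$; the condition that this section (after twisting by real multiplication) be torsion is Zariski closed, since $N$-torsion of an abelian scheme is a finite \'{e}tale cover of the base for any $N$, and torsion sections of abelian schemes defined over $\overline{\Q}$ are themselves $\overline{\Q}$-rational. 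Finally, condition (3) is fibrewise linear in the Hodge bundle once the $K$-action on $\mathcal{F}$ is fixed, since requiring $\omega$ to lie in a prescribed eigenspace is a linear constraint; this produces an algebraic subbundle defined over $\overline{\Q}$. The dimension bound from Filip's theorem then isolates $\mathcal{N}$ as the unique irreducible component of the intersection of these three loci of the correct dimension containing a given point.

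For the second assertion, any $\sigma \in \operatorname{Gal}(\overline{\Q}/\Q)$ acts on the defining data by permuting the RM field $K$ with a Galois conjugate, sending $\mathcal{S}$ and its Abel-Jacobi torsion condition to their conjugates, and transporting the eigenspace condition through the action on real embeddings. Consequently $\sigma(\mathcal{N})$ satisfies the conjugate package of conditions (1)-(3) together with the same dimension bound, so Filip's characterization forces $\sigma(\mathcal{N})$ to be an orbit closure as well. The main subtlety I would need to address carefully is the interaction of Galois action with the chosen real embedding $K \hookrightarrow \mathbb{R}$ used to define both the real multiplication structure and the eigenform condition: since $K$ may admit several real embeddings permuted by $\sigma$, one must verify that applying $\sigma$ to all three pieces of data consistently produces the characterizing data of another genuine orbit closure rather than a larger invariant manifold. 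Here the dimension bound, together with the full force of Theorem~\ref{mainthmalgfilip}, provides the necessary rigidity.
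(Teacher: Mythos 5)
The paper gives no proof of this corollary --- it is presented as an immediate consequence of Filip's characterization (Theorem \ref{mainthmalgfilip}) --- and your derivation is exactly the intended one: each of the three conditions cuts out an algebraic locus defined over $\overline{\Q}$, the dimension bound identifies $\mathcal{N}$ with an irreducible component of that locus, and Galois conjugation permutes the packages of defining data, so the \emph{if and only if} form of the characterization applies to $\sigma(\mathcal{N})$. Your argument is correct up to minor imprecisions: the RM locus for a fixed totally real $K$ is a countable (not finite) union of Hecke translates, and the torsion condition is closed only after fixing the torsion order, which one may do because it is locally constant on the irreducible $\mathcal{N}$; the subtlety you flag about the choice of real embedding of $K$ is indeed absorbed by the converse direction of Filip's theorem.
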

From \Cref{mainthmalgfilip}, one may introduce \emph{real multiplication, torsion and eigenform} invariants (see \Cref{diagramatyp}, for a geometric description). In particular, we write
\begin{itemize}
\item  $r=r _\mathcal{N}$, the \emph{(cylinder) rank} $\frac{1}{2}\dim H^1(T\mathcal{N})$ (which is in fact an integer, since $T\mathcal{N}$ is known to be symplectic thanks to \cite{zbMATH06801922});
\item $d=d_\mathcal{N}$, to simply denote the degree of the number field defining the linear equations of $\mathcal{N}$ (that is $[K:\Q]$);
\item And finally $t=t_\mathcal{N}$ the \emph{torsion corank} is $\dim W_0(T \mathcal{N})$.
\end{itemize}
With this notation, the dimension bound appearing at the end of \Cref{mainthmalgfilip} is just
\begin{displaymath}
\dim \mathcal{N}=2r_{\mathcal{N}}+t_{\mathcal{N}}.
\end{displaymath}

In fact the above theorem is close to the philosophy explained in \Cref{sec2}. Namely of finding subvarieties of $\mathcal{M}_g$ rich in Hodge symmetries.

\subsubsection{Further remarks on linearity}
\begin{itemize}
\item There are examples of linear subvarieties
of $\Og (\kappa)$ which are not invariant, namely the Linear Hurwitz spaces. Let indeed $E$ be an elliptic curve and $H_{g,d}(E)$ be the coarse Hurwitz moduli space parametrizing $[X]\in \Mg$ together with a degree $d$ branched cover $X\to E$. By pullback along the branched cover we obtain a $1$-form on each $X$, and therefore a map (in fact an immersion) $H_{g,d}(E)\to \Og$. The (connected components) of the intersection $H_{g,d}(E)\cap  \Og(\kappa)$, are linear subvarieties and are not invariant unless $E$ is defied over $\overline{\Q}$, see  \cite[Sec. 1.2.2]{2022arXiv220206031K}. In fact for $g>2$ these are the only algebraic leaves of the isoperiodic foliation \cite[Thm. 1.5]{zbMATH07662889}.

\item  We clarify the relation between the two notions here (linear and $\operatorname{SL}_2(\R)$-invariant), cf. \cite[Thm. 5.1]{zbMATH06731862}. First of all any closed set $M \subset \Og$ locally defined by real linear equation is $\Sl_2 (\R)$-invariant. Furthermore, let $N \subset \Omega \mathcal{M}_g(\kappa)$ be an algebraic variety whose irreducible components have dimension at least $d \geq 1$. Suppose that for every $(X,\omega) \in N$ there is a $d$-dimensional subspace $L = L_{(X,\omega)}$, defined over a real number field, such that $[\omega] \in L \subset H^1(X, Z(\omega); \mathbb{C})$. Then $N$ is locally defined by real linear equations in period coordinates, and $\dim(M) = d$. In particular, as remarked above, it is $\Sl_2(\mathbb{R})$ invariant.

\end{itemize}
For a discussion about further special subvarieties known as the Eierlegende Wollmilchsau and Ornithorynque, we refer to \cite[Sec. 7]{zbMATH06436256}
 
\begin{rmk} There are higher dimensional examples of invariant subvarieties recently constructed by Eskin, McMullen, Mukamel, and Wright \cite{zbMATH06731862, zbMATH07268737}. We refer also Goujard's survey for the Bourbaki seminar \cite{Goujard2021} for an overview of such beautiful results.
\end{rmk}

\subsection{Totally geodesic subvarieties}\label{kdefsection}

\begin{thm}[{\cite[Thm. 1.2]{zbMATH07225031}}]\label{wrightthm}
There are only finitely many (maximal) totally geodesic subvarieties (in the sense of \Cref{kobgeod}) of $\mathcal{M}_{g,n}$ of dimension greater than $1$.
\end{thm}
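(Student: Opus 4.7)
The plan is to lift $V$ to an orbit closure in an appropriate stratum and then invoke the Eskin--Filip--Wright finiteness theorem (\Cref{thmEFWoriginal}).

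By Royden's theorem (cf.\ \Cref{rmkroy}), the Kobayashi metric on $\mathcal{M}_{g,n}$ coincides with the Teichm\"{u}ller metric, so the complex geodesics are precisely the Teichm\"{u}ller disks, i.e.\ projections of $\operatorname{GL}_2(\mathbb{R})^+$-orbits in the bundle of non-zero holomorphic quadratic differentials $Q\mathcal{M}_{g,n}$. I would first observe that if $V \subset \mathcal{M}_{g,n}$ is $K$-totally geodesic and $[X] \in V$ is a smooth point, every Teichm\"{u}ller disk through $[X]$ whose initial direction is tangent to $V$ lies entirely in $V$. Passing to abelian differentials via the canonical double-cover construction $(\tilde{X}, \omega) \mapsto (X, \omega^2)$, one obtains a non-empty $\operatorname{GL}_2(\mathbb{R})^+$-invariant subset of a stratum $\Omega\mathcal{M}_{\tilde{g}}(\kappa)$ whose image in $\mathcal{M}_{g,n}$ sits inside $V$ and dominates it.

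Next, I would take the orbit closure $\mathcal{N}$ of a generic such $(\tilde{X}, \omega)$. By Eskin--Mirzakhani--Mohammadi $\mathcal{N}$ is an affine invariant submanifold carrying the structural data of \Cref{mainthmalgfilip}, with the three invariants $(r_{\mathcal{N}}, d_{\mathcal{N}}, t_{\mathcal{N}})$ and dimension formula $\dim\mathcal{N} = 2 r_{\mathcal{N}} + t_{\mathcal{N}}$. The crucial step is to show that $\dim_{\mathbb{C}} V > 1$ forces $r_{\mathcal{N}} \geq 2$. The idea is that the projection of $\mathcal{N}$ to $\mathcal{M}_{g,n}$ factors through the quotient $T\mathcal{N} / W_0(T\mathcal{N})$, identified with $H^1(T\mathcal{N})$, which is symplectic of real dimension $2 r_{\mathcal{N}}$; purely isoperiodic (relative) deformations of $(\tilde{X}, \omega)$ do not move the underlying point of $\mathcal{M}_{g,n}$ nontrivially in a way compatible with the totally-geodesic condition, so the complex dimension of the image is at most $r_{\mathcal{N}}$. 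Hence $\dim_{\mathbb{C}} V > 1$ implies $r_{\mathcal{N}} \geq 2$.

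Having established this, I would apply \Cref{thmEFWoriginal} directly: in each irreducible component of each stratum, all but finitely many orbit closures have rank $1$. Since only finitely many strata $\Omega\mathcal{M}_{\tilde{g}}(\kappa)$ are reached by the double-cover construction from $\mathcal{M}_{g,n}$, only finitely many orbit closures $\mathcal{N}$ with $r_{\mathcal{N}} \geq 2$ can arise this way. A maximal $V$ of dimension $>1$ is then recovered as the Zariski closure of the image of its associated $\mathcal{N}$ in $\mathcal{M}_{g,n}$, and distinct maximal $V$'s correspond to distinct $\mathcal{N}$'s, yielding finiteness.

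The main obstacle I anticipate is the rigorous proof of the rank lower bound $r_{\mathcal{N}} \geq 2$. The relation $\dim\mathcal{N} = 2 r_{\mathcal{N}} + t_{\mathcal{N}}$ allows the torsion corank $t_{\mathcal{N}}$ to absorb arbitrary dimension, so naive dimension counting is insufficient; one really needs that the extra dimensions of $V$ beyond a Teichm\"{u}ller curve come from genuine variation of \emph{absolute} periods. This is where the totally-geodesic hypothesis for the Kobayashi/Teichm\"{u}ller metric is used essentially, via the fact that Teichm\"{u}ller metric is recovered from the absolute part of periods. A secondary but routine issue is the bookkeeping for marked points and the abelian-versus-quadratic-differential dictionary, together with the fact that $\mathcal{M}_{g,n}$ versions of the EFW theorem are required and are indeed available by the same arguments as in the $n=0$ case.
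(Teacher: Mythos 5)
Your proposal follows essentially the same route as the paper's sketch (which itself follows the end of \cite[\S 2]{zbMATH07225031}): pass to the locus of square roots of quadratic differentials over the totally geodesic subvariety, show that dimension $>1$ forces cylinder rank $\geq 2$, note that only finitely many strata can arise, and conclude by \Cref{thmEFWoriginal}. The rank lower bound you correctly single out as the crux is exactly Theorem 1.3 of \emph{op.\ cit.}, which the paper cites rather than proves, so your heuristic for it is an acceptable placeholder for the same external input.
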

We recall how the above theorem is proven, and the relationship between totally geodesic subvarieties and orbit closures, simply following the end of \cite[\S 2]{zbMATH07225031}. By Theorem 1.3 in \emph{op. cit.}, if $N$ is a totally geodesic subvariety of dimension at least 2, then $\Omega N$ has rank at least 2. Here we denote by $\Omega N$ the locus of square roots of quadratic differentials in the largest dimensional stratum of the cotangent bundle to $N$.
Since $\Omega N$ determines $N$, and there are a finite list of strata that may contain $\Omega N$ for $N$ a totally geodesic subvariety of $\mathcal{M}_{g,n}$, the result in turn follows from \Cref{thmEFWoriginal}.

Another interesting result is the following, which shows the difference between the symmetric and the \kob metrics on a Hilbert modular variety $\mathcal{H}_K$.
\begin{thm}[McMullen {\cite[Thm. 1.9]{mcmullen2023triangle}}]\label{mcmullenexa}
There exists a compact geodesic curve $f:V \to \mathcal{H}_K$ with $\dim \mathcal{H}_K =6$ such that $f(V)$ is not contained in any proper Shimura subvariety $S\subset \mathcal{H}_K$
\end{thm}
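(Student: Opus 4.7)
The plan is to exhibit a concrete construction using a non-arithmetic cocompact triangle group whose invariant trace field $K$ has degree $6$ over $\mathbb{Q}$. First, I would pick a cocompact triangle group $\Delta = \Delta(p,q,r)$ (so $1/p+1/q+1/r<1$, with $p,q,r$ all finite) whose invariant trace field $K = \mathbb{Q}(\cos(2\pi/p), \cos(2\pi/q), \cos(2\pi/r))$ is totally real of degree exactly $6$ and such that $\Delta$ is \emph{not} arithmetic (i.e.\ not in Takeuchi's classification). Passing to a torsion-free finite-index normal subgroup $\Delta' \triangleleft \Delta$ gives a compact Riemann surface $V = \Delta' \backslash \mathbb{H}$.

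Next, for each real embedding $\sigma_i : K \hookrightarrow \mathbb{R}$, $i=1,\ldots,6$ (with $\sigma_1 = \mathrm{id}$), apply $\sigma_i$ entry-wise to elements of $\Delta \subset \mathrm{PSL}_2(K)$ to obtain the Galois-conjugate Fuchsian groups $\Delta_i := \sigma_i(\Delta)$. Choose holomorphic developing maps $\phi_i : \mathbb{H} \to \mathbb{H}$ intertwining $\Delta$ with $\Delta_i$, normalized so that $\phi_1 = \mathrm{id}$. The map
\[
\Phi : \mathbb{H} \longrightarrow \mathbb{H}^{6}, \qquad z \longmapsto (\phi_1(z), \ldots, \phi_6(z)),
\]
is equivariant under the diagonal Galois action of $\Delta$, whose image is commensurable with a subgroup of $\mathrm{PSL}_2(\mathcal{O}_K) \subset \mathrm{PSL}_2(\mathbb{R})^{6}$, and descends to a holomorphic map $f : V \to \mathcal{H}_K$ with compact image. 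Since $\phi_1$ is a Poincaré isometry and the Kobayashi metric on $\mathbb{H}^6$ is the supremum of the Poincaré metrics on the six factors, $\Phi$ is Kobayashi-isometric and so is $f$; hence $f(V)$ is a compact complex geodesic curve for the Kobayashi metric on $\mathcal{H}_K$.

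To show $f(V)$ is not contained in any proper Shimura subvariety $S \subsetneq \mathcal{H}_K$, I would argue that any such containment would force the Mumford--Tate group of the associated variation of Hodge structure to drop to a proper reductive $\mathbb{Q}$-subgroup of $\mathrm{Res}_{K/\mathbb{Q}}\mathrm{SL}_2$, and in particular would force the monodromy image $\rho(\Delta) \subset \prod_i \mathrm{PSL}_2(\mathbb{R})$ to lie in a proper algebraic subgroup. Combined with the trace-field condition $[K:\mathbb{Q}]=6$, one then invokes Margulis' commensurator criterion (or the invariant-trace-field formalism of Maclachlan--Reid) to deduce that $\Delta$ would have to be arithmetic, contradicting the first step.

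The main obstacle is this last step: $\mathcal{H}_K$ admits many kinds of proper Shimura subvarieties (sub-Hilbert modular varieties for subfields $K' \subset K$, quaternionic Shimura subvarieties coming from twists, zero-dimensional CM points, and intermediate examples), and one must uniformly rule them all out by tracing any hypothetical containment $f(V) \subset S$ through a Mumford--Tate-group analysis down to a concrete algebraic constraint on $\rho(\Delta)$, and then show this constraint is incompatible with non-arithmeticity of $\Delta$. Concurrently verifying degree $6$ \emph{and} non-arithmeticity for a specific triple $(p,q,r)$---which makes step~1 explicit---is exactly what unlocks the final contradiction.
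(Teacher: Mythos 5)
The paper does not actually prove this statement; it is quoted verbatim from McMullen, so the only meaningful comparison is with McMullen's own construction, which your proposal does in fact mirror: a non-arithmetic triangle group with invariant trace field $K$ of degree $6$, the Cohen--Wolfart modular embedding $\Phi=(\phi_1,\dots,\phi_6)$, and the Schwarz--Pick argument showing that $\max_i d_{\mathbb H}(\phi_i(z),\phi_i(w))=d_{\mathbb H}(z,w)$, so that $f$ is Kobayashi-isometric. Two caveats on that part: the existence of the intertwining holomorphic maps $\phi_i:\mathbb H\to\mathbb H$ is itself a theorem (Cohen--Wolfart), special to triangle groups and realized by Schwarz triangle functions -- the conjugate groups $\sigma_i(\Delta)$ are generally not discrete and the $\phi_i$ are not M\"obius -- so ``choose developing maps'' hides a real input; and descending the isometry to the quotient $\mathcal H_K$ requires controlling the infimum over the \emph{full} lattice $\mathrm{PSL}_2(\mathcal O_K)$, not just over $\Delta$, which is where compactness of $V$ and properness of the embedding are used.

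The genuine gap is in your last step. From ``$f(V)\subset S$ for a proper Shimura subvariety $S$'' you correctly extract that $\rho(\Delta)$ lies in a proper $\mathbb Q$-algebraic subgroup of $\mathrm{Res}_{K/\mathbb Q}\,\mathrm{SL}_2$, but you then try to conclude via Margulis' commensurator criterion that $\Delta$ is arithmetic. That implication is not valid and is also unnecessary: non-arithmeticity plays no role in ruling out containment. The correct finish is purely a Zariski-density statement. Because the invariant trace field of $\Delta$ is exactly $K$ with $[K:\mathbb Q]=6$, the group $\Delta$ is Zariski dense in $\mathrm{SL}_{2,K}$ and hence its Galois-diagonal image is Zariski dense in $\mathrm{Res}_{K/\mathbb Q}\,\mathrm{SL}_2$ (no proper $\mathbb Q$-subgroup can contain it, since projecting to any factor is dominant and the factors are pairwise non-conjugate over $\mathbb R$ precisely because the traces generate all of $K$). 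Since every proper Shimura -- indeed every proper weakly special -- subvariety of $\mathcal H_K$ has strictly smaller algebraic monodromy (cf.\ \Cref{thmweaksp} and \Cref{fixedpart}), containment is impossible. This is a direct contradiction with density, not a reduction to arithmeticity; non-arithmeticity of $\Delta$ is only what prevents $f(V)$ from being a Shimura curve itself, i.e.\ what makes the example illustrate the gap between Kobayashi geodesics and special subvarieties.
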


\subsection{Irreducible components}\label{irrcomp}

The problem of classifying irreducible components is also hard, as we saw in \Cref{genus2mc}. For completeness, we recall the main result of Kontsevich and Zorich \cite[Thms. 1,2 ]{zbMATH02001031}: If $g\geq 4$
\begin{itemize}
\item $\Og (2g-2)$ and $\Omega \mathcal{M}_{2k+1}(2k,2k)$ each have three connected components: the hyperelliptic one, and two more (distinguished by even/odd spin structures).
\item $\Omega \mathcal{M}_{2k}(2k-1,2k-1)$ has two connected components, a hyperelliptic and a non-hyperelliptic one.
\item When each entry of $\kappa$ is divisible by $2$ hen there are two connected components (distinguished by spin structures).
\item All other strata are connected.
\end{itemize}  
 In the remaining (low) genera, we have:
\begin{itemize}
\item $g=3$: $\Og (2,2)$ and $\Og (4)$ each have two components, the hyperelliptic and the odd spin one. The remaining strata are connected.
\item $g=2$: there are two connected strata $\Og (1,1)$ and $\Og (2)$.
\end{itemize}

\subsection{The work of Apisa and Wright}
Mirzakhani conjectured that the only invariant subvarieties of
$\Og$ of rank 2 or more are those coming from strata
of 1-forms or quadratic differentials. Some rank 2 counterexamples to the conjecture were constructed, but it follows from \Cref{thmEFWoriginal} that there can be only finitely many
counterexamples to the conjecture in any fixed stratum. Even better:

\begin{thm}[{\cite[Thm. 1.1]{zbMATH07734889}}]
Let $\mathcal{M}\subset \Og$ be an invariant subvariety with $\operatorname{rank}(\mathcal{M}) \geq \frac{g}{2}+1$. Then $\mathcal{M}$ is either a connected component of a stratum, or the locus of all holonomy double covers of surfaces in a stratum of quadratic differentials.
\end{thm}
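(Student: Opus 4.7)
My plan is to proceed by induction on the genus $g$, combined with an analysis of boundary degenerations and cylinder deformations. For the base case, when $g \leq 3$ the hypothesis $r_\mathcal{M} \geq g/2 + 1$ already forces $\mathcal{M}$ to have maximal possible rank $g$, so Filip's \Cref{mainthmalgfilip} forces the real multiplication field to be $\mathbb{Q}$ and a dimension count (using $\dim \mathcal{M} = 2 r_\mathcal{M} + t_\mathcal{M}$) combined with existing classifications pins $\mathcal{M}$ down to either a full stratum component or a quadratic-differential double cover locus.

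For the inductive step I would first pass to the boundary $\partial \mathcal{M}$ in the sense of Mirzakhani--Wright, pinching cores of cylinders (or saddle connections). By the boundary theory of invariant subvarieties, components of $\partial \mathcal{M}$ are themselves invariant subvarieties, sitting in strata whose effective genus is strictly less than $g$. The first key step is to show that at least one such component $\mathcal{M}' \subset \partial \mathcal{M}$ inherits the rank hypothesis in the form $r_{\mathcal{M}'} \geq g'/2 + 1$, where $g'$ is the genus of the ambient boundary stratum. This should follow because pinching a single cylinder drops $g$ by at least one while dropping $r$ by at most one, provided the pinched cylinder is selected using Wright's cylinder deformation theorem so that its class is compatible with a carefully chosen slice of $T\mathcal{M}$.

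Second, applying the inductive hypothesis identifies each such boundary component as either a full stratum component or a holonomy double cover locus over a stratum of quadratic differentials. To propagate this back to $\mathcal{M}$ itself, I would use local linearity of $\mathcal{M}$ in period coordinates (\Cref{linearity}) together with the dimension formula. The high-rank assumption forces the absolute image $H^1(T\mathcal{M}) \subset H^1(X;\mathbb{C})$ to have codimension at most $g-2$, leaving very little room for extra real-linear defining equations. Combining the boundary data with cylinder deformations inside $\mathcal{M}$ should show that any defining relations must come from the involution associated to a quadratic-differential cover, producing the dichotomy.

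The main obstacle will be excluding intermediate invariant subvarieties, i.e.\ ruling out an $\mathcal{M}$ that is strictly smaller than its ambient stratum component yet strictly larger than any quadratic-differential double cover locus. Algebraically this means showing that any nontrivial real-linear relation on $T\mathcal{M}$ not arising from such an involution forces some boundary component's rank to drop below $g'/2 + 1$, contradicting the inductive input. This requires jointly exploiting the symplectic structure on $H^1(T\mathcal{M})$ (which ensures ranks are integers) and the short exact sequence \eqref{bundextseq} to control the torsion corank $t_\mathcal{M}$, together with Filip's real multiplication structure to constrain the field of definition. A delicate point is that the bound $g/2 + 1$ is sharp, since holonomy double covers of quadratic differentials on surfaces of genus $h < g/2$ give genuine invariant subvarieties of rank $g - h$; the argument cannot leave any slack in the inductive threshold.
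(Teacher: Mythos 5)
First, a point of order: the survey itself offers no proof of this statement --- it is quoted verbatim from Apisa--Wright, whose published argument is a long paper built on their prior classification of invariant subvarieties with marked points together with a ``diamond'' recombination of two independent degenerations --- so your plan can only be tested on its own terms. As written, the induction does not close, for a concrete quantitative reason: the threshold $r \geq g/2+1$ is \emph{not} inherited by the boundary components you produce. Pinching a single nonseparating cylinder drops the genus by exactly one and can drop the rank by one, so from $r_{\mathcal{M}} \geq g/2+1$ you only obtain $r_{\mathcal{M}'} \geq g/2 = g'/2 + 1/2$, which is strictly weaker than the required $r_{\mathcal{M}'} \geq g'/2+1$ whenever the rank actually drops (and when $g$ is even, integrality of the rank does not rescue you). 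Concretely, the first case not already covered by the full-rank theorem of Mirzakhani--Wright is $g=4$, $r=3$: degenerating to genus $3$ gives only $r' \geq 2$, while your inductive hypothesis there demands $r' \geq 3$, i.e.\ full rank. The bound $g/2+1$ is calibrated exactly so that a one-cylinder-at-a-time induction loses half a unit at every step; this is why the actual argument must extract much finer structure (behaviour of marked points, simultaneous control of two different degenerations) before any induction can be run, rather than relying on the rank inequality alone.

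Second, even granting the inductive input on the boundary, the step ``propagate this back to $\mathcal{M}$'' is where essentially all of the content of the theorem lives, and your proposal replaces it with the assertion that the defining relations ``must come from the involution.'' Knowing that every boundary component is a stratum component or a holonomy double-cover locus does not determine $\mathcal{M}$: distinct invariant subvarieties can degenerate to boundary components of the same type, and reconstructing $\mathcal{M}$ from $\partial\mathcal{M}$ requires comparing at least two independent degenerations and ruling out free marked points and translation covers --- precisely the machinery the sketch omits. The base case via the full-rank classification is legitimate, but until the two gaps above are filled the proposal is a plausible outline rather than a proof.
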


\newpage
\part{Bi-algebraic viewpoint and Zilber-Pink conjecture}\label{part3}

\section{Zilber-Pink Conjecture and Some Predictions}

The André-Oort conjecture for $\mathcal{A}_g$ we discussed in \Cref{sectionao} is a special case of a more general, widely open conjecture about \emph{atypical intersections}. The conjecture has various levels of generality, ranging from semi-abelian varieties to the case of variations of mixed Hodge structures. Here, we focus on the case of the moduli space of principally polarized abelian varieties and its subvarieties.

\subsection{The Conjecture}\label{zpconj}
In 1999, Bombieri, Masser, and Zannier \cite{zbMATH01402564, zbMATH06022187} began studying other types of atypical intersections—e.g., curves against algebraic subgroups of multiplicative groups. Independently, Zilber \cite{zbMATH01818752} explored similar questions in the setting of exponential sum equations and the Schanuel conjecture. Pink \cite{zbMATH05013101}, motivated by unifying the Mordell-Lang and André-Oort conjectures, proposed his own version of the conjecture. A further important step came with the advent of the Pila-Zannier strategy for Manin-Mumford (see the recent book \cite{zbMATH07516542}).

Let $\VV$ be a family of principally polarized abelian varieties on an irreducible smooth quasi-projective variety $S$. We identify $\VV$ with the associated polarized variation of Hodge structures (VHS) of weight 1 obtained by interpolating the various $H^1(-,\Z)$.

At this point, it is useful to formally introduce the \textbf{Hodge locus} of $(S,\VV)$:
\begin{equation}
\HL (S,\VV^\otimes) := \{s \in S(\C) : \MT(\VV_s) \subsetneq \MT(\VV)\},
\end{equation}
where $\MT(\VV)$ denotes the Mumford-Tate group at a very general point of $S$, fixed and denoted by $0$. Some of the terminology from Hodge theory was already introduced in \Cref{NLsection} and \Cref{sectionao}; we refer to the appendix for a quick but self-contained introduction to Shimura varieties and their sub-Shimura varieties.

Informally, the Hodge locus parametrizes the closed points of $S$ corresponding to Hodge structures with additional \emph{Hodge symmetries}. It is easy to observe that such a Hodge locus is a countable union of subvarieties of $S$, known as \textbf{special subvarieties} for\footnote{The same results for arbitrary pure VHS is a deep result of Cattani, Deligne, and Kaplan \cite{CDK}. The difficulty lies in the fact that the period domain associated to $\VV$ is often non-algebraic. For a quick overview of the ZP philosophy in this this more general setting, as well as a discussion on functional transcendence, we refer also to \cite{2025arXiv250203071B}.} $(S, \VV)$. The datum $\VV$ is encoded in the associated period map 
\begin{equation}
\psi: S \to \mathcal{A}_g, \ \ \ s \mapsto [\VV_s].
\end{equation}
Here $\mathcal{A}_g$ is considered as the period domain of Hodge structures of weight $1$,
which is naturally a homogeneous space under $G$, the real points of $\MT(\VV)$. In particular, $\HL(S,\VV^\otimes)$ can be interpreted as the preimage of sub-Shimura varieties of $\mathcal{A}_g$, i.e., subvarieties that are maximal among those with a given generic Mumford-Tate group (cf. \Cref{defspec}). In short, special subvarieties $Y$ are components of $\psi^{-1}(\Gamma_Y \backslash D_Y)$, and, in this writing, we always assume that $\Gamma_Y \backslash D_Y$ is the smallest sub-Shimura variety containing $\psi(Y)$. (If $S$ is fixed, it is more natural to factorize $\psi$ through $\Gamma_S \backslash D_S\subset \Ag$.)

The link between Part I and Part II of the text appears in the following open-ended question, which is part of a more general philosophy on the study of families of motives:
\begin{question}
Can the Hodge locus of $\mathcal{M}_g$ (with respect to the universal family of curves) be explicitly described?
\end{question}
Some special examples were already discussed in \Cref{secrm} and \Cref{seccm}. See also \cite[Sec. 5]{arXiv:1703.10377}, for a related discussion.

To quantitatively describe the behaviour of the Hodge locus one is naturally lead to the following definition:
\begin{defi}
A special subvariety $Y = \psi^{-1}(\Gamma_Y \backslash D_Y)^0$ is \textbf{atypical} if 
\[
\codim_{\psi (S)} (\psi (Y)) < \codim_{D_S} (D_Y),
\]
and \textbf{typical} otherwise.
\end{defi}
The atypicality condition can be rephrased as an excess of dimension of the intersection.
\begin{conj}[Zilber–Pink Conjecture]\label{zpstatement}
There are only finitely many maximal atypical subvarieties for $(S,\VV)$. In particular, the atypical Hodge locus is not Zariski-dense in $S$.
\end{conj}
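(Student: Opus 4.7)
The plan is to follow the Pila--Zannier strategy adapted to variations of Hodge structure, combining functional transcendence, o-minimality, and (in its conjectural full form) delicate arithmetic input. First I would set up the period map $\psi \colon S \to \Gamma_S \backslash D_S$ associated to $(S,\VV)$ and reformulate atypicality in terms of the position of the graph of $\psi$ inside $S \times D_S$: a special $Y = \psi^{-1}(\Gamma_Y \backslash D_Y)^0$ is atypical precisely when its image realizes an excess-dimensional intersection of $\psi(S)$ with the weak Mumford--Tate subdomain $D_Y$. The goal is to extract from this excess a strong structural constraint that forces all but finitely many maximal such $Y$ to be captured by a proper, a priori identified subvariety of $S$; the non--Zariski-density statement will then be automatic.

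The geometric heart of the argument is the hyperbolic Ax--Schanuel theorem for variations of Hodge structure of Bakker--Tsimerman (building on Mok--Pila--Tsimerman in the Shimura case). Applied to $\psi$, it asserts that any atypical component of the intersection of an algebraic subvariety of $S \times D_S$ with the graph of $\psi$ is contained in a weak Mumford--Tate subvariety. From each maximal atypical special $Y$ one then produces a minimal weakly special $Y' \supset Y$ of controlled Hodge-theoretic type, and the combinatorics of Klingler's defect condition organizes the collection of such $Y'$ into a countable family. To pass from countable to finite, I would invoke the Baldi--Klingler--Ullmo algebraicity theorem for the strict atypical Hodge locus, which shows that, outside the weakly special locus arising from the Levi decomposition, the union of positive-dimensional strict atypical special subvarieties is itself a finite union of strict atypical special subvarieties.

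The analytic backbone is the Bakker--Klingler--Tsimerman theorem that, after restriction to a suitable semi-algebraic fundamental set in $D_S$, the period map $\psi$ is definable in the o-minimal structure $\R_{\an, \exp}$. Combined with the Pila--Wilkie counting theorem as used in the proof of \Cref{aothm}, this lets one package infinite families of atypical $Y$'s into definable families and then compare dimensions by o-minimal arguments, producing the ``algebraic part'' of each family. The key interplay is between the lower bounds on algebraic point counts coming from the hypothetical infinite family of atypical $Y$ and the upper bounds coming from Pila--Wilkie together with Ax--Schanuel.

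The hard part will be the arithmetic ingredient needed to close the loop for the zero-dimensional atypical components, that is, atypical CM-type points for general $\VV$. One requires polynomial lower bounds on Galois orbits of such points, generalizing Tsimerman's theorem used for $\mathcal{A}_g$ and resting on averaged Colmez-type estimates. Outside the pure Shimura setting these bounds are not available, so a realistic plan yields the geometric conclusion --- finiteness of maximal positive-dimensional strict atypical subvarieties, and hence non--Zariski-density of their union --- unconditionally in the VHS setting, while the remaining zero-dimensional atypical points would be handled only conditionally on the appropriate Galois orbit conjectures, and left as the principal open direction.
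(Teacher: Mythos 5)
The statement you are ``proving'' is \Cref{zpstatement}, the Zilber--Pink conjecture itself. The paper does not prove it --- it is presented explicitly as a widely open conjecture, and the text only establishes partial results in its direction (the geometric part, \Cref{thm:mixedgeomzp}, and the typical-locus density statement \Cref{dense}). So there is no proof in the paper to compare against, and your text cannot be accepted as a proof of the statement: it is a strategy outline, and, to your credit, your final paragraph concedes exactly this, since the Galois-orbit lower bounds needed to run Pila--Wilkie on the zero-dimensional (and more generally the non-maximal-monodromy) atypical components are not available outside special cases.

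Beyond that honest concession, one claim in your sketch is stated too strongly. The unconditional ``geometric conclusion'' you invoke --- finiteness of the maximal positive-dimensional strict atypical special subvarieties for an arbitrary $(S,\VV)$ --- is not what the known theorems give. Geometric Zilber--Pink (in the form of \Cref{thm:mixedgeomzp}) only says that maximal atypical subvarieties arise from finitely many families of weakly special data $(\mathbf{H},D_H,\mathbf{N})$; passing from ``contained in the fibres of finitely many weakly special families'' to ``finitely many maximal atypical special subvarieties'' requires an extra rigidity or maximality input (step IV of the strategy in the paper), and the Baldi--Klingler--Ullmo algebraicity theorem you cite applies to the atypical locus \emph{of positive period dimension} and under hypotheses on the generic (adjoint) Mumford--Tate group; it does not yield \Cref{zpstatement} in full. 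In particular even the positive-dimensional half of the conjecture is not unconditionally closed by the ingredients you list. The correct conclusion of your proposal is therefore that the conjecture remains open, which is precisely the status the paper assigns to it.
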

The formulation we gave above might not look the easiest to digest. However, it is the one that easily generalizes to the context of more general VHS--discussed in \Cref{sectionvhs}.

\begin{rmk}
We show how the above implies the André–Oort conjecture (AO), which asserts that CM points accumulate only on sub-Shimura varieties. It is part of the general theory that a\footnote{By the existence of one CM point, it is easy to deduce their density (even for the analytic topology) by applying Hecke translations.} CM point exists on any Shimura variety (this fact is sometimes attributed to D. Mumford). Let $S$ be a subvariety of $\mathcal{A}_g$. CM points of $S$ belong to $\HL(S, \VV^\otimes)$, as long as $\dim \psi(S) > 0$. They are atypical whenever $S$ is not a sub-Shimura variety. Therefore, \Cref{zpstatement} posits that the only possibility for the Zariski closure of a set of CM points is to accumulate on sub-Shimura varieties.
\end{rmk}

If $S = \mathcal{A}_g$, the Zilber–Pink conjecture trivially holds true, since there are no atypical special subvarieties (as alluded to in the above remark). In particular, it says nothing on $\Mg$, for $g=1,2,3$. For certain larger $g$s, we saw in \Cref{sec2} various examples of atypical special subvarieties-- determining whether or not they are maximal is often not that easy. See for example \Cref{ttv}, \Cref{dejongnoot}, \Cref{paul}, and \eqref{fermat}.

It can happen, for \emph{combinatorial reasons}, that every special component is atypical. If this is the case, the Zilber-Pink conjecture appears even more striking, predicting that the Hodge locus consists of finitely many maximal components. We record here one case (another, in a different direction, can be found in \cite[Thm. 1.5, Cor. 1.6]{2021arXiv210708838B}):

\begin{conj}\label{conjcurve}
Let $g \geq 3$, and let $C$ be a Hodge generic\footnote{I.e., it is not contained in any smaller Shimura subvariety (or equivalently, its algebraic monodromy is as large as possible, namely $\operatorname{Sp}_{2g}$). In the previous notation, this is to say that $\Gamma_S \backslash D_S=\Ag$.} curve in $\Ag$. For all but finitely many $x \in C(\C)$, $\MT(x) = \operatorname{GSp}_{2g}$; i.e., the collection of intersections between $C$ and the sub-Shimura varieties of $\Ag$ is a finite set.
\end{conj}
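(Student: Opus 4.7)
The plan is to derive \Cref{conjcurve} as a direct consequence of the Zilber--Pink Conjecture (\Cref{zpstatement}) applied to the pair $(S,\VV) = (C, \VV|_C)$, where $\VV$ is the restriction to $C$ of the tautological polarized variation of Hodge structure on $\Ag$. Since $C$ is Hodge generic, the generic Mumford--Tate group equals $\operatorname{GSp}_{2g}$, so $D_S = \mathbb{H}_g$ is the Siegel upper half-space of dimension $g(g+1)/2$, $\Gamma_S \backslash D_S = \Ag$, and $\psi : C \to \Ag$ is (a finite cover of) the inclusion. Points of the Hodge locus of $(C, \VV|_C)$ are exactly the points $x \in C$ with $\operatorname{MT}(x) \subsetneq \operatorname{GSp}_{2g}$, so our task is to show that this locus is finite.

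The key step is the observation that, under the hypothesis $g \geq 3$, every such $x$ gives rise to an \emph{atypical} zero-dimensional special subvariety of $C$. Indeed, $x$ lies in a proper sub-Shimura variety of $\Ag$ with associated Shimura sub-datum $D_x \subsetneq \mathbb{H}_g$, and atypicality of $\{x\} \subset C$ reads $\codim_C(\{x\}) = 1 < \codim_{\mathbb{H}_g}(D_x)$. The deduction is therefore reduced to the following structural lemma: \emph{for $g \geq 3$, every proper sub-Shimura variety of $\Ag$ has codimension at least $2$.} This is purely Lie-theoretic and follows from the classification of Hermitian symmetric sub-domains of $\mathbb{H}_g$ arising from Shimura sub-data. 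The largest examples---product loci $\mathcal{A}_k \times \mathcal{A}_{g-k}$ (codimension $k(g-k) \geq g-1$) and PEL loci of Hilbert modular or unitary type (codimensions at least $g(g-1)/2$)---all have codimension $\geq 2$ in this range. Equivalently, and in the spirit of \Cref{NLsection}, the Debarre--Laszlo classification shows that codimension-$1$ components of the Noether--Lefschetz locus of $\Ag$ only occur for $g = 2$.

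Granting the lemma, Hodge genericity forces $C$ itself not to be a special subvariety of $\Ag$ (otherwise its generic Mumford--Tate group would not be all of $\operatorname{GSp}_{2g}$), so the maximal atypical special subvarieties of $(C, \VV|_C)$ are necessarily the $0$-dimensional ones. The Zilber--Pink Conjecture then guarantees that there are only finitely many maximal atypical subvarieties, hence finitely many such points, which is exactly the content of \Cref{conjcurve}.

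The main obstacle is of course that \Cref{zpstatement} is wide open in this generality, so the strategy above provides only a clean reduction of \Cref{conjcurve} to ZP. A natural follow-up question is whether one can bypass the full conjecture by running a Pila--Zannier type argument directly on $C$: since the target is a single Shimura variety, one can hope to combine the definability and Galois/height lower bounds developed in the proof of André--Oort (\cite{MR3744855, 2021arXiv210908788P}) with an Ax--Schanuel statement for the period map of $C$ so as to attack \Cref{conjcurve} unconditionally.
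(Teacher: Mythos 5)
Your argument is correct and is essentially the same as the paper's: both reduce the statement to the observation that for $g\geq 3$ the Siegel modular variety $\Ag$ contains no proper special subvariety of codimension one, so every point of $C$ with degenerate Mumford--Tate group is an atypical intersection, and then invoke \Cref{zpstatement} to conclude finiteness of the maximal atypical special subvarieties. Your additional justification of the codimension bound (via the classification of maximal special subvarieties such as $\mathcal{A}_k\times\mathcal{A}_{g-k}$) and the closing remark on a possible unconditional Pila--Zannier attack are supplementary but do not change the route.
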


\begin{proof}[Proof of \Cref{conjcurve} assuming ZP]
Let $x \in C(\C)$ be a point with a Mumford-Tate group smaller than expected. This can either correspond to a typical or an atypical intersection. The former occurs, by definition (and the fact that $C$ is Hodge generic), only when 
\begin{displaymath}
\{x\} \subset C \cap \Gamma_x \backslash D_x
\end{displaymath}
for some sub-Shimura variety $\Gamma_Y \backslash D_Y \subset \Ag$ of codimension 1 (since $C$ is one-dimensional). The condition $g \geq 3$ guarantees that $\Ag$ contains no codimension-one sub-Shimura variety, and therefore the claim follows from \Cref{zpstatement}.
\end{proof}

One can actually describe \emph{quantitatively} the Hodge locus, not only its atypical part. (In fact, one can prove that \Cref{zpstatement} implies \Cref{conj-typical}, cf. \cite{2021arXiv210708838B, 2023arXiv230316179K}.)

\begin{conj}[Density of the Typical Hodge Locus] \label{conj-typical}
The following are equivalent: 
\begin{enumerate}
\item $\HL(S, \VV^\otimes)_\typ$ is non-empty; 
\item $\HL(S, \VV^\otimes)_\typ$ is dense in $S(\C)$; 
\item There exists a Hermitian symmetric subspace $D' \subset D_S$ such that $\dim S - \codim_{D_S} D' \geq 0$.
\end{enumerate}
\end{conj}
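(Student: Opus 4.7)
The plan is to establish the cyclic implications $(2)\Rightarrow(1)\Rightarrow(3)\Rightarrow(2)$. The first is tautological. For $(1)\Rightarrow(3)$, I take any typical special subvariety $Y$ arising from a sub-period subdomain $D_Y\subset D_S$, and set $D':=D_Y$. By definition of typicality,
\[
\codim_{D_S} D' \;=\; \codim_{\psi(S)}\psi(Y) \;\leq\; \dim \psi(S) \;\leq\; \dim S,
\]
which is precisely condition $(3)$.

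The core of the argument is $(3)\Rightarrow(2)$. Fix $D'\subset D_S$ as in $(3)$, arising from a sub-Shimura subdatum $(H,D_H=D')$ of the ambient datum $(G,D_S)$. The strategy is to produce typical special subvarieties arbitrarily close, in the analytic topology, to any chosen $s_0\in S(\mathbb{C})$, by exploiting Hecke translates of the sub-Shimura variety attached to $D'$. Three ingredients come in: (a) the real orbit $G(\mathbb{R})\cdot D'$ covers $D_S$, since $G(\mathbb{R})$ acts transitively on the period domain; (b) the commensurator of $\Gamma_S$ in $G(\mathbb{R})$ is dense, so the Hecke orbit of $D'$ is analytically dense in $D_S$; and (c) the dimension hypothesis rephrases as $\dim\psi(S)+\dim D'\geq \dim D_S$, making the intersection of $\psi(S)$ with a translate $g\cdot D'$ expected to be non-empty of non-negative dimension. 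Lifting $\psi$ locally to $\tilde\psi\colon\tilde U\to D_S$ near a point above $s_0$, I would argue that the set of $g\in G(\mathbb{R})$ such that $g\cdot D'$ meets $\tilde\psi(\tilde U)$ transversely is open; picking $\gamma$ in the commensurator inside this open set, the component of $\psi^{-1}(\Gamma_S\backslash(\gamma\cdot D'))$ passing through $\tilde U$ is a typical special subvariety arbitrarily close to $s_0$, giving density.

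The main obstacle is making the transversality step rigorous. The image $\tilde\psi(\tilde U)$ is only a complex-analytic subset of $D_S$, not an open subset, so a priori a Hecke translate could miss it entirely, or intersect it only in dimension greater than expected (an atypical intersection). The resolution exploits that both $\tilde\psi(\tilde U)$ and $g\cdot D'$ are complex-analytic submanifolds of the complex manifold $D_S$ whose dimensions sum to at least $\dim_{\mathbb{C}} D_S$: the set of $g\in G(\mathbb{R})$ producing a non-empty \emph{transverse} intersection (i.e. of the expected dimension) is then open and non-empty, and density of the commensurator supplies a Hecke element inside this open set. A formal embodiment of this argument in the Shimura setting is the typical-intersection theorem of Tayou--Tholozan, extended to the VHS context in \cite{2021arXiv210708838B} and \cite{2023arXiv230316179K}; these provide the decisive geometric input and would be invoked to close the argument.
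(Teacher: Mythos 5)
The statement you are proving is stated in the paper as a \emph{conjecture}, and the paper explicitly says it is open: the only things recorded there are (i) that it follows from the Zilber--Pink conjecture, and (ii) the unconditional but strictly weaker \Cref{dense}, whose proof (Ax--Schanuel at a Hodge generic point plus density of $\mathbf{G}(\Q)$ in $\mathbf{G}(\R)$) is essentially the argument you give for $(3)\Rightarrow(2)$. So the question is whether your argument actually closes the gap between \Cref{dense} and the conjecture --- and it does not.

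The gap is in the last step of $(3)\Rightarrow(2)$. Your transversality/Ax--Schanuel argument produces, near any $s_0$, a component $Y$ of $\psi^{-1}(\Gamma_S\backslash(\gamma\cdot D'))$ whose intersection with $\psi(S)$ has the expected dimension \emph{relative to $D'$}. But typicality of $Y$ is measured against the \emph{smallest} special subdomain $D_Y$ containing $\psi(Y)$, not against $\gamma\cdot D'$. Nothing in the construction prevents the generic Mumford--Tate group of $Y$ from being strictly smaller than (a conjugate of) $H$; in that case $D_Y\subsetneq \gamma\cdot D'$, so $\codim_{D_S}D_Y>\codim_{D_S}D'=\codim_{\psi(S)}\psi(Y)$ and $Y$ is \emph{atypical}. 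In other words, what you prove is density of the locus $\HL(S,\VV^\otimes,\mathbf{H})=\{s:\MT(\VV_s)\subset g\mathbf{H}g^{-1}\text{ for some }g\}$ --- which is exactly \Cref{dense} --- and this set could a priori be entirely contained in the atypical Hodge locus. Ruling that out for at least one Hecke translate is precisely the content of Zilber--Pink in this situation, which is why the paper only asserts the implication ``ZP $\Rightarrow$ \Cref{conj-typical}'' and why the references you invoke (Baldi--Klingler--Ullmo, Khelifa--Urbanik, and the Tayou--Tholozan circle of ideas) prove density of the larger locus, or density of the typical locus \emph{conditionally on its non-emptiness}, but not the implication $(3)\Rightarrow(1)$ unconditionally. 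Your steps $(2)\Rightarrow(1)$ and $(1)\Rightarrow(3)$ are fine (for the latter, note typicality gives $\codim_{D_S}D_Y\le\codim_{\psi(S)}\psi(Y)$, an inequality rather than the equality you wrote, but the conclusion stands).
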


\begin{rmk}\label{rmktypical}
Interestingly, the first works towards \Cref{conj-typical} were inspired by the study of Jacobians. See the works of Colombo-Pirola \cite{colombopriola} and Izadi \cite{izadi}, who were motivated by the following question: Given an integer $k$ between $1$ and $g-1$, is the set of genus $g$ curves whose Jacobian contains a $k$-dimensional abelian subvariety dense in $\Mg$? Later, Chai \cite{chai} provided a version of this result for arbitrary Shimura varieties. The situation is now understood in the broader context of variational Hodge theory, cf. \cite{2021arXiv210708838B} (for the pure case) and \cite{2024arXiv240616628B} (for the mixed case). We will expand on this in \Cref{tyin}. Finally, we remark here that in the 80s McMullen noticed that the set of Riemann surfaces admitting a map to an elliptic curve
are dense in $\Mg$; see also \cite{zbMATH04157417} for related techniques. (This fact also follows from the density of square tiled surfaces we reviewed earlier. in \Cref{part2})
\end{rmk}

\subsubsection{First Applications}
A $g$-dimensional abelian variety $A$ is said to be a \emph{Weyl CM} abelian variety if $\End^0(A)$ is a field of degree $2g$ over $\Q$ whose Galois closure has degree $2g \cdot g!$ over $\Q$. It can be shown that, in a suitable sense, most CM abelian varieties are of this type.
\begin{thm}[Chai-Oort,Tsimerman {\cite{zbMATH06074022}}]
For every $g > 3$, the number of Weyl CM points in the Torelli locus is finite.
\end{thm}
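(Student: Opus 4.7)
The plan is to combine the solved André–Oort conjecture for $\Ag$ (\Cref{aothm}) with a structural rigidity of Weyl CM tori inside $\operatorname{GSp}_{2g}$, reducing the statement to a dimension count. I argue by contradiction: suppose $j(\Mg) \subset \Ag$ contains infinitely many Weyl CM points $\{x_i\}_{i \geq 1}$. Their Zariski closure lies in $\overline{j(\Mg)}$ and, by \Cref{aothm}, is a finite union of sub-Shimura subvarieties of $\Ag$. Passing to a subsequence concentrated on a single irreducible component and pigeonholing, I obtain a positive-dimensional irreducible sub-Shimura subvariety $Z_0 \subset \overline{j(\Mg)}$ passing through some Weyl CM point $x$. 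Write $T := \MT(x)$ and $H := \MT(Z_0)$, so that $T \subsetneq H \subset \operatorname{GSp}_{2g}$ as connected reductive $\Q$-groups.

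The core of the argument is to show that the only possible $H \supset T$ are $T$ itself and $\operatorname{GSp}_{2g}$. Since $T$ has rank $g+1 = \operatorname{rank}(\operatorname{GSp}_{2g})$, it is a maximal torus, and connected reductive $\Q$-subgroups $T \subset H \subset \operatorname{GSp}_{2g}$ correspond to sub-root systems $\Phi(H,T) \subset \Phi(C_g)$ stable under the Galois action on the cocharacter lattice of $T_{\bar{\Q}}$. The Weyl hypothesis forces this Galois action to factor through the full Weyl group $W(C_g)$; the $W(C_g)$-orbits on $\Phi(C_g)$ are precisely the long roots (an $A_1^g$ subsystem) and the short roots (a $D_g$ subsystem). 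Each intermediate candidate would yield a Shimura sub-datum forcing $H_1(A_x, \Q)$ to split as a $T$-module into summands of dimension $< 2g$, but this contradicts the simplicity of $A_x$ (whose endomorphism algebra is the field $\End^0(A_x)$ of degree $2g$, which acts irreducibly on $H_1(A_x,\Q)$). Hence $\Phi(H,T) \in \{\emptyset, \Phi(C_g)\}$, i.e.\ $H = T$ or $H = \operatorname{GSp}_{2g}$; combined with $H \supsetneq T$, this forces $H = \operatorname{GSp}_{2g}$ and therefore $Z_0 = \Ag$.

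To conclude, I compare dimensions: for $g \geq 4$ one has $\dim \Mg = 3g - 3 < g(g+1)/2 = \dim \Ag$, so $\Ag \not\subset \overline{j(\Mg)}$, contradicting $Z_0 = \Ag \subset \overline{j(\Mg)}$. The hard part is the rigidity claim in the second paragraph --- ruling out that any intermediate Shimura sub-datum accommodates a Weyl CM point; this is essentially the contribution of Chai–Oort, while Tsimerman's role is to supply the Galois-orbit lower bound that makes the ambient version of André–Oort unconditional for the CM orbits encountered here. Granted these two inputs, the remainder is the dimension count, which also pinpoints why the conclusion collapses for $g \leq 3$ (where $j(\Mg)$ is dense in $\Ag$) and sharpens cleanly for $g \geq 4$.
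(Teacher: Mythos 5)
Your reduction via Andr\'e--Oort to a positive-dimensional special subvariety $Z_0$ through a Weyl CM point matches the paper's strategy, but the rigidity claim at the heart of your second paragraph is false, and the error propagates to the final dimension count. You correctly identify the two $W(C_g)$-stable proper sub-root systems (the long roots $A_1^{g}$ and the short roots $D_g$), but you then assert that each would force $H_1(A_x,\Q)$ to decompose as a $T$-module into pieces of dimension $<2g$, contradicting simplicity. For the long-root system this is wrong: the corresponding group is (up to center) $\operatorname{Res}_{E^+/\Q}\operatorname{GL}_{2,E^+}$ for $E^+$ the totally real index-$2$ subfield of the CM field $E=\End^0(A_x)$, and its standard representation on $H_1(A_x,\Q)\cong E$ is $\Q$-irreducible (it is induced from the degree-$g$ field $E^+$). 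Geometrically, every simple CM abelian variety with CM by $E$ has real multiplication by $E^+$ and therefore lies on the $g$-dimensional Hilbert modular variety $\mathcal{H}_{E^+}$, which is a genuine intermediate special subvariety. The content of Chai--Oort is precisely that $\Ag$ and this Hilbert modular variety are the \emph{only} positive-dimensional special subvarieties through a Weyl CM point --- not that there are none.

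Consequently you cannot conclude $Z_0=\Ag$, and the inequality $3g-3<g(g+1)/2$ does not finish the argument: a $g$-dimensional Hilbert modular variety is not excluded from a $(3g-3)$-dimensional Torelli locus by dimension alone, since $g\leq 3g-3$ for all $g\geq 2$. The paper instead invokes the substantive geometric input of Hain and de Jong--Zhang (\Cref{thmmoonenetal} and the references there): a Hilbert modular variety attached to a totally real field of degree $g$ can be generically contained in the Torelli locus only for $g\leq 3$. That theorem is the step your proposal is missing, and it cannot be replaced by a root-system or dimension argument.
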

\begin{proof}[Sketch of the Proof]
Suppose there are infinitely many genus $g$ Weyl CM Jacobians. The Zariski closure of this set contains an irreducible positive-dimensional subvariety. By AO, this is a special subvariety. The Weyl condition implies that this is a Hilbert modular variety (of dimension $g$), but we have seen before (in \Cref{thmmoonenetal}) that this can occur only for $g \leq 3$, cf. \cite{zbMATH05263283, zbMATH06296347}.
\end{proof}

\subsection{Predictions of the Zilber-Pink conjecture}\label{predsection}

Van der Geer and Oort \cite[Sec. 5]{zbMATH01445240} actually expect excess intersection of the Torelli locus and certain special subvarieties of $\Ag$: ``\emph{\dots one expects excess intersection of the Torelli locus and the loci corresponding to abelian varieties with very large endomorphism rings; that is, one expects that they intersect much more than their dimensions suggest.''}. 

In \Cref{secrm}, we mentioned various results supporting this belief. However these examples where usually related to the field $\Q(\zeta_p +\zeta_p^{-1})$. What would actually happen for a \emph{generic} totally real number field?

Let $K$ be a totally real number field of degree $g$. Consider the set
\begin{displaymath}
E_{g,K}:=\{x\in \mathcal{M}_g : \End(Jx)^0\cong K\}.
\end{displaymath}

\begin{conj}\label{conjendo}
Fix $g\geq 7$, each $E_{g,K}$ is contained in a finite union of Hilbert modular varieties. Moreover $E_K$ is empty for all but finitely many $K$.  
\end{conj}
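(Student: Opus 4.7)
The conjecture is a natural prediction of the Zilber--Pink philosophy (\Cref{zpstatement}) applied to $(\mathcal{M}_g, \mathbb{V})$, where $\mathbb{V}$ is the universal weight-$1$ variation of Hodge structure carried by the Jacobians. The plan is to establish both claims by combining a codimension count with ZP.

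First I would set up the atypicality. For $K$ a totally real number field of degree $g$, the locus in $\mathcal{A}_g$ of principally polarized abelian varieties with real multiplication by $\mathcal{O}_K$ is a finite union of Hilbert modular subvarieties, each of dimension $g$, cf.\ \Cref{NLsection}. Any $x \in E_{g,K}$ satisfies $j(x) \in \mathcal{H}_K$ for one of these Hilbert modular varieties, so $x$ lies in some component of $j^{-1}(\mathcal{H}_K)$. The expected dimension of such a component in $\mathcal{M}_g$ (equivalently, of $j(\mathcal{M}_g) \cap \mathcal{H}_K$ in $\mathcal{A}_g$) is
\[
(3g-3) + g - \tfrac{g(g+1)}{2} \;=\; -\tfrac{(g-1)(g-6)}{2},
\]
which is strictly negative as soon as $g \geq 7$. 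Thus every non-empty component of $j^{-1}(\mathcal{H}_K)$ is an atypical special subvariety of $\mathcal{M}_g$ in the sense of ZP, and so is every subvariety of it.

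The heart of the argument is then to invoke \Cref{zpstatement}: only finitely many maximal atypical special subvarieties $Y_1, \ldots, Y_N$ can occur in $\mathcal{M}_g$. For $x \in E_{g,K}$, the Albert classification forces the generic Mumford-Tate group at $x$ to be contained in $\operatorname{Res}_{K/\Q}(\operatorname{GL}_{2,\Q})$, so the smallest sub-Shimura variety of $\mathcal{A}_g$ containing $j(x)$ lies inside the Hilbert modular variety attached to $K$. Consequently, whichever $Y_i$ contains $x$ is itself contained in a component of $j^{-1}(\mathcal{H}_K)$, i.e.\ in a Hilbert modular subvariety of $\mathcal{M}_g$. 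This yields the first assertion: $E_{g,K}$ is covered by finitely many such components. For the second, as $K$ varies over totally real fields of degree $g$ the varieties $\mathcal{H}_K$ correspond to pairwise incomparable generic Mumford-Tate groups, so each $Y_i$ can contribute to at most one $E_{g,K}$; finiteness of the $Y_i$ therefore gives finiteness of the set of $K$ with $E_{g,K} \neq \emptyset$.

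The main obstacle is that Zilber-Pink for $\mathcal{M}_g$ is wide open in positive dimension: the unconditional results \Cref{aothm} and \Cref{thmmoonenetal} handle only $0$-dimensional atypical intersections (the CM case), whereas positive-dimensional RM loci are genuinely new. Any unconditional attack would presumably combine the bi-algebraic geometry of period mappings (in the spirit of \Cref{geomao}) with Galois-theoretic lower bounds for heights of RM points analogous to those used in the proof of the full André--Oort conjecture. Producing quantitative control uniform over \emph{all} totally real fields of degree $g$ --- not merely those containing cyclotomic traces as in \Cref{ttv} --- is where the decisive difficulty lies.
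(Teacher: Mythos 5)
Your argument coincides with the paper's own treatment: the statement is left as a conjecture there, and what the paper actually proves is precisely that it follows from Zilber--Pink (\Cref{zpstatement}), via the same codimension count $(3g-3)+g<g(g+1)/2$ for $g\geq 7$ making every component of $j^{-1}(\mathcal{H}_K)$ (and of its Hecke translates) atypical, the same maximality observation for points with $\End^0\cong K$, and the same appeal to the finiteness of maximal atypical special subvarieties, with the variation of $\mathcal{H}_K$ as $K$ varies yielding the second claim. Your closing caveat about the unconditional status matches the paper's framing of the result as ``ZP implies the conjecture.''
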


\begin{conj}\label{conjendosmall}Let $g \leq 6$. For each $K$, $E_{g,K}$ is dense in $\mathcal{M}_g(\C)$.\end{conj}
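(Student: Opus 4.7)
The plan is to view this conjecture as an instance of \Cref{conj-typical} applied to the pair $(\Mg, \V)$, with $\V$ the polarizable VHS of weight $1$ associated with the universal family of Jacobians; the pure case of \Cref{conj-typical} is a theorem of Baldi--Klingler--Ullmo \cite{2021arXiv210708838B}, cf.\ \Cref{rmktypical}. The sub-Shimura variety of $\Ag$ encoding RM by $K$ is the Hilbert modular variety $\mathcal{H}_K$, of complex dimension $g$ inside $\Ag$ of dimension $g(g+1)/2$. A direct count gives that the expected dimension of $j(\Mg)\cap \mathcal{H}_K$ is
\begin{displaymath}
(3g-3) + g - \tfrac{g(g+1)}{2} \;=\; -\tfrac{(g-1)(g-6)}{2},
\end{displaymath}
which is non-negative exactly when $1 \leq g \leq 6$. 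So in this range every component is, dimensionally, a typical intersection, complementing precisely the atypical regime of \Cref{conjendo}.

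The first concrete task is to establish non-emptiness of $E_{g,K}$: exhibit at least one $[X]\in \Mg$ with $\End^0(\Jac(X)) \supseteq K$. For $g = 2, 3$ this is automatic, as $j(\Mg)$ is open dense in $\Ag$ and so a very general point of $\mathcal{H}_K$ is already a smooth Jacobian. For $4 \leq g \leq 6$ this is the substantive point. I expect it to be the main obstacle. One possible avenue is to produce such an $X$ via cyclic or abelian branched covers of $\mathbb{P}^1$ with carefully tailored monodromy, in the spirit of Ellenberg \cite{zbMATH01675790} or Tautz--Top--Verberkmoes (cf.\ \Cref{ttv}); another is to smooth a boundary point of $j(\Mg)$ that sits inside $\mathcal{H}_K$, where products of lower-genus Jacobians with compatible RM can be assembled. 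While constructions exist for many specific (typically cyclotomic) $K$, no uniform recipe for a generic totally real field $K$ of degree $g \in \{4,5,6\}$ seems to be known, and a general existence result appears to be the real bottleneck.

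Granted non-emptiness, the density theorem of \cite{2021arXiv210708838B} yields analytic density in $\Mg(\C)$ of the Hodge locus $\{[X] : \End^0(\Jac(X)) \supseteq K\}$: any typical component can be translated by Hecke correspondences associated to $\mathcal{H}_K$, and the resulting family of components becomes dense. To upgrade density of this larger locus to density of $E_{g,K}$ itself, one observes that on any positive-dimensional irreducible component of $j^{-1}(\mathcal{H}_K)$ the generic Mumford--Tate group is precisely the one cutting out $\mathcal{H}_K$, so the generic endomorphism algebra is exactly $K$. The sublocus where $\End^0$ strictly enlarges is a further (strictly smaller) Hodge locus, hence a meagre countable union of proper analytic subvarieties; removing it preserves density, and \Cref{conjendosmall} follows.
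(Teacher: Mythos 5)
Your dimension count and the identification of $E_{g,K}$ as a typical-intersection problem are exactly the paper's starting point: for $g\leq 6$ one has $3g-3+g\geq g(g+1)/2$, so the intersections of $j(\Mg)$ with Hecke translates of $\mathcal{H}_K$ are typical. But the paper's argument stops there and is explicitly \emph{conditional}: it deduces \Cref{conjendosmall} from Zilber--Pink via \Cref{conj-typical}, which is itself still a conjecture. Your attempt to run the argument unconditionally has two genuine gaps. First, you overstate what is known: what Baldi--Klingler--Ullmo prove in the pure case is the equivalence of (1) and (2) in \Cref{conj-typical} (non-empty implies dense), not the implication from the dimension condition (3); and the unconditional input available, \Cref{dense}, only gives density of the locus where $\MT(\Jac X)$ is \emph{contained} in a conjugate of the Hilbert group, i.e. $\End^0(\Jac(X))\supseteq K$ possibly with extra structure. (Note that \Cref{dense} needs no non-emptiness hypothesis, so your step (a) is not what is missing for the containment locus; it is the exact-equality locus that is the problem.)

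Second, and more seriously, your upgrade step is not an observation but precisely the open point. You assert that on any positive-dimensional component of $j^{-1}(\mathcal{H}_K)$ the generic Mumford--Tate group equals the Hilbert group. A priori such a component could be entirely contained in a strictly smaller special subvariety (equivalently, be an atypical special subvariety despite having the typical dimension), in which case $\End^0$ would generically be strictly larger than $K$ along it and the component would contribute nothing to $E_{g,K}$; your "further meagre Hodge locus" argument only applies once one knows the generic Mumford--Tate group of the component. Ruling this out for a dense family of components is exactly the ``replace $\subset$ by $=$'' refinement of \Cref{dense} that the paper flags as still open and that ZP would supply. So your proposal, as written, proves density of $\{[X]:\End^0(\Jac(X))\supseteq K\}$ (unconditionally, and without needing your non-emptiness constructions), but not of $E_{g,K}$; closing the gap requires either ZP, as in the paper, or a genuinely new idea.
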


For $g=1,2,3$ the above conjecture trivially holds true.

\begin{rmk}
For a related discussion regarding the case of quaternionic multiplication (once more an example of large endomorphism ring), we refer to \cite[Rmk. 7.3.4]{arXiv:2303.00804}.
\end{rmk}

\begin{thm}
Zilber-Pink (i.e. \Cref{zpstatement}) implies \Cref{conjendo} and \Cref{conjendosmall}.
\end{thm}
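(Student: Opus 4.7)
The plan is to split along the typical/atypical divide for the intersection $j(\mathcal{M}_g) \cap \mathcal{H}_K$ inside $\mathcal{A}_g$. A direct dimension count with $\dim \mathcal{A}_g = g(g+1)/2$, $\dim \mathcal{M}_g = 3g-3$ and $\dim \mathcal{H}_K = g$ gives the expected typical dimension of a component of $j^{-1}(\mathcal{H}_K)$ as
\[
(3g-3) + g - \tfrac{g(g+1)}{2} \;=\; -\tfrac{(g-1)(g-6)}{2},
\]
which is $\geq 0$ exactly when $g \leq 6$ and $<0$ when $g \geq 7$. Thus \Cref{conjendosmall} falls into the typical regime and \Cref{conjendo} into the atypical one, and I would treat them separately.

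For $g \leq 6$ (\Cref{conjendosmall}), the Hilbert modular variety $\mathcal{H}_K \subset \mathcal{A}_g$ has codimension $g(g-1)/2 \leq \dim \mathcal{M}_g$, so condition (3) of \Cref{conj-typical} is satisfied for the Mumford--Tate datum of $\mathcal{H}_K$. I would invoke the Density of the Typical Hodge Locus in its per-type form---going back to the deformation-theoretic results of Colombo--Pirola, Izadi and Chai recalled in \Cref{rmktypical}, and consistent with the ZP-theoretic framework---to produce a Zariski-dense set in $\mathcal{M}_g$ of Jacobians admitting an $\mathcal{O}_K$-action. Removing the countable union of proper special subvarieties along which $\End^0(J(x))$ strictly contains $K$ does not affect Zariski density, and leaves a dense subset equal to $E_{g,K}$.

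For $g \geq 7$ (\Cref{conjendo}), every point of $E_{g,K}$ now realises an atypical special subvariety of $(\mathcal{M}_g, \VV)$. \Cref{zpstatement} then provides a finite list $Y_1, \dots, Y_N \subset \mathcal{M}_g$ of maximal atypical special subvarieties with associated sub-Shimura varieties $D_{Y_i} \subsetneq \mathcal{A}_g$. Any $x \in E_{g,K}$ lies in some $Y_i$, and since the smallest sub-Shimura containing $j(x)$ is (a Hecke translate of) $\mathcal{H}_K$, one must have $\mathcal{H}_K \subseteq D_{Y_i}$. This immediately yields the first assertion: $E_{g,K} \subset \bigcup_{i : \mathcal{H}_K \subset D_{Y_i}} Y_i$ is a finite union of special subvarieties each lying over a Hilbert modular variety for $K$. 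For the second assertion, I would argue by contradiction: if infinitely many $K$ gave nonempty $E_{g,K}$, pigeonholing would force some $D_{Y_i} \subsetneq \mathcal{A}_g$ to contain infinitely many distinct $\mathcal{H}_K$, and an André--Oort accumulation argument (\Cref{aothm}) applied inside $D_{Y_i}$ together with the fact that the generic Mumford--Tate groups $G_K$ jointly generate $\operatorname{GSp}_{2g}$ as $K$ varies would force $D_{Y_i} = \mathcal{A}_g$, contradicting the atypicality of $Y_i$.

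The main obstacles are of complementary natures in the two regimes. For $g \leq 6$, extracting a \emph{per-$K$} density result (rather than density of the full typical Hodge locus) from \Cref{conj-typical} as stated requires the type-refined strengthening alluded to in \Cref{rmktypical}. For $g \geq 7$, the delicate point is the last step: making rigorous the assertion that a proper sub-Shimura $D_{Y_i} \subsetneq \mathcal{A}_g$ cannot contain $\mathcal{H}_K$ for infinitely many $K$ needs either a direct Mumford--Tate argument bounding the totally real subfields of degree $g$ embeddable into a fixed reductive $\Q$-subgroup of $\operatorname{GSp}_{2g}$, or the André--Oort-style accumulation above. Confirming that both refinements are formal consequences of ZP as stated---rather than additional inputs---is the technical heart of the implication.
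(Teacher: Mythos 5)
Your overall architecture matches the paper's: the same dimension count $(3g-3)+g-\tfrac{g(g+1)}{2}=-\tfrac{(g-1)(g-6)}{2}$ splits the problem into the typical regime $g\leq 6$ (handled via the density of the typical Hodge locus, \Cref{conj-typical}/\Cref{dense}, exactly as in the paper, and your extra care about getting $\End^0$ \emph{equal} to $K$ is a reasonable refinement) and the atypical regime $g\geq 7$, where the first assertion of \Cref{conjendo} is deduced from the finiteness of maximal atypical special subvarieties just as the paper does.

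The genuine gap is in your argument for the second assertion of \Cref{conjendo} (emptiness of $E_{g,K}$ for all but finitely many $K$). Your contradiction rests on the claim that a proper sub-Shimura variety $D_{Y_i}\subsetneq \mathcal{A}_g$ cannot contain $\mathcal{H}_K$ for infinitely many degree-$g$ totally real fields $K$ (via André--Oort accumulation and the assertion that the $G_K$ ``jointly generate'' $\operatorname{GSp}_{2g}$). This is false. If $g=dm$ is composite and $F$ is a fixed totally real field of degree $d$, the Hilbert--Siegel subvariety attached to $\operatorname{Res}_{F/\Q}\operatorname{GSp}_{2m}$ is a proper special subvariety of $\mathcal{A}_g$ containing $\mathcal{H}_K$ for \emph{every} totally real $K\supseteq F$ of degree $g$, and there are infinitely many such $K$ once $m\geq 2$; the groups $\operatorname{Res}_{K/\Q}\operatorname{GL}_2$ for these $K$ all sit inside the fixed proper group $\operatorname{Res}_{F/\Q}\operatorname{GSp}_{2m}$, so they do not generate $\operatorname{GSp}_{2g}$, and \Cref{aothm} (which concerns CM points, not unions of positive-dimensional specials) gives no contradiction. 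The paper avoids this by arguing differently: a point of $E_{g,K}$ has $\End^0$ \emph{exactly} $K$, so the smallest special subvariety of $\mathcal{A}_g$ through its image is a single Hecke translate of $\mathcal{H}_K$, and the corresponding atypical special subvariety of $\mathcal{M}_g$ is itself \emph{maximal} atypical; since \Cref{zpstatement} allows only finitely many maximal atypical special subvarieties and distinct fields $K$ (and distinct Hecke translates) produce distinct such subvarieties, both the finiteness of the Hecke translates needed for a fixed $K$ and the finiteness of the admissible $K$ follow in one stroke. Without that maximality input, your finite list $Y_1,\dots,Y_N$ only controls the ambient data $D_{Y_i}$, which, as the Hilbert--Siegel example shows, is not enough to pin down $K$.
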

As alluded to above, the set $E_{g,K}$ is related to the geometric problem of understanding the intersections between $\Mg$ and Hecke translates of Hilbert modular varieties (cf. \Cref{examples}). It is indeed the pull-back to $\Mg$ of the set
\begin{displaymath}
\{ a \in \Ag:  \End(a)^0=K\}
\end{displaymath}
which is contained in a infinite union of Hilbert modular varieties. (This set is Zariski dense in $\mathcal{A}_g$.) Roughly, there is indeed a $\mathcal{H}_K:=\operatorname{SL}_2(\mathcal{O}_K)\backslash \mathbb{H}^g$ totally geodesic embedded in $\Ag$, corresponding to the abelian varieties with $\End$ (not $\End^0$) containing $\mathcal{O}_K$, and then one applies Hecke correspondences to cover all possible orders in $K$.

In particular we are trying to intersect $\Mg$, a $3g-3$ dimensional variety, with a $g$ dimensional one (the Hilbert modular varieties associated to $K$), inside the ambient variety $\Ag$ of dimension $g(g+1)/2$. As usual, we write $j: \Mg\to \Ag$.
\begin{proof}[ZP implies \Cref{conjendo} (large $g$)]
If $E_{g,K}$ is empty, there is nothing to prove. The first claim is that each $E_{g,K}$ is not Zariski dense in $\Mg$, i.e. finitely many Hecke translates suffices (rather than countably many). Consider $j^{-1}(\mathcal{H}_K)$ and its Hecke translates. Since $g\geq 7$, each component of such preimage is an atypical special subvariety. Moreover, to be in $E_{g,K}$, we are looking at points that lie in $j^{-1}(\mathcal{H}_K)$ but not on any smaller sub-Shimura variety. In particular they are automatically maximal, i.e. the only sub-Shimura variety of $\Ag$ containing them is one of the $\mathcal{H}_K$. \Cref{zpstatement} implies directly the claim.

The latter claim is that $E_K$ is empty for all but finitely many (isomorphism class of degree $g$ totally real) fields $K$. This follows once more from \Cref{zpstatement}, maximality, and the fact that $\mathcal{H}_K$ truly varies with $K$: If $E_{K_i}\neq \emptyset$ for a sequence $(K_i)_{i\in \N}$, then ZP implies that there exists finitely a sub-Shimura variety containing the $\bigcup E_{K_i}$.
\end{proof}
\begin{proof}[ZP implies \Cref{conjendosmall} (small $g$)]
For $g\leq 6$, we are looking at typical intersections, as $3g-3 + g \geq g(g+1)/2$. The claim is now just a reformulation of \Cref{conj-typical}, which is known to follow from ZP. (More details on a similar argument will be provided during the proof of \Cref{thmmumf}).
\end{proof}

\begin{prop}
ZP holds true for the one-dimensional family of curves described in \Cref{ttv}.
\end{prop}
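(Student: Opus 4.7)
The plan is to recast the problem as an instance of Zilber--Pink for a Hodge-generic curve in a Hilbert modular variety, and then to invoke the known cases in that setting.

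By the construction in \Cref{ttv}, every member of the family has Jacobian endowed with real multiplication by $K=\Q(\zeta_p+\zeta_p^{-1})$, so the Torelli image of $\{C_{t,p}\}_t$ is a one-dimensional subvariety $C\subset \mathcal{H}_K\subset \Ag$, where $\mathcal{H}_K$ has dimension $g=(p-1)/2$. An atypical intersection of this family with the Hodge locus of $\Ag$ corresponds exactly to a parameter $t_0$ whose image lies in a sub-Shimura subvariety of $\mathcal{H}_K$ of codimension at least two (intersections with $\mathcal{H}_K$ itself being typical by construction).

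The critical step is to show that $C$ is Hodge-generic in $\mathcal{H}_K$, meaning that its generic Mumford--Tate group equals the full Hilbert modular group $\mathrm{Res}_{K/\Q}\mathrm{GL}_2$. If instead $C$ were contained in some proper sub-Shimura subvariety $S'\subsetneq\mathcal{H}_K$, then $S'$ would either come from extra endomorphisms of the generic Jacobian (i.e.\ from a subfield $K_0\subsetneq K$ or from a quaternionic enhancement of $K$), or it would be a one-dimensional Shimura subvariety forcing $C$ itself to be a Shimura curve. The first possibility is excluded by showing that the endomorphism algebra of $\Jac(C_{t,p})$ at a very general $t$ is precisely $K$; the containment $K\subseteq \End^0\Jac(C_{t,p})$ is provided by \Cref{ttv}, and equality at very general $t$ follows from a specialisation argument together with the fact that the family is not isotrivial (which is clear from the explicit equations). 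The second possibility is ruled out for $g\geq 5$ by \Cref{reicshcurves}, combined with the identification (explained in the remark after \Cref{ttv}) of $C_{t,p}$ with the Teichm\"uller curve generated by billiards in the regular $p$-gon, so that a Shimura-curve structure on $C$ would produce a Shimura--Teichm\"uller curve; the remaining low-genus cases $p=5,7$ (i.e.\ $g=2,3$) are handled by checking against the finite list in \Cref{reicshcurves}.

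Once $C$ is known to be Hodge-generic in $\mathcal{H}_K$, the conclusion follows from the known cases of Zilber--Pink for Hodge-generic curves in Shimura varieties of abelian type: the zero-dimensional atypical intersections of $C$ (i.e.\ the CM points of $\Mg$ lying on the family) are finite by \Cref{aothm} applied to $\mathcal{H}_K$, whereas the positive-dimensional atypical intersections are controlled via the Pila--Zannier strategy, combining the Ax--Schanuel theorem of Mok--Pila--Tsimerman for Shimura varieties with Tsimerman's lower bounds on Galois orbits of special points. The main obstacle is the verification of Hodge-genericity: ruling out \emph{all} intermediate proper sub-Shimura subvarieties requires a careful analysis of the algebraic monodromy of the family, which in practice can be extracted from the explicit cover $D_t\to C_{t,p}$ appearing in the proof sketch of \Cref{ttv} (it suffices to check that the image of the monodromy representation is Zariski-dense in $\mathrm{Res}_{K/\Q}\mathrm{SL}_2$).
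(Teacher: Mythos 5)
There is a genuine gap in your final step. After establishing that $C$ is Hodge generic in $\mathcal{H}_K$, you invoke ``the known cases of Zilber--Pink for Hodge-generic curves in Shimura varieties of abelian type,'' handling CM points by \Cref{aothm} and the remaining atypical points (those lying on positive-dimensional proper special subvarieties of $\mathcal{H}_K$, e.g.\ quaternionic curves or sub-Hilbert modular varieties) by ``Pila--Zannier $+$ Ax--Schanuel $+$ Tsimerman's lower bounds on Galois orbits.'' No such theorem is available: Tsimerman's large Galois orbit bound concerns CM points only, and the analogous lower bound for non-CM atypical points is precisely the open arithmetic input that keeps Zilber--Pink for curves in Shimura varieties a conjecture (compare \Cref{conjcurve}, which is proved in the text only \emph{assuming} ZP). So as written, your argument proves the proposition only modulo an open case of ZP, which is circular with respect to what is being claimed.

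The point you are missing, and which is the entire content of the paper's (one-line) proof, is structural: for the family of \Cref{ttv} one shows that \emph{every} point where the Mumford--Tate group drops below the generic one is a CM point --- there are no non-CM atypical degenerations to control. Once this is known, the atypical locus is exactly the set of CM parameters, and its finiteness is an immediate consequence of André--Oort (\Cref{aothm}), which is a theorem. Your preliminary analysis (generic endomorphism algebra equal to $K=\Q(\zeta_p+\zeta_p^{-1})$, exclusion of a Shimura--Teichmüller structure via \Cref{reicshcurves}) is in the right spirit and is the kind of input needed to justify that structural claim, but it is directed at showing $C$ is not \emph{contained} in a smaller special subvariety, which is not the same as showing that no point of $C$ lies on a positive-dimensional non-CM special subvariety. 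Reorganize the proof around that latter statement and conclude with André--Oort alone.
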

\begin{proof}
This follows from André-Oort since here if the Mumford-Tate group drops at a point $p$, then $p$ has to be a CM point.
\end{proof}

\subsection{A question of Serre and Gross}\label{questionserregross}
In \cite{zbMATH03271259} Mumford shows the existence of
principally polarized abelian varieties $X$
of dimension $4$ having trivial endomorphism ring, that are not Hodge
generic in $\mathcal{A}_4$: they have an exceptional Hodge class in
$H^4(A^2, \Z)$. Serre asks if one can describe ``as
explicitly as possible'' such abelian varieties \emph{of Mumford's
  type}, cf. also
\cite[Proble 1]{zbMATH01587304}.

\begin{thm}[{\cite[Thm. 3.17]{2021arXiv210708838B}}]\label{thmmumf}
There are infinitely many smooth projective curves $X/\overline{\Q}$ of genus 4 whose Jacobians have a Mumford-Tate group isogenous to a $\Q$-form of the complex group $\mathbb{G}_m \times \Sl_2^3$. 
\end{thm}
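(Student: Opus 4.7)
The plan is to apply the density theorem for the typical Hodge locus (the pure-Hodge-theoretic counterpart of \Cref{conj-typical}, established in \cite[Thm.~1.5]{2021arXiv210708838B}) to the variation of Hodge structures $\V$ on $\Mg$ for $g=4$ provided by the universal Jacobian. The starting point is Mumford's original construction \cite{zbMATH03271259}: from a totally real cubic field $F$ together with a totally definite quaternion algebra $D/F$ whose corestriction $\mathrm{Cor}_{F/\Q}(D)$ is Brauer-trivial over $\Q$, one extracts a $\Q$-group $G$ that is a $\Q$-form of $\Gm \times \Sl_2^3$ together with a faithful $8$-dimensional symplectic representation. A suitable Hodge cocharacter endows $G$ with a Shimura datum $(G,X_G)$; the corresponding sub-Shimura variety $M \subset \mathcal{A}_4$ has complex dimension $3$ and parametrises abelian fourfolds of \emph{Mumford's type} -- trivial endomorphism algebra yet with an exceptional Hodge class in $H^{4}(A^{2},\Z)$. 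The Hermitian symmetric subdomain $X_G \hookrightarrow \mathbb{H}_4$ then has codimension $10-3 = 7$.

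Next, one verifies that condition (3) of \Cref{conj-typical} is satisfied for $(\Mg,\V)$ with the choice $D' = X_G \subset \mathbb{H}_4$:
\[
\dim \Mg - \codim_{\mathbb{H}_4}(X_G) = 9 - 7 = 2 \geq 0,
\]
so that a typical intersection of $\psi(\Mg)$ with Hecke translates of $M$ should form a positive-dimensional special subvariety of $\Mg$. Applying \cite[Thm.~1.5]{2021arXiv210708838B} directly to this target yields a Zariski-dense family of pairwise distinct irreducible components $\{Y_n\}_{n \in \N}$ of preimages of Hecke translates of $M$, each of dimension $2$ and defined over $\bQ$, with the property that the generic Mumford-Tate group of $\V|_{Y_n}$ is $G(\C)$-conjugate to the Mumford group. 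The equality (rather than proper containment) of this generic MT group in $G$ is forced by typicality: any further drop would place $Y_n$ inside a strictly smaller sub-Shimura variety, violating the codimension identity characterising a typical component.

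Finally, each $Y_n$ is a positive-dimensional $\bQ$-variety, and at a very general $\bQ$-point $[X_n] \in Y_n$ -- in particular at one which is Hodge-generic on $Y_n$ -- the Mumford-Tate group of $\Jac(X_n)$ is isogenous to $G$. Since the $Y_n$ are pairwise distinct and intersect only in lower-dimensional loci, one extracts in this way infinitely many curves $X_n/\bQ$ of the desired type. I expect the main obstacle to be the invocation of the density theorem in this precise setting: it rests both on Mumford's delicate cocycle construction, which realises $G$ as a bona fide $\Q$-subgroup of $\mathrm{GSp}_8$ with the correct non-compact Hermitian real orbit (rather than the compact form coming directly from $D\otimes \R\cong\mathcal{H}^3$), and on the Ax--Schanuel theorem for $\mathcal{A}_g$ \cite{MR3744855} underlying the density of typical intersections. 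A subsidiary technicality is ensuring that the typical components actually meet the interior $j(\Mg)\subsetneq\mathcal{A}_4$ rather than the boundary of the Torelli locus; this is handled by working throughout on $\Mg$ with the pulled-back VHS, so that the density output lives in $\Mg$ by construction.
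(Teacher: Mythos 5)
Your overall strategy --- intersecting the Torelli locus with Hecke translates of Mumford's special subvariety and invoking density of typical intersections --- is the same as the paper's, but two steps as written do not go through. First, the dimension of Mumford's Shimura subvariety is wrong. Mumford's construction requires $D\otimes_{\Q}\R\cong\mathcal{H}\times\mathcal{H}\times M_2(\R)$ (so $D$ is \emph{not} totally definite, and no cocycle twist can change the real form afterwards): exactly one $\Sl_2$-factor is non-compact, the Hermitian domain is a single copy of $\mathbb{H}$, and the special subvariety $Y\subset\mathcal{A}_4$ is a \emph{curve}, not a threefold. (Were all three factors non-compact, the Hodge structure on the $8$-dimensional tensor representation would acquire $(3,0)$-pieces and could not be of abelian type.) Hence $\codim_{\mathbb{H}_4}(D_Y)=9$ and $\dim\mathcal{M}_4-\codim_{\mathbb{H}_4}(D_Y)=0$: the typical intersections of $\mathcal{T}_4$ with the Hecke translates $Y_n$ are zero-dimensional, so your Zariski-dense family of two-dimensional components with generic Mumford--Tate group $G$ does not exist. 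What one actually gets is a dense set of isolated points $P\in Y_n\cap\mathcal{T}_4$.

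Second, even with the corrected count, \Cref{dense} only produces points $s$ with $\MT(\V_s)\subset g\mathbf{H}g^{-1}$; your assertion that equality is ``forced by typicality'' is exactly the part of \Cref{conj-typical} that is still open (the paper notes explicitly that upgrading $\subset$ to $=$ in the typical Hodge locus is unproven). The paper closes this gap by a different mechanism: since each $Y_n$ is a Shimura \emph{curve}, any point $P\in Y_n$ either is a CM point or satisfies $\MT(P)=\MT(Y_n)$; if every intersection point were CM, then $\mathcal{T}_4$ would contain a Zariski-dense set of CM points and would be special by Andr\'{e}--Oort, contradicting the fact that the Torelli locus is not a Shimura subvariety of $\mathcal{A}_4$. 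This CM-dichotomy-plus-Andr\'{e}--Oort step is the essential ingredient missing from your argument. (Your final remark about staying inside $j(\mathcal{M}_4)$ rather than the boundary of $\mathcal{T}_4$ is handled the same way in the paper, by extracting a subsequence of translates meeting the open Torelli locus.)
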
 
(For a discussion on a related question of Oort see \cite[Rmk 11.6]{2021arXiv210708838B} and \cite{2024arXiv240306217L}).

\begin{proof}[Sketch of the proof]
Let $\mathcal{M}_4$ be the moduli space of curves of genus $4$, and $j : \mathcal{M}_4 \hookrightarrow \mathcal{A}_4$
be the Torelli morphism. Denote the image of $j$ by $
\mathcal{T}_4^0=j (\mathcal{M}_4)\subset \mathcal{A}_4$, and by
$\mathcal{T}_4$ its Zariski closure. It is well
known that $\mathcal{T}_4$ is Hodge generic in $\mathcal{A}_4$ and that $
1=10-9= \codim_{\mathcal{A}_4} (\mathcal{T}_4).$

Recall that $\mathcal{A}_4$ contains a special curve $Y$ whose generic
Mumford-Tate group $\mathbf{H}$ is isogenous to a $\Q$-form of $\Gm \times
(\Sl_2)^3/\C$, as proven by Mumford \cite{zbMATH03271259}. It's not hard to see that 
 $\mathcal{T}_4$ cuts many Hecke
translates $Y_n$ of $Y$, that is: the union, varying $n$, of $\mathcal{T}_4 \cap Y_n$ is dense in $\mathcal{T}_4 $. In particular, upon extracting a sub-sequence of $Y_n$, we have
\begin{equation}
Y_n \cap \mathcal{T}_4^0 \neq \emptyset,
\end{equation}
since it is not possible that all intersections happen on $\mathcal{T}_4 - \mathcal{T}_4^0 $.

Since the $Y_n$ are one dimensional, for each $P \in Y_n $, there are only two possibilities:
\begin{itemize}
\item $P$ is a special point, i.e. $\MT(P)$ is a torus (so $P$ corresponds to a 4-fold with CM); 
\item $\MT(P)= \MT(Y_n)$, which is isogenous to some $\Q$-form of $\Gm \times (\Sl_2)^3$.
\end{itemize}
Therefore, to conclude, we have to find a $n$ and a non-special $P
\in Y_n \cap \mathcal{T}_4^0$. Heading for a contradiction, suppose
that, for all $n$, all points of $Y_n \cap \mathcal{T}_4^0$ are
special. By density of of the intersections, 
this means that $\mathcal{T}_4$ contains a dense
set of special points. Andr\'{e}--Oort now implies that
$\mathcal{T}_4$ is special, which is the contradiction we were looking 
for. 

\end{proof}

\subsection{Further results and remarks}
\subsubsection{G-function method}
There are a recent series of works, initiated by ideas of C.Daw and M.Orr, on height bounds on atypcal points, following a strategy first envisioned by Y. André, using what is now referred to as the \emph{G-functions method}. See for example \cite{2023arXiv230613463D, 2022arXiv220111240P, 2023arXiv230101857U} and the notes of André \cite{andre:hal-04799970}.

We just mention one result that fits well in the narrative:

\begin{thm}[Papas {\cite[Thm. 1.1]{2022arXiv220111240P}}]
Let $C$ be a smooth irreducible curve in $\Mg$ defined over $\Qbar$ whose image in $\Ag$ is Hodge generic. Assume that $g$ is odd with $g\geq 3$ and that the curve intersects the 0-dimensional stratum of the boundary of the Baily-Borel compactification of $\Ag$.

Then the set of $t \in C(\C)$ whose corresponding Jacobian, $\Jac t$,  is non-simple is finite.
\end{thm}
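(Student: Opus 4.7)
The plan is to treat the statement as a Zilber-Pink type unlikely intersection problem and to attack it via the G-function method of Andr\'e, as recently refined in the Shimura-varieties setting by Daw--Orr, Urbanik, and Papas. First I would unpack the atypicality: a non-simple Jacobian at $t \in C(\C)$ produces an isogeny $\Jac(t) \sim A_1 \times A_2$ with $\dim A_i = g_i \geq 1$, and the locus of such ppav inside $\Ag$ is a Hecke translate of $\mathcal{A}_{g_1} \times \mathcal{A}_{g_2}$, of codimension $g_1 g_2 \geq g-1 \geq 2$. Since $\dim C = 1$ and $C$ is Hodge generic, every non-simple point of $C$ is therefore an atypical intersection in the sense of \Cref{zpstatement}, and the whole non-simple locus sits inside the atypical Hodge locus of the universal family of Jacobians over $C$.

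Next I would exploit the boundary hypothesis. The unique $0$-dimensional stratum of the Baily-Borel compactification of $\Ag$ parametrises totally degenerate ppav, so the fact that $C$ extends to meet it furnishes a cusp $c_0 \in \overline{C}$ at which the VHS $R^1\pi_*\Q$ attached to the family of Jacobians carries a maximally unipotent local monodromy. In a formal neighbourhood of $c_0$, suitably normalised relative periods admit expansions in the local uniformiser whose coefficients are $G$-functions in the sense of Siegel--Andr\'e. An application of the Andr\'e--Bombieri arithmetic estimate for values of $G$-functions at algebraic points --- in the incarnation developed in \cite{andre:hal-04799970, 2022arXiv220111240P, 2023arXiv230101857U} --- then yields a polynomial upper bound for the Weil height of any algebraic point $t \in C(\Qbar)$ in terms of the \emph{complexity} (isogeny degree, discriminant of induced endomorphism ring, or similar arithmetic invariant) of the smallest proper special subvariety of $\Ag$ through $\psi(t)$.

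Against this one pits the Galois/complexity lower bounds of Masser--Wüstholz, in Tsimerman's refinement: for a point $t$ at which $\Jac(t)$ admits an isogeny onto $A_1 \times A_2$ of complexity $n$, the height and Galois degree of $t$ are bounded below polynomially in $n$. Matching the two bounds forces $n$ to be uniformly bounded as $t$ ranges over the non-simple locus in $C(\Qbar)$. Hence only finitely many Hecke translates of product loci $\mathcal{A}_{g_1}\times \mathcal{A}_{g_2}$ can meet $\psi(C)$; by the atypicality observation above, each meets $\psi(C)$ in finitely many points, and the finiteness statement follows.

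The main obstacle is to certify that the period functions appearing at the totally degenerate cusp are honest $G$-functions in a form to which the Bombieri--Andr\'e estimate applies: this demands a careful analysis of the limit mixed Hodge structure at $c_0$ and of how the product-splitting loci interact with its weight filtration. The parity hypothesis $g$ odd enters precisely here, since it excludes the balanced split $g_1 = g_2 = g/2$ (whose associated ``Hilbert-modular'' locus carries extra symmetries that could force the dominant $G$-function coefficient to vanish) and thereby secures the non-triviality conditions needed for the arithmetic estimate. A secondary difficulty is ruling out accumulation of atypical intersections along \emph{intermediate} sub-Shimura varieties of $\Ag$, which one handles inductively on the generic Mumford--Tate group or by invoking a known instance of Zilber--Pink for low codimension sub-Shimura loci.
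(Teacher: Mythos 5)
First, a caveat: the survey does not prove this theorem. It is quoted verbatim from Papas \cite{2022arXiv220111240P}, and the surrounding text only names the method (Andr\'e's $G$-functions method as developed by Daw--Orr, Urbanik and Papas). So there is no in-paper proof to compare against; what follows measures your sketch against the actual argument in the cited literature. On that score your reconstruction of the architecture is essentially right: non-simplicity is an atypical intersection of the Hodge generic curve with Hecke translates of $\mathcal{A}_{g_1}\times\mathcal{A}_{g_2}$ (your codimension count $g_1g_2\geq g-1\geq 2$ is correct); the hypothesis that $C$ meets the zero-dimensional Baily--Borel stratum supplies a totally degenerate fibre at which the relative periods are values of $G$-functions; the Bombieri--Andr\'e ``Hasse principle'' gives a height bound; and isogeny estimates of Masser--W\"ustholz type supply the complementary arithmetic input. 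Your reading of the parity hypothesis is also in the right direction: the danger for $g$ even is the balanced splitting $g_1=g_2=g/2$, where the relation cutting out the product locus risks being a \emph{trivial} relation among the $G$-function values (one already implied globally, e.g.\ by the polarization), in which case the Andr\'e--Bombieri theorem yields nothing; it has nothing to do with a Hilbert modular locus, but the instinct that the balanced split is the obstruction is correct.

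The genuine gap is in your final step. You claim that ``matching'' the height upper bound with a polynomial lower bound ``forces $n$ to be uniformly bounded.'' It does not: an upper bound $h(t)\ll n^{c_1}$ and a lower bound $h(t)\gg n^{c_2}$ with unrelated exponents are perfectly compatible for all $n$. The correct logic is the Pila--Zannier one. The $G$-function method gives $h(t)\ll [\Q(t):\Q]^{c}$; feeding this into Masser--W\"ustholz (which bounds the minimal isogeny degree \emph{from above} by a polynomial in the Faltings height and the field degree -- you have quoted it in the wrong direction) yields $N(t)\ll [\Q(t):\Q]^{c'}$, i.e.\ precisely the Large Galois Orbits lower bound $[\Q(t):\Q]\gg N(t)^{\delta}$. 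Finiteness then requires the remaining two legs of the strategy sketched in \Cref{sectaoproof}: definability of the uniformization on a fundamental set, the Pila--Wilkie counting theorem, and the Ax--Schanuel/geometric Zilber--Pink input to dispose of positive-dimensional families of atypical intersections. Without that counting step (or some other mechanism converting the two polynomial bounds into a contradiction for large $N$), your argument does not close. A secondary, serious omission is that you do not address how to verify non-triviality of the period relations at the degenerate fibre -- this is the technical heart of Papas's paper and exactly where the hypotheses on $g$ and on the boundary behaviour are consumed.
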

Perhaps these kind of results can eventually be applied to \teic curves (even though they are not Hodge generic, as we explained early on), to obtain interesting arithmetic consequences.

\subsubsection{Other interesting subvarieties}

On a different direction, a feature of the Zilber-Pink philosophy is that it applies to any subvariety of $\Ag$ and, often, the results described above use only the fact that $\Mg$ is Hodge generic and has dimension $3g-3$. This might seem a deficit of the theory, but at the same time one can investigate interesting subvarieties of $\Mg$ with the same techniques. To elucidate this point, we present one example that doesn't seem to be already in the literature.

Let $d\geq 3$, and $\mathcal{U}_d$ be the moduli space of plane curves of degree $d$. By this we mean the quotient of the open locus of smooth curves
in the Hilbert scheme of plane curves of degree $d$ by the action of $\operatorname{Aut}(\mathbb{P}^2)=\operatorname{PGL}_3(\C)$. It has dimension $\frac{(d+2)(d+1)}{2}-9$ and comes with an immersive period map into $\Mg$, for $g=\frac{(d-1)(d-2)}{2}$. Since the algebraic monodromy of the VHS is the $\Q$-algebraic group $\operatorname{Sp}_{2g}$, then $\mathcal{U}_d$ is Hodge generic in $\Ag$.

\begin{prop}
The set of plane curves of degree $d $ whose Jacobian contains an elliptic curve is dense in $\mathcal{U}_d(\C)$ (with respect to the euclidean topology).
\end{prop}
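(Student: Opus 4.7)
The plan is to recognise the stated locus as a typical Hodge locus in $\mathcal{U}_d$ (in the sense of \Cref{conj-typical}) and then invoke its density, which in this Shimura setting is the main theorem of \cite{2021arXiv210708838B}. Let $g=(d-1)(d-2)/2$. By Poincaré complete reducibility, an abelian variety $A$ of dimension $g$ contains an elliptic subvariety if and only if $A$ is isogenous to a product $E\times B$ with $\dim E=1$; equivalently, $[A]\in \mathcal{A}_g$ lies in some Hecke translate of the special subvariety $Z\subset \mathcal{A}_g$ obtained as the image of the natural map $\mathcal{A}_1 \times \mathcal{A}_{g-1} \to \mathcal{A}_g$. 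Then $\dim Z = 1 + (g-1)g/2$, so $\codim_{\mathcal{A}_g} Z = g-1$, and the associated Hermitian symmetric subspace of $\mathbb{H}_g$ is $\mathbb{H}_1 \times \mathbb{H}_{g-1}$, likewise of codimension $g-1$.

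The second step is a numerical check. Hodge genericity of $\mathcal{U}_d \to \mathcal{A}_g$ (stated in the proposition) means that the relevant period domain is the full Siegel space $\mathbb{H}_g$. Using $\dim \mathcal{U}_d = \tfrac{(d+1)(d+2)}{2}-9$ and $g-1 = \tfrac{(d-1)(d-2)}{2}-1$, one computes
\[
\dim \mathcal{U}_d - \codim_{\mathbb{H}_g}\bigl(\mathbb{H}_1 \times \mathbb{H}_{g-1}\bigr) = 3d-8 \geq 0
\]
for every $d\geq 3$. This is exactly condition~(3) of \Cref{conj-typical} for the pair $(\mathcal{U}_d, \VV)$, where $\VV$ is the weight-one polarised VHS coming from the $H^1$ of the universal family of plane curves of degree $d$ on $\mathcal{U}_d$.

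Consequently, the density theorem of \cite{2021arXiv210708838B} applies and yields that the typical Hodge locus of $(\mathcal{U}_d, \VV)$ corresponding to the factorisation type of $Z$ is dense in $\mathcal{U}_d(\C)$; by construction this locus is precisely the set of plane curves of degree $d$ whose Jacobian contains an elliptic subvariety. The main obstacle is therefore to confirm that one is in the range of the \emph{density} result, as opposed to only the weaker non-emptiness/finiteness statements of typical intersection theory; the inequality $3d-8\geq 0$ settles exactly this. For the smallest cases the claim can alternatively be verified by hand ($d=3$ is trivial; for $d=4$ one may use explicit families of plane quartics with an involution yielding an elliptic quotient), but the argument above has the virtue of working uniformly for all $d\geq 3$.
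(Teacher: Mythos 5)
Your proposal is correct and follows essentially the same route as the paper: identify the locus as $\HL(\mathcal{U}_d,\VV^\otimes,\mathbf{H})$ for the sub-datum corresponding to $\mathcal{A}_1\times\mathcal{A}_{g-1}$, check that $\dim\mathcal{U}_d \geq \codim_{\Ag}(\mathcal{A}_1\times\mathcal{A}_{g-1})=g-1$ (your computation $3d-8\geq 0$ makes the paper's inequality explicit), and conclude by density of the typical locus. The only nuance is the reference: what is actually needed is the unconditional criterion of \Cref{dense} (Eterovic--Scanlon, Khelifa--Urbanik), since \cite{2021arXiv210708838B} alone gives density only once non-emptiness of the typical locus is known.
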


\begin{proof}
It is enough to check that $\dim \mathcal{U}_d > \codim_{\Ag} (\mathcal{A}_{g-1}\times \mathcal{A}_1)=g-1$., explicitly:
\begin{displaymath}
\frac{(d+2)(d+1)}{2}-9 \geq \frac{(d-1)(d-2)}{2} -1.
\end{displaymath}
The result then follows from \Cref{dense}, which is a weaker but unconditional version of \Cref{conj-typical}.
\end{proof}

\subsection{Digression on degree $d$ surfaces}

A simple, yet rich, scenario for applications of the Zilber-Pink philosophy (once extended to arbitrary VHS) is the study of the so called Noether-Lefschetz locus, which we briefly mentioned for $\Mg$ in \Cref{NLsection}. Let $U_{2,d}\subset \mathbb{P}(H^0(\mathcal{O}(d)))$ be the parameter space of smooth degree $d$ surfaces in $\mathbb{P}^3_{\C}$. Define
\begin{displaymath}
\NL_d:=\{[X]\in U_{2,d} : \operatorname{Pic}(\mathbb{P}^3_{\C})\to
\operatorname{Pic}(X) \text{  is not an isomorphism} \}. 
\end{displaymath}
If $d\geq 4$, it is a countable union of (strict) subvarieties of $U_{2,d}$.
For a surface $X$ outside $ \NL_d$, every curve on $X$ has the pleasant and useful property that it is the complete intersection of $X$
with another surface in $\mathbb{P}^{3}$. This object has been the subject of many beautiful studies by Griffiths, Green, Voisin, Ciliberto, Harris, and Miranda, and many others. We refer to the introduction of \cite{2023arXiv231211246B} for the link between ZP and the distribution of the components of $\NL_d$, and to Theorem 20 in \emph{op. cit.} for a discussion of a more general setting.

In this setting, it is worth stating the analogue of \Cref{conjcurve} which again follows from the most general ZP, cf. \cite[Conj. 3]{2023arXiv231211246B}:
\begin{conj}
Suppose that $\mathcal{Y}\to \mathbb{P}^1$ is a Lefschetz pencil of
degree $d$ surfaces in $\mathbb{P}^3$. If $d\geq 5$ the Picard number
of $\mathcal{Y}_s$ is greater or equal to $2$ for at most a finite
number of values of $s\in \mathbb{P}^1 (\C)$.  
\end{conj}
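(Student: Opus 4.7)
The plan is to deduce the conjecture from the most general Zilber--Pink conjecture applied to the polarised variation of Hodge structures $\V$ on $U_{2,d}$ carried by the primitive second cohomology of the universal smooth degree $d$ surface in $\mathbb{P}^3$, following exactly the template of the proof of \Cref{conjcurve}. A point $s \in U_{2,d}$ satisfies $\rho(\mathcal{Y}_s) \geq 2$ precisely when there is an integral class $\eta \in H^2_{\mathrm{prim}}(\mathcal{Y}_s, \Z)$ of Hodge type $(1,1)$, equivalently when $\MT(\V_s)$ is strictly smaller than the generic Mumford--Tate group of the family. In other words, the statement asserts that a Lefschetz pencil $C \cong \mathbb{P}^1 \hookrightarrow U_{2,d}$ meets $\HL(U_{2,d}, \V^\otimes)$ in only finitely many points, which is an instance of the pattern of \Cref{conjcurve}.

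The essential combinatorial ingredient is a codimension bound: every Noether--Lefschetz type special subvariety $Y \subset U_{2,d}$ satisfies $\codim_{U_{2,d}}(Y) \geq p_g = \binom{d-1}{3}$. Indeed, once an extra Hodge class $\eta$ is fixed, the condition $\eta \in H^{1,1}$ in the period domain $D_S$ is cut out by the $p_g$ linear equations $\int \eta \wedge \omega = 0$ as $\omega$ ranges over a basis of $H^{2,0}$, so the corresponding sub-period-domain $D_Y \subset D_S$ has codimension exactly $p_g$. Since the period map $\psi \colon U_{2,d} \to \Gamma \backslash D_S$ is an immersion on a dense open (infinitesimal Torelli for smooth hypersurfaces of degree $\geq 4$) and transverse to $D_Y$ generically, the preimage $Y$ inherits the same codimension in $U_{2,d}$. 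For $d \geq 5$ one has $p_g \geq 4$, so $U_{2,d}$ carries no special subvariety of codimension $1$ — this is the analogue of the statement used in \Cref{conjcurve} that $\mathcal{A}_g$ contains no codimension-one sub-Shimura variety when $g \geq 3$.

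One then concludes via the Zilber--Pink mechanism. A Lefschetz pencil $C$ is Hodge generic in $U_{2,d}$ by the classical monodromy theorem for pencils of smooth hypersurfaces, so $C$ is not contained in any proper special subvariety. If $s \in C$ has Picard number $\geq 2$ and $Y \ni s$ is a minimal special subvariety containing it, then $\codim_{U_{2,d}}(Y) \geq 2 > 1 = \dim C$, the intersection $C \cap Y$ has strictly negative expected dimension, and hence the isolated component $\{s\}$ is an atypical special subvariety of $(U_{2,d}, \V)$. The most general ZP conjecture provides only finitely many maximal atypical special subvarieties; each meets the Hodge generic curve $C$ properly, hence in finitely many points, yielding the claim. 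The main obstacle is of course ZP itself, which is wide open in this non-classical (non-Shimura) setting; the threshold $d \geq 5$ enters only via $p_g \geq 2$, and the argument genuinely fails for $d = 4$ because K3 surfaces have $p_g = 1$, making each NL component a hypersurface and producing typical — and generically infinite — intersections with any pencil.
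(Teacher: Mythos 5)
The statement is a conjecture in the text, offered (like \Cref{conjcurve}) as a consequence of the most general Zilber--Pink conjecture, so what is being assessed is your conditional derivation. You have the right VHS, the right translation of ``Picard number $\geq 2$'' into a drop of the Mumford--Tate group, and the right threshold ($d\geq 5 \Leftrightarrow p_g\geq 2$, with the $d=4$ failure correctly attributed to K3 surfaces). But two steps are genuinely wrong.

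First, the codimension bound. It is false that every Noether--Lefschetz component $Y\subset U_{2,d}$ satisfies $\codim_{U_{2,d}}(Y)\geq p_g$. The transversality of the period map to $D_Y$ that you invoke is precisely what fails in Noether--Lefschetz theory: for $d=5$ the component of quintics containing a line has codimension $d-3=2$ in $U_{2,5}$, far below $p_g=4$. The correct bounds (Green, Voisin) are $d-3\leq \codim_{U_{2,d}}(Y)\leq p_g$. Your downstream claim that $U_{2,d}$ has no codimension-one special subvariety for $d\geq 5$ happens to be true, but only via Green's bound $d-3\geq 2$, not via your argument. What is correct is the statement $\codim_{D_S}D_Y=p_g$; the entire subtlety of the subject is that taking $\psi^{-1}$ does not preserve this codimension.

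Second, and more seriously, you apply ZP to the wrong pair. The special subvarieties of $(U_{2,d},\V)$ are the positive-dimensional NL components; a point $\{s\}$ is not one of them, and $s$ may lie on a \emph{typical} NL component, i.e.\ one of codimension exactly $p_g$ in $U_{2,d}$. Such components are infinite in number and dense (this is the density of the typical Hodge locus, cf.\ \Cref{conj-typical} and \Cref{dense}, classically Ciliberto--Harris--Miranda), so a pencil could a priori meet infinitely many distinct typical components in one point each, and none of those points would lie on the finitely many maximal atypical special subvarieties of $(U_{2,d},\V)$ that ZP provides. The repair is to follow the template of \Cref{conjcurve} literally: take $S=C$, the pencil base minus the discriminant, and apply \Cref{zpstatement} to $(C,\V_{|C})$. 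Each NL point of $C$ is then a zero-dimensional special subvariety of $(C,\V_{|C})$, atypical because $1<\codim_{D_S}D_Y=p_g$ once $d\geq 5$, and maximal because $C$ itself is Hodge generic by the big monodromy theorem for Lefschetz pencils; ZP for this pair gives the finiteness directly, with no need to control codimensions of NL components in $U_{2,d}$ at all.
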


\section{Functional transcendence}
In this section we review the class of weakly special subvarieties and explain an important tool that will be used in the later sections, namely the Ax-Schanuel theorem(s). This is the key result of functional transcendence and it reduces the ZP conjecture to a Galois problem, as demonstrated in \cite{MR3867286}. 
\subsection{Weakly special subvarieties of Shimura varieties}\label{sec71}
Sub-Shimura varieties of $\Ag$ (and their Hecke translates) are part of a more general class of subvarieties called \emph{weakly special}. We now review some of their properties, cf. \cite{zbMATH01353476, MR2825237}, or \cite{2024arXiv240616628B} for the most general setting. A pair $(S,\V)$ (with a smooth base point $0\in S$) as in \Cref{zpconj} defines a local system, which corresponds to a representation of $\pi_1(S,0)$ into $\Gl_n (\Q)$. We denote by $\mathbf{H}_S$ its \emph{algebraic monodromy}, that is the neutral component of the Zariski closure of the image of  $\pi_1(S,0)$ in $\Gl_n /\Q$. It is a $\Q$-algebraic group. (If $S$ is not smooth one wither computes the monodromy of its smooth locus, or of its normalization.)

\begin{defi}\label{weakspe}
The weakly special subvarieties of $(S,\VV)$ are the closed irreducible algebraic subvarieties $Y\subset S$ maximal among the closed irreducible algebraic subvarieties $Z$ of $S$ whose algebraic monodromy group $\mathbf{H}_Z$ (with respect to $\VV_{| Z}$) equals, possibly up to conjugation due to the change of base point, $\mathbf{H}_Y$. 
\end{defi}

We write $\pi: \mathbb{H}_g\to \Ag (\C)= \operatorname{Sp}_{2g}(\Z) \backslash  \mathbb{H}_g$ for the uniformization map. Let $\mathcal{L}=\mathcal{L}(V_\Q , \varphi)$ denote the symplectic Grassmanian of Lagrangian, i.e. the scheme parametrizing maximal isotropic subspaces of $V_\Q$ with respect to the form $\varphi$. The map 
\begin{equation}\label{eqbor}
B: \mathbb{H}_g\to \mathcal{L}(\C)
\end{equation}
that sends a Hodge structure $y\in \mathbb{H}_g$ to the corresponding Hodge filtration $\operatorname{Fil}^0 \subset V_\C$ is the open immersion (the Borel embedding) of the $g$-dimensional Siegel space into its compact dual (after denoted by ${\mathbb{H}_g}^{\vee}$).

\begin{defi}
An irreducible subvariety $Y \subset \Ag$ is \emph{bi-algebraic} if some (equivalently any) analytic component of $\pi^{-1}(Y)$ is algebraic in the following sense: there exists an algebraic subvariety $Y^{\vee} \subset \mathcal{L}$ such that $Y=B^{-1}(Y^{\vee})$.
\end{defi}

\begin{thm}[Moonen, Ullmo-Yafaev] \label{thmweaksp} Let $Y \subset \Ag$ be an irreducible subvariety. The following are equivalent:
\begin{itemize}
\item $Y$ is weakly special;
\item $Y$ is bi-algebraic;
\item the smooth locus of $Y$ is totally geodesic with respect to the Bergamm metric (equivalently $Y$ is the image of a totally geodesic Hermitian subdomain $D_Y \subset \mathbb{H}_g$);
\item There is a sub-Shimura datum $ (H,D_H)$ of the Shimura datum $(\operatorname{GSp}_{2g},\mathbb{H}_g)$ such that the adjoint Shimura datum splits as a product: $(H^{\operatorname{ad}},D_H^{\operatorname{ad}})=(H_1,D_1)\times (H_2,D_2$), and $Y$ is the image in $S$ of $D_1\times {\{x_2\}}$, for some $x_2 \in D_2$.
\end{itemize}
Furthermore, a weakly special subvariety is special (in the sense of \Cref{defspec}) if and only if it contains a CM point.
\end{thm}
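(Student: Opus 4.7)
The plan is to organize the proof around the last characterization (the product-of-Shimura-data description), which is the most concrete, and to verify the equivalences by showing that it implies each of the others and then reversing. The ``easy'' direction, from the product description to the remaining three, only uses standard properties of Shimura data and the Borel embedding \eqref{eqbor}. If $(H^{\operatorname{ad}}, D_H^{\operatorname{ad}}) = (H_1, D_1) \times (H_2, D_2)$ and $Y$ is the image of $D_1 \times \{x_2\}$, then the inclusion $D_H \hookrightarrow \mathbb{H}_g$ coming from $H \hookrightarrow \operatorname{GSp}_{2g}$ is totally geodesic (being equivariant for a morphism of reductive $\mathbb{Q}$-groups), and a fiber of the Riemannian product $D_1 \times D_2$ is automatically totally geodesic, giving the geodesic characterization. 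For bi-algebraicity, the compact dual $D_H^{\vee}$ sits algebraically inside $\mathbb{H}_g^{\vee} = \mathcal{L}$, and the analytic component $D_1 \times \{x_2\}$ of $\pi^{-1}(Y)$ is cut out by $D_1^{\vee} \times \{x_2\}$ via $B$. The monodromy characterization follows because the variation restricted to $D_1 \times \{x_2\}$ is constant in the second factor, so its algebraic monodromy is (a $\mathbb{Q}$-form of) $H_1$, and maximality is forced by the product structure.

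The converse directions require real work. For the algebraic-monodromy characterization implying the product structure: by the theorem of André--Deligne, the algebraic monodromy $\mathbf{H}_Y$ is a \emph{normal} subgroup of the derived generic Mumford-Tate group of the smallest special subvariety containing $Y$, with associated Shimura datum $(H, D_H)$. I would then decompose $H^{\operatorname{ad}}$ as a product $H_1 \times H_2$, with $H_1$ the image of $\mathbf{H}_Y$ and $H_2$ the product of the remaining $\mathbb{Q}$-simple factors; the associated domain splits accordingly, and maximality of $Y$ forces it to be a full $H_1$-orbit modulo the arithmetic lattice, which is exactly the desired description. For the geodesic characterization implying the product structure, one uses the structure theory of totally geodesic complex subvarieties of Hermitian symmetric domains to recognize the analytic lift as an orbit of a reductive $\mathbb{R}$-subgroup; combining this with the arithmetic of the stabilizer in $\operatorname{Sp}_{2g}$ produces the required sub-Shimura datum and splitting. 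For bi-algebraicity implying the product structure, one invokes the Ax--Schanuel theorem for Shimura varieties (Mok--Pila--Tsimerman), which forces any bi-algebraic subvariety of $\mathcal{A}_g$ to have the monodromy behavior asked for by the first characterization.

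For the last assertion about CM points, one direction is automatic: a special subvariety comes from a sub-Shimura datum, and every Shimura datum admits a CM point in its associated domain. For the converse, suppose $Y$ is weakly special coming from $D_1 \times \{x_2\}$ as above and contains a CM point $p$. Projecting $p$ to the second factor recovers $x_2$, and since $\operatorname{MT}(p)$ is a torus, the Hodge structure at $x_2$ (a quotient of that of $p$) also has torus Mumford-Tate group $T$. Then $(T \times H_1, \{x_2\} \times D_1)$ is a sub-Shimura datum of $(H, D_H)$ whose image in $\mathcal{A}_g$ coincides with $Y$, proving that $Y$ is special.

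The main technical hurdle throughout is moving from the purely geometric or monodromy-theoretic conditions to an \emph{honest} $\mathbb{Q}$-rational decomposition of the ambient Shimura datum: turning a real totally geodesic condition, or a normality statement for algebraic monodromy, into a product decomposition at the level of $\mathbb{Q}$-algebraic groups is delicate, and historically was carried out by Moonen and Ullmo--Yafaev using the fine structure theory of the relevant reductive groups over $\mathbb{Q}$.
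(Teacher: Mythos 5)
The paper does not actually prove \Cref{thmweaksp}: it defers to Moonen \cite[Sec. 4]{zbMATH01353476} and Ullmo--Yafaev \cite[Thm. 1.2]{MR2825237}, recording only the Deligne--Andr\'e monodromy theorem (\Cref{fixedpart}) as the key input. Your sketch is a correct outline of the standard argument from those sources, and you correctly identify Andr\'e--Deligne normality as the crux of the ``weakly special $\Rightarrow$ product'' direction: normality of $\mathbf{H}_Y$ in $H^{\mathrm{der}}$ gives the $\Q$-rational splitting $H^{\mathrm{ad}} = H_1 \times H_2$. One step you elide there is worth naming: after splitting $D_H^{\mathrm{ad}} = D_1 \times D_2$, you still need the projection of a lift $\tilde Y$ to $D_2$ to be \emph{constant} (this follows from triviality of the monodromy in the $H_2$-factor together with a rigidity/Liouville argument on the bounded domain $D_2$); only then does maximality identify $Y$ with the full image of $D_1 \times \{x_2\}$. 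Your one genuine departure from the original proofs is deducing ``bi-algebraic $\Rightarrow$ weakly special'' from Ax--Schanuel (really Ax--Lindemann suffices: if a component $\tilde Y$ of $\pi^{-1}(Y)$ is algebraic, the Zariski closure of $\pi(\tilde Y) = Y$ is weakly special); Ullmo--Yafaev's argument predates Ax--Schanuel and is monodromy-theoretic, but the Ax--Lindemann route is valid and fits the paper's later emphasis on functional transcendence. The remaining acknowledged gaps (the Lie-theoretic classification behind ``totally geodesic $\Rightarrow$ product'', and the precise construction of the sub-Shimura datum generated by $\MT(p)$ and $\mathbf{H}_Y$ in the CM-point direction, where $T \times H_1$ should be assembled inside $H$ rather than at the adjoint level) are exactly the technical content of the cited references, so as a survey-level sketch this is fine.
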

The original sources \cite[Sec. 4]{zbMATH01353476} and \cite[Thm. 1.2]{MR2825237} contain the most details on the subject; see also \cite[Sec. 4]{MR3821177} for a nice overview. We just sketch one highlight: A key input in the theory is the following, which links the monodromy group to the Mumford-Tate:

\begin{thm}[Deligne-Andr\'{e} monodromy theorem]\label{fixedpart}
Let $s\in S$. We have:
\begin{itemize}
\item[] \emph{Normality}. The monodromy group at $s$, $\mathbf{H}_S$ is a normal subgroup of the derived subgroup of the generic Mumford-Tate $G^{\text{der}}$ of $\VV$;
\item[] \emph{Maximality}. Suppose $S$ contains a special point. Then the monodromy equals $G^{\text{der}}$.
\end{itemize}
\end{thm}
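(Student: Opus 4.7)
The theorem splits into the normality part (due to Deligne) and the maximality part (due to André); I would approach them in that order.

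For normality, I would first establish the containment $\mathbf{H}_S\subseteq G^{\text{der}}$. The key input is Deligne's \emph{theorem of the fixed part}: for any polarizable VHS $\V$ on $S$ and any tensor construction $T(\V)$, global flat sections of $T(\V)$ are Hodge classes at every fiber. At a very general point $s\in S$---chosen outside the countable union of Hodge loci of all tensor powers of $\V$---every Hodge tensor in $V_s^{\otimes}$ must extend to a global Hodge class; otherwise it would define a proper component of $\HL(S,\V^\otimes)$ through $s$ (which is algebraic by \cite{CDK}), contradicting generality. These global extensions are monodromy-invariant, so $\mathbf{H}_S$ fixes every Hodge tensor at $s$, which by the Tannakian characterization of the Mumford-Tate group yields $\mathbf{H}_S\subseteq G$. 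Deligne's semisimplicity theorem for polarizable VHS (extended to quasi-projective bases via Schmid's nilpotent orbit theorem) shows $\mathbf{H}_S$ is semisimple, hence contained in $G^{\text{der}}$ since a semisimple group maps trivially to any torus quotient.

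For the normality upgrade I would run the same machinery on the adjoint VHS $\End(\V)$. The Lie subalgebra $\operatorname{Lie}(\mathbf{H}_S)\subseteq \End(V_s)$ is $\pi_1$-stable and, via the theorem of the fixed part combined with semisimplicity of the monodromy action on tensor constructions, can be identified with a sub-Hodge structure. Since $G$ acts on sub-Hodge structures of $\End(V_s)$ by Lie-algebra automorphisms, the adjoint action $\operatorname{Ad}(G)$ preserves $\operatorname{Lie}(\mathbf{H}_S)$, giving normality of $\mathbf{H}_S$ in $G$, and a fortiori in $G^{\text{der}}$. For maximality, I would fix a CM point $s_0\in S$ and, toward a contradiction, assume $\mathbf{H}_S\subsetneq G^{\text{der}}$. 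Set $Q:=G^{\text{der}}/\mathbf{H}_S\neq 1$. The sub-VHS $\V':=\V^{\mathbf{H}_S}\subseteq \V$ of $\mathbf{H}_S$-invariants (a sub-VHS by the theorem of the fixed part) has trivial monodromy on $S$ and generic Mumford-Tate group admitting $Q$ as a quotient. Rigidity of polarizable VHS with trivial monodromy forces $\V'$ to be isotrivial after a finite étale cover, so its generic MT group agrees with its MT group at any point---in particular at $s_0$, where it is a torus. This contradicts that the generic MT of $\V'$ surjects onto the nontrivial semisimple group $Q$.

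\emph{Main obstacle.} The most delicate step is the identification of $\operatorname{Lie}(\mathbf{H}_S)$ with a sub-Hodge structure of $\End(V_s)$ in the normality upgrade: the monodromy is \emph{a priori} transcendental with respect to the Hodge filtration, and bridging the two requires careful orchestration of the theorem of the fixed part, semisimplicity of the monodromy action on tensor constructions, and the algebraicity of Hodge loci \cite{CDK}. The rigidity input in the maximality step---for polarizable VHS over non-compact bases---is equally subtle, and is where André's contribution is most clearly felt.
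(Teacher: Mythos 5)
The paper itself states this theorem without proof (it is André's theorem; the survey points to Moonen and Ullmo--Yafaev for details), so your proposal has to be measured against the standard argument. Your overall architecture --- theorem of the fixed part, Deligne semisimplicity, evaluation at a CM point --- is the right one, and the containment $\mathbf{H}_S\subseteq G^{\text{der}}$ is essentially correct (modulo the routine point that the component of the locus of Hodge classes through $(s,t)$ dominating $S$ is only \emph{finite} over $S$, so the Hodge tensor $t$ has finite monodromy orbit and is fixed by the connected group $\mathbf{H}_S$, not literally by $\pi_1$). But each of the two remaining steps has a genuine gap.

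For the normality upgrade, your pivotal claim --- that $\operatorname{Lie}(\mathbf{H}_S)$ is a sub-Hodge structure of $\End(V_s)$, obtained ``via the theorem of the fixed part combined with semisimplicity'' --- is precisely the statement that in the standard treatment is a \emph{consequence} of normality (once $G\supseteq h(\DT)$ normalizes $\mathbf{H}_S$, its Lie algebra is $h(\DT)$-stable, hence a sub-HS), not an ingredient for it. A $\pi_1$-stable $\Q$-subspace of a polarizable VHS does not in general underlie a sub-VHS (already for $S$ a point every $\Q$-subspace is $\pi_1$-stable), so a real argument is required and you do not supply one; you only flag it as the ``main obstacle.'' The standard route sidesteps this: since $\mathbf{H}_S$ is connected reductive, it is recovered as the common stabilizer of its invariant tensors $\bigcup_T T(V_s)^{\mathbf{H}_S}$; after a finite étale cover these coincide with the global flat sections $H^0(S,T(\V))$, which are sub-Hodge structures by the fixed part theorem and hence $G$-stable; therefore $g\mathbf{H}_Sg^{-1}$ and $\mathbf{H}_S$ have the same invariant tensors for every $g\in G$ and so coincide.

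For maximality, taking $\mathbf{H}_S$-invariants in $\V$ itself does not work: $\V^{\mathbf{H}_S}$ can vanish while $\mathbf{H}_S\subsetneq G^{\text{der}}$ (e.g.\ in Mumford's $\Sl_2^3$-families, where $V$ is a tensor product of standard representations and no normal factor acts with nonzero invariants), and even when it is nonzero its generic Mumford--Tate group need not surject onto $Q$. The correct object is the Lie algebra $\mathfrak{n}$ of the almost-direct complement $N$ of $\mathbf{H}_S$ in $G^{\text{der}}$, which exists by normality and semisimplicity: $\mathfrak{n}\neq 0$ iff $N\neq 1$, it is centralized by $\mathbf{H}_S$ and hence lies in $\End(V_s)^{\pi_1}$ up to a finite cover, so it is a constant sub-VHS by the fixed part theorem; at the CM point its Hodge structure has abelian Mumford--Tate group, while at a very general point its Mumford--Tate group contains the adjoint image of $N$, forcing $N=1$. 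With these two repairs your proof matches André's original argument.
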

%\begin{proof}[Sketch of the proof.]

%\end{proof}

\begin{rmk}[Linearity]\label{rmklink}
In analogy with what we described in \Cref{linearity} (the linearity properties of orbit closures), we add some words on the Harish-Chandra and the Borel \eqref{eqbor} embeddings (following  \cite[Sec. 3.1]{MR2825237}). Fix a point $0\in \mathbb{H}_g$, and let $\mathfrak{p}^+\subset \Lie(\operatorname{Sp}_{2g})$
be the holomorphic tangent bundle at $0$ in $\mathbb{H}_g$. Thanks to the work of Harish-Chandra, $\mathbb{H}_g$ can be canonically realised as a bounded symmetric domain in  $\mathfrak{p}^+ \cong \mathbb{C}^{g(g+1)/2}$. This gives an equivalent notion of algebraic subvariety of $\mathbb{H}_g$ (independent of the base point). We will give a more detailed description of the Harish-Chandra realization in \Cref{secfingeod}.

In this description, the bi-algebraic subvarieties are in fact \emph{linear}: they are of the form $V\cap \mathbb{H}_g$, for $V\subset  \mathbb{C}^{g(g+1)/2}$ a complex vector subspace corresponding to the tangent space of a totally geodesic Hermitian subdomain. This is a well know fact of the theory, see for example \cite[Prop. 2.4]{zbMATH07629821}.
\end{rmk}

\subsection{Ax-Schanuel after Bl\'{a}zquez-Sanz, Casale, Freitag, and Nagloo}\label{sectionAS}
Thanks to \Cref{fixedpart} (or Deligne's semisimplicity theorem), we observe that the monodromy groups appearing here are always semisimple (the monodromy of a mixed VHS is a semidirect product of a unipotent group and a semisimple one). In particular, for this section will simply focus on such algebraic groups. 

Let $S$ be an (irreducible, complex) smooth variety, and $H$ a complex linear algebraic group which is semisimple. Let $\pi : P \to S$ be an $H$-\emph{principal} bundle, that is $H$ acts on $P$ (on the right, with the action denoted by $\cdot$) and such that the action induces an isomorphism
\begin{displaymath}
P \times H \simeq P \times_S P, \  (p,g )\mapsto (p,p\cdot g).
\end{displaymath}
In particular the fibers of $\pi$ are principal homogeneous spaces, and the choice of a point $p \in P_s:=\pi^{-1}(s)$ induces an isomorphism $H \cong P_s $, given by $g \mapsto p \cdot g$. We have the following exact sequence of $S$-bundles
\begin{equation}\label{firstdefver}
0 \to \textrm{ker} \, d \pi \to TP \to TS \times_{S} P \to 0.
\end{equation}
A \emph{connection} $\nabla$ is a section of \eqref{firstdefver}. We say that $\nabla$ is $H$-\emph{principal} if it is $H$-equivariant. We additionally say that $\nabla$ is \emph{flat} if it lifts to a morphism of Lie algebras: the equation $\nabla_{[v,w]} = [\nabla_{v}, \nabla_{w}]$ holds on the level of maps of $\mathbb{C}$-schemes. From now on all connections are $H$-principal and flat.

\begin{defi}\label{mininvdef}
An \emph{integral submanifold} of $P$ is a locally closed complex submanifold whose tangent subbundle lies entirely in the image of $\nabla$. A \emph{leaf} is a maximal integral submanifold. An irreducible subvariety $V \subset P$ is a \emph{minimal $\nabla$-invariant variety} if it is the Zariski closure of a $\nabla$-horizontal leaf. 
\end{defi}

Let $V$ be a minimal $\nabla$-invariant variety. Then the \emph{Galois group of $S$}
\begin{displaymath}
H_V:=\Gal(\nabla)=\Gal (S)=\{g\in H : V \cdot g =V\},
\end{displaymath}
is an algebraic subgroup of $H$ and $\pi_{|V}$ is an $H_V$-principal bundle (cf. \cite[Lem. 2.3]{2021arXiv210203384B}). We can assume, from now on, that $H=\Gal(\nabla)=\Gal(S)$. 

\begin{defi}\label{def:nablasp}
A Zariski closed irreducible subvariety $Y \subset S$ is \emph{$\nabla$-special} if its Galois group $\Gal(Y) = \Gal (\nabla_{| Y})$ is strictly contained in $\Gal(S)$, and it is maximal for this property.
\end{defi}

Let $\pi : P \to S$ with flat $H$-equivariant connection $\nabla$. The following is the main theorem of Bl\'{a}zquez-Sanz, Casale, Freitag, and Nagloo (see also \cite{2022arXiv220805182B} and references therein).

\begin{thm}[{\cite[Thm. 3.6]{2021arXiv210203384B}}]\label{thm:newAS}
Let $V$ be a subvariety of $P$, $x\in V$, and let $\mathcal{L} \subset P$ be a horizontal leaf through $x$. Let $U$ be an analytic irreducible component of $V \cap \mathcal{L}$.

 If $\dim V < \dim U + \dim H$,
then the projection of $U$ in $S$ is contained in a $\nabla$-special subvariety.
\end{thm}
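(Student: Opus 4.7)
The plan is to reduce the statement to the following claim: if $T := \pi(U)^{Zar} \subset S$, then $\Gal(\nabla|_T) \subsetneq H$. Once this is established, by \Cref{def:nablasp} the subvariety $T$ is contained in a subvariety of $S$ that is maximal among those whose Galois group is strictly contained in $\Gal(S) = H$; any such maximal subvariety is precisely a $\nabla$-special subvariety by definition, and since $\pi(U) \subset T$, the conclusion follows.

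I would prove the Galois claim by contradiction. First, the preliminary identity $\dim U = \dim T$ holds: since $U \subset \mathcal{L}$ and the projection $\pi|_{\mathcal{L}}$ is a local biholomorphism, $\pi|_U$ is generically injective and $\dim U = \dim \pi(U) = \dim T$. Now suppose $\Gal(\nabla|_T) = H$. Then a minimal $\nabla|_T$-invariant subvariety of $P_T := \pi^{-1}(T)$ (in the sense of \Cref{mininvdef}) is a principal $H$-bundle over $T$, and, having dimension $\dim T + \dim H = \dim P_T$, must coincide with $P_T$ itself. Taking this minimal invariant to be the Zariski closure in $P_T$ of the leaf $\mathcal{L}_T$ of $\nabla|_T$ through $x$ (the connected component of $\mathcal{L} \cap P_T$ containing $x$), I would obtain
\[
\overline{\mathcal{L}_T}^{Zar} = P_T.
\]
Now comes the dimension count: $U \subset V \cap \mathcal{L}_T$ has analytic dimension $\dim U = \dim T = \dim \mathcal{L}_T$. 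If $V \cap P_T \subsetneq P_T$ strictly, then $\mathcal{L}_T \not\subset V$ by the Zariski-density displayed above, so $V \cap \mathcal{L}_T$ is a proper analytic subset of $\mathcal{L}_T$ of dimension $< \dim T$, contradicting $\dim U = \dim T$. Otherwise $P_T \subset V$, and therefore
\[
\dim V \geq \dim P_T = \dim T + \dim H = \dim U + \dim H,
\]
contradicting the hypothesis $\dim V < \dim U + \dim H$. Either alternative yields a contradiction, so $\Gal(\nabla|_T) \subsetneq H$, as desired.

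The main obstacle I anticipate is justifying the structural step that $\Gal(\nabla|_T) = H$ forces the Zariski closure of a leaf of $\nabla|_T$ to be all of $P_T$. This requires the description of minimal $\nabla$-invariant subvarieties as principal bundles for their Galois groups, which is part of the general machinery developed by Bl\'azquez-Sanz, Casale, Freitag and Nagloo surrounding \Cref{mininvdef}. Granted that input, the remainder of the argument is a transparent dimension count that exploits the incompatibility between Zariski-density of $\mathcal{L}_T$ in $P_T$ and the existence of a full-dimensional analytic component of $V \cap \mathcal{L}_T$.
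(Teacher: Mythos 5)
There is a genuine gap, and it sits at the very first "preliminary identity" $\dim U=\dim T$, where $T=\overline{\pi(U)}^{\mathrm{Zar}}$. What the local biholomorphy of $\pi|_{\mathcal{L}}$ gives you is only $\dim U=\dim \pi(U)$, and the dimension of an analytic set can be \emph{strictly smaller} than that of its Zariski closure (think of the graph of a transcendental entire function in $\C^2$: analytic dimension $1$, Zariski closure of dimension $2$). In the present setting this is not a pathology but the generic situation: for a \emph{typical} intersection, $U$ has dimension $\dim V-\dim H<\dim S$ while $\pi(U)$ is Zariski dense in $S$, so $\dim U<\dim T$. Controlling exactly when an analytic component of $V\cap\mathcal{L}$ can have a projection whose Zariski closure jumps in dimension is the whole content of Ax--Schanuel, so assuming $\dim U=\dim T$ at the outset is essentially circular. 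Concretely, the gap kills your Case B: there you only obtain $\dim U\le\dim(V\cap\mathcal{L}_T)<\dim T$, which is no contradiction at all once $\dim U=\dim T$ is withdrawn; you have merely replaced $(P,V,S)$ by $(P_T, V\cap P_T, T)$ with the same atypicality inequality, i.e.\ you have restated the problem rather than solved it. (Your Case A, and the structural input that $\Gal(\nabla|_T)=H$ forces $\overline{\mathcal{L}_T}^{\mathrm{Zar}}=P_T$ via the principal-bundle description of minimal invariant varieties around \Cref{mininvdef}, are fine.)

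Note also that the survey does not prove \Cref{thm:newAS}; it is quoted from Bl\'azquez-Sanz--Casale--Freitag--Nagloo. Their argument cannot be reduced to the dimension count above: it uses the Galois correspondence for principal connections (in particular the identity $\dim \overline{\mathcal{L}}^{\mathrm{Zar}}=\dim S+\dim\Gal(S)$ for leaves and its behaviour under restriction and under taking Zariski closures of $U$ inside $P$), together with an induction that is precisely designed to handle the case your Case B leaves open. If you want to salvage your outline, the missing ingredient is an argument that, when $\dim U<\dim T$ and $\Gal(\nabla|_T)=H$, the atypicality inequality for $(V\cap P_T, \mathcal{L}_T)$ can be strictly improved or the dimension of the ambient data strictly decreased, so that an induction terminates; as written, no such descent is present.
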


\begin{rmk}\label{standardbundle}
In the theory of Shimura variety, there is an automorphic bundle, usually called \emph{the standard bundle}. See e.g.  \cite[Sec. 3]{zbMATH04120309}. Over a Shimura variety associated to $(G,X)$, it will give indeed an algebraic $G$-torsor naturally equipped with the desired data. We will denote it by $\pi: P \to \mathcal{A}_g$. Here $\nabla$-special subvarieties are exactly the weakly special ones, described by \Cref{thmweaksp}.
\end{rmk}

The above discussion implies the following, originally due to Mok, Pila and Tsimerman
\cite[Thm. 1.1]{as} in the case of Shimura varieties (Bakker and Tsimerman in general):

\begin{thm}[Ax-Schanuel for VHS]\label{astheorem}  
Let $W \subset S \times \mathbb{H}_g$ be an algebraic subvariety. Let $U$ be an
irreducible complex analytic component of $W \cap S \times_{\Ag} \mathbb{H}_g$
such that 
\begin{displaymath}
\codim_{S \times \mathbb{H}_g} U< \codim_{S \times \mathbb{H}_g} W+ \codim_{S
  \times \mathbb{H}_g} (S \times_{\Ag}
\mathbb{H}_g)\;\;.  
\end{displaymath}
Then the projection of $U$ to $S$ is contained in a strict weakly
special subvariety of $S$.
\end{thm}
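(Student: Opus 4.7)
The plan is to derive this Ax-Schanuel theorem for the period map $\psi: S \to \Ag$ by specializing the abstract principal bundle statement, namely \Cref{thm:newAS}, to the standard bundle over $\Ag$. First I would let $G$ denote the structure group of the standard algebraic torsor $\pi: P \to \Ag$, equipped with its canonical flat $G$-equivariant connection $\nabla$, as in \Cref{standardbundle}. By that remark, the $\nabla$-special subvarieties of $\Ag$ are precisely the weakly special ones. Pulling back along $\psi$ yields a principal $G$-bundle $\pi_S: P_S \to S$ with induced flat connection $\nabla_S$; by functoriality, its $\nabla_S$-special subvarieties (in the sense of \Cref{def:nablasp}) coincide with the weakly special subvarieties of $(S, \VV)$ in the sense of \Cref{weakspe}.

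The second step is to set up a dictionary between the two formulations. After fixing a basepoint $s_0 \in S$, a lift $x_0 \in \mathbb{H}_g$ of $\psi(s_0)$, and a frame above $x_0$, one identifies the period domain $\mathbb{H}_g$ (via the Borel embedding $\mathbb{H}_g \hookrightarrow \mathbb{H}_g^{\vee}$ into the compact dual, cf.\ \Cref{rmklink}) with a horizontal leaf $\mathcal{L}_0 \subset P$ through the chosen frame. Pulled back to $P_S$, analytic components of $S \times_{\Ag} \mathbb{H}_g \subset S \times \mathbb{H}_g$ correspond to horizontal leaves of $\nabla_S$. Under this dictionary, an algebraic subvariety $W \subset S \times \mathbb{H}_g$ (algebraic meaning with respect to the algebraic structure inherited from $S \times \mathbb{H}_g^{\vee}$) corresponds to a $G$-invariant algebraic subvariety $V \subset P_S$ whose intersection with each horizontal leaf encodes $W \cap (S \times_{\Ag} \mathbb{H}_g)$ on that leaf.

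Next, a routine dimension count rewrites the hypothesis $\codim_{S \times \mathbb{H}_g} U < \codim_{S \times \mathbb{H}_g} W + \codim_{S \times \mathbb{H}_g}(S \times_{\Ag} \mathbb{H}_g)$ as $\dim W < \dim U + \dim \mathbb{H}_g$, using $\codim_{S \times \mathbb{H}_g}(S \times_{\Ag} \mathbb{H}_g) = \dim \mathbb{H}_g$. A bookkeeping along the $G$-fibers of $\pi_S$ and along horizontal leaves then converts this into the inequality $\dim V < \dim U' + \dim G$ required by \Cref{thm:newAS}, where $U'$ is the analytic component of $V \cap \mathcal{L}$ lifting $U$; the extra $G$-fiber dimensions on both sides cancel because $V$ is $G$-invariant and $\mathcal{L}$ is transverse to the fibers. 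Applying \Cref{thm:newAS} then shows that the projection of $U'$ to $S$ is contained in a $\nabla_S$-special subvariety, which is weakly special by the first paragraph. Strictness follows from the fact that if the projection were all of $S$, then $V$ would have to contain a full leaf, contradicting the strict dimension inequality.

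The main obstacle is the algebro-geometric bookkeeping in the second and third paragraphs: one must verify the correspondence between $G$-invariant algebraic subvarieties of $P_S$ and algebraic subvarieties of $S \times \mathbb{H}_g^{\vee}$ (using that the algebraicity of the standard bundle on $\mathbb{H}_g^{\vee}$ descends appropriately), and check that this correspondence intertwines intersections with leaves and intersections with $S \times_{\Ag} \mathbb{H}_g$. The underlying reason it works is that horizontal leaves and $G$-fibers span complementary directions in $P_S$, so dimensions split additively across them, allowing a clean transition between the two formulations; this is the same structural input that makes the theory of the standard bundle the correct vehicle for transporting \Cref{thm:newAS} into Hodge-theoretic Ax-Schanuel.
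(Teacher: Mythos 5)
Your proposal follows exactly the route the paper intends: apply \Cref{thm:newAS} to the standard automorphic bundle of \Cref{standardbundle}, whose $\nabla$-special subvarieties are precisely the weakly special ones; the paper itself offers no more detail than ``the above discussion implies,'' so you are filling in the same sketch rather than taking a different path. The reduction, the identification of leaves with lifts of $\mathbb{H}_g$, and the final dimension count $\dim W < \dim U + \dim \mathbb{H}_g$ are all correct.

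One point in your dictionary needs correcting. The subvariety $V \subset P_S$ attached to $W$ is the preimage of $W$ under the algebraic map $(\pi_S, \nu) : P_S \to S \times \mathbb{H}_g^{\vee}$, where $\nu$ is the frame-to-filtration map into the compact dual; this $V$ is \emph{not} $G$-invariant. If it were, it would equal $\pi_S^{-1}(\pi_S(V))$ and its intersection with a leaf would only remember a subvariety of $S$, losing the $\mathbb{H}_g$-direction entirely (and the inequality $\dim V < \dim U' + \dim G$ would then force $\dim \pi_S(V) < \dim U'$, which is absurd). The correct bookkeeping is that the fibres of $(\pi_S,\nu)$ are conjugates of the parabolic $Q$ with $\mathbb{H}_g^{\vee} = G/Q$, so $\dim V = \dim W + \dim Q$ while $\dim U' = \dim U$, and the hypothesis of \Cref{thm:newAS} becomes $\dim W < \dim U + \dim G - \dim Q = \dim U + \dim \mathbb{H}_g$, which is exactly your reformulation of the atypicality inequality. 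With that substitution the argument closes as you describe.
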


\subsubsection{Schanuel's conjecture}

It is worth pausing for a second on why the above theorems are called Ax-Schanuel and how they link to transcendence theory.

\begin{conj}[Schanuel's conjecture] Given $z_1, \dots ,z_n\in \C$ that are linearly independent over $\Q$, the field extension
\begin{displaymath}
\Q (z_1,\dots , z_n, \exp{(z_1)},\dots, \exp{(z_n)})
\end{displaymath}
 has transcendence degree at least $n$ over $\Q$.  
\end{conj}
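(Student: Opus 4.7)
This is Schanuel's conjecture, a central open problem in transcendence theory that subsumes essentially every known transcendence result (Lindemann--Weierstrass, the Gelfond--Schneider theorem, the algebraic independence of $\pi$ and $e$, the transcendence of $e^e$, and so on). Any honest proof proposal must begin by acknowledging that no attack is currently known to succeed. The natural first move would be to exploit the functional analogue, namely Ax's theorem of 1971, which is the ``Ax--Schanuel'' statement for formal power series: if $f_1,\dots,f_n\in t\,\mathbb{C}[[t]]$ are $\mathbb{Q}$-linearly independent, then
\[
\operatorname{tr.deg}_{\mathbb{C}} \mathbb{C}(f_1,\dots,f_n,\exp f_1,\dots,\exp f_n)\ \geq\ n.
\]
This is the exponential cousin of the Ax--Schanuel theorem for VHS (\Cref{astheorem}). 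A first, naive, strategy is to specialise it: given the $z_i$, consider the curve $t\mapsto (t z_1,\dots,t z_n)$ inside $\mathbb{C}^n$, apply Ax to the associated formal germ, and then attempt to descend the transcendence statement from $\mathbb{C}((t))$ to the point $t=1$.

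A second, more realistic angle is the Pila--Zannier template combined with the Pila--Wilkie counting theorem in the o-minimal structure $\mathbb{R}_{\exp}$. One would recast the desired transcendence as the non-existence of too many algebraic points of bounded height and degree on certain definable sets cut out by $\exp$, and then combine the resulting o-minimal upper bound with a Galois/height lower bound on the Galois orbits of $(\exp z_1,\dots,\exp z_n)$. This is the same template underlying the proof of André--Oort (\Cref{aothm}) and the conditional attacks on Zilber--Pink discussed in this text, and in principle it matches the bi-algebraic philosophy of \Cref{sec71} applied to the universal cover $\exp\colon\mathbb{C}^n\to(\mathbb{C}^\times)^n$. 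In this formulation the geometric/combinatorial half is essentially in place thanks to Ax; what is missing is the arithmetic input.

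The hard part, and the reason the conjecture has resisted since the 1960s, is precisely that arithmetic input. Ax's theorem is proven by purely differential-algebraic means and knows nothing about which complex numbers happen to be algebraic; every known way of turning a functional transcendence statement into an arithmetic one passes through an analytic estimate of Gelfond--Baker type (auxiliary functions, zero estimates, interpolation determinants), and none of these techniques is presently strong enough to handle $n$ arbitrary $\mathbb{Q}$-linearly independent complex numbers simultaneously. Already the case $n=2$ with $z_1=1$, $z_2=i\pi$ gives the algebraic independence of $e$ and $\pi$, which is unknown; so even the very first non-trivial instance lies beyond current technology. I would therefore expect that any real progress will come not from combining Ax--Schanuel with Pila--Wilkie in a clever way, but from a genuinely new transcendence method capable of supplying the missing height lower bound.
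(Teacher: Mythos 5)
You have correctly identified that this statement is Schanuel's conjecture, a wide-open problem; the paper states it without proof (it is labelled as a conjecture and serves only to motivate the functional Ax--Schanuel theorems that follow), so there is no argument to compare against. Your assessment of the landscape --- that Ax's theorem supplies the functional half, that the naive specialization of the formal germ at $t=1$ does not descend, and that the missing ingredient is an arithmetic height/Galois lower bound of a kind no current transcendence method provides --- is accurate and consistent with how the surrounding text frames the conjecture.
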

By replacing $\Q\subset \C$ by $\C \subset \C[[t_1,\dots, t_m]]$, one obtains a \emph{functional} version of the above, which is in fact a theorem of Ax:
\begin{thm}[Ax \cite{zbMATH03366984}]
Let $x_1,\dots, x_n \in \C[[t_1,\dots, t_m]]$ have no constant term and be linearly independent over $\Q$. Then $\operatorname{tr.deg.}_\C \C(x_1,\dots, x_n, e^{x_1},\dots , e^{x_n})$ is at least $ n +\text{rank} \left(\frac{\partial x_i}{\partial t_j}\right)$.  
\end{thm}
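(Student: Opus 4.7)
This is Ax's functional Schanuel theorem, and the plan is to follow his original differential-algebraic proof, rephrased in the language of Kähler differentials. Let $R := \C[[t_1,\ldots,t_m]]$ with its commuting derivations $\partial_j := \partial/\partial t_j$, and set $K := \Frac R$; the field of constants of $K$ is $\C$. Writing $y_i := e^{x_i}$ and $F := \C(x_1,\ldots,x_n, y_1,\ldots,y_n) \subset K$, set $d := \operatorname{tr.deg.}_\C F$. The only ``transcendental'' input is the logarithmic-derivative identity $\partial_j y_i = y_i\, \partial_j x_i$, which implies $dy_i = y_i\, dx_i$ in $\Omega^1_{K/\C}$. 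By definition, $r := \operatorname{rank}_K(\partial_j x_i)$ equals the $K$-dimension of the span of $\{dx_1,\ldots,dx_n\}$ (equivalently of $\{dx_i, dy_i\}_i$) inside $\Omega^1_{K/\C}$.

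The whole theorem reduces to the following \emph{Claim}: the $n$ ``exponential differentials''
\[
\omega_i \;:=\; dy_i - y_i\, dx_i \;\in\; \Omega^1_{F/\C}, \qquad i = 1,\ldots,n,
\]
are $F$-linearly independent. Granting the Claim, each $\omega_i$ vanishes in $\Omega^1_{K/\C}$, so the images $\omega_i \otimes 1$ lie in the kernel of $\Omega^1_{F/\C}\otimes_F K \to \Omega^1_{K/\C}$. As $F$ is finitely generated over $\C$, $\dim_F \Omega^1_{F/\C} = d$; the image of this map has $K$-dimension $r$, so the kernel has $K$-dimension $d - r$. Flatness of $K$ over $F$ upgrades $F$-linear independence of the $\omega_i$ to $K$-linear independence in the tensor product, so $n \leq d - r$, i.e.\ $d \geq n + r$, as required.

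The Claim itself is proved by induction on $n$. I would unwind a minimal $F$-linear dependence $\sum_i a_i \omega_i = 0$ into a polynomial relation $P \in \C[x,y]$ with $P(x,y) = 0$ in $F$, chosen of minimal complexity under some lexicographic order on multidegree. Applying each derivation $\partial_j$ to $P(x,y) = 0$ and substituting $\partial_j y_i = y_i \partial_j x_i$ produces the identity $\sum_i \partial_j x_i \cdot \bigl(\partial P/\partial x_i + y_i\, \partial P/\partial y_i\bigr) = 0$. For generic behavior of the Jacobian, this yields a new polynomial relation of strictly smaller complexity than $P$, contradicting minimality. The only degenerate case is when there exist rationals $c_1,\ldots,c_n$, not all zero, with $\sum_i c_i\, \partial_j x_i = 0$ for every $j$; since $\C$ is the field of constants of $K$, this gives $\sum_i c_i x_i \in \C$, and because each $x_i$ has no constant term, $\sum_i c_i x_i = 0$, contradicting the $\Q$-linear independence of the $x_i$.

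The main technical obstacle I foresee is setting up the ``complexity'' invariant on $P$ so that the derivation trick provably decreases it in every non-degenerate case, and cleanly carrying the induction through the cases in which the new coefficients that arise lie in the ``previous-stage'' field $F_{n-1} := \C(x_1,\ldots,x_{n-1},y_1,\ldots,y_{n-1})$ rather than being purely polynomial in $(x_n, y_n)$. Ax's original paper handles this via a careful lexicographic bookkeeping on minimal dependence relations; once that combinatorial heart is in place, the Claim follows and the inequality $d \geq n+r$ is immediate from the kernel-dimension count above.
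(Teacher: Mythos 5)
First, a point of comparison: the paper does not prove this statement at all --- it is quoted as an external input from Ax's 1971 paper, and the surrounding text only sketches the equivalence with the geometric formulation. So there is no in-paper argument to measure yours against; I can only assess your sketch on its own terms.

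Your overall strategy (reduce to linear independence of the exponential differentials, then count dimensions of a kernel) is the standard modern route, but the dimension count as written fails. In characteristic zero the natural map $\Omega^1_{F/\C}\otimes_F K\to\Omega^1_{K/\C}$ is \emph{injective} for any field extension $K/F$, so its kernel is zero, not $d-r$; and the forms $\omega_i=dy_i-y_i\,dx_i$ do \emph{not} vanish in the algebraic module $\Omega^1_{K/\C}$ --- what is true is only that they are annihilated by the $m$ derivations $\partial_1,\dots,\partial_m$. For the same reason your opening identification is off: $\operatorname{rank}(\partial_j x_i)$ can be strictly smaller than the dimension of the span of the $dx_i$ in $\Omega^1_{K/\C}$ (take $m=1$, $x_1=t$, $x_2=u$ with $u\in\C[[t]]$ transcendental over $\C(t)$). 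The repair is to replace $\Omega^1_{K/\C}$ throughout by $K^m$ (equivalently, by the \emph{continuous} differentials of $\C[[t_1,\dots,t_m]]$), via the $K$-linear map $\theta:\Omega^1_{F/\C}\otimes_F K\to K^m$, $\omega\mapsto(\langle\partial_j,\omega\rangle)_j$. Since $\theta(dy_i)=y_i\,\theta(dx_i)$, the image of $\theta$ is spanned by the vectors $(\partial_j x_i)_j$ and so has dimension exactly $r$; the kernel then has dimension $d-r$ and contains the $\omega_i$, and your inequality $n\le d-r$ follows once the $\omega_i$ are known to be independent. As written, though, your count would yield $n\le 0$.

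Beyond that, the independence Claim is where essentially all of the content of Ax's theorem lives --- already for $n=1$ it amounts to showing that the form $dy/y-dx$ cannot vanish identically on an algebraic curve, i.e.\ that $y=e^{x}$ has no nonconstant algebraic solution curves --- and your sketch of it (``unwind a minimal dependence into a polynomial relation and decrease a complexity invariant'') is too vague to check. You do correctly isolate the degenerate case, which is exactly where the $\Q$-linear independence and no-constant-term hypotheses enter, but the non-degenerate descent step is precisely what Ax's induction (or, in the differential-form versions, a residue/monodromy argument for closed logarithmic forms) has to supply, and your proposal defers it entirely to the reference. So the proposal is a reasonable outline of the standard proof, but it has one step that is false as stated and one step that is not actually carried out.
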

There is a \emph{Geometric} equivalent formulation, which we recall now:
\begin{cor}
 Let $W\subset \C^n \times (\C^*)^n$ be an irreducible algebraic subvariety. Let $U$ be an irreducible analytic component of $W \cap \Pi$, where $\Pi$ is the graph of the exponentiation map. Assume that the projection of $U$ to $(\C^*)^n$ is not contained in a translate of any proper algebraic subgroup. Then $\dim W = \dim U +n$.
\end{cor}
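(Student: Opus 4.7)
The plan is to derive the geometric formulation from Ax's theorem (stated just above) by proving the two inequalities $\dim W \le \dim U + n$ and $\dim W \ge \dim U + n$ separately. The first is elementary codimension counting, while the second is the substance of Ax's theorem once we translate the non-degeneracy hypothesis into the required linear independence.

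For the easy direction, observe that $\Pi \subset \C^n \times (\C^*)^n$ is cut out by the $n$ holomorphic equations $w_i = e^{z_i}$. Hence any analytic component of $W \cap \Pi$ has dimension at least $\dim W - n$, so $\dim U \ge \dim W - n$. For the other direction, I would pick a smooth point $p = (z_0, w_0) \in U$ (which exists since $U$ is irreducible analytic). Because the first projection $\Pi \to \C^n$ is a biholomorphism, $U$ is locally the graph over its image in $\C^n$ of the exponential map, so we can choose formal local parameters $t_1, \dots, t_d$ on $U$ at $p$, with $d = \dim U$, and write the formal arc as
\[
t \;\longmapsto\; \bigl(z_0 + x(t),\; w_0 \cdot e^{x(t)}\bigr),
\]
where each $x_i(t) \in \C[[t_1,\dots,t_d]]$ has no constant term and the Jacobian $(\partial x_i/\partial t_j)$ has rank exactly $d$.

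The crucial step is to verify that the non-degeneracy hypothesis forces $x_1,\dots,x_n$ to be $\Q$-linearly independent. A nontrivial integer relation $\sum a_i x_i(t) \equiv 0$ would make $\sum a_i z_i$ constant on $U$, and hence $\prod w_i^{a_i}$ constant on the projection of $U$ to $(\C^*)^n$. This would place that projection inside a translate of the proper algebraic subgroup $\{\prod w_i^{a_i} = 1\}$, contradicting the hypothesis. Ax's theorem then yields
\[
\operatorname{tr.deg.}_\C \C\bigl(x_1,\dots,x_n, e^{x_1},\dots,e^{x_n}\bigr) \;\ge\; n + d.
\]
On the other hand, the formal arc $t \mapsto (x(t), e^{x(t)})$ lies entirely in the translate $T(W)$ of $W$ under $(z,w)\mapsto (z-z_0, w/w_0)$; since the transcendence degree of the coordinates of a formal arc bounds the dimension of its Zariski closure, we get $n + d \le \dim T(W) = \dim W$. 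Combined with the easy inequality, this gives the desired equality $\dim W = \dim U + n$.

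There is no substantive obstacle beyond invoking Ax's theorem itself, which is the main ingredient. The only mild subtlety is the analytic-to-algebraic passage at the end: the formal image of the parametrization of $U$ is only an analytic object, but its Zariski closure in $\C^n \times (\C^*)^n$ is a genuine algebraic subvariety whose dimension is computed by the transcendence degree and is bounded above by that of $W$. This gives the bridge between Ax's formal/transcendence statement and the geometric statement about dimensions of algebraic intersections.
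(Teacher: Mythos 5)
Your proposal is correct and follows essentially the same route as the paper, which merely records the dictionary (transcendence degree $=$ dimension of the Zariski closure of the arc, rank of the Jacobian $=$ $\dim U$, $\Q$-linear independence $\leftrightarrow$ non-degeneracy of the projection to $(\C^*)^n$); you flesh out exactly this dictionary, adding the routine reverse inequality $\dim U \ge \dim W - n$ from the fact that $\Pi$ is cut out by $n$ holomorphic equations. No gaps.
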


To see the equivalence between the two formulation, notice that:
\begin{itemize}
\item $\text{tr.deg.}_\C \C(x_1,\dots, x_n, e^{x_1},\dots , e^{x_n})$ is the dimension of the Zariski closure of $U$ in $\C^n \times (\C^*)^n$; 
\item $\text{rank} \left(\frac{\partial x_i}{\partial t_j}\right)$ is the (analytic) dimension of $U$; 
\item The $x_i$ have no constant terms and are linearly independent over $\Q$, then the projection of $U$ to $\C^n$ contains the origin and is not contained in a linear subspace defined over $\Q$. 
\end{itemize}

\subsection{Typical Intersections in Hodge theory}\label{tyin}
This is a brief continuation of the discussions appearing around \Cref{conj-typical} and \Cref{rmktypical}. We would like to explain how Hodge theory actually gives a simple combinatorial criterion to decide whether $\HL(S,\VV^\otimes)_{\typ}$ is empty or not. Denote by $ \Psi$ the period map associated to $(S,\VV)$. We follow the recent works of Eterovic-Scanlon, Khelifa-Urbanik \cite{2022arXiv221110592E, 2023arXiv230316179K}. They proved:

\begin{thm}[Eterovic-Scanlon, Khelifa-Urbanik]\label{dense}
If $(\mathbf{H}, D_H) \subset (\mathbf{G}, D_G)$ is a strict sub-Hodge datum such that $\dim \Psi (S^{\operatorname{an}})  + \dim D_M \geq \dim D$, then 
\begin{displaymath}
\HL(S, \V^{\otimes}, \mathbf{H}):=\{s\in S : \exists g \in \mathbf{G}(\Q)^+, \MT(\V_s)\subset g \mathbf{H} g^{-1}\}
\end{displaymath}
 is dense in $S^{\operatorname{an}}$.
\end{thm}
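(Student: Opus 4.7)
The plan is to reduce density to a local existence statement: given any non-empty open $U\subset S^{\operatorname{an}}$, produce one point $s\in U$ whose Mumford--Tate group is contained in a $\mathbf{G}(\Q)^+$-conjugate of $\mathbf{H}$. Since $U$ is arbitrary, this yields density. The three ingredients will be (i) the transitive $\mathbf{G}(\R)^+$-action on $D_G$, (ii) an openness/dimension-count argument for intersections of $\tilde\Psi(\tilde U)$ with $\mathbf{G}(\R)^+$-translates of $D_H$, and (iii) real approximation of $\mathbf{G}(\R)^+$ by $\mathbf{G}(\Q)^+$.

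First I would pass to the universal cover: fix a lift $\tilde\Psi:\tilde S\to D_G$ of the period map, an open lift $\tilde U\subset \tilde S$ of $U$, and set $Y:=\tilde\Psi(\tilde U)\subset D_G$, a complex analytic set of dimension $\dim \Psi(S^{\operatorname{an}})$. The key observation is that if $g\in \mathbf{G}(\Q)^+$ satisfies $Y\cap g\cdot D_H\neq \emptyset$, then any preimage $s\in U$ of an intersection point has a Hodge filtration factoring through the sub-Hodge datum $(g\mathbf{H}g^{-1},\,g\cdot D_H)$; hence $\MT(\V_s)\subset g\mathbf{H}g^{-1}$ and $s\in \HL(S,\V^\otimes,\mathbf{H})\cap U$. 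So the problem reduces to producing such a $g\in \mathbf{G}(\Q)^+$.

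For existence over $\R$, fix any $y_0\in Y$; by transitivity of $\mathbf{G}(\R)^+$ on $D_G$ there exists $g_0\in \mathbf{G}(\R)^+$ with $y_0\in g_0\cdot D_H$, so the set
\[
\mathcal{O}_U:=\{g\in \mathbf{G}(\R)^+ : Y\cap g\cdot D_H\neq \emptyset\}
\]
is non-empty. The crux is to show that $\mathcal{O}_U$ is \emph{open} in $\mathbf{G}(\R)^+$, and this is exactly where the hypothesis $\dim \Psi(S^{\operatorname{an}})+\dim D_H\geq \dim D_G$ enters. Consider the smooth map $\Phi:Y\times \mathbf{G}(\R)^+\to D_G$, $(y,g)\mapsto g^{-1}\cdot y$: it is submersive on the $\mathbf{G}(\R)^+$-factor by homogeneity, so $\Phi^{-1}(D_H)$ is a real-analytic subset of $Y\times \mathbf{G}(\R)^+$ whose dimension at $(y_0,g_0)$ is at least $\dim \mathbf{G}(\R)^+ +2(\dim \Psi(S^{\operatorname{an}}) +\dim D_H-\dim D_G)\geq \dim\mathbf{G}(\R)^+$. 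Projecting to the second factor and using persistence of complex-analytic intersection (for proper intersections, by intersection numbers; for excess intersections, trivially), the projection contains an open neighbourhood of $g_0$.

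Finally I would invoke real approximation: for any connected linear $\Q$-group $\mathbf{G}$, the subgroup $\mathbf{G}(\Q)^+$ is dense in $\mathbf{G}(\R)^+$, a classical result (cf.\ Platonov--Rapinchuk). Hence $\mathcal{O}_U\cap \mathbf{G}(\Q)^+\neq \emptyset$, which provides the desired $s\in \HL(S,\V^\otimes,\mathbf{H})\cap U$. The main obstacle I expect is the openness step: when the inequality in the hypothesis is an equality the intersection $Y\cap g_0\cdot D_H$ is merely zero-dimensional, and openness at $g_0$ requires a genuine transversality/persistence argument rather than a soft dimensional one; one may need to move $y_0$ slightly inside $Y$ (possible because $Y$ is open in its image, and the set of transversal base-points is generic) before invoking the implicit function theorem. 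Everything else --- homogeneity, quotienting by $\Gamma_S$, approximation --- is fairly standard once this local transversality is in hand.
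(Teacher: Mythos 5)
Your scaffolding matches the paper's: lift to the universal cover, find a real translate $g_0\cdot D_H$ through a point of the image of the period map, show the intersection persists for nearby $g$, and finish with density of $\mathbf{G}(\Q)$ in $\mathbf{G}(\R)$. But the openness step, which you correctly identify as the crux, is where your argument has a genuine gap, and the missing ingredient is precisely the functional-transcendence input that makes the theorem non-trivial. The soft dimension count via the submersion $\Phi$ only bounds $\dim \Phi^{-1}(D_H)$ from below; it does not make the projection to $\mathbf{G}(\R)^+$ open. The failure mode is an \emph{excess} (atypical) intersection at $y_0$: if, say, a neighbourhood of $y_0$ in $Y$ were contained in $g_0\cdot D_H$, then $Y\cap g\cdot D_H$ could become empty for $g$ arbitrarily close to $g_0$ (a line inside a plane in $\C^3$, perturbed to a parallel plane, is the model case — the dimension hypothesis is satisfied, one translate meets $Y$, but the set of such translates is a proper closed subset). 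Your claim that persistence holds ``for excess intersections, trivially'' is exactly backwards: it is the excess case that destroys persistence, while the proper case is the one saved by positivity of local intersection multiplicities. Likewise, ``the set of transversal base-points is generic'' is not automatic; it is false precisely in the degenerate scenario above, and ruling that scenario out for \emph{real} (as opposed to rational) $g$ is not a consequence of Hodge-genericity alone.

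The paper closes this gap by anchoring the argument at a Hodge generic point $\tilde s$ and invoking the Ax--Schanuel theorem (\Cref{thm:newAS}): any analytic component of $\tilde{S}\cap g_0\cdot D_H$ through $\tilde s$ of larger-than-expected dimension would force its projection into a strict weakly special subvariety of $S$, contradicting Hodge genericity of $\tilde s$. Hence the intersection at $\tilde s$ has a component of the expected (nonnegative, by the hypothesis) dimension, and only then does the intersection persist for all $g'$ near $g_0$, allowing the real-approximation endgame. To repair your proof you should (i) choose $y_0=\tilde\Psi(\tilde s)$ with $\tilde s$ Hodge generic (such points are dense in any open $U$, since the non-generic locus is a countable union of proper analytic subsets), and (ii) replace the ``trivial'' persistence claim by the Ax--Schanuel argument guaranteeing a typical component at $y_0$. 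Without step (ii) the statement you are proving would follow from pure dimension counting, and it does not.
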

The full ZP conjecture would predict that the above $\subset $ can be replaced by an equality. This is the meaning of the density of typical Hodge locus and it is still open. 

\begin{proof}[Sketch of the proof]
 Denote by $\tilde{S}$ the universal cover of $S$, and $\tilde{s}\in \tilde{S}$ a Hodge generic point. Fix $(\mathbf{H}, D_H) \subset (\mathbf{G}, D_G)$ a Hodge sub-datum satisfying the dimension inequality and $g\in \mathbf{G}(\R)$ such that $\tilde{s}\in g \cdot D_H$. Consider $\mathcal{ U}=\tilde{S}\cap g \cdot D_H \subset D_G$, which contains $\tilde{s}$. Since $\tilde{s}$ is Hodge generic, Ax-Schanuel (\Cref{thm:newAS}) implies that $\mathcal{ U}$ has an analytic irreducible component of the expected dimension (at $\tilde{s}$). The same holds true for any $g '$ sufficiently close to $g$, and we conclude by density of the rational points of $\mathbf{G}$ in the real ones.
\end{proof}

\subsection{Ax-Schanuel in Families}
We now explain one of the links between Ax-Schanuel and the Zilber-Pink conjecture, more precisely with its geometric part.

 In Hodge theory, the special intersections in $S$ are particular cases of \emph{weakly special subvarieties}, i.e. the maximal subvarieties of $S$ with a given monodromy group. The plan is roughly as follows:
\begin{enumerate}
\item[I.] Relate the atypicality inequality to an analogous one appearing in an opportune period torsor $P \to S$. 
\item[II.] Over-parametrize by an algebraic parameter space $\mathcal{Y}$ all subvarieties of $P$ that can give rise to atypical intersections with flat leaves in $P$ above $S$;
\item[III.] Use an \emph{Ax-Schanuel Theorem in families} to show that, all atypical intersections with leaves parameterized by $\mathcal{Y}$ project into the fibres of finitely many families of weakly special subvarieties of $S$ (not necessarily atypical themselves); and
\item[IV.] Conclude by either using some rigidity or maximality property.
\end{enumerate}
To implement the above strategy, Hodge theory (essentially through the fixed part theorem, cf. \Cref{fixedpart}) is needed at two steps. It is used to guarantee, a priori, that the atypical intersections we are interested in can be over-parametrized by algebraic families (of subvarieties of $P$), as well as to say that the weakly special subvarieties one obtains after applying Ax-Schanuel fibre-by-fibre can be assembled in countably many families (of subvarieties of $S$).

Here we highlight the use of the Ax-Schanuel in $P$ rather than the Hodge-theoretic one associated to some period domain $D$. At first sight, this might look like a small and technical difference but, in reality, it makes the scope of the Zilber-Pink theory much broader. The reason is that the Hodge-theoretic Ax-Schanuel considers only atypical intersections inside $S \times D$, which usually has much smaller dimension than $P$. This means that there can be more atypical intersections in $P$, since there is more freedom to impose algebraic conditions on leaves $\mathcal{L} \subset P$ then on the graph of some period map. This difference is crucial for treating the case of orbit closures: the space $\Omega \mathcal{M}_g(\kappa)$ supports a $\Z$VMHS, but the atypicality of the orbit closure $\mathcal{N}$ can be detected only in an opportune automorphic bundle above the mixed Shimura variety associated to the $\Z$VMHS. Said in more plain terms: the conditions defining orbit closures include algebraic conditions on periods of the one-form $\omega$ which do not come from conditions on Hodge structures, but can be imposed inside $P$.

In practice, we need the following two results, that can be found in \cite[Sec. 3.3]{2024arXiv240616628B}, and follow from \Cref{thm:newAS}:
\label{axschanfamsec}
\begin{prop}
\label{nondenseA}
Let $(P, \nabla)$ be an algebraic $H$-principal bundle with connection on an algebraic variety $S$, and let $f : \mathcal{Z} \to \mathcal{Y}$ be a family of subvarieties of $P$. Then the locus
\[ \mathcal{Y}(f,e) := \{ (x, y)\in P \times \mathcal{Y} : \dim_{x} (\mathcal{L}_{x} \cap \mathcal{Z}_{y}) \geq e \} \]
is algebraically constructible.
\end{prop}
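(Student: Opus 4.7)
The plan is to reduce the seemingly analytic condition $\dim_{x}(\mathcal{L}_x \cap \mathcal{Z}_y) \geq e$ to an algebraic condition on formal neighborhoods, after which constructibility will follow from a jet-theoretic Chevalley-type argument. The crucial observation is that although each leaf $\mathcal{L}_x$ is in general transcendental, the foliation it belongs to is defined by the algebraic involutive distribution $\operatorname{Im}(\nabla) \subset TP$ of rank $\dim S$, so the germ of $\mathcal{L}_x$ at $x$ is encoded purely algebraically.

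First I would introduce the formal integral $\hat{\mathcal{L}}_x \subset \operatorname{Spec} \hat{\mathcal{O}}_{P,x}$: the unique formally smooth closed subscheme of dimension $\dim S$ integrating the distribution through $x$. Its construction is algebraic in $x$; in particular, for every $k \geq 0$ the $k$-jet $J^k \hat{\mathcal{L}}_x$ inside $J^k_x P$ is a polynomial function of $x$ and of the coefficients of $\nabla$. Because the analytic leaf $\mathcal{L}_x$ is smooth at $x$ and has the same completed local ring as $\hat{\mathcal{L}}_x$, one has the key identity
\[ \dim_x(\mathcal{L}_x \cap \mathcal{Z}_y) \;=\; \dim\bigl(\hat{\mathcal{L}}_x \cap \hat{\mathcal{Z}}_{y,x}\bigr), \]
where the right-hand side is the formal intersection inside $\operatorname{Spec} \hat{\mathcal{O}}_{P,x}$. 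This converts the problem into one about formal schemes varying in an algebraic family over $P \times \mathcal{Y}$.

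Next I would stratify $\mathcal{Y}$ so that $f:\mathcal{Z}\to\mathcal{Y}$ becomes flat on each stratum; on such a stratum the $k$-jets $J^k_x \mathcal{Z}_y \subset J^k_x P$ depend algebraically on $(x,y)$. Since the dimension of a closed subscheme of $\operatorname{Spec}\hat{\mathcal{O}}_{P,x}$ is detected by its Hilbert polynomial, and Hilbert polynomials of formal intersections are locally constant in a constructible manner on flat families, there exists a uniform truncation order $k_0$ such that the $k_0$-jet of the intersection determines its dimension. Fixing such a $k_0$, the condition that $\dim(J^{k_0} \hat{\mathcal{L}}_x \cap J^{k_0}_x \mathcal{Z}_y)$ exceeds the appropriate threshold is a rank condition on a morphism of coherent sheaves on the algebraic jet bundle $J^{k_0} P \times_S \mathcal{Y}$; pulling back under the section $(x, y) \mapsto (J^{k_0}\hat{\mathcal{L}}_x, J^{k_0}_x \mathcal{Z}_y)$ exhibits $\mathcal{Y}(f, e)$ on that stratum as a constructible subset of $P \times \mathcal{Y}$. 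Gluing over finitely many strata gives the result.

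The main obstacle will be the uniformity step, namely producing a single truncation order $k_0$ that detects the dimension condition uniformly on each stratum rather than point-by-point. This is handled by generic flatness combined with Noetherian induction on $\mathcal{Y}$, but organizing mutually compatible stratifications of $P \to S$ and of $f:\mathcal{Z}\to\mathcal{Y}$ requires some care. A subsidiary technical issue is that the identification between analytic and formal intersection dimensions must itself be uniform in $x \in P$; this is guaranteed by the smoothness of leaves being a consequence of the integrability of the algebraic distribution $\nabla$, hence a global feature of the foliation rather than a merely pointwise one.
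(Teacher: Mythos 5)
The survey itself does not prove this proposition: it is quoted from \cite[Sec. 3.3]{2024arXiv240616628B}, and the argument there has exactly the architecture you describe — integrate $\nabla$ formally to obtain $\hat{\mathcal{L}}_x$, note that its $k$-jets vary algebraically with $x$, replace the analytic local dimension by the dimension of the formal intersection (legitimate, since Krull dimension is preserved under completion and the leaf is smooth), and express everything through constructible conditions on truncations. So your first two steps are sound and your overall route is the intended one.

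The gap is the uniformity step, which is the entire content of the proposition, and the justification you offer for it in the body of the argument is circular. You appeal to ``Hilbert polynomials of formal intersections are locally constant in a constructible manner on flat families,'' but the family of formal intersections $\bigl(\hat{\mathcal{L}}_x \cap \hat{\mathcal{Z}}_{y,x}\bigr)_{(x,y)}$ is precisely \emph{not} flat: the jumping of its fibre dimension is the phenomenon that $\mathcal{Y}(f,e)$ is designed to detect, so flatness of that family cannot be assumed without begging the question. Stratifying $\mathcal{Y}$ so that $f:\mathcal{Z}\to\mathcal{Y}$ is flat controls the jets of $\mathcal{Z}_y$ but says nothing about their intersection with the leaves. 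What is true is that for each fixed $k$ the truncated length $h_k(x,y)=\dim \hat{\mathcal{O}}_{P,x}/\bigl(I(\hat{\mathcal{L}}_x)+I(\mathcal{Z}_y)+\mathfrak{m}_x^{k+1}\bigr)$ is a constructible function of $(x,y)$; but the dimension of the intersection is read off from the asymptotics of $h_k$ in $k$, and no single truncation order $k_0$ works a priori for all $(x,y)$. The repair is the Noetherian induction you mention only in your closing paragraph, and it has to be actually performed: one shows that on every irreducible locally closed $T \subset P\times\mathcal{Y}$ the local dimension function is constant on a dense open subset of $T$ (by computing the formal intersection at the generic point of $T$ and spreading out enough of its finite-order data to an open set), then replaces $T$ by the closed complement and concludes by Noetherianity; the uniform order $k_0$ on each stratum is an output of this induction, not an input. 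As written, your proof asserts the conclusion of that induction rather than establishing it, so the key step remains unproved.
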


For a family of $ h : C \to B$ of subvarieties of $S$, we define:
\begin{align*}
\mathcal{Y}(f,e,h) := \left\{ (x, y) \in P \times \mathcal{Y} : \begin{array}{c} \textrm{ exists an embedded analytic germ } \\ (x, \mathcal{D}) \subset (x, \mathcal{Z}_{y} \cap \mathcal{L}_{x}) \textrm{ of dimension } \\ \textrm{ at least } e \textrm{ mapping into a fibre of }h\end{array} \right\}
\end{align*}

The following says that if one applies the Ax-Schanuel theorem to a family of subvarieties then the resulting intersections belong to a family of weakly special subvarieties. 

\begin{thm}
\label{protogeoZP}
Fix some family of subvarieties of $P$, $f : \mathcal{Z} \to \mathcal{Y}$, and an integer $e \geq 1$, such that each fibre of $f$ has dimension $< \dim H + e$.

 Suppose that there exists a countable collection of families $\{ h_{i} : C_{i} \to B_{i} \}_{i=1}^{\infty}$ of subvarieties of $S$, all of whose fibres are weakly special, and that all weakly special subvarieties of $S$ are among the fibres of these families. Then the set $\mathcal{Y}(f,e)$ introduced in \Cref{nondenseA} is contained in a finite union of $ \mathcal{Y}(f,e,h_{i})$.
\end{thm}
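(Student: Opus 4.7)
The plan is to argue pointwise via \Cref{thm:newAS}, and then to extract finitely many indices $i$ using the constructibility provided by \Cref{nondenseA} together with the Noetherianity of the Zariski topology over $\mathbb{C}$.

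Step one, the pointwise argument. Fix any $(x,y) \in \mathcal{Y}(f,e)$, and let $U$ be an analytic irreducible component of $\mathcal{Z}_y \cap \mathcal{L}_x$ through $x$ with $\dim U \geq e$. By the hypothesis on the fibres of $f$,
\[
\dim \mathcal{Z}_y \; < \; \dim H + e \; \leq \; \dim H + \dim U,
\]
so the dimension inequality in the Ax--Schanuel theorem \Cref{thm:newAS} is satisfied with $V = \mathcal{Z}_y$ and $\mathcal{L} = \mathcal{L}_x$. Applying it yields a $\nabla$-special (equivalently, by \Cref{thmweaksp}, a weakly special) subvariety $W \subset S$ containing $\pi(U)$. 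By the hypothesis on $\{h_i\}_{i \geq 1}$, $W$ is a fibre of some $h_i$, so the germ $U$ witnesses $(x,y) \in \mathcal{Y}(f,e,h_i)$. This establishes the countable containment
\[
\mathcal{Y}(f,e) \; \subset \; \bigcup_{i=1}^{\infty} \mathcal{Y}(f,e,h_i).
\]

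Step two, finiteness. I would first check that each $\mathcal{Y}(f,e,h_i)$ is algebraically constructible in $P \times \mathcal{Y}$ by the same style of argument used to prove \Cref{nondenseA}: the condition ``there exists an embedded analytic germ in $\mathcal{Z}_y \cap \mathcal{L}_x$ of dimension at least $e$ whose projection lies in a fibre of $h_i$'' can be re-encoded via a jet-scheme calculation as an algebraic incidence condition on the quadruple $(\mathcal{Z}, \mathcal{L}, C_i, B_i)$, and then Chevalley's theorem gives constructibility of its image in $P \times \mathcal{Y}$. Granted this, let $X$ be any irreducible locally closed stratum occurring in the constructible decomposition of $\mathcal{Y}(f,e)$ furnished by \Cref{nondenseA}. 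The sets $\mathcal{Y}(f,e,h_i) \cap X$ form a countable constructible cover of $X$, and since over $\mathbb{C}$ an irreducible variety is never a countable union of proper Zariski closed subvarieties, some $\mathcal{Y}(f,e,h_{i_0}) \cap X$ must be Zariski-dense in $X$ and therefore contain a nonempty open subset of $X$. The complement in $X$ is constructible of strictly smaller dimension, and an induction on $\dim X$ yields a finite subcover of $X$. Combining over the finitely many strata of $\mathcal{Y}(f,e)$ gives the desired finite subcover.

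The principal obstacle I anticipate is the constructibility of $\mathcal{Y}(f,e,h_i)$. The germ condition mixes an analytic component condition (already nontrivial in \Cref{nondenseA}) with the algebraic constraint imposed by $h_i$, and one must check that these interact well at the level of jets uniformly in the parameter $(x,y)$, so that Chevalley's theorem can indeed be brought to bear. Once this analytic-to-algebraic translation is in place, the Ax--Schanuel input in step one and the Noetherian descent at the end of step two are essentially formal, and the theorem follows.
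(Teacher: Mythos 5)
Your proposal is correct and follows essentially the same route as the proof in the cited source: apply \Cref{thm:newAS} pointwise to each fibre $\mathcal{Z}_y$ (the hypothesis $\dim \mathcal{Z}_y < \dim H + e$ being exactly the atypicality inequality) to get the countable containment $\mathcal{Y}(f,e) \subset \bigcup_i \mathcal{Y}(f,e,h_i)$, and then use constructibility together with the fact that an irreducible complex variety is not a countable union of proper Zariski-closed subsets to extract a finite subcover by Noetherian induction on dimension. You correctly single out the one substantive technical point, the constructibility of $\mathcal{Y}(f,e,h_i)$, and the jet-theoretic argument you sketch (algebraicity of formal leaves via the algebraic connection, plus Chevalley, with the extra algebraic incidence condition coming from $h_i$) is precisely how it is established alongside \Cref{nondenseA} in the source.
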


\subsection{The geometric part}

From \Cref{protogeoZP} and some various facts from Hodge theory, one can deduce of the so called \emph{geometric part of ZP} in its most general setting: admissible graded-polarizable integral VMHS. A similar argument will be discussed in more detail in \Cref{proofEFW}.

\begin{thm}[Geometric mixed ZP, {\cite[Thm. 7.1]{2024arXiv240616628B}}]\label{thm:mixedgeomzp}
There is a finite set $\Sigma= \Sigma_{(S,\V)}$ of triples $(\mathbf{H},D_H, \mathbf{N})$, where $(\mathbf{H},D_H)$ is some sub-Hodge datum of the generic Hodge datum $(\mathbf{G}_{S},D_{S})$, $\mathbf{N}$ is a normal subgroup of $\mathbf{H}$ whose reductive part is semisimple, and such that the following property holds.

For each monodromically atypical maximal (among all monodromically atypical subvarieties) $Y \subset S$ there is some $(\mathbf{H},D_H, \mathbf{N})\in \Sigma$ such that, up to the action of $\Gamma$, $D^{0}_{Y}$ is the image of $\mathbf{N}(\R)^+ \mathbf{N}(\C)^{u} \cdot y$, for some $y \in D_H$.
\end{thm}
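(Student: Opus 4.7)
The plan is to implement the four-step strategy outlined at the beginning of Section 8, combined with the structural results on weakly special subvarieties from \Cref{thmweaksp}. First, I would translate the monodromic atypicality of $Y\subset S$ into an excess-dimension condition inside the period torsor $\pi : P \to S$ equipped with its flat $\mathbf{G}_S$-equivariant connection $\nabla$ (the mixed analogue of the standard bundle of \Cref{standardbundle}). Concretely, if $Y$ is monodromically atypical with witness sub-Hodge datum $(\mathbf{H},D_H)$, then a horizontal leaf $\mathcal{L}\subset P$ through a suitable lift of a Hodge-generic point of $Y$ meets an algebraic subvariety $\mathcal{Z}\subset P$ (cut out by the bi-algebraic description of $D_H$ inside the compact dual) in an analytic germ whose dimension at that point exceeds the expected value by precisely the atypicality defect of $Y$.

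Second, I would over-parametrize \emph{all} such $\mathcal{Z}$'s by a \emph{single} algebraic family $f : \mathcal{Z} \to \mathcal{Y}$. The key point is that although sub-Hodge data form a countable collection, the associated bi-algebraic subvarieties in $P$ are cosets of parabolic-type subgroups and therefore fibres of a finite union of algebraic families: fix a discrete conjugacy-type (a Lie subalgebra together with compatible weight filtration data on the unipotent part) and let the translation parameter vary. At this point \Cref{fixedpart} is used decisively to ensure that only finitely many conjugacy classes of normal subgroups $\mathbf{N}\subset \mathbf{H}$ can occur as algebraic monodromy of a subvariety of $S$ (normality gives the normal-subgroup condition; semisimplicity of the reductive part follows from Deligne semisimplicity).

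Third, I would feed $(f,e)$ into \Cref{protogeoZP}, where $e$ is the atypicality defect, together with a countable enumeration $\{h_i : C_i \to B_i\}$ of families whose fibres exhaust all weakly special subvarieties of $S$ (the mixed bi-algebraic characterisation of \Cref{thmweaksp} guarantees such an enumeration). The theorem returns a finite subcollection $\{h_{i_1},\dots,h_{i_N}\}$ covering every atypicality witness arising from $f$. Each $h_{i_k}$ corresponds, via the mixed analogue of the splitting in \Cref{thmweaksp}, to a triple $(\mathbf{H}_{i_k}, D_{H_{i_k}}, \mathbf{N}_{i_k})$ of the required shape, and the maximality of $Y$ among monodromically atypical subvarieties forces $Y$ (up to the $\Gamma$-action) to coincide with a fibre of one of these $h_{i_k}$, with $D_Y^0$ described as the image of $\mathbf{N}(\R)^+ \mathbf{N}(\C)^u\cdot y$. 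Defining $\Sigma$ to be the finite collection of these triples, taken over all possible defects $e$ (bounded above by $\dim \mathbf{G}_S$), yields the theorem.

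The main obstacle will be the second step: verifying that the over-parametrization $\mathcal{Y}$ is genuinely algebraic, uniform across sub-Hodge data, and has fibre dimension strictly less than $\dim \mathbf{G}_S + e$ so that \Cref{protogeoZP} produces a non-trivial conclusion. In the pure case this follows from the finiteness of conjugacy classes of reductive subgroups of a given dimension, but in the mixed setting one must simultaneously moduli-theoretically encode a Lie subalgebra of the reductive part, a compatible weight filtration, and the unipotent radical acting through $\mathbf{N}(\C)^u$, while keeping the total family algebraic. Once this algebraization is carried out, the rest of the argument is a clean application of Ax-Schanuel in families together with the Hodge-theoretic rigidity supplied by \Cref{fixedpart}.
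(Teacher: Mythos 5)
Your proposal follows essentially the same route as the paper, which itself only sketches this theorem by invoking the four-step strategy of \Cref{axschanfamsec} (translation to the period torsor, algebraic over-parametrization via the analogue of \Cref{gaolemma}, application of \Cref{protogeoZP}, and conclusion by maximality), with the two uses of \Cref{fixedpart} exactly where you place them. You also correctly identify the genuine technical crux — the algebraicity and uniformity of the over-parametrizing family in the mixed setting — which is where the cited reference does the real work.
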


The above theorem intermediately implies \Cref{geomao}

%\begin{cor}[Geometric AO for $\Mg$]
%If $g\geq 4$, the collection of weakly special subvarieties generically contained in $\Mg$ is not Zariski dense in $\Mg$.
%\end{cor}
%\begin{proof}[Sketch of the proof]

%\end{proof}

\section{Strata of abelian differentials}

We are now ready to go back to the setting of \teic geometry, as in \Cref{properties}, and try to use what we learned on functional transcendence in the process. A first comparison with the case of Shimura varieties was discussed in \Cref{rmklink}.
\subsection{The work of Klingler and Lerer}

The initial observation for the \textbf{bi-algebraic} perspective for $\Og (\kappa)$ and the developing map is that orbit closures are \textbf{linear} in period coordinates. One is therefore naturally lead to consider linear subvarieties and, more generally, the bi-algebraic ones, as first promoted in \cite{2022arXiv220206031K}.
\begin{defi}\label{bialgdef}
A subvariety $W\subset \Omega \mathcal{M}_g (\kappa)$ is \textbf{bi-algebraic} if it is algebraic in the period coordinates. It is $\Qbar$-bi-algebraic if it is moreover defined over $\Qbar$, for both structures.
\end{defi}

Klingler and Lerer proposed a general framework and made the following conjecture, inspired by the André--Oort conjecture (\Cref{sectionao}):

\begin{conj}[{\cite[Conj. 2.13]{2022arXiv220206031K}}]
Let $S \subset \Omega \mathcal{M}_g(\kappa)$ be an irreducible algebraic subvariety containing a Zariski-dense set of arithmetic points. Then $S$ is $\overline{\mathbb{Q}}$-bi-algebraic.
\end{conj}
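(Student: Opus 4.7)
Following the Pila--Zannier strategy used to establish the André--Oort conjecture in the Shimura setting (cf. \Cref{aothm}), the plan is to combine o-minimality, a functional transcendence statement à la Ax--Lindemann, and Galois-theoretic lower bounds on arithmetic points. Let $\Sigma \subset S$ be a Zariski-dense set of arithmetic points; after replacing $S$ with the Zariski closure of $\Sigma$ we may assume the set is dense in $S$.

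First I would work in the universal cover picture of \eqref{def:dev}, fixing a base point $(X_0,\omega_0)$ and a fundamental domain $\mathcal{F} \subset \widetilde{\Omega\mathcal{M}_g(\kappa)}$ for the action of the relative mapping class group. The crucial point is that $\operatorname{Dev}$ restricted to $\mathcal{F}$ should be definable in the o-minimal structure $\mathbb{R}_{\mathrm{an},\exp}$; this is the stratum analog of the Klingler--Ullmo--Yafaev definability of Shimura uniformizations and of the Bakker--Klingler--Tsimerman result for arbitrary period maps, and its proof should go through here since $\mathcal{F}$ can be chosen semi-algebraically. Each arithmetic $\sigma \in \Sigma$ then lifts to an algebraic point of $\mathcal{F}$, because \textquotedblleft arithmetic\textquotedblright\ precisely means that the periods lie in $H^1(X_0,Z(\omega_0);\overline{\mathbb{Q}})$.

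Second, I would invoke an Ax--Lindemann theorem for $\Omega\mathcal{M}_g(\kappa)$: the maximal irreducible algebraic subvarieties $V \subset H^1(X_0,Z(\omega_0);\mathbb{C})$ such that $\operatorname{Dev}^{-1}(V)\cap \mathcal{F}$ projects to an algebraic subvariety of $\Omega\mathcal{M}_g(\kappa)$ are exactly the (lifts of) bi-algebraic subvarieties in the sense of \Cref{bialgdef}. Such a statement should follow from the general Ax--Schanuel framework of Blázquez-Sanz--Casale--Freitag--Nagloo (\Cref{thm:newAS}), applied to a principal bundle playing the role of the standard automorphic bundle of \Cref{standardbundle} but enlarged to record the relative periods at the zeros of $\omega$; the $\nabla$-special subvarieties for this connection should coincide with the bi-algebraic ones. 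Combining this with Pila--Wilkie applied to the definable set $\mathcal{F}\cap \operatorname{Dev}^{-1}(S)$, one obtains that the algebraic points of this set of bounded height and degree either grow polynomially (in which case they lie in a positive-dimensional semi-algebraic family projecting into $S$, hence a bi-algebraic subvariety of $S$) or they are very sparse.

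Third, and this is where I expect the essential difficulty, one needs a \emph{Galois lower bound} on arithmetic points: an intrinsic notion of complexity $c(\sigma)$ for $\sigma=(X,\omega)\in\Sigma$ (combining the heights of $X$ and $\omega$ with the arithmetic of the period lattice) such that the Galois orbit of $\sigma$ has cardinality growing at least polynomially in $c(\sigma)$. In the Shimura case the analogous statement rests on the averaged Colmez conjecture; in the stratum setting the notion of \textquotedblleft Hecke orbit\textquotedblright\ of an arithmetic point is not as clean, and producing many Galois conjugates whose lifts all lie in $\mathcal{F}$ is subtle because the relative mapping class group action is much wilder than that of an arithmetic group. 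For the subclass of square-tiled surfaces one can hope to argue explicitly, since the Galois action factors through congruence subgroups of $\operatorname{SL}_2(\mathbb{Z})$, but the general case appears to require a genuinely new arithmetic input. Granting such a bound, the Pila--Zannier comparison produces, for $\sigma\in\Sigma$ of sufficiently large complexity, a positive-dimensional bi-algebraic subvariety contained in $S$ and passing through $\sigma$; letting $c(\sigma)\to\infty$ and using the Zariski-density of $\Sigma$, one concludes that $S$ itself is bi-algebraic, and a standard Galois-descent argument using that both $\Sigma$ and the linear equations defining $\operatorname{Dev}(\widetilde{S})$ are $\overline{\mathbb{Q}}$-rational yields the $\overline{\mathbb{Q}}$-bi-algebraicity.
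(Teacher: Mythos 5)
The statement you are trying to prove is an \emph{open conjecture} of Klingler and Lerer; the paper does not prove it, it only records it (and the surrounding discussion makes clear that only partial results in its direction are known). So there is no proof in the paper to compare against, and what you have written is a strategy outline rather than a proof. To your credit, you have correctly identified the shape of a Pila--Zannier attack and you honestly flag the crucial missing ingredient yourself: the Galois orbit lower bound for arithmetic points of strata. That gap is not a technicality --- it is precisely why the conjecture is open. There is no known notion of complexity for an arithmetic point $(X,\omega)$ of $\Omega\mathcal{M}_g(\kappa)$ for which one can prove polynomial growth of Galois orbits, and no analogue of the averaged Colmez input; nothing like the Hecke/CM structure that makes this work on Shimura varieties is available here.

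Two further steps of your outline are also not established. First, the definability of $\operatorname{Dev}$ restricted to a fundamental domain: the group acting on $\widetilde{\Omega\mathcal{M}_g(\kappa)}$ is (a relative version of) the mapping class group, which is not an arithmetic lattice acting on a symmetric space, so neither the Klingler--Ullmo--Yafaev definability of Shimura uniformizations nor the Bakker--Klingler--Tsimerman definability of period maps applies; indeed the paper itself warns that ``the period coordinates do not cleanly relate to the universal cover.'' Constructing a semi-algebraic (or even definable) fundamental domain here is an open problem. Second, your reduction ``arithmetic $\Rightarrow$ algebraic point of $\mathcal{F}$'' is not enough for Pila--Wilkie: one needs the lifts of the many Galois conjugates to land in the \emph{same} fundamental domain with controlled height, and relating the height of the lifted periods to an intrinsic complexity of $\sigma$ is again part of the missing arithmetic input. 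The one step that is genuinely available is the functional transcendence: the Ax--Lindemann statement you invoke is a theorem of Bakker--Tsimerman (recalled as \Cref{albt} in the text, via \Cref{thm:newAS} and \Cref{asforabeliandiff}). In summary, your proposal is a reasonable roadmap, but it should be presented as such --- conditional on a definability theorem and a large-Galois-orbits theorem, both of which are open --- and not as a proof.
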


They also prove:
\begin{thm}[{\cite[Thm. 2.6]{2022arXiv220206031K}}]
 Any invariant linear subvariety of $\Og (\kappa)$ of rank $1$ and degree at least $2$ does not contain a Zariski-dense set of arithmetic points. In particular any \teic curve of degree at least $2$ contains only finitely many arithmetic points.
\end{thm}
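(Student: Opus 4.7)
The plan is to combine Filip's structure theorem (\Cref{mainthmalgfilip}) with a Galois-theoretic analysis of the orbit closure and its conjugates.

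First, I apply Filip's theorem to $\mathcal{N}$. Since $\mathcal{N}$ has rank $1$ and degree $d\geq 2$, there is a Jacobian factor $\mathcal{F}$ of the relative Jacobian over $\mathcal{N}$ with real multiplication by a totally real field $K$ of degree $d$, and $\omega$ is an eigenform. Projecting the tangent space $T\mathcal{N}$ to absolute cohomology identifies it with a specific $2$-dimensional eigenspace $V_{\sigma_0}$ in the decomposition $H^1(\mathcal{F},\mathbb{R})=\bigoplus_\sigma V_\sigma$ indexed by the $d$ real embeddings of $K$. The defining linear equations of $\mathcal{N}$ in period coordinates have coefficients in $K$.

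Second, by \Cref{thmstrorbit}, $\mathcal{N}$ is defined over $\overline{\mathbb{Q}}$ and its $d$ Galois conjugates $\mathcal{N}=\mathcal{N}^{\sigma_1},\dots,\mathcal{N}^{\sigma_d}$ are pairwise distinct orbit closures, corresponding to the Galois-conjugate eigenspaces $V_{\sigma_i}$. Consequently the $K$-linear subspace cutting out $\mathcal{N}$ in period coordinates does not descend to a $\mathbb{Q}$-rational subspace (otherwise one would force $V_{\sigma_0}=V_{\sigma_i}$ for all $i$ and hence $d=1$), and $\bigcap_i \mathcal{N}^{\sigma_i}$ is a proper closed subvariety of $\mathcal{N}$.

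The heart of the argument is then to show that every arithmetic point of $\mathcal{N}$ lies in $\bigcap_i \mathcal{N}^{\sigma_i}$. Concretely, if the period vector $v$ of an arithmetic point $p\in\mathcal{N}$ satisfies the $K$-linear equation $\sum_j k_j v_j=0$ cutting out $\mathcal{N}$ locally, then exploiting the Galois-invariant structure underlying arithmetic periods one deduces that $v$ also satisfies each conjugate equation $\sum_j \tau(k_j)v_j=0$. Hence $p$ lies in every $\mathcal{N}^{\sigma_i}$ simultaneously. Arithmetic points are thus confined to the proper subvariety $\bigcap_i \mathcal{N}^{\sigma_i}$, precluding Zariski density. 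For a Teichm\"uller curve the associated orbit closure $\mathcal{N}$ is $2$-dimensional, so the intersection is at most $1$-dimensional and projects to a finite subset of the curve itself.

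The main obstacle is formalizing the Galois-descent step above: I need to show that the period vector of an arithmetic point interacts with the $K$-defining equations in a way that transports the Galois action on coefficients to a constraint on the point itself, rather than merely on its Galois conjugate $\tau(p)\in\mathcal{N}^\tau$. A natural tool is the Ax-Schanuel theorem \Cref{thm:newAS}, applied to the algebraic principal bundle carrying the Gauss-Manin connection above $\Og(\kappa)$ together with the developing map \eqref{def:dev}: such a result bounds the transcendence behavior of periods along the leaves of the flat connection, and combined with the fact that $\mathcal{N}$ does not descend to $\mathbb{Q}$ when $d\geq 2$, it should complete the descent.
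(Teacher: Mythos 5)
The survey does not prove this theorem; it quotes it from Klingler--Lerer, so your proposal has to stand on its own, and as written it does not. The decisive gap is the one you flag yourself. If $v$ has algebraic coordinates and satisfies the $K$-linear relation $\sum_j k_j v_j=0$, then applying $\tau\in\Gal(\overline{\QQ}/\QQ)$ gives $\sum_j \tau(k_j)\,\tau(v_j)=0$: a constraint on the conjugate vector $\tau(v)$, not on $v$. Nothing places $v$ itself in the conjugate subspace unless the $v_j$ are $\tau$-fixed (e.g.\ rational), which is not what ``arithmetic'' provides. Ax--Schanuel cannot close this gap: it is a statement of \emph{functional} transcendence, bounding the dimension of atypical intersections of algebraic subvarieties with leaves of the flat connection, and it is blind to whether the numerical values of periods at a given point are algebraic and to how $\Gal(\overline{\QQ}/\QQ)$ acts on them. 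In the Pila--Zannier paradigm such functional statements must be coupled with a point-counting step and an arithmetic input (Galois lower bounds or a genuine transcendence theorem for periods of abelian varieties with real multiplication, \`a la Schneider, Shiga--Wolfart, W\"{u}stholz); your sketch contains neither, so the heart of the proof is missing rather than merely unformalized.

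A second, independent problem is the identification of the Galois conjugates $\mathcal{N}^{\sigma_i}$ --- conjugates for the $\overline{\QQ}$-structure on the algebraic variety $\Omega\mathcal{M}_g(\kappa)$, as in \Cref{thmstrorbit} --- with the loci cut out \emph{in the same period chart} by the conjugate eigenspaces $V_{\sigma_i}$. These are two unrelated Galois actions: $\tau$ sends $(X,\omega)$ to $(X^\tau,\omega^\tau)$, and since periods are transcendental there is no induced action of $\tau$ on period coordinates. The conjugates need not be pairwise distinct --- for instance the Weierstrass--Teichm\"{u}ller curves $\Omega W_D\subset\Omega\mathcal{M}_2(2)$ with $d=2$ are irreducible and cut out by Galois-stable conditions (real multiplication by $\mathcal{O}_D$, eigenform, double zero), hence are defined over $\QQ$ and equal to all their conjugates --- so $\bigcap_i\mathcal{N}^{\sigma_i}$ can equal $\mathcal{N}$ and your proper-subvariety conclusion collapses. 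The salvageable germ of your idea is purely local: inside one period chart the conjugate linear subspaces $W^\tau$ do intersect $W$ properly when $d\geq 2$, and the maximal $\QQ$-rational subspace $\bigcap_\tau W^\tau$ is proper; but showing that arithmetic points land there is exactly the unproven descent step above. Finally, even granting everything, a one-dimensional subvariety of the cone over a Teichm\"{u}ller curve can surject onto the curve, so ``projects to a finite subset'' requires invoking the invariance of the arithmetic locus under $\overline{\QQ}^{*}$-rescaling and $\operatorname{GL}_2(\overline{\QQ}\cap\R)^{+}$, which you do not do.
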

On the geometric properties of bi-algebraic subvarieties, they show:
\begin{thm}[{\cite[Thms. 2.8, 2.9]{2022arXiv220206031K}}]\label{thm0000}
The bi-algebraic curves in $\Og (\kappa)$ satisfying condition $(\star)$ (cf. Def. 6.5 in \emph{op. cit.}) are linear, as well as the ones contained in an isoperiodic leaf.

\end{thm}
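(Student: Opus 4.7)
The plan is to exploit the developing map \eqref{def:dev}, which locally realises $\Omega\mathcal{M}_g(\kappa)$ as an open subset of $H^1(X_0,Z(\omega_0);\mathbb{C})$. By \Cref{bialgdef}, a bi-algebraic curve $C$ is an algebraic curve in $\Omega\mathcal{M}_g(\kappa)$ whose preimage $\operatorname{Dev}^{-1}(C)$ contains an algebraic irreducible component $\widetilde{C}\subset H^1(X_0,Z(\omega_0);\mathbb{C})$. The goal is to prove that $\widetilde{C}$ is a piece of a complex affine line, which is the meaning of linearity in the sense of \Cref{linearity}.

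I would treat the isoperiodic leaf case first, as it is more concrete. Recall the exact sequence \eqref{bundextseq}: an isoperiodic leaf $\mathcal{F}$ is, in period coordinates, an open piece of an affine translate of the subspace $W_0 \cong \mathbb{C}^t$. A bi-algebraic curve $C \subset \mathcal{F}$ lifts to a complex analytic curve $\widetilde{C}$ inside this $\mathbb{C}^t$, which is algebraic by bi-algebraicity; at the same time, the monodromy of the leaf (generated by integral periods of cycles relative to $Z(\omega_0)$ that vanish on absolute cycles) acts on $\mathbb{C}^t$ by translation through a lattice $\Lambda$, and the projection of $\widetilde{C}$ to the quasi-abelian quotient $\mathbb{C}^t/\Lambda$ is (essentially) $C$ itself, hence algebraic. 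The classical Ax--Lindemann theorem for semi-abelian varieties then forces $\widetilde{C}$ to be a translate of a one-parameter subgroup of $\mathbb{C}^t$, i.e.\ a complex affine line, which is the desired linearity.

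For bi-algebraic curves satisfying the condition $(\star)$, I would instead apply the Ax--Schanuel statement \Cref{thm:newAS} to a natural principal bundle $\pi : P \to \Omega\mathcal{M}_g(\kappa)$ whose flat connection $\nabla$ encodes period coordinates, playing here the role that the standard bundle of \Cref{standardbundle} plays for Shimura varieties. The bi-algebraicity of $C$ produces inside $P$ an algebraic subvariety $V$ whose intersection with a horizontal leaf of $\nabla$ contains a positive-dimensional analytic germ. Condition $(\star)$ is presumably what guarantees that the Galois group $\mathrm{Gal}(\nabla)$ is large enough so that the dimension inequality $\dim V < \dim U + \dim \mathrm{Gal}(\nabla)$ of \Cref{thm:newAS} becomes nontrivial at $C$. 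The conclusion then produces a $\nabla$-special subvariety containing the projection of $U$; combined with the maximality built into \Cref{def:nablasp} and the dictionary of \Cref{rmklink} (which in the Shimura world equates weakly special with linear in Harish-Chandra coordinates), $\nabla$-special for this bundle is precisely linear in the period coordinates, so $C$ is linear.

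The main obstacle is the correct setup of the auxiliary bundle $P$ and the verification that condition $(\star)$ translates into a statement about $\mathrm{Gal}(\nabla)$ matching the hypothesis of \Cref{thm:newAS}: the usual Shimura-variety framework does not apply directly, since $\Omega\mathcal{M}_g(\kappa)$ is not locally symmetric and the flat structure that witnesses bi-algebraicity lives on an auxiliary torsor rather than on the base. Once this dictionary is in place, the identification of $\nabla$-special with linear in period coordinates is straightforward, and the isoperiodic case reduces, as sketched, to a direct application of Ax--Lindemann for (quasi-)abelian varieties.
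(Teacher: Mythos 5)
First, a point of order: the survey states \Cref{thm0000} as a citation of Klingler--Lerer and gives no proof of it, so there is no argument in this text to compare yours against; I can only assess your proposal on its own terms and against what is known. Your overall instinct --- that the statement is a functional-transcendence result to be extracted from an Ax--Lindemann/Ax--Schanuel input for the developing map or an associated torsor --- is the right one, but both halves of your argument have concrete gaps.

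For the isoperiodic case, the reduction to ``classical Ax--Lindemann for semi-abelian varieties'' does not go through as stated. The deck transformations of a leaf do not act on $W_0\cong\widetilde{H}^0(Z;\C)$ by translations through a lattice: the linear part of the monodromy on $W_0$ permutes the zeros, and more importantly the natural ``quotient'' you want to form is by the group of absolute periods $\{\int_\gamma\omega:\gamma\in H_1(X;\Z)\}\subset\C$, which is constant along the leaf but is a \emph{dense} subgroup of $\C$ for a generic leaf, not a lattice. The quasi-abelian variety you invoke therefore exists only in the very special case of torus covers (the Hurwitz-space leaves mentioned in the text), and in general there is no algebraic quotient to which Ax--Lindemann for group varieties applies. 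One must instead work with the mixed Hodge structure directly, i.e.\ with the extension classes in \eqref{eqrelcoho} varying over the leaf, which is a genuinely different (and harder) transcendence statement.

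For the curves satisfying $(\star)$, the argument is circular at the decisive step. You conclude by identifying ``$\nabla$-special'' with ``linear in period coordinates,'' but that identification is exactly what is at stake: bi-algebraic (equivalently, weakly special for the relevant flat structure) does \emph{not} imply linear in $\Og(\kappa)$ --- the Deroin--Matheus curves of \Cref{nonline} are bi-algebraic and non-linear. The hypothesis $(\star)$ is not a largeness condition on $\Gal(\nabla)$ that merely makes the Ax--Schanuel inequality ``nontrivial''; as the text notes right after \Cref{thm0000}, it is a condition on the fixed part of the weight-one variation, ruling out a component of $\omega$ whose periods are constant (hence algebraic but unconstrained by monodromy). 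Any correct proof must engage with the decomposition of $H^1$ into fixed and moving parts and show that, under $(\star)$, the tangent directions of the lift $\widetilde{C}$ are forced into a single monodromy-determined linear subspace; a black-box appeal to \Cref{thm:newAS} followed by ``$\nabla$-special hence linear'' skips precisely the content of the theorem.
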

%\begin{proof}[Sketch of the proof]

%\end{proof}

We review some results related to the geometry of bi-algebraic subvarieties (the arithmetic properties are harder to investigate).

\subsection{Non-linear bi-algebraic subvarieties, after Deroin and Matheus}
We briefly discuss \cite[Thm. 1.1, 1.3]{zbMATH07811831}, as well as some of the computations from \cite[Thm. 8.3]{zbMATH06149478}. The condition $(\star)$ from \Cref{thm0000} indicates that the families of curves whose Jacobians possess non-trivial fixed parts are potential sources of non-linear bi-algebraic subvarieties, and in fact:

\begin{thm}[{\cite[Thm. 1.1, 1.3]{zbMATH07811831}}]\label{nonline}
 There is a non-linear bi-algebraic curve, resp. surface, of abelian differentials of genus 7, resp. 10. They are given by 
 
\begin{equation}
C_t =\{ y^6=x(x-1)(x+1)(x-t)\},  \ \ \  \omega_t=\frac{x^2dx}{y^5};
\end{equation}
\begin{equation}
 C_{a,b,c}=\{y^6=x(x-1)(x-a)(x-b)(x-c)\}, \ \ \  \omega_{a,b,c}=\frac{dx}{y^5} 
\end{equation}
 for $(a,b,c)$ in a generic algebraic surface $S\subset \C^3$, ($a,b,c$ distinct and $\notin \{0,1\}$).
\end{thm}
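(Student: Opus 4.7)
The strategy exploits the order-6 cyclic symmetry $\rho: (x,y) \mapsto (x, \zeta_6 y)$ common to both families. First I would decompose the relative cohomology $H^1(C_t, Z(\omega_t); \C)$ into the eigenspaces $V_j$ of $\rho^*$ and compute the Hodge numbers of each $V_j$ via the Chevalley--Weil formula for cyclic covers of $\mathbb{P}^1$. Since $\rho^* \omega_t = \zeta_6 \omega_t$, the form $\omega_t = x^2 dx/y^5$ sits in a specific eigenspace $V_1$, and this decomposition is locally constant along the family. The analogous decomposition works for the surface family, with $\omega_{a,b,c} = dx/y^5$ again landing in a single eigencomponent.

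The key Hodge-theoretic input is the fixed-part property: for the chosen weights, the eigenspaces $V_j$ with $j \neq 1$ carry Hodge structures of CM type whose isomorphism class does not vary in $t$ (resp.\ in $(a,b,c)$), giving a non-trivial fixed part of $\Jac(C_t)$ in the sense of the Klingler--Lerer condition $(\star)$. The only varying piece is $V_1$, whose period map lands in a low-dimensional Hermitian symmetric domain --- essentially a Deligne--Mostow ball quotient associated to the weight data. Bi-algebraicity then follows: via the Borel embedding, the image of the family in local period coordinates coincides with the intersection of the algebraic compact dual with the $V_1$-slice, together with the linear identification of the fixed part given by flat transport, and is therefore algebraic in period coordinates. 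This is the precise mechanism by which \Cref{thm0000} is avoided, since condition $(\star)$ is violated by the existence of the fixed part.

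For non-linearity, I would compute explicit periods $\int_\gamma x^2\, dx/y^5$ as hypergeometric integrals: a classical ${}_2F_1(\alpha, \beta; \gamma; t)$ in the curve case and an Appell--Lauricella $F_D$ function of $(a,b,c)$ in the surface case. Non-linearity amounts to showing that the graph of this hypergeometric in $\C^n$ is not contained in an affine hyperplane, which can be checked, for instance, via the Schwarzian derivative of the ${}_2F_1$ or by exhibiting three independent period ratios satisfying a non-linear polynomial relation. The main obstacle lies here: ruling out that the algebraic subvariety identified in the previous step happens to be linear requires explicit analysis of the hypergeometric monodromy for the specific weights $(1,1,1,1,5)/6$ and $(1,1,1,1,1,5)/6$. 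One expects this monodromy to be, up to commensurability, an arithmetic lattice in $\operatorname{PU}(1,n)$ of Deligne--Mostow type, and the non-linearity of the period map should be witnessed by the non-vanishing of the second fundamental form of the image --- a concrete but delicate check that ultimately makes the examples work in genus $7$ and $10$ respectively.
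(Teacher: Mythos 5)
The paper itself gives no proof of this statement --- it is quoted from Deroin--Matheus --- so the only internal point of comparison is the sentence preceding it: the non-linearity is traced to the failure of condition $(\star)$ of \Cref{thm0000}, i.e.\ to the existence of a non-trivial fixed part of the Jacobian along the family. Your overall skeleton (decompose under the order-$6$ automorphism, locate $\omega$ in a single eigenspace, identify the rigid eigenspaces as the fixed part, compute the varying periods as hypergeometric/Lauricella integrals) is consistent with that gloss and with the Deligne--Mostow circle of ideas the survey invokes. One concrete slip: it is not true that all eigenspaces $V_j$ with $j\neq 1$ are rigid of CM type --- complex conjugation carries $V_1$ to $V_{5}$, so $V_5$ varies exactly as much as $V_1$ does. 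The fixed part consists only of those eigenspaces whose Hodge numbers satisfy $h^{1,0}h^{0,1}=0$, and which ones these are must be read off from the Chevalley--Weil data for the specific exponents; the argument only needs this fixed part to be non-zero, not to exhaust the complement of $V_1$.

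The genuine gap is in the bi-algebraicity step. Bi-algebraicity in the sense of \Cref{bialgdef} concerns the developing map \eqref{def:dev} into the \emph{relative} cohomology $H^1(X_0,Z(\omega_0);\C)$: one must show that the vector of actual periods $\bigl(\int_{\gamma_i}\omega_t\bigr)_i$, including the relative periods between zeros of $\omega_t$, sweeps out an algebraic subvariety of this affine space. The Borel embedding and the compact dual live in a flag variety and control only the position of the Hodge filtration of $V_1$; algebraicity of the filtration in the compact dual does not by itself yield polynomial equations satisfied by the period vector, and it says nothing about the extension class carried by the relative part. What is actually needed --- and what Deroin--Matheus supply --- are explicit algebraic (in the event, quadratic) relations among the periods, obtained from the interaction between the varying eigenspace and the rigid ones; these same relations are what witness non-linearity, so the two steps you separate are really one computation. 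Relatedly, your non-linearity criterion via the second fundamental form presupposes that the image is already known to be an algebraic subvariety of period space, and your treatment of the surface case ignores the restriction to a \emph{generic algebraic surface} $S\subset\C^3$, which is not a decoration: the statement is not that the full three-parameter family is bi-algebraic, and the proposal as written does not explain why cutting down to $S$ is necessary or sufficient.
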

%\begin{proof}[Sketch of the proof]

%\end{proof}
\begin{question}
What is the monodromy/generic Mumford-Tate group of the above families (e.g. what is the weakly special closure of $ C_{a,b,c}$)? Can the set of CM points of those families be described?
\end{question}

\subsection{Orbit closures as (a)typical intersections}\label{diagramatyp}

In this section we give a geometric reinterpretation of the conditions appearing in \Cref{mainthmalgfilip}, following \cite[Sec. 6]{2024arXiv240616628B} (which is inspired by the work of Filip). Unsurprisingly, we study orbit closures via various period maps. For every $n$, let $\mathcal{A}_{g,n} \to \mathcal{A}_g$ denote the fibration whose fibre  over a point $[A]\in \mathcal{A}_g$ is $\operatorname{Sym}^{[n]}A$ (the unordered $n$-tuples of points of $A$). The varieties $\mathcal{A}_{g}$ and $\mathcal{A}_{g,n}$ can be interpreted as mixed Shimura varieties, cf. \Cref{mixedshim}. For $\mathcal{A}_{g}$ this is the standard Siegel space description. In general one has a natural bijection 
\begin{displaymath}
\mathcal{A}_{g,n} \simeq \{ (H, E) : H \in \Sp_{2g}(\mathbb{Z}) \backslash \mathbb{H}_{g}, \hspace{0.5em} E \in \textrm{Ext}^{1}_{\textrm{MHS}}(H, \mathbb{Z}^{n})) \} ,
\end{displaymath}
where we view $\Sp_{2g}(\mathbb{Z}) \backslash \mathbb{H}_{g} \simeq \mathcal{A}_{g}$ as the moduli space for principally polarized pure weight one $\mathbb{Z}$-Hodge structures, and we view $\mathbb{Z}^{n}$ as a pure weight zero Hodge structure; see \cite[4.3.14]{filipnotes}. One obtains an algebraic mixed period map $\varphi : \Omega \mathcal{M}_{g}(\kappa) \to \mathcal{A}_{g,n}$ which sends $(X, \omega)$ to the isomorphism class of the extension. The data of $\varphi$ is equivalent to the data of the mixed $\mathbb{Z}$VHS $H^{1}_{\textrm{rel}}$. Note that $\varphi$ is quasi-finite, cf. the discussion at the end of \cite[\S 3.4]{2022arXiv220206031K}. We will typically work with the factorization
\begin{displaymath}
\Omega \mathcal{M}_g(\kappa) \xrightarrow{\Omega \varphi} \Omega \mathcal{A}_{g, n} \to  \mathcal{A}_{g, n},
\end{displaymath}
of $\varphi$, where the first arrow is obtained by seeing the differential $\omega$ on the Jacobian of $X$.

We think of an orbit closure $\mathcal{N}$ (with associated invariants $r,d,t$) as an irreducible component of $(\Omega \varphi)^{-1}(E[\mathcal{N}])$, where $E[\mathcal{N}] \subset \Omega \mathcal{A}_{g,n}$ is an algebraic subvariety constructed using the conditions appearing in \Cref{mainthmalgfilip}. In accordance with the structure of \Cref{mainthmalgfilip}, the variety $E[\mathcal{N}]$ is constructed in three stages, depicted in the following diagram:

\begin{equation}
\label{Filipdiagram}
\begin{tikzcd}
  \mathcal{N} \arrow[r] \arrow[d, hookrightarrow]
  &   E[\mathcal{N}] \arrow[r,  two heads] \arrow[d, hookrightarrow]
   &    M[\mathcal{N}]  \arrow[r,  two heads] \arrow[d, hookrightarrow]
    & S[\mathcal{N}] \arrow[d, hookrightarrow] \\    
  \Omega \mathcal{M}_g(\kappa ) \arrow[r]
&\Omega \mathcal{A}_{g,n} \arrow[r,  two heads]
& \mathcal{A}_{g,n} \arrow[r,  two heads]
&\mathcal{A}_{g}
 \end{tikzcd}:
 \end{equation}
 
\begin{enumerate}
\item Let $S[\mathcal{N} ] \subset \mathcal{A}_g$ be the smallest weakly special subvariety of $\mathcal{A}_g$ containing the image of $\mathcal{N}$. It lies in the locus $R[\mathcal{N}]$ of $g$-dimensional principally polarized abelian varieties that have real multiplication \emph{of the same type of $\mathcal{N}$} (in the sense of \Cref{mainthmalgfilip}). The latter is a special subvariety of $\mathcal{A}_{g}$ whose generic Mumford-Tate group is isomorphic to $\left(\operatorname{Res}_{K/\Q} {\operatorname{GSp}_{2r,K}} \right)\times \operatorname{GSp}_{2 (g-dr)}$.
\item Let $M [\mathcal{N}] \subset \mathcal{A}_{g,n}$ be the smallest weakly special subvariety of $\mathcal{A}_{g,n}$ containing $\mathcal{N}$. It lies in $T[\mathcal{N}]$, the bundle over $R [\mathcal{N}]$ consisting of $n$-tuples of points satisfying the same torsion conditions as $\mathcal{N}$.
\item Let $E[\mathcal{N}]$ be the bundle of $1$-forms over $M[\mathcal{N}]$ inside in the $K$-eigenspace containing the restriction of the section $\omega$ to $\mathcal{N}$. 
\end{enumerate}
The notation $S[\mathcal{N}]$ is chosen as we think of the variety $S[\mathcal{N}]$ as giving the \emph{Shimura condition}. Similarly, $M[\mathcal{N}]$ gives the \emph{mixed-Shimura condition} (see also \Cref{mixedshim}), and $E[\mathcal{N}]$ includes also the \emph{eigenform} condition in addition to the previous two. 

The conditions $R[\mathcal{N}]$ and $T[\mathcal{N}]$ are simply geometric reinterpretations of the first two items in \Cref{mainthmalgfilip}, whereas the conditions $S[\mathcal{N}]$ and $M[\mathcal{N}]$ may in principle be stronger. The condition $E[\mathcal{N}]$ corresponds to the third condition in \Cref{mainthmalgfilip}.

\begin{defi}[Relative (a)typicality --- Intersection theoretic version]\label{defatyfili}
Let $\mathcal{M}$ be a orbit closure in some stratum $\Omega \mathcal{M}_g (\kappa)$ (possibly equal to component of the latter). Then an orbit closure $\mathcal{N} \subset \mathcal{M}$ is said \emph{(intersection theoretically) atypical} (relative to $\mathcal{M}$) if
\begin{displaymath}
\codim_{E[\mathcal{M}]} \mathcal{N} < \codim_{E[\mathcal{M}]} (E[\mathcal{N}]) +  \codim_{E[\mathcal{M}]} (\mathcal{M}),
\end{displaymath}
and \emph{(intersection theoretically) typical} (relative to $\mathcal{M}$) otherwise.
\end{defi}

We can now discuss a proof of \Cref{thmEFWoriginal} inspired by the philosophy of atypical intersections. More precisely, with the vocabulary of \Cref{defatyfili}:

\begin{thm}[{\cite[Thm. 6.5]{2024arXiv240616628B}}]
\label{orbitclosurefinthm}
Let $\mathcal{M} \subset \Omega \mathcal{M}_g$ be an orbit closure (possibly equal to an irreducible component of some stratum $\Omega \mathcal{M}_g (\kappa)$). Then $\mathcal{M}$ contains at most finitely many (strict) orbit closures $\mathcal{N}$ that are:
\begin{enumerate}
\item Maximal, i.e. the only suborbit closures of $\mathcal{M}$ containing $\mathcal{N}$ are $\mathcal{N}$ and  $\mathcal{M}$;
\item Atypical (relative to $\mathcal{M}$) in the sense of \Cref{defatyfili}.
\end{enumerate}
In particular in each (connected component of each) stratum $\Omega \mathcal{M}_g (\kappa)$, all but finitely many orbit closures have rank 1 and degree at most 2.
\end{thm}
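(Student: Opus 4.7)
The plan is to recast each orbit closure $\mathcal{N} \subset \mathcal{M}$ as a mixed-Hodge-theoretic atypical intersection à la Zilber--Pink and to invoke the Ax--Schanuel theorem in families (\Cref{nondenseA}--\Cref{protogeoZP}) on the standard $G$-torsor $P$ of \Cref{standardbundle} above $\mathcal{A}_{g,n}$. By the factorisation of diagram \eqref{Filipdiagram}, $\mathcal{N}$ appears as an irreducible component of $(\Omega\varphi)^{-1}(E[\mathcal{N}])$, where $E[\mathcal{N}]$ is an eigenform sub-bundle above a weakly special $M[\mathcal{N}] \subset \mathcal{A}_{g,n}$ sitting over the real-multiplication locus $S[\mathcal{N}] \subset \mathcal{A}_g$. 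The atypicality condition of \Cref{defatyfili} is exactly the input needed for Ax--Schanuel: after lifting $\mathcal{M}$ and the bundle $E[\mathcal{N}]$ horizontally to $P$, the image of $\mathcal{N}$ appears as an irreducible component of the intersection of a flat leaf with a fibre of an algebraic family of subvarieties of $P$, and its dimension exceeds what a generic intersection would predict.

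Next I would over-parametrize the candidates $E[\mathcal{N}]$. The discrete data attached to such a sub-bundle consist of a sub-Hodge datum $(\mathbf{H}, D_H)\hookrightarrow(\operatorname{GSp}_{2g}, \mathbb{H}_g)$ (the RM type), a normal subgroup $\mathbf{N}\trianglelefteq\mathbf{H}$ (the torsion condition), and a choice of $K$-eigenspace for the class of $\omega$. Up to $\Gamma$-conjugacy these triples form a countable set, and each type yields a single algebraic family $f_i:\mathcal{Z}_i\to\mathcal{Y}_i$ of subvarieties of $P$ whose fibres enumerate the $\Gamma$-translates of a fixed model. Applying \Cref{nondenseA} followed by \Cref{protogeoZP} to each $f_i$ shows that the projection to $\mathcal{M}$ of any atypical component of shape $i$ lies in a fibre of one of finitely many families of weakly special subvarieties $h_{i,j}:C_{i,j}\to B_{i,j}$.

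The crucial, and hardest, remaining step is to reduce the countable index set $\{i\}$ of Hodge shapes to a \emph{finite} one. This is furnished by the mixed geometric Zilber--Pink theorem, \Cref{thm:mixedgeomzp}: the Deligne--André monodromy theorem (\Cref{fixedpart}), together with the semisimplicity of the reductive part of the monodromy and the boundedness forced by the generic Hodge datum of $\mathcal{M}$, leaves only finitely many shapes that can support a monodromically atypical subvariety of $\mathcal{M}$. Maximality of $\mathcal{N}$ then seals the argument: if $\mathcal{N}$ were strictly contained in the weakly special fibre into which it is placed, that fibre would lift to a strict sub-orbit-closure of $\mathcal{M}$ properly between $\mathcal{N}$ and $\mathcal{M}$, contradicting maximality; within each of the finitely many families at most finitely many fibres can actually occur as a full orbit closure carrying the prescribed invariants $(r,d,t)$.

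For the ``in particular'' claim, the invariants of a sub-orbit-closure $\mathcal{N}\subsetneq \Omega\mathcal{M}_g(\kappa)$ determine the generic Mumford--Tate group of $S[\mathcal{N}]$ as $\operatorname{Res}_{K/\mathbb{Q}}\operatorname{GSp}_{2r,K}\times\operatorname{GSp}_{2(g-dr)}$; combined with $\dim\mathcal{N}=2r+t$, a direct codimension count in $E[\Omega\mathcal{M}_g(\kappa)]$ shows that any $\mathcal{N}$ with $r\geq 2$ or $d\geq 3$ is atypical relative to the stratum. A downward induction on dimension then handles the non-maximal occurrences: the main theorem leaves only finitely many maximal strict sub-orbit-closures of $\Omega\mathcal{M}_g(\kappa)$ that could themselves contain further atypical ones, and iterating the main theorem inside each of them terminates after finitely many steps. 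The main obstacle to emphasise is the countable-to-finite reduction of Hodge shapes; without the fixed-part machinery embodied in \Cref{thm:mixedgeomzp}, the Ax--Schanuel input alone would produce a family for \emph{each} shape, leaving infinitely many a priori.
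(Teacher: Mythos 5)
Your overall architecture matches the paper's: lift everything to the torsor $P$, translate the atypicality of \Cref{defatyfili} into a codimension inequality there (the paper's \Cref{ineqinsidebundle}), over-parametrize the candidate subvarieties $E_{\mathcal{N}}\times\ch{D}^0_{\mathcal{N}}$ by an algebraic family, apply Ax--Schanuel in families (\Cref{nondenseA}, \Cref{protogeoZP}), and finish using maximality plus quasi-finiteness of the resulting weakly special families over $\mathcal{M}$ (the paper's \Cref{orbitclnofamlem}). You also omit, but would need, the preliminary reduction that the Zariski closure of an infinite union of suborbit closures is a finite union of orbit closures, so that it suffices to rule out Zariski density.

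However, the step you single out as ``crucial and hardest'' --- reducing a countable index set of Hodge shapes to a finite one via \Cref{thm:mixedgeomzp} --- is both unnecessary and, as stated, does not work. It is unnecessary because the over-parametrization can be done by a \emph{single} finite-type algebraic family from the start: the eigenspaces $E_{\mathcal{N}}\subset E_{\mathcal{M}}$ are linear subspaces, hence parametrized by a union of Grassmannians, and \emph{all} weakly special subdomains of $\ch{D}^0_{\mathcal{M}}$ fit into one algebraic family over a finite disjoint union of copies of $\mathcal{G}\times\ch{D}^0_{\mathcal{M}}$ (the paper's \Cref{orbitoverparamlem} and \Cref{gaolemma}, the latter resting on Gao's lemma). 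The only countability in the argument sits in the \emph{hypothesis} of \Cref{protogeoZP} (the countably many families $h_i$ of weakly special subvarieties of the base, which is standard Hodge theory), and the \emph{conclusion} of \Cref{protogeoZP} is precisely that finitely many of these suffice. It does not work as you state it because \Cref{thm:mixedgeomzp} governs \emph{monodromically} atypical subvarieties of the VMHS, whereas an orbit closure atypical in the sense of \Cref{defatyfili} need not be monodromically atypical: the eigenform condition $E[\mathcal{N}]$ is a condition on periods of $\omega$ that lives only in the torsor $P$ and is invisible to the period map --- this is exactly the distinction the surrounding text insists on. So routing the finiteness through the geometric mixed ZP theorem would leave uncovered precisely those atypical orbit closures whose excess intersection comes from the eigenform condition. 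Replace that step by the direct construction of the over-parametrizing family and the argument closes.
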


For simplicity we treat only the case $\mathcal{M} = \Omega \mathcal{M}_g(\kappa)$ some fixed orbit closure. 

\subsubsection{Sketch of the proof of \Cref{orbitclosurefinthm}}\label{proofEFW}

We now let $(\mathcal{H}, \nabla)$ be the algebraic vector bundle with regular singular connection associated to $\mathbb{V} = H^{1}_{\textrm{rel}}$, and consider the bundle $P$ and map $\nu$ constructed above. The graded quotient of $\mathcal{H}$ which corresponds to $H^{1}_{\textrm{abs}}$ we denote $\mathcal{H}_{\textrm{abs}}$. Then $\omega$ is an algebraic section of $\mathcal{H}_{\textrm{abs}}$, and we then obtain any algebraic map $r : P \to \mathcal{H}_{abs, s_{0}}$ given by 
\[ [\eta \in \Hom(\mathcal{H}_{s}, \mathcal{H}_{s_{0}})] \mapsto \eta(\omega_{s}) . \]

\begin{lemma}
\label{mapfromPsurj}
The map $r$ lands inside a unique eigenspace $E_{\mathcal{M}} = E_{\mathcal{M},s_{0}} \subset \mathcal{H}_{\textrm{abs},s_{0}}$ for the field of real multiplication $K_{\mathcal{M}}$ associated to $\mathcal{M}$. The complex algebraic variety $(E_{\mathcal{M}} \setminus \{ 0 \}) \times \ch{D}^{0}$ consists of a single $\mathbf{H}(\mathbb{C})$-orbit, and the map $r \times \nu : P \to (E_{\mathcal{M}} \setminus \{ 0 \}) \times \ch{D}^{0}$ is $\mathbf{H}(\mathbb{C})$-equivariant and surjective.
\end{lemma}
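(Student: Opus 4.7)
My plan is to prove the three claims in sequence: containment in a single eigenspace, equivariance of $r \times \nu$, and surjectivity onto a single orbit.

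First, the containment follows from item (3) of \Cref{mainthmalgfilip}: the section $\omega$ is an eigenform for the real multiplication by $K_{\mathcal{M}}$. After tensoring with $\mathbb{C}$ the algebra $K_{\mathcal{M}}$ splits as $\prod_{\sigma} \mathbb{C}$ over its complex embeddings, inducing a decomposition $\mathcal{H}_{\mathrm{abs}, s_{0}} = \bigoplus_{\sigma} E_{\sigma}$, and the eigenform condition places $\omega_s$ in a single summand $E_{\sigma_0}$ at every $s$. Since the RM structure is part of the Hodge data that $\mathbf{H}$ preserves, $K_{\mathcal{M}}$ acts centrally with respect to $\mathbf{H}$, so every frame $\eta \in P$ intertwines the $K_{\mathcal{M}}$-actions on $\mathcal{H}_{\mathrm{abs}, s}$ and $\mathcal{H}_{\mathrm{abs}, s_{0}}$. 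Therefore $r([\eta]) = \eta(\omega_s) \in E_{\sigma_0}$, and we set $E_{\mathcal{M}} := E_{\sigma_0}$.

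Equivariance is then direct: $\mathbf{H}(\mathbb{C})$ acts on $P$ by post-composition of frames, so $r(h\cdot [\eta]) = (h\circ\eta)(\omega_s) = h\cdot r([\eta])$, and $\nu$ is equivariant by construction of the standard automorphic bundle, cf. \Cref{standardbundle}. For surjectivity and the single-orbit statement, I would fix a basepoint $[\eta_0] \in P_{s_0}$ (the identity frame) and analyze the orbit of $(\omega_{s_0}, F_0^\bullet) := (r \times \nu)([\eta_0])$. Using the structural description of the generic Mumford-Tate group on an orbit closure recalled just before diagram \eqref{Filipdiagram}, namely that $\mathbf{H}$ contains (up to center) the factor $\mathrm{Res}_{K_{\mathcal{M}}/\mathbb{Q}} \mathrm{GSp}_{2r, K_{\mathcal{M}}}$ governing the RM part, the base change to $\mathbb{C}$ decomposes $\mathbf{H}(\mathbb{C})$ along the embeddings $\sigma$; the $\sigma_0$-factor $\mathrm{GSp}_{2r, \mathbb{C}}$ acts transitively on $E_{\mathcal{M}} \setminus \{0\}$ via the standard symplectic representation, and also transitively on the Lagrangian-Grassmannian component of $\ch{D}^0$ attached to $\sigma_0$. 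A dimension count, paired with the connectedness of both factors, then identifies the orbit of the basepoint with the full product; surjectivity follows by equivariance.

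The main obstacle lies precisely in the last step: showing that the orbit of $(\omega_{s_0}, F_0^\bullet)$ truly exhausts the product, rather than being cut out by some residual incidence relation between $v$ and $F^\bullet$. The parabolic stabilizer of $F_0^\bullet$ in $\mathbf{H}(\mathbb{C})$ naively partitions $E_{\mathcal{M}} \setminus \{0\}$ into two pieces depending on whether the vector lies in the Lagrangian $F_0^1 \cap E_{\mathcal{M}}$, so one must coordinate the similitude factor, the Levi $\mathrm{GL}_r$, and the unipotent radical — together with the transitive action on $\ch{D}^0$ that can change $F_0^\bullet$ — in order to merge these into a single orbit on the product. Verifying this carefully, likely by exhibiting an explicit element of $\mathbf{H}(\mathbb{C})$ moving an arbitrary $(v, F^\bullet)$ to the basepoint, is the delicate point.
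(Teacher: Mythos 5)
Your proposal is correct in substance and reaches the same level of completeness as the paper's own sketch, but the first part takes a genuinely different route. For the containment $r(P)\subset E_{\mathcal{M}}$, the paper argues via a flat leaf $\mathcal{L}\subset P$ coming from the rational structure of $\mathbb{V}$: the restriction $r|_{\mathcal{L}}$ is essentially the developing map, so it lands in the eigenspace by part (3) of \Cref{mainthmalgfilip}, and one concludes by Zariski density of $\mathcal{L}$ in $P$ together with the Zariski-closedness of $E_{\mathcal{M}}$. You instead argue that every frame in $P$ intertwines the $K_{\mathcal{M}}$-actions and hence respects the eigenspace decomposition; this is valid because $\mathbf{H}_{\mathcal{M}}$ commutes with the flat endomorphisms by $K_{\mathcal{M}}$ (so the flat frames intertwine and the property is preserved by the structure group), and it is arguably more direct, whereas the paper's density argument has the advantage of invoking only the statement of Filip's theorem about $\omega$ along leaves rather than unpacking how the RM structure interacts with the torsor. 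On equivariance you and the paper agree that it is immediate from the construction. On the single-orbit and surjectivity claim, the paper's proof is no more complete than yours: it reduces surjectivity to the orbit statement and then only remarks that this ``is easy in the case $\mathcal{M}$ equals a stratum and more delicate in general,'' deferring the details to \cite{2024arXiv240616628B}. The obstacle you isolate --- that the stabilizer of a point of $\ch{D}^{0}$ does not obviously act transitively on $E_{\mathcal{M}}\setminus\{0\}$ because of the apparent incidence between the vector and the Hodge flag, so that the similitude factor, the Levi, and the unipotent radical must be coordinated --- is exactly the delicate point the paper is alluding to, so your honest flagging of it is appropriate rather than a defect relative to the source.
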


\begin{proof}
Let $\mathcal{L} \subset P$ be a leaf coming from the rational structure of the underlying local system $\mathbb{V}$. Then from the third part of \Cref{mainthmalgfilip}, the image $r(\mathcal{L})$ lies in a $K_{\mathcal{M}}$-eigenspace $E_{\mathcal{M}} = E_{\mathcal{M},s_{0}}$. Because $\mathcal{L}$ is Zariski dense in $P$ as a consequence of the previous discussion on leaves, we have $r^{-1}(\overline{r(\mathcal{L})}^{\textrm{Zar}}) = P$ and hence $r(P) \subset E_{\mathcal{M}}$. 

It is clear by construction that the map is $\mathbf{H}(\mathbb{C})$-invariant, so surjectivity will follow if we can show that in fact $(E_{\mathcal{M}} \setminus \{ 0 \}) \times \ch{D}^{0}$ is an orbit of $\mathbf{H}(\mathbb{C})$. This is easy in the case $\mathcal{M}$ equals a stratum and more delicate in general.
\end{proof}

Given a suborbit closure $\mathcal{N} \subset \mathcal{M}$, we will write $Z_{\mathcal{N}} := E_{\mathcal{N}} \times \ch{D}^{0}_{\mathcal{N}}$ for the analogous subvariety of $E_{\mathcal{M}} \times \ch{D}^{0}$ associated to $\mathcal{N}$. It is well-defined up to the action of the monodromy group $\Gamma_{\mathcal{M}}$.

\begin{prop}
\label{ineqinsidebundle}
We have $\dim (E_{\mathcal{N}} \times \ch{D}^{0}_{\mathcal{N}}) = \dim E[\mathcal{N}]$. If $\mathcal{N}$ is an intersection-theoretically atypical suborbit closure of $\mathcal{M}$, then we have the following inequality of formal codimensions:
\[ \codim_{P} \mathcal{N} < \codim_{P} (r \times \nu)^{-1}(E_{\mathcal{N}} \times \ch{D}^{0}_{\mathcal{N}}) + \codim_{P} \mathcal{M} . \]
\end{prop}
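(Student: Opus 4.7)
The plan is to establish the two claims in sequence: the dimension equality by unpacking the three-stage construction of $E[\mathcal{N}]$ in diagram \eqref{Filipdiagram}, and the inequality by transporting the atypicality condition of \Cref{defatyfili} through the $\mathbf{H}(\mathbb{C})$-equivariant surjection $r \times \nu$ of \Cref{mapfromPsurj}.

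First I would check $\dim(E_{\mathcal{N}} \times \ch{D}^{0}_{\mathcal{N}}) = \dim E[\mathcal{N}]$ by inspection of the construction. Recall that $E[\mathcal{N}] \to M[\mathcal{N}]$ is a vector bundle whose fibers are the $K_{\mathcal{N}}$-eigenspaces of the relevant cohomology, and these fibers are by definition (canonically isomorphic to) $E_{\mathcal{N}}$. Since $M[\mathcal{N}]$ is a weakly special subvariety of the mixed Shimura variety $\mathcal{A}_{g,n}$, its dimension equals the dimension of the corresponding orbit in the compact dual, which is exactly $\ch{D}^{0}_{\mathcal{N}}$ (cf.\ \Cref{thmweaksp}). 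Hence
\begin{equation*}
\dim E[\mathcal{N}] = \dim M[\mathcal{N}] + \dim E_{\mathcal{N}} = \dim \ch{D}^{0}_{\mathcal{N}} + \dim E_{\mathcal{N}} = \dim(E_{\mathcal{N}} \times \ch{D}^{0}_{\mathcal{N}}).
\end{equation*}
The same identity applied with $\mathcal{N}$ replaced by $\mathcal{M}$ gives $\dim E[\mathcal{M}] = \dim E_{\mathcal{M}} + \dim \ch{D}^{0}$.

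For the inequality, I would invoke \Cref{mapfromPsurj}: since $r \times \nu$ is surjective onto a single $\mathbf{H}(\mathbb{C})$-orbit and equivariant under that transitive action, all fibers are translates of a fixed stabilizer, hence of constant dimension. Pulling back therefore preserves formal codimensions, and combining with the identities of the previous paragraph,
\begin{equation*}
\codim_{P} (r \times \nu)^{-1}(E_{\mathcal{N}} \times \ch{D}^{0}_{\mathcal{N}}) = \codim_{E_{\mathcal{M}} \times \ch{D}^{0}}(E_{\mathcal{N}} \times \ch{D}^{0}_{\mathcal{N}}) = \dim E[\mathcal{M}] - \dim E[\mathcal{N}].
\end{equation*}

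Finally, identifying $\mathcal{M}$ and $\mathcal{N}$ with their preimages under the structural projection $P \to \Omega \mathcal{M}_g(\kappa)$ (smooth with fibers of constant dimension, hence codimension-preserving), we have $\codim_{P}\mathcal{M} = \dim P - \dim \mathcal{M}$ and likewise for $\mathcal{N}$. Substituting, the desired inequality becomes
\begin{equation*}
\dim \mathcal{M} - \dim \mathcal{N} < \dim E[\mathcal{M}] - \dim E[\mathcal{N}],
\end{equation*}
which, after expanding each formal codimension in \Cref{defatyfili} as a difference of dimensions and cancelling the common term $\dim E[\mathcal{M}]$ on both sides, is precisely the atypicality hypothesis on $\mathcal{N}$ relative to $\mathcal{M}$. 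The only real obstacle is bookkeeping: one must verify that the two identifications of $\dim M[\mathcal{N}]$ with $\dim \ch{D}^{0}_{\mathcal{N}}$ and of the eigenspace-bundle fiber with $E_{\mathcal{N}}$ are compatible with the normalizations built into \eqref{Filipdiagram}; once this is in place the argument is purely formal.
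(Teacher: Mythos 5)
Your proof is correct and follows essentially the same route as the paper's: the paper likewise treats the dimension equality as a formal translation of the abelian-variety data into Hodge-theoretic terms, and then uses the constant fibre dimension of $P \to (E_{\mathcal{M}}\setminus\{0\})\times\ch{D}^0$ to identify $\codim_P (r\times\nu)^{-1}(E_{\mathcal{N}}\times\ch{D}^0_{\mathcal{N}})$ with $\dim E[\mathcal{M}]-\dim E[\mathcal{N}]$ and rewrite the atypicality inequality of \Cref{defatyfili}. You simply spell out the bookkeeping (the bundle structure of $E[\mathcal{N}]\to M[\mathcal{N}]$ and the identification $\dim M[\mathcal{N}]=\dim\ch{D}^0_{\mathcal{N}}$) that the paper leaves implicit.
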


\begin{proof}
The first equality of dimensions is formal and can be seen by translating the abelian variety data into its Hodge-theoretic incarnation. For the second we use the fact that $P \to ( E_{\mathcal{M}} \setminus \{ 0 \}) \times \ch{D}^{0}$ has constant fibre dimension to rewrite the inequality in \Cref{defatyfili}. The key fact is that
\begin{align*}
\codim_{P} (r \times \nu)^{-1}(E_{\mathcal{N}} \times \ch{D}^{0}_{\mathcal{N}}) &= \dim [E_{\mathcal{M}} \times \ch{D}^{0}_{\mathcal{M}}] -  \dim [E_{\mathcal{N}} \times \ch{D}^{0}_{\mathcal{N}}] \\
&= \dim E[\mathcal{M}] - \dim E[\mathcal{N}] .
\end{align*}
\end{proof}

\subsubsection{Over-parametrization}\label{sectionoverpar}

We describe a family of subvarieties of $P$ that (over) parametrizes all the data that can give rise to suborbit closures of $\mathcal{M}$, regardless whether such an orbit closure is typical or atypical. 

\begin{lemma}
\label{orbitoverparamlem}
There exists an algebraic family $f : \mathcal{Z} \to \mathcal{Y}$ of subvarieties of $Z_{\mathcal{M}} = E_{\mathcal{M}} \times \ch{D}^{0}$ such that all subvarieties $Z_{\mathcal{N}} \subset Z_{\mathcal{M}}$ associated to suborbit closures $\mathcal{N} \subset \mathcal{M}$ arise as a fibre $f^{-1}(y)$ for some $y \in \mathcal{Y}$.
\end{lemma}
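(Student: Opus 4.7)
The plan is to build the family $\mathcal{Y}$ as (essentially) a product of two parameter spaces, exploiting the product structure $Z_{\mathcal{N}}=E_{\mathcal{N}}\times\check{D}^{0}_{\mathcal{N}}$ recorded in diagram \eqref{Filipdiagram}. By \Cref{mainthmalgfilip}, each suborbit closure $\mathcal{N}\subset\mathcal{M}$ is determined by two essentially independent pieces of algebraic data: the real-multiplication field $K_{\mathcal{N}}$ together with a choice of eigenline for $\omega$, controlling the factor $E_{\mathcal{N}}\subset E_{\mathcal{M}}$; and a mixed Hodge sub-datum $(\mathbf{H}_{\mathcal{N}},D_{\mathcal{N}})\subset(\mathbf{H}_{\mathcal{M}},D_{\mathcal{M}})$, controlling the factor $\check{D}^{0}_{\mathcal{N}}\subset\check{D}^{0}$. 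Since the product of two algebraic families of subvarieties is again an algebraic family, it will suffice to over-parametrize the two factors separately and then take the product.

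First I would build an algebraic parameter space $\mathcal{Y}_{1}$ whose universal family over-parametrizes the duals $\check{D}^{0}_{\mathcal{N}}\subset\check{D}^{0}$. By \Cref{thmweaksp} and the discussion in \Cref{sec71}, each such dual is the compact dual of a weakly special subvariety coming from a Hodge sub-datum of $(\mathbf{H}_{\mathcal{M}},D_{\mathcal{M}})$, and so is an algebraic subvariety of $\check{D}^{0}$. Standard structure theory for Hodge data tells us that there are only countably many $\mathbf{H}_{\mathcal{M}}(\mathbb{Q})$-conjugacy classes of such sub-data, and within each class the duals form a single algebraic $\mathbf{H}_{\mathcal{M}}(\mathbb{C})$-orbit inside $\check{D}^{0}$, parameterized by a homogeneous variety of the form $\mathbf{H}_{\mathcal{M}}(\mathbb{C})/N_{\mathbf{H}_{\mathcal{M}}}(\mathbf{H}_{\mathcal{N}})$. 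Taking the disjoint union of these orbits, indexed by conjugacy classes, then gives the desired $\mathcal{Y}_{1}$ together with its universal family of subvarieties of $\check{D}^{0}$.

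Second I would build an analogous algebraic parameter space $\mathcal{Y}_{2}$ over-parametrizing the eigenspace factors $E_{\mathcal{N}}$. The key input is that by Albert's classification the degree $[K_{\mathcal{N}}:\mathbb{Q}]$ is bounded by $g$, so there are only countably many isomorphism classes of totally real fields $K$ extending $K_{\mathcal{M}}$ that can appear as $K_{\mathcal{N}}$. For each such $K$, the possible embeddings of $K$ into the reductive part of $\mathbf{H}_{\mathcal{N}}$ compatible with the Hodge decomposition are finite, and each embedding cuts $E_{\mathcal{M}}$ into a finite collection of $K$-eigenspaces of a given dimension, all of which can be read off algebraically from the representation-theoretic data of $\mathbf{H}_{\mathcal{N}}\subset\mathbf{H}_{\mathcal{M}}$. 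Assembling a disjoint union of these finite algebraic families over the countable set of fields $K$ yields $\mathcal{Y}_{2}$, and I would then take $\mathcal{Y}:=\mathcal{Y}_{1}\times\mathcal{Y}_{2}$ with $\mathcal{Z}\subset Z_{\mathcal{M}}\times\mathcal{Y}$ the induced universal family of products.

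The step I expect to be most delicate is not the parametrization itself but the verification that these countable disjoint unions assemble into a genuine scheme of (at worst) locally finite type, so that $f:\mathcal{Z}\to\mathcal{Y}$ is an \emph{algebraic family} in the sense of \Cref{nondenseA}. Concretely, one has to check a boundedness statement --- for example a bound on the degree of each $\check{D}^{0}_{\mathcal{N}}$ inside a fixed projective embedding of $\check{D}^{0}$ --- that realizes each component of $\mathcal{Y}_{1}$ as a locally closed subscheme of a Hilbert or Chow scheme of $\check{D}^{0}$. Since \Cref{orbitoverparamlem} only asks for an over-parametrization, it is allowed that not every fibre of $f$ correspond to an actual suborbit closure, and one can therefore afford to ignore the finer compatibility between the $\mathcal{Y}_{1}$ and $\mathcal{Y}_{2}$ data (such as the fact that $K_{\mathcal{N}}$ already partially determines $S[\mathcal{N}]$); any spurious extra fibres will be harmless in the subsequent application of \Cref{protogeoZP}.
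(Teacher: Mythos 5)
Your overall structure --- splitting $Z_{\mathcal{N}} = E_{\mathcal{N}} \times \ch{D}^{0}_{\mathcal{N}}$ according to the product structure and over-parametrizing the two factors separately --- is exactly the paper's strategy. But there is a genuine gap precisely where you flag the ``delicate step'': both of your parameter spaces $\mathcal{Y}_{1}$ and $\mathcal{Y}_{2}$ are \emph{countable} disjoint unions (over conjugacy classes of sub-data, and over totally real fields $K$, respectively), and a countable disjoint union of families is not an algebraic family in the sense required by \Cref{nondenseA} and \Cref{protogeoZP}: the constructibility and finiteness conclusions there rest on the base $\mathcal{Y}$ being of finite type (up to finitely many components of each fibre dimension). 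You propose to repair this by bounding degrees in a Hilbert or Chow scheme, but you give no argument for such a bound; that boundedness is exactly the non-trivial content of the lemma and does not come for free from the Hilbert-scheme formalism.

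The paper closes the gap with two observations you did not make. For the linear factor, since $E_{\mathcal{N}} \subset E_{\mathcal{M}}$ is merely an inclusion of linear subspaces and only an \emph{over}-parametrization is required, one takes the family of \emph{all} linear subspaces of $E_{\mathcal{M}}$, i.e.\ a finite union of Grassmannians; no real-multiplication fields, eigenspace decompositions, or countable indexing enter at all. (You even note that spurious fibres are harmless, but do not apply this to collapse your $\mathcal{Y}_{2}$ to a Grassmannian.) For the factor $\ch{D}^{0}_{\mathcal{N}}$, one invokes \Cref{gaolemma}: up to the action of an explicit algebraic group $\mathcal{G}(\C)$ there are only \emph{finitely} many weakly special subdomains of $\ch{D}^{0}_{\mathcal{M}}$, so the base may be taken to be a finite union of copies of $\mathcal{G} \times \ch{D}^{0}_{\mathcal{M}}$. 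This group-theoretic finiteness over $\C$ --- not mere countability of $\Q$-conjugacy classes --- is the key input missing from your argument.
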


\begin{proof}
It suffices to handle the two ``factors'' separately. In particular, since $E_{\mathcal{N}} \subset E_{\mathcal{M}}$ is an inclusion of linear subspaces, it is clear all possible choices for such a subspace are parameterized by a union of Grassmannian varieties. It thus suffices to show that all weakly special subdomains of $\ch{D}^{0}$ belong to a common algebraic family, which is a consequence of \Cref{gaolemma} below.
\end{proof}

Let $(\mathbf{H}_{\mathcal{M}},\ch{D}^0_{\mathcal{M}})$ be the monodromy datum associated to $(\mathcal{M},\V_{| \mathcal{M}})$ (where $\V $ is the standard $\Z$VMHS on some stratum containing $\mathcal{M}$). 

\begin{lemma}\label{gaolemma}
There is an algebraic family (over a disconnected base) $f: \mathcal{Z}\to \mathcal{Y}$ of subvarieties of $\ch{D}^0_{\mathcal{M}}$ such that, for every weakly special sub-datum $(\mathbf{H}_i,\ch{D}_i)$, $\ch{D}_i$ appears as a fibre of $f$.
\end{lemma}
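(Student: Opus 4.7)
The plan is to parametrize weakly special sub-data by pairs (semisimple subgroup of $\mathbf{H}_{\mathcal{M}}$, base point in $\ch{D}^0_{\mathcal{M}}$), and then to show this parametrization can be assembled into an algebraic family.

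The first step is finiteness: by a theorem of Richardson, there are only finitely many $\mathbf{H}_{\mathcal{M}}(\mathbb{C})$-conjugacy classes of connected reductive subgroups of $\mathbf{H}_{\mathcal{M}}$. Combined with \Cref{thmweaksp} and \Cref{fixedpart} (which forces the relevant monodromy groups to sit inside $\mathbf{H}_{\mathcal{M}}$ with a specific normal relation), this reduces the problem to handling finitely many ``types'' of weakly special sub-data. I would fix representatives $\mathbf{H}_{i,0} \subset \mathbf{H}_{\mathcal{M}}$ of each such conjugacy class (of which, by the mixed setting, I take both a reductive and a unipotent part).

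The second step is the construction. For each $i$, the set of $\mathbf{H}_{\mathcal{M}}(\mathbb{C})$-conjugates of $\mathbf{H}_{i,0}$ is naturally identified with the algebraic homogeneous space $U_i := \mathbf{H}_{\mathcal{M}}/N_{\mathbf{H}_{\mathcal{M}}}(\mathbf{H}_{i,0})$. I would set $\mathcal{Y}_i := U_i \times \ch{D}^0_{\mathcal{M}}$ and let $\mathcal{Z}_i \subset \mathcal{Y}_i \times \ch{D}^0_{\mathcal{M}}$ be the universal family whose fibre over $(\mathbf{H}', y)$ is the algebraic orbit $\mathbf{H}'(\mathbb{C}) \cdot y$; that this varies algebraically with the pair $(\mathbf{H}', y)$ follows from the standard theory of algebraic group actions on quasi-projective varieties. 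Taking the (finite) disjoint union $\mathcal{Y} := \bigsqcup_i \mathcal{Y}_i$, which is disconnected as allowed in the statement, and $\mathcal{Z} := \bigsqcup_i \mathcal{Z}_i$, yields the desired family $f : \mathcal{Z} \to \mathcal{Y}$.

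The last step is exhaustiveness: by \Cref{thmweaksp}, any weakly special $\ch{D}_i$ is of the form $\mathbf{N}(\mathbb{C}) \cdot y$, where $\mathbf{N}$ arises as a normal subgroup in a sub-Hodge datum of $(\mathbf{H}_{\mathcal{M}}, \ch{D}^0_{\mathcal{M}})$; since $\mathbf{N}$ belongs to one of the finitely many chosen conjugacy classes, $\ch{D}_i$ occurs as a fibre of $f$. The main obstacle I anticipate is the bookkeeping in the mixed setting: one must simultaneously track the reductive and unipotent normal subgroups and verify that Richardson-type finiteness applies uniformly, so that the total parameter space $\mathcal{Y}$ really is a finite-type algebraic variety rather than merely a countable union. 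This is essentially handled by the structure theory of mixed Shimura data, where the unipotent factors are rigid enough to be parametrized by a fixed linear-algebraic datum.
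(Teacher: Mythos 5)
Your proposal is correct and takes essentially the same route as the paper, which simply defers to Gao's Lem.\ 12.3 (and Sec.\ 8.2 of the cited Hodge-locus paper): the base there is likewise a finite disjoint union of copies of $\mathcal{G}\times \ch{D}^0_{\mathcal{M}}$ parametrizing orbits, at a varying point, of $\mathcal{G}(\C)$-translates of finitely many representative subgroups. The finiteness of conjugacy classes (Richardson in the reductive case, together with the unipotent bookkeeping you flag) is exactly the content of the cited lemma, so nothing is missing from your sketch.
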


\begin{proof}
This is a consequence of \cite[Lem. 12.3]{zbMATH06801925} and \cite[Sec. 8.2]{zbMATH07305885}. In particular, it is enough to take as base $\mathcal{Y}$ the disjoint union of finitely many copies of $\mathcal{G} \times \ch{D}^0_{\mathcal{M}}$, where the group $\mathcal{G}$ is defined right above \cite[Lem. 12.3]{zbMATH06801925}. The algebraic family obtained parametrizes orbits at some point $x\in \ch{D}^0_{\mathcal{M}}$ of some $\mathcal{G}(\C)$-translate of a finite set of representatives of weakly special subdomains of $\ch{D}^0_{\mathcal{M}}$.  (In the pure case a stronger result can be found for example in \cite[\S 4.2]{zbMATH06492665}.)
\end{proof} 

In what follows we abusively write $f : \mathcal{Y} \to \mathcal{Z}$ for the pullback of the family in \Cref{orbitoverparamlem} to $P$ along the map $r \times \nu$. We write $f^{(j)} : \mathcal{Y}^{(j)} \to \mathcal{Z}^{(j)}$ for the subfamily where the fibres have dimension $j$.

\begin{proof}[Sketch of the proof of \Cref{orbitclosurefinthm}]

 Let $(\mathcal{N}_{i})_{i\in \N}$ be an infinite sequence of intersection theoretically atypical (relatively to $\mathcal{M}$) suborbit closures that don't lie in any other suborbit closure. The first input is as the beginning of \cite[Proof of Thm. 1.5]{zbMATH06890813}: the Zariski closure of their union is a finite union of orbit closures, so it is enough to show that they are not Zariski dense in $\mathcal{M}$. 
We may fix the dimension $e$ of the suborbit closures $\mathcal{N}_{i}$ we consider; it suffices to prove the theorem for each $e$ separately. Similarly, we may fix the dimension $j$ of the inverse images $(r \times \nu)^{-1}(E_{\mathcal{N_{i}}} \times \ch{D}^{0}_{\mathcal{N}_{i}})$. 

Now let us apply \Cref{protogeoZP} to our situation using the family $f^{(j)}$. The hypotheses are satisfied by \Cref{ineqinsidebundle}, which gives the atypicality inequality with $e = j - \dim \mathbf{H}_{\mathcal{M}}$, and some standard Hodge theory which produces the desired weakly special families. We therefore obtain finitely many families $\{ h_{i} : C_{i} \to B_{i} \}_{i=1}^{m}$ of strict weakly special subvarieties of $S$ such that each $\mathcal{N}_{i}$ maps into a fibre of one of the $h_{i}$, and each map $C_{i} \to \mathcal{M}$ is quasi-finite. The proof is then completed by the following: 

\begin{lemma}
\label{orbitclnofamlem}
Let $h : C \to B$ be an algebraic family of subvarieties of $\mathcal{M}$ such that $\pi : C \to \mathcal{M}$ is quasi-finite. Then only finitely many fibres of $h$ contain a maximal suborbit closure of $\mathcal{M}$.
\end{lemma}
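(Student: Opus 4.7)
The plan is to argue by contradiction. Suppose the set $B^\circ := \{b \in B : F_b \supset \mathcal{N} \text{ for some maximal suborbit closure } \mathcal{N} \text{ of } \mathcal{M}\}$ is infinite; I will derive a contradiction in stages. The first step is to show that for each individual orbit closure $\mathcal{N} \subset \mathcal{M}$ the locus $B_\mathcal{N} := \{b \in B : \mathcal{N} \subset \pi(F_b)\}$ is finite, as a direct consequence of quasi-finiteness of $\pi$: $X := \pi^{-1}(\mathcal{N}) \subset C$ has $\dim X = \dim \mathcal{N}$ and only finitely many irreducible components, and for any $b \in B_\mathcal{N}$ the intersection $F_b \cap X$ surjects onto $\mathcal{N}$ with finite fibres under $\pi$ and hence has dimension $\dim X$, forcing it to contain a top-dimensional component $X_j$. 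Since each such $X_j$ sits in at most one fibre of $h$, the cardinality $|B_\mathcal{N}|$ is bounded by the number of top-dimensional components of $X$.

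Because $B^\circ = \bigcup_\mathcal{N} B_\mathcal{N}$ is then a union of finite sets, its assumed infinitude forces infinitely many distinct maximal suborbit closures $\mathcal{N}_i$ to contribute; choose $b_i \in B_{\mathcal{N}_i}$. Applying the isolation theorem (\Cref{isolation}) to $(\mathcal{N}_i)$ and passing to a subsequence, there is an orbit closure $\mathcal{M}' \supset \bigcup_i \mathcal{N}_i$ with $\mu_i \to \mu$ on $\mathcal{M}'$. Maximality of each $\mathcal{N}_i$ in $\mathcal{M}$ together with the distinctness of the $\mathcal{N}_i$ forces $\mathcal{M}' = \mathcal{M}$, so $\bigcup_i \mathcal{N}_i$ is Zariski-dense in $\mathcal{M}$, and in particular $\pi$ is dominant with $\dim C = \dim \mathcal{M}$. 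Now set $Z := \overline{\{b_i\}}^{\mathrm{Zar}} \subset B$: if $Z \subsetneq B$ the dimension count $\dim \pi(h^{-1}(Z)) \leq \dim h^{-1}(Z) < \dim C = \dim \mathcal{M}$ contradicts the Zariski-density of $\bigcup_i \mathcal{N}_i \subset \pi(h^{-1}(Z))$ inside $\mathcal{M}$.

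The main obstacle is the remaining case $Z = B$. Letting $\widetilde{\mathcal{N}}_i \subset F_{b_i}$ be a lift of $\mathcal{N}_i$, the same density/quasi-finiteness argument gives $\overline{\bigcup_i \widetilde{\mathcal{N}}_i}^{\mathrm{Zar}} = C$; a spreading-out calculation (comparing $\dim C = \dim B + k$ with the fibre dimension of this closure over $B$, using upper semicontinuity on the Zariski-dense set $\{b_i\}$) then forces $\dim \mathcal{N}_i = k$, where $k$ is the generic fibre dimension of $h$. After further stratifying $B$ so that the generic $F_b$ is irreducible, one gets $\pi(F_{b_i}) = \mathcal{N}_i$ along this subsequence. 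The algebraic map $\Psi : B \to \operatorname{Hilb}(\mathcal{M})$, $b \mapsto [\pi(F_b)]$, is then generically finite (each fibre $\Psi^{-1}([\mathcal{N}])$ lies in $B_\mathcal{N}$ and is hence finite), so its image $\mathcal{Y}$ is an irreducible subvariety of the Hilbert scheme of dimension $\dim B \geq 1$ containing $\{[\mathcal{N}_i]\}$ Zariski-densely.

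To conclude I would pass once more to a subsequence so that the $\mathcal{N}_i$ all share the same Filip invariants — possible by pigeonhole among the countably many discrete invariants (the RM field $K$, rank $r$, degree $d$, torsion corank $t$, and the associated torsion and eigenform data) attached to an orbit closure by \Cref{mainthmalgfilip}. Let $\mathcal{Y}' \subset \mathcal{Y}$ be the Zariski closure of the $[\mathcal{N}_i]$'s along this subsequence; since it contains an infinite set it is positive-dimensional. Each of the conditions in \Cref{mainthmalgfilip} — existence of an RM subfactor of the relative Jacobian with prescribed $K$-action, torsion of the relevant Abel--Jacobi values, and the eigenform condition on the universal $1$-form — is a closed algebraic condition on the tautological family over $\operatorname{Hilb}(\mathcal{M})$, so it propagates from the Zariski-dense $[\mathcal{N}_i]$'s to every $y \in \mathcal{Y}'$. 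Hence every such $y$ corresponds to an orbit closure of $\mathcal{M}$; but by \Cref{thmstrorbit} orbit closures are defined over $\overline{\mathbb{Q}}$ and thus form a countable set, whereas $\mathcal{Y}'(\mathbb{C})$ is uncountable — the desired contradiction.
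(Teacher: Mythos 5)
Your opening reductions are sound: the finiteness of $B_{\mathcal N}$ for a fixed $\mathcal N$ via quasi-finiteness of $\pi$, the passage to infinitely many pairwise distinct maximal $\mathcal N_i$, the use of \Cref{isolation} plus maximality to force $\mathcal M'=\mathcal M$ and hence density of $\bigcup_i\mathcal N_i$, and the easy case $Z\subsetneq B$ are all correct (modulo the harmless reductions to $B$, $C$ irreducible and $h$ dominant). The survey states the lemma without proof, so I compare against the intended argument from the cited source. Two steps in your remaining case do not hold up. First, Zariski density of $\bigcup_i\widetilde{\mathcal N}_i$ in $C$ does \emph{not} force $\dim\mathcal N_i=k$: upper semicontinuity only gives $\dim F_{b_i}\geq k$, and a union of subvarieties of dimension $<k$, one in each fibre over a Zariski-dense set of $b_i$, can perfectly well be Zariski dense in $C$ (already for points in the fibres of $\C^2\to\C$). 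So ``$\pi(F_{b_i})=\mathcal N_i$'' is not established, and the whole construction of the generically finite map $\Psi$ to the Hilbert scheme rests on it. Second, and more fundamentally, the conditions in \Cref{mainthmalgfilip} are not Zariski-closed conditions on the base of a family of subvarieties: ``the relative Jacobian over $V_y$ has real multiplication by $K$'' means that $V_y$ maps into one of \emph{countably many} Hecke translates of a fixed special subvariety of $\Ag$, so the locus of such $y$ is a countable union of closed subsets, which can contain a Zariski-dense set without being everything. (This is precisely the phenomenon that forces the use of Ax--Schanuel rather than Zariski closure throughout this subject.) The same applies to the torsion and eigenform conditions, and you would in any case also need the dimension bound $\dim=2r+t$ to invoke Filip's characterization. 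The final ``uncountably many orbit closures'' contradiction is therefore not reached.

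The missing idea is that you use only the Zariski density of $\bigcup_i\mathcal N_i$, whereas \Cref{isolation} gives the much stronger statement $\mu_{\mathcal N_i}\to\mu_{\mathcal M}$ with $\mu_{\mathcal M}$ of full support: eventually $\mathcal N_i$, and hence $\pi(F_{b_i})$, meets every fixed open subset of $\mathcal M$. Compactify $B$ and pass to a subsequence with $b_i\to b_\infty$. The closure of the incidence variety in $\overline{\mathcal M}\times\overline{B}$ is irreducible of dimension $\dim\mathcal M$ and dominates a base of dimension $\geq 1$, so its fibre over $b_\infty$ has dimension $<\dim\mathcal M$; pick a compact set $K\subset\mathcal M$ with nonempty interior disjoint from that fibre. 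A compactness argument shows $K\cap\pi(F_{b_i})=\emptyset$ for all large $i$, whence $\mu_{\mathcal N_i}(K^{\circ})=0$, contradicting $\liminf_i\mu_{\mathcal N_i}(K^{\circ})\geq\mu_{\mathcal M}(K^{\circ})>0$. This uses no Hilbert scheme and no propagation of the conditions of \Cref{mainthmalgfilip}.
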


\end{proof}

\section{Functional transcendence for strata of meromorphic differentials}\label{ftmerdiff}

In this section we discuss the Ax-Lindemann and Ax-Schanuel for \textbf{meromorphic} differentials and some applications. 

Our starting point is the Ax-Lindemann for abelian differentials, conjectured by Klingler and Lerer \cite[Conj. 2.5]{2022arXiv220206031K}. Shortly after it was proved by Bakker-Tsimerman \cite[Thm. 5.4]{2022arXiv220805182B} as a consequence of new Ax-Schanuel theorems in the period torsor, as we recalled above as \Cref{thm:newAS}. We will find their theorem at the end of the section, see \Cref{albt}, but we take this occasion to generalize the picture, utilizing essentially the same circle of ideas. In a related direction, Pila and Tsimerman studied an Ax-Schanuel theorem for complex curves and differentials \cite[Thm. 3.2]{2022arXiv220204023P}.
\begin{rmk}
At the moment there aren't similar functional theoretic statements whose conclusion is a linear subvariety/an orbit closure. A first attempt can be found in \cite{2023arXiv231007523B}. 
\end{rmk}

Let's start by recalling the more general meromorphic setting. Let $\mu = (\mu_1,\dots, \mu_n)$ be an \textbf{integer} partition of $2g-2 $, and denote by $\Og(\mu)$ the stratum of (meromorphic) differentials of type $\mu$. I.e. the parameter space of pairs $(X,\omega)$ for $[X]\in \Mg$ and $\omega$  a meromorphic differential with zeros of order $\mu_i>0$, poles of order $\mu_i>0$, and marked points corresponding to $\mu_i=0$. Each stratum inherits an algebraic structure as a stratification of the Hodge bundle. We denote by $Z(\omega)$ and $P(\omega)$ the zeros and poles of $\omega$ respectively. Locally at a point $(X_0, \omega_0)$, we pick a choice of basis for the relative homology $H_1(X_0\setminus P(\omega_0),Z(\omega_0);\Z)$ (whose dual naturally comes equipped with a mixed Hodge structure). This basis extends locally via the flat connection and local orbifold coordinates known as \emph{period coordinates} are obtained by integrating the differential by this basis. Let $S$ be a connected component of $\Og(\mu)$ (cf. the work of Boissy and \cite[3.3.2]{filipnotes}). As in the usual case of abelian differentials, we have a holomorphic developing map
\begin{equation}\label{eqdev}
\operatorname{Dev}: \widetilde{S}\to H^1(X_0\setminus P(\omega_0),Z(\omega_0);\C),
\end{equation}
and set
\begin{equation}
\Pi := \{(v, \pi \operatorname{Dev}^{-1}(v) ), \forall v\in  H^1(X_0\setminus P(\omega_0),Z(\omega_0);\C)  \}\subset H^1(X_0\setminus P(\omega_0),Z(\omega_0);\C)\times S
\end{equation}
where $\pi : \widetilde{S}\to S$ is the uniformization map.
 See for example \cite[Sec. 3.3 and 3.4]{filipnotes},  \cite{2023arXiv230503309M}, and we refer also to \cite{zbMATH07141512} (and related works of the authors) for more general settings. What we need here is simply a variety/orbifold whose tangent space underlines a $\Z$VMHS.

\begin{rmk}
Bakker and Mullane \cite[App.]{2023arXiv230503309M} noticed that strata of meromorphic differentials can contain $\R$-linear, but non-algebraic manifolds, in contrast to the setting of holomorphic differentials (and Filip's theorem).
\end{rmk}

\subsection{Some results and applications}
We start with a simple observation regarding the monodromy of bi-algebraic subvarieties (the definition is the same as the one appearing in \Cref{bialgdef}).

\begin{prop}
Let $S\subset \Og (\mu)$ be a bi-algebraic subvariety. If $S $ is strictly contained in $ \Og (\mu)$ then the monodromy group of $\V_{| S} $ is strictly contained in the monodromy of $\V_{| \Og (\mu)}$. Furthermore bi-algebraic curves are maximal with this property, i.e. they are weakly special for the pair $(\Og (\mu), \V)$, in the sense of \Cref{thmweaksp}.
\end{prop}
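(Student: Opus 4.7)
The plan is to deduce both statements from the Ax--Schanuel theorem in its principal bundle form (\Cref{thm:newAS}). Let $(P, \nabla) \to \Og(\mu)$ denote the algebraic $H$-principal bundle with flat connection attached to the $\Z$VMHS $\V$, where $H$ is the complex algebraic group whose generic integral points recover the algebraic monodromy $\mathbf{H}$ of $\V$. Flat leaves $\mathcal{L} \subset P$ are lifts of the developing map $\operatorname{Dev}\colon \widetilde{\Og(\mu)} \to V$ with $V = H^{1}(X_{0}\setminus P(\omega_{0}), Z(\omega_{0}); \C)$, and, as in the Shimura setting of \Cref{standardbundle}, the $\nabla$-special subvarieties of \Cref{def:nablasp} coincide with the weakly special subvarieties of $(\Og(\mu), \V)$ in the sense of \Cref{weakspe}.

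For the first assertion I would pick a connected component $\widetilde{S}_{0}$ of the preimage of $S$ in $\widetilde{\Og(\mu)}$. Bi-algebraicity of $S$ supplies an algebraic subvariety $W \subset V$ of dimension $\dim S$ containing $\operatorname{Dev}(\widetilde{S}_{0})$ (recall that $\operatorname{Dev}$ is a local biholomorphism). I would then build an algebraic subvariety $\mathcal{Z} \subset P$ by imposing fibrewise that the value of $\omega$ expressed in the frame $p \in P$ lies in $W$. By construction $\mathcal{Z}$ contains the lift of $\widetilde{S}_{0}$ inside a flat leaf $\mathcal{L}$ adapted to the chosen component. A standard fibrewise count, resting on the generic surjectivity of the orbit map $H \to V$ determined by $\omega$ (a consequence of the large monodromy theorems for strata), gives
\[
\dim \mathcal{Z} \;=\; \dim S + \dim H - (\dim V - \dim W) \;=\; 2\dim S + \dim H - \dim \Og(\mu),
\]
whereas the analytic component $U \subseteq \mathcal{Z} \cap \mathcal{L}$ through the lift of $\widetilde{S}_{0}$ has $\dim U = \dim S$. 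The Ax--Schanuel atypicality inequality $\dim \mathcal{Z} < \dim U + \dim H$ of \Cref{thm:newAS} then reduces to $\dim S < \dim \Og(\mu)$, which holds by assumption. Hence $S$ is contained in a proper $\nabla$-special subvariety $T \subsetneq \Og(\mu)$, whose Galois group is by definition strictly contained in $H$. Translating back to algebraic monodromy yields $\mathbf{H}_{S} \subseteq \mathbf{H}_{T} \subsetneq \mathbf{H}$, as desired.

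For the second assertion let $S$ be a bi-algebraic curve and let $T$ be the smallest weakly special subvariety of $(\Og(\mu), \V)$ containing $S$. By the Deligne--Andr\'e monodromy theorem (\Cref{fixedpart}), one has $\mathbf{H}_{T} = \mathbf{H}_{S}$. Arguing by contradiction, suppose $T \supsetneq S$, so that $\dim T \geq 2$. Being weakly special, $T$ is itself bi-algebraic, the restricted VMHS $\V|_{T}$ has generic algebraic monodromy $\mathbf{H}_{T}$, and $S \subsetneq T$ remains bi-algebraic in the period coordinates of $T$, which are obtained by restricting the developing map of $\Og(\mu)$ to a bi-algebraic lift of $T$. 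Applying the first assertion to the pair $(T, \V|_{T})$ in place of $(\Og(\mu), \V)$ then forces $\mathbf{H}_{S} \subsetneq \mathbf{H}_{T}$, contradicting $\mathbf{H}_{S} = \mathbf{H}_{T}$. Hence $T = S$, and $S$ is weakly special.

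The main obstacle is the dimension computation for $\mathcal{Z}$: it rests on the orbit map $H \to V$ being generically surjective, which in the VMHS setting of meromorphic differentials relies on large-monodromy inputs extending those of Filip--Wright for holomorphic strata. A secondary technical point is the persistence of bi-algebraicity under passing to a weakly special subvariety $T$ used in the iteration step; this follows from the fact that $T$ itself is bi-algebraic, so that its developing map is the restriction of the one on the ambient stratum to a suitable component of the preimage of $T$.
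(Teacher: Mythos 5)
Your Ax--Schanuel route is far heavier than what is needed here, and it contains two genuine gaps. First, the dimension count for $\mathcal{Z}$ presupposes that the fibres of the algebraic map $r\colon P|_{S}\to V=H^1(X_0\setminus P(\omega_0),Z(\omega_0);\C)$ over points of $W$ have dimension $\dim H-\dim V$, i.e.\ that the orbit map $H\to V$ determined by $\omega$ is dominant. You flag this as a ``large monodromy'' input, but for genuinely meromorphic $\mu$ it is false rather than merely unestablished: the top weight-graded piece of the relative cohomology records the residues of $\omega$ at its poles and is a constant local system, so the connected algebraic monodromy acts trivially on it and every $H(\C)$-orbit is confined to a proper affine subspace $v+W_1\subsetneq V$. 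The fibre dimension of $r$ is therefore $\dim H-\dim(H(\C)\cdot\tilde\omega_s)>\dim H-\dim V$, and whenever $W$ meets the orbit closures excessively (for instance $S$ inside an isoresidual leaf, which is bi-algebraic and even linear), $\dim\mathcal{Z}$ exceeds your formula and the atypicality hypothesis of \Cref{thm:newAS} can fail. Second, in the maximality step you invoke ``weakly special $\Rightarrow$ bi-algebraic'' for the weakly special closure $T$ of the curve, citing \Cref{thmweaksp}; that equivalence is a theorem about Shimura varieties, and for strata with the period coordinates of \Cref{bialgdef} the two notions are not known to coincide (this discrepancy is precisely what \Cref{thm0000} and the non-linear examples of \Cref{nonline} are about). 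Without it you cannot relativize the first assertion to $(T,\V|_{T})$, so the contradiction in your induction step is not reached. (A minor point: the equality $\mathbf{H}_T=\mathbf{H}_S$ for the weakly special closure is the definition in \Cref{weakspe}, not an application of Deligne--Andr\'e.)

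The paper's argument is entirely elementary and bypasses both issues. One lifts a smooth point $s\in S$ to $\tilde s\in H^1$ and observes that the $\pi_1(S,s)$-orbit $O_s$ of $\tilde s$ lies in the algebraic variety $Y_S$ witnessing bi-algebraicity; hence $\overline{O_s}^{\mathrm{Zar}}=\mathbf{H}_S(\C)\cdot\tilde s\subseteq Y_S$ has dimension at most $\dim S<\dim\Og(\mu)$, which forces $\mathbf{H}_S\subsetneq\mathbf{H}_{\Og(\mu)}$. For a curve, $Y_S$ is exhausted by a single orbit closure for dimension reasons, and this gives maximality directly, with no appeal to Ax--Schanuel, to weakly special closures, or to surjectivity of orbit maps.
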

\begin{proof}
Let $s=(X_0,\omega_0)\in S(\C)$ be a smooth point, and $\tilde{s}\in H^1:= H^1(X_0\setminus P(\omega_0),Z(\omega_0);\C)$ be a point above $s$. Since $S$ is bi-algebraic, there is an algebraic variety $Y_S\subset H^1$ above $S$, passing through $\tilde{s}$. We can consider the orbit $O_s:=\pi_1(S(\C),s) \cdot \tilde{s} \subset H^1$. It lies in $Y_S$. Since $Y$ is algebraic, the Zariski closure of $O_s$ is contained in $Y_S$. It is easy to observe that $\overline{O}_s^{Zar}$ is equal to $\mathbf{H}_S(\C)\cdot \tilde{s}$. If $S$ is strictly contained in the stratum, then $\mathbf{H}_S(\C)\cdot \tilde{s}$ has to be strictly contained in $H^1$, which therefore implies that $\mathbf{H}_S(\C)$ is not the full group $\mathbf{H}_{\Og (\mu)}$.

In general, the above argument shows that 
\begin{displaymath}
Y_S= \bigcup_{p \in Y_S}\overline{O_p}^{Zar}.
\end{displaymath}
but it is not clear whether one orbit suffices. If $S$ has dimension one, this is clearly the case, for dimension reasons. Therefore curves are weakly special, i.e. every bigger subvariety has bigger monodromy.
\end{proof}

We take this occasion to offer a statement and a proof of a stronger result, namely an Ax-Schanuel for meromorphic differential (along the lines of thought of \cite{2024arXiv240616628B}). Set $H^1:= H^1(X_0\setminus P(\omega_0),Z(\omega_0);\C)$.
\begin{thm}\label{asforabeliandiff}
Let $W\subset  H^1\times \Og (\mu)$ be an algebraic subvariety. Let $U$ be an analytic component of $W \cap \Pi$. If $U$ is atypical, i.e. 
\begin{displaymath}
\dim W - \dim U < \dim H^1,
\end{displaymath}
then the projection of $W$ to $\Og (\mu)$ lies in a strict bi-algebraic subvariety.
\end{thm}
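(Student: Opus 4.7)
The plan is to deduce \Cref{asforabeliandiff} from the general Ax-Schanuel theorem for flat principal bundles (\Cref{thm:newAS}), applied to the natural admissible graded-polarizable $\Z$VMHS on $\Og(\mu)$ whose fibre over $(X,\omega)$ is $H^{1}(X\setminus P(\omega), Z(\omega); \Z)$. This is the meromorphic analogue of \cite[Thm. 5.4]{2022arXiv220805182B}, and follows the template of the proof of \Cref{astheorem} and of the discussion in \Cref{axschanfamsec}.

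First I would construct the associated algebraic $\mathbf{H}$-principal bundle $\pi : P \to \Og(\mu)$, where $\mathbf{H}$ is the generic algebraic monodromy group of this VMHS, together with its canonical flat $\mathbf{H}$-equivariant connection $\nabla$ (cf.\ \Cref{standardbundle}). A horizontal leaf $\mathcal{L} \subset P$ through a chosen base point encodes the developing map \eqref{eqdev}, so that the graph $\Pi \subset H^1 \times \Og(\mu)$ is identified, up to an $\mathbf{H}$-equivariant projection of constant fibre dimension $\dim \mathbf{H} - \dim H^1$, with the image of $\mathcal{L}$ under the natural algebraic evaluation map $r \times \pi : P \to H^1 \times \Og(\mu)$ obtained by evaluating a frame on the tautological relative cohomology class.

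Next, given $W \subset H^1 \times \Og(\mu)$ and its atypical analytic component $U \subset W \cap \Pi$, I would set $\widetilde W := (r \times \pi)^{-1}(W) \subset P$ and consider its intersection with $\mathcal{L}$. Since $r \times \pi$ has constant fibre dimension, the assumption $\dim W - \dim U < \dim H^1$ transports verbatim into the atypicality inequality
\[
\dim \widetilde W - \dim \widetilde U < \dim \mathbf{H}
\]
required by \Cref{thm:newAS}, where $\widetilde U$ is the analytic component of $\widetilde W \cap \mathcal{L}$ lying above $U$. The theorem then implies that $\pi(\widetilde U)$, and hence the projection of $U$ to $\Og(\mu)$, is contained in a strict $\nabla$-special subvariety $Y \subsetneq \Og(\mu)$ in the sense of \Cref{def:nablasp}.

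The main obstacle, and the last step, is to upgrade ``$\nabla$-special'' to ``strict bi-algebraic'' in the sense of \Cref{bialgdef}, i.e.\ to prove the analogue for strata of the Moonen--Ullmo--Yafaev equivalence of \Cref{thmweaksp}. The key input here is Veech's theorem, which ensures that the developing map $\operatorname{Dev}$ is a local biholomorphism; consequently, above a $\nabla$-special $Y$, a horizontal leaf is contained in a single $\Gal(\nabla|_Y)(\C)$-orbit inside the fibre of $P$, whose image under $\operatorname{Dev}$ is a proper algebraic subvariety of $H^1$. Pulling back this algebraic condition through $\operatorname{Dev}$ realises $Y$ as the zero locus of algebraic equations in period coordinates, so $Y$ is bi-algebraic; strictness is automatic since $\Gal(\nabla|_Y) \subsetneq \mathbf{H}$ forces the corresponding orbit to be properly contained in $H^1$. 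Conjugating with the mapping class group monodromy action one deduces the statement for an arbitrary irreducible component of $\Og(\mu)$.
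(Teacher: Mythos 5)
Your proposal is correct and follows essentially the same route as the paper, whose entire proof is to factor the developing map through the standard automorphic/principal bundle $P$ with its flat connection, pull $W$ back to $P$, and apply \Cref{thm:newAS} following the template of Bakker--Tsimerman's proof of the Ax--Lindemann statement. Your final paragraph bridging ``$\nabla$-special'' to ``strict bi-algebraic'' via Veech's local biholomorphism and the Zariski closure of the monodromy orbit is a step the paper leaves implicit, and spelling it out is a welcome addition rather than a deviation.
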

Let $ \Og (\mu)$ be a stratum as above, and $\V$ its natural VHS, with target in a mixed Shimura variety $\mathcal{S}$, generalizing the setting described in \Cref{diagramatyp}. As observed in \Cref{standardbundle}, we have the so called \emph{standard automorphic} bundle $P\to \mathcal{S}$, which is a $G$-torsor for $G$ the monodromy of $\V$. Given $S\subset \Og (\mu)$ an irreducible subvariety, we have (cf. \cite{2022arXiv220805182B}) a factorisation of \eqref{eqdev}:
\begin{displaymath}
\widetilde{S}\xrightarrow{\sigma} P=P(\V_{| S}) \xrightarrow{r} H^1.
\end{displaymath}
The latter arrow $r$ is algebraic and $\mathbf{H}(\C)$-equivariant, where $\mathbf{H}_S$ it the monodromy group of $\V_{| S}$ (or the smooth locus of $S$).

\begin{proof}
By pulling back to $P$, we can apply \Cref{thm:newAS}, cf. proof of \cite[Thm. 5.4]{2022arXiv220805182B}.
\end{proof}

\begin{cor}[Bakker-Tsimerman's Ax-Lindemann for abelian differentials, {\cite[Thm. 5.4]{2022arXiv220805182B}}]\label{albt}
For any algebraic subvariety $Y$ of $ H^1(X_0,Z(\omega_0);\C)$, the Zariski closure of $\pi(Y_0)$ is bi-algebraic for any component $Y_0$ of $\operatorname{dev}^{-1}(Y)$.
\end{cor}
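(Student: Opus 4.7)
The plan is to deduce this Ax--Lindemann statement directly from the Ax--Schanuel theorem \Cref{asforabeliandiff} via a standard product-and-intersect construction, together with a Noetherian induction on the minimal ambient bi-algebraic subvariety. Without loss of generality $Y$, and hence the component $Y_{0}$, is irreducible; set $Z := \overline{\pi(Y_{0})}^{\operatorname{Zar}} \subseteq \Og(\mu)$, which is then also irreducible. Since $\operatorname{dev}$ is a local biholomorphism by Veech, one has $\dim Y_{0} = \dim Y$ and $\dim H^{1} = \dim S$, where $S$ denotes the connected component of the stratum containing $\pi(Y_{0})$.

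Consider the algebraic subvariety $W := Y \times Z \subseteq H^{1} \times S$ and the analytic set
\[
\Gamma := \{(\operatorname{dev}(\tilde{s}),\pi(\tilde{s})) : \tilde{s} \in Y_{0}\} \subseteq W \cap \Pi,
\]
which satisfies $\dim \Gamma = \dim Y$. Let $U$ be the analytic component of $W \cap \Pi$ containing $\Gamma$, so that $\dim U \geq \dim Y$. If $Z = S$, then $S$ itself is trivially bi-algebraic and we are done. Otherwise $\dim Z < \dim S = \dim H^{1}$, and
\[
\dim W - \dim U \leq (\dim Y + \dim Z) - \dim Y = \dim Z < \dim H^{1},
\]
so $U$ is atypical. \Cref{asforabeliandiff} then produces a strict bi-algebraic subvariety $Z^{(1)} \subsetneq S$ containing the projection of $U$ to $S$, and in particular $Z \subseteq Z^{(1)}$.

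To upgrade this inclusion to the statement that $Z$ itself is bi-algebraic, I iterate inside $Z^{(1)}$. Because $Z^{(1)}$ is bi-algebraic, the restriction of $\operatorname{dev}$ to a component $\widetilde{Z^{(1)}}$ of $\pi^{-1}(Z^{(1)})$ containing $Y_{0}$ is a local biholomorphism onto an algebraic subvariety $\widetilde{Y}^{(1)} \subseteq H^{1}$. Running the same argument with $S$ replaced by $Z^{(1)}$ and $Y$ replaced by $Y \cap \widetilde{Y}^{(1)}$ yields either $Z = Z^{(1)}$ or a strictly smaller bi-algebraic $Z^{(2)} \subsetneq Z^{(1)}$ containing $Z$. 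By Noetherian induction, the descending chain $S \supsetneq Z^{(1)} \supsetneq Z^{(2)} \supsetneq \cdots$ must stabilise at some $Z^{(n)}$ with $\dim Z^{(n)} = \dim Z$; since $Z$ and $Z^{(n)}$ are irreducible and $Z \subseteq Z^{(n)}$, this forces $Z = Z^{(n)}$, which is bi-algebraic.

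The main obstacle is to justify the inductive step rigorously: one needs an Ax--Schanuel statement relative to each bi-algebraic $Z^{(k)}$. This should follow from the fact that \Cref{asforabeliandiff} is itself obtained by applying \Cref{thm:newAS} to the standard principal bundle over $S$, and that the restriction of this bundle to a bi-algebraic $Z^{(k)}$ is again an $\mathbf{H}_{Z^{(k)}}$-principal bundle with flat connection whose $\nabla$-special subvarieties are precisely the bi-algebraic subvarieties of $Z^{(k)}$. Thus \Cref{thm:newAS} can be reapplied at each stage, closing the induction.
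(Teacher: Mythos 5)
Your proof is correct and takes essentially the same route as the paper, whose entire proof reads ``just set $W=Y\times \pi(Y_0)^{\operatorname{Zar}}$'' --- exactly your opening product-and-intersect step, with the same atypicality count. The Noetherian induction you add (passing from ``$Z$ is \emph{contained in} a strict bi-algebraic subvariety'', which is all one application of \Cref{asforabeliandiff} yields, to ``$Z$ \emph{is} bi-algebraic'', via relative Ax--Schanuel on the restricted principal bundle) addresses a real gap that the paper's one-line proof glosses over, and you close it in the standard way.
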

\begin{proof}
We simply explain how it formally follows from \Cref{asforabeliandiff}: just set $W=Y\times \pi (Y_0)^{\operatorname{Zar}}$.
\end{proof}

\section{Related Results and Conjectures}
\subsection{Sketch of the proof of André-Oort} \label{sectaoproof}

We thought it is worth sketching, in a very crude form, the proof of AO since it is a result that was used many times in the text and whose mere existence highly influenced the angle of these notes. A further reference for an introduction to the bi-algebraic point of view is \cite{MR3821177}. The proof consists of three main steps that form the so called \emph{Pila-Zannier strategy} (originated in \cite{zbMATH05292756}):
\begin{itemize}
\item definability in some o-minimal structure of the restriction of the uniformization map to a semi-algebraic fundamental set $\mathcal{F}$ for the action of $\Gamma$ on $X$
\item functional transcendence (in the easier case called \emph{Ax-Lindemann})
\item Pila-Wilkie’s counting theorem and a good lower bound for the size of Galois orbits of special points (which we detail below).
\end{itemize}

Thanks to the averaged Colmez conjecture, Tsimerman proved \cite{MR3744855} (see also \cite[Prob. 14]{zbMATH01587304}):
\begin{thm}[Large Galois Orbits]
Let $g \geq 1$. There exists $\delta_g >0$ such that if $E$ is a CM field of degree $2g$, $\Psi$ a primitive CM type for $E$, $A$ an abelian variety with CM by $(\Oo_E, \Psi)$, then the field of moduli of $A$ satisfies
\begin{displaymath}
[\Q (A):\Q] \gg_g |\operatorname{disc} (E)|^{\delta_g}.
\end{displaymath}
\end{thm}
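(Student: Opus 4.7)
The strategy, due to Tsimerman, is to use the main theorem of complex multiplication to reduce to a lower bound on the class number of the reflex CM field, and then to establish that bound via the averaged Colmez formula. I would proceed in three steps.

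First, I would reduce to a class number bound. Let $E^*$ denote the reflex CM field attached to $(E,\Psi)$. By the main theorem of complex multiplication (Shimura--Taniyama), the action of $\operatorname{Gal}(\Qbar/E^*)$ on the isomorphism class $[A]$ factors, via the reflex type norm reciprocity map, through a quotient of $\operatorname{Cl}(\mathcal{O}_{E^*})$ whose kernel has size bounded only in terms of $g$ (it accounts for the roots of unity in $E^*$ and the units acting trivially on the polarization). Since $[E^*:\Q] \leq 2^g$, one obtains $[\Q(A):\Q] \gg_g h(E^*)$, and since $|\operatorname{disc}(E^*)| \gg_g |\operatorname{disc}(E)|^{c_g}$ for some $c_g > 0$, it suffices to prove the unconditional lower bound $h(E^*) \gg_g |\operatorname{disc}(E^*)|^{\delta_g}$.

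Next, I would convert this into an upper bound on the regulator. The analytic class number formula gives
\[
h(E^*) R(E^*) \;\asymp_g\; \sqrt{|\operatorname{disc}(E^*)|} \cdot \operatorname{Res}_{s=1}\zeta_{E^*}(s),
\]
where the residue is polynomially controlled from above and below by standard bounds (using the factorization of $\zeta_{E^*}$ through the totally real subfield $\zeta_{E_0^*}$ and a quadratic twist $L(s,\chi_{E^*/E_0^*})$). The problem thus reduces to the upper bound $R(E^*) \ll_g |\operatorname{disc}(E^*)|^{1/2-\delta_g}$. The Chowla--Selberg formula, generalised by Colmez, expresses the Faltings height of any CM abelian variety with CM by $E^*$ as a linear combination of logarithmic derivatives at $s=0$ of Artin $L$-functions over $E_0^*$; under this identity an upper bound on $h_F(A)$ translates into one on $R(E^*)$ of the same shape, up to a power of the discriminant arbitrarily close to~$1/2$.

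Finally, I would invoke the averaged Colmez formula, proved unconditionally by Andreatta--Goren--Howard--Madapusi~Pera and independently by Yuan--Zhang:
\[
\frac{1}{2^g}\sum_{\Psi\text{ primitive}} h_F(A_{(E,\Psi)}) \;=\; -\frac{1}{2}\frac{L'(0,\chi_{E/E_0})}{L(0,\chi_{E/E_0})} \;-\; \frac{1}{4}\log\Bigl|\frac{\operatorname{disc}(E)}{\operatorname{disc}(E_0)}\Bigr| \;+\; O_g(1).
\]
Combined with the convexity bound for logarithmic derivatives of Artin $L$-functions at $s=0$, this gives $h_F(A_{(E,\Psi)}) \ll_g \log|\operatorname{disc}(E)|$ for some primitive CM type, and since the Faltings height is constant on isogeny classes and hence on Galois orbits, the bound transfers to the given $A$. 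Combining with Step~2 yields the required regulator bound and hence $h(E^*) \gg_g |\operatorname{disc}(E^*)|^{\delta_g}$, completing the proof.

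The main obstacle is unquestionably Step~3: the averaged Colmez identity is the only known \emph{unconditional} substitute for a Brauer--Siegel lower bound on $h(E^*)/R(E^*)$, since the classical Siegel bound is ineffective. The deep arithmetic-geometric content lies in its proof, which proceeds via a comparison of Faltings heights of CM abelian varieties with arithmetic intersection numbers on integral models of unitary (or, in Yuan--Zhang, quaternionic) Shimura varieties — and this is precisely the input one must blackbox in order to make the Pila--Zannier strategy for André--Oort go through unconditionally.
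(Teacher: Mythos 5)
The survey does not actually prove this statement: it is Tsimerman's theorem, quoted with only the remark that the averaged Colmez conjecture (Andreatta--Goren--Howard--Madapusi Pera, Yuan--Zhang) is the key unconditional input. Measured against Tsimerman's argument, you have correctly named the headline ingredients --- the main theorem of complex multiplication, the averaged Colmez identity, and its role as the substitute for an unavailable Brauer--Siegel bound --- but your chain of reductions is not the one that works, and its first link reduces the theorem to a statement that is essentially false. The action of $\operatorname{Gal}(\Qbar/E^*)$ on $[A]$ factors through $\operatorname{Cl}(\mathcal{O}_{E^*})$ via the reflex type norm $N_{\Psi^*}\colon \operatorname{Cl}(E^*)\to\operatorname{Cl}(E)$, and the stabilizer of $[A]$ is essentially $\ker N_{\Psi^*}$, which is \emph{not} bounded in terms of $g$; what the orbit size is comparable to is $|\operatorname{im} N_{\Psi^*}|$, not $h(E^*)$, and lower-bounding that image is the crux of the whole proof. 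Worse, the target $h(E^*)\gg_g d_{E^*}^{\delta_g}$ is not a true statement for general CM fields: $h(E^*)=h(E^*_0)\,h^-(E^*)$ with $h^-(E^*)\asymp_g \sqrt{d_{E^*}/d_{E^*_0}^2}\cdot L(1,\chi)$, so when $E^*/E^*_0$ is unramified at the finite places the square-root factor disappears and $h(E^*)$ can be subpolynomial in $d_{E^*}$. Relatedly, the ``standard bounds'' you invoke for $\operatorname{Res}_{s=1}\zeta_{E^*}$ from below are exactly the unconditional Brauer--Siegel input that does not exist for these non-normal, varying fields; and the regulator $R(E^*)$, which for a CM field equals $R(E^*_0)$ up to a factor $2^{O(g)}$, does not appear in and is not controlled by Colmez's formula, so there is no route ``height bound $\Rightarrow$ regulator bound''.

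Two further concrete gaps. First, to pass from the averaged identity to the single given $(E,\Psi)$ you cannot use ``constancy of the Faltings height on isogeny classes'': distinct CM types give non-isogenous varieties, and $h_F$ is not an isogeny invariant anyway. What Tsimerman uses is Bost's absolute lower bound $h_F\ge -c(g)$, which converts a bound on the average of the $2^{g-1}$ terms into a bound on each term; and your ``convexity bound'' hides the genuinely delicate point, the possible Siegel zero of $L(s,\chi_{E/E_0})$, whose treatment (Stark's theorem plus Siegel's theorem for quadratic Dirichlet characters) is the source of the ineffectivity of $\delta_g$. Second, and most importantly, the mechanism by which the resulting bound $h_F(A)\ll_{g,\varepsilon}|\operatorname{disc}(E)|^{\varepsilon}$ yields the Galois orbit estimate is absent from your sketch: Tsimerman combines it with the Masser--W\"ustholz isogeny theorem and a counting argument in which, roughly, a coincidence $\sigma_{\mathfrak{p}}(A)\cong\sigma_{\mathfrak{q}}(A)$ among conjugates indexed by degree-one primes of $E^*$ produces, via a small-degree isogeny, an element $\alpha\in\mathcal{O}_E$ of small height generating $E$, contradicting $d_E\le|\operatorname{disc}(\Z[\alpha])|$. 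This exploits the full lattice $\mathcal{O}_E$ rather than class-group data alone, which is precisely why it succeeds where a reduction to class numbers cannot.
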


As an application of such result, Orr and Skorobogatov proved:

\begin{thm}[{\cite[Thm. A]{zbMATH06944055}}]
There are only finitely many $\C$-isomorphism classes of abelian varieties of CM type of given dimension which can be defined over number fields of given degree.
\end{thm}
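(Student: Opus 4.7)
The strategy is to reduce the theorem to Tsimerman's large Galois orbits bound via an elementary descent on invariants of CM data. Fix $g \geq 1$ and $d \geq 1$, and let $A/K$ be a CM abelian variety of dimension $g$ with $[K:\Q] \leq d$. Write $E = \End^{0}(A_{\Cbar})$, a CM field of degree $2g$, and let $\Psi$ be the associated CM type. Since $A$ is defined over $K$, the moduli point $[A] \in \mathcal{A}_g(\C)$ is rational over $K$, and a fortiori over the field of moduli $\Q(A) \subset K$. In particular $[\Q(A):\Q] \leq d$.

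First I would apply Tsimerman's Large Galois Orbits theorem directly: it yields constants $c_g, \delta_g > 0$ with
\[
d \;\geq\; [\Q(A):\Q] \;\gg_g\; |\operatorname{disc}(E)|^{\delta_g},
\]
hence $|\operatorname{disc}(E)| \ll_g d^{1/\delta_g}$. By Hermite--Minkowski there are only finitely many number fields of degree $2g$ with bounded discriminant, so $E$ ranges over a finite set as $A$ varies. For each such $E$ there are at most $2^{g}$ CM types $\Psi$, so the pair $(E,\Psi)$ takes finitely many values.

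It then remains to bound, for each $(E,\Psi)$, the $\C$-isomorphism classes of abelian varieties of this CM type that arise. After passing to a suitable finite extension of $K$ (of degree bounded only in terms of $g$) where the action of $\End(A)$ becomes defined, one can assume $A$ is isogenous (over a field of degree $\ll_g d$) to an abelian variety $A_0$ with CM by the full ring of integers $\mathcal{O}_E$ and type $\Psi$. The isomorphism classes of such $A_0$ form a single Galois orbit on the appropriate moduli space under the Main Theorem of Complex Multiplication (parametrized by $\operatorname{Pic}(\mathcal{O}_E)$ via Shimura reciprocity), and the orbit has size at most a constant multiple of $d$. This gives only finitely many $A_0$ up to $\C$-isomorphism. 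Isomorphism classes of $A$ within the isogeny class of $A_0$ correspond to lattices stable under some order $\mathcal{O} \subset \mathcal{O}_E$; bounding the conductor of $\mathcal{O}$ (either by a Faltings/Shafarevich-type argument applied over a common field containing all the $A$'s, or by a refinement of Tsimerman's bound incorporating the conductor into the discriminant) then confines $\mathcal{O}$ to a finite list, and for each $\mathcal{O}$ the proper ideal class group $\operatorname{Pic}(\mathcal{O})$ parametrizes finitely many further isomorphism classes.

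The main obstacle is the last step: cleanly controlling the non-maximal orders $\mathcal{O} \subset \mathcal{O}_E$ that can occur as endomorphism rings. The Galois orbit bound from Tsimerman is formulated for the full CM type $(E,\Psi)$ and does not by itself preclude arbitrarily deep suborders. I expect that the cleanest route is to replace $\mathcal{A}_g$ by a Shimura variety with enough level structure so that $\End(A)$ becomes a locally constant invariant of the moduli point, and to verify that Tsimerman's lower bound upgrades to include a factor measuring the conductor of $\mathcal{O}$ (this is the content of the extensions needed to obtain the corollary from the raw theorem statement quoted in the paper). Once orders are tamed, the remaining finiteness statements about CM types, class groups, and ideal classes are elementary counts bounded by powers of $|\operatorname{disc}(E)|$ and hence by the already-bounded quantities.
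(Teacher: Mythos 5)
The paper itself gives no proof of this statement: it merely records it as ``an application'' of Tsimerman's Large Galois Orbits theorem, citing Orr--Skorobogatov for the actual argument. Your overall strategy --- bound $|\operatorname{disc}(E)|$ polynomially in $d$ via the Galois-orbit lower bound, invoke Hermite--Minkowski to confine $E$ to a finite list, then count CM types and ideal classes --- is indeed the skeleton of the Orr--Skorobogatov proof, so you have identified the right route.

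However, there is a genuine gap, and you have located it yourself without closing it: the passage from the maximal order to arbitrary orders. Tsimerman's theorem, as quoted, gives a lower bound on $[\Q(A):\Q]$ only for $A$ with CM by the full ring of integers $\Oo_E$ and a \emph{primitive} CM type $\Psi$. For a general CM abelian variety defined over a degree-$d$ field, the endomorphism ring is some order $\mathcal{O} \subset \Oo_E$ of a priori unbounded conductor, and nothing in the quoted theorem prevents infinitely many such $A$, all with the same $E$, from being defined over fields of degree $d$. Your proposed fixes (a Faltings/Shafarevich argument ``over a common field containing all the $A$'s'' --- but there is no such common field of bounded degree --- or ``a refinement of Tsimerman's bound incorporating the conductor'') are precisely the missing content: Orr and Skorobogatov prove a Galois-orbit lower bound in terms of $|\operatorname{disc}(\mathcal{O})|$ rather than $|\operatorname{disc}(E)|$, essentially by comparing the CM points with endomorphism ring $\mathcal{O}$ to those with ring $\Oo_E$ and controlling the fibers of that comparison; this is a real argument, not a formality. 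Two smaller issues: you should also reduce to the case of a primitive CM type (if $\Psi$ is imprimitive, $A$ is isogenous to a power of a lower-dimensional CM variety and Tsimerman's bound does not apply directly), and your claim that the $\Oo_E$-CM varieties of type $\Psi$ ``form a single Galois orbit of size at most a constant multiple of $d$'' is not needed and not quite right --- what you actually use is that they form a torsor under a class group, which is finite for each of the finitely many admissible $E$.
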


\subsection{Isogenies over the algebraic numbers}
 Another notable example is the work of Tsimerman \cite{zbMATH06074023}, who proved the existence of a $\overline{\Q}$ abelian variety isogenous to no Jacobian (using also the main result of \cite{zbMATH07794951} that we discussed before).

We recall the following folklore question.
\begin{question}
Let $g\geq 4$, and let $ A$ be a generic abelian variety over an algebraically closed field $k$. What is the lowest integer $g'$ such that there exists a Jacobian $J$ of dimension $g'$ and a surjection $J\to A$?
\end{question}

In characteristic zero, the first results were obtained by Chai-Oort \cite{zbMATH06074022}, Tsimerman \cite{zbMATH06074023}, and after in a more general setting by Masser-Zannier \cite{zbMATH07168647}. For simplicity we just cite the most recent and general one \cite{2023arXiv230205860T}:
\begin{thm}[Tsimerman]
For any two integers $g\geq 4$ and $g' \leq 2g -1$, there exist $g$-dimensional abelian varieties over $\overline{\Q}$ which are not quotients of a Jacobian of dimension $g'$.
\end{thm}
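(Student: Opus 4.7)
By Poincar\'e complete reducibility, a $g$-dimensional abelian variety $A$ is a quotient of a Jacobian $J$ of dimension $g'$ if and only if $J$ is isogenous to $A \times B$ for some abelian variety $B$ of dimension $d := g' - g \in \{0, 1, \ldots, g-1\}$. Write $\Phi : \mathcal{A}_g \times \mathcal{A}_d \to \mathcal{A}_{g'}$ for the product polarized abelian variety morphism and $\mathcal{J}_{g'} \subset \mathcal{A}_{g'}$ for the (countable) isogeny class of Jacobians of dimension $g'$, i.e.\ the union of Hecke translates of $\mathcal{T}_{g'}$. The theorem amounts to exhibiting some $[A] \in \mathcal{A}_g(\overline{\Q})$ with $\Phi(\{[A]\} \times \mathcal{A}_d) \cap \mathcal{J}_{g'} = \emptyset$. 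Following the Chai--Oort--Tsimerman template for the case $g' = g$, I would seek such $[A]$ among the Weyl CM abelian varieties of dimension $g$; an infinite supply of these over $\overline{\Q}$ is produced by Tsimerman's large Galois orbits theorem.

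\textbf{Andr\'e--Oort and extraction of a Hilbert modular piece.} Arguing by contradiction, assume every Weyl CM $[A_i] \in \mathcal{A}_g$ admits a companion $[B_i] \in \mathcal{A}_d$ with $A_i \times B_i \in \mathcal{J}_{g'}$. Since $A_i$ is CM and its complementary factor $B_i$ appears (up to isogeny) as a summand of a Jacobian whose other factor is CM, a standard Mumford--Tate computation forces $B_i$ to be CM, so each $[A_i \times B_i]$ is a CM point of $\mathcal{A}_{g'}$. After passing to a sub-sequence landing in a fixed Hecke translate of $\mathcal{T}_{g'}$ and applying \Cref{aothm} in $\mathcal{A}_{g'}$, we obtain a positive-dimensional special subvariety $S \subset \mathcal{A}_{g'}$ contained in that Hecke translate. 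The projection of $\Phi^{-1}(S) \subset \mathcal{A}_g \times \mathcal{A}_d$ to $\mathcal{A}_g$ contains infinitely many Weyl CM points; by the Chai--Oort rigidity of Weyl CM tori (the only proper connected reductive subgroup of $\operatorname{GSp}_{2g}$ of Hermitian type that properly contains a Weyl CM torus is the Hilbert modular group $\operatorname{Res}_{F/\Q} \operatorname{SL}_2$ associated with the maximal totally real subfield $F$), this projection is Zariski dense in a Hilbert modular subvariety $\mathcal{H}_F \subset \mathcal{A}_g$ of dimension $g$.

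\textbf{Dimension estimate.} Translating back, $S$ gives rise to a positive-dimensional special subvariety of $\mathcal{A}_{g'}$ generically contained in a Hecke translate of $\mathcal{T}_{g'}$, whose first projection surjects onto a Hilbert modular subvariety of dimension $g$. This configuration is ruled out by Hain's theorem and its refinements summarised in \Cref{thmmoonenetal}, provided the dimensional hypotheses $g \geq 4$ and $g' \leq 2g - 1$ force the resulting Hilbert-modular-type special subvariety to violate the Arakelov-style inequalities governing sub-Shimura varieties generically contained in the Torelli locus. This yields the desired contradiction and produces an $[A]$ with the required property.

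\textbf{Main obstacle.} The hardest step is the last one. The bounds of \Cref{thmmoonenetal} are stated for special subvarieties \emph{generically contained} in $\mathcal{M}_g$, whereas the special subvariety produced above lives in $\mathcal{A}_{g'}$ with a specific product structure through $\Phi$, and is contained only in a Hecke translate of $\mathcal{T}_{g'}$. One must therefore carefully transcribe those bounds to cover the Hilbert-modular-type special subvarieties of $\mathcal{A}_{g'}$ that can arise here, and verify that the inequality of dimensions $g + \dim S' \leq 3g' - 3$ (coupled with the constraint $d \leq g - 1$) forces a violation of Hain's inequality for $g \geq 4$, $g' \leq 2g - 1$. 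The bound $g' = 2g - 1$ is precisely the threshold at which these inequalities become tight; past it, genuine typical intersections are expected and the strategy would need to be supplemented by additional input.
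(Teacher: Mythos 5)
The survey does not prove this theorem: it is quoted from Tsimerman's paper \cite{2023arXiv230205860T}, so I can only assess your proposal on its own terms. Your Poincar\'e reduction is correct, and choosing Weyl CM abelian varieties as candidates is the right opening move; for $d:=g'-g=0$ your outline essentially reproduces the Chai--Oort--Tsimerman argument sketched earlier in the text. For $d\geq 1$, however, the argument breaks at the second step.

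The claim that ``a standard Mumford--Tate computation forces $B_i$ to be CM'' is false, and it is the load-bearing claim. Being the isogeny complement of a CM factor inside a Jacobian imposes no condition on $B_i$: one has $\MT(A_i\times B_i)\subset \MT(A_i)\times\MT(B_i)$, which is a torus only if $\MT(B_i)$ already is, and by Matsusaka's theorem \emph{every} abelian variety occurs as a factor of a Jacobian. (Already in $\mathcal{A}_2$, the locus of Jacobians isogenous to $E_{\mathrm{CM}}\times E'$ for fixed $E_{\mathrm{CM}}$ is a curve whose generic point has $E'$ without CM.) Consequently the points $[A_i\times B_i]\in\mathcal{A}_{g'}$ are not CM points and \Cref{aothm} cannot be invoked. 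What your hypothesis actually gives is that Hecke translates of the special subvarieties $\{[A_i]\}\times\mathcal{A}_d$ meet $\mathcal{T}_{g'}$; a short computation shows these intersections are \emph{atypical} exactly when $g\geq 4$, since
$\codim_{\mathcal{A}_{g'}}\mathcal{T}_{g'}-\tfrac{d(d+1)}{2}=(g-3)\bigl(\tfrac{g-2}{2}+d\bigr)>0$.
So the finiteness you need is an instance of the Zilber--Pink conjecture, not of Andr\'e--Oort, and it is open. This is precisely what makes the quotient problem for $g'>g$ genuinely harder than the isogeny problem for $g'=g$, where the Jacobian itself is CM; Tsimerman's proof necessarily supplies a substitute for this step rather than applying \Cref{aothm} to the product points. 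Two further issues are secondary but real: the Chai--Oort rigidity statement you quote classifies reductive subgroups of $\operatorname{GSp}_{2g}$ containing a Weyl torus, whereas here the ambient group is $\operatorname{GSp}_{2g'}$ and the Weyl torus acts only on a $2g$-dimensional summand, so a relative version of that lemma is needed; and, as you note yourself, $\dim S\geq g$ does not contradict the bound $\dim S\leq 2g'-1$ of \Cref{thmmoonenetal}, so the concluding exclusion would require the finer results of Hain and de Jong--Zhang transported to $\mathcal{A}_{g'}$. In short, the cast of characters is right, but the central mechanism of the proposal is unavailable as stated.
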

Regarding the existence of abelian varieties not isogenous to Jacobians over characteristics $p$ function fields see \cite{2021arXiv210502998S}.

\subsection{Ceresa cycles}
Another exciting result related to the Zilber–Pink conjecture is the study of the torsion locus of the Ceresa normal function by Gao and Zhang \cite{2024arXiv240701304G}, as well as related work by Kerr-Tayou \cite{2024arXiv240619366K} and Hain \cite{2024arXiv240807809H}. (Hain's approach is not related to ZP, but argues by induction on the genus-- the base case $g=3$ uses a result of Collino and Pirola).

Let $g\geq 2$, and $[X]\in \Mg$. For each $x\in X(\C)$, the Abel--Jacobi mapping $\mu_x : X \to \Jac (X)$ sends $p$ to $[p]-[x]$ and embeds $X$ in its Jacobian. Denote by $X_x$ the 1-cycle in $\Jac X$ obtain by image of $X$ along $\mu_x$, and by $X_x^-$ the image of $X_x$ under the multiplication by $-1$. The Ceresa cycle
associated to $(X, x) $ is the algebraic 1-cycle
\begin{displaymath}
\operatorname{Ce} (X,x):=X_x-X_x^-.
\end{displaymath}
When $X$ is a general curve of genus $\geq 3$, the Ceresa cycle is not algebraically equivalent to 0, as proven by Ceresa \cite{zbMATH03855282} in the 80s.

\begin{thm}
For all $g\geq 3$, the normal function $\nu $ of the Ceresa cycle has the maximum possible rank, namely $3g-3$.
\end{thm}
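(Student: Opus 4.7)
\emph{Plan.} The rank of the Ceresa normal function $\nu:\mathcal{M}_g\to \mathcal{J}$ at $[X]$ is the rank of its infinitesimal invariant; maximality ($=3g-3$) is equivalent to asking that the image of the universal lift of $\nu$ in the total space of the primitive intermediate Jacobian bundle have dimension $3g-3$. I would prove this by applying the bi-algebraic / Ax-Schanuel machinery for mixed Shimura varieties developed in the preceding sections, reducing the theorem to a monodromy computation for the Ceresa variation.

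\emph{Setup.} To the Ceresa cycle I attach the admissible graded-polarizable $\mathbb{Z}$VMHS $\mathbb{V}_{\mathrm{Ce}}$ on $\mathcal{M}_g$ with weight graded pieces $H^1(X,\mathbb{Z})$ and $\Lambda^3 H^1(X,\mathbb{Z})/H^1(X,\mathbb{Z})$, whose single non-trivial extension class records $\mathrm{AJ}(\mathrm{Ce}(X,x))$. The data of $\mathbb{V}_{\mathrm{Ce}}$ is equivalent to an algebraic mixed period map $\varphi:\mathcal{M}_g\to \Gamma\backslash M$ into a mixed Shimura-type variety $M\to \mathcal{A}_g$ realized as a torsor under the primitive intermediate Jacobian. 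Under this dictionary the rank of $\nu$ equals the dimension of $\varphi(\mathcal{M}_g)$.

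\emph{Ax-Schanuel step.} Assume for contradiction that the rank is strictly smaller than $3g-3$. Then $\varphi$ has general fibres of positive dimension. Working inside the standard automorphic bundle $P\to \mathcal{M}_g$ (see \Cref{axschanfamsec} and \Cref{standardbundle}), apply \Cref{thm:newAS} to the algebraic subvariety obtained by lifting a fibre of $\varphi$ to $P$: its intersection with a horizontal leaf is atypical, hence its projection to $\mathcal{M}_g$ is contained in a strict weakly special subvariety of $(\mathcal{M}_g,\mathbb{V}_{\mathrm{Ce}})$. Packaging these projections across all fibres through the geometric mixed Zilber--Pink statement \Cref{thm:mixedgeomzp}, one concludes that $\varphi(\mathcal{M}_g)$ itself lies in a proper weakly special subvariety of $\Gamma\backslash M$.

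\emph{Main obstacle: monodromy.} By the monodromy characterization of weakly special subvarieties (\Cref{thmweaksp} combined with \Cref{fixedpart}), the previous paragraph implies that the algebraic monodromy $\mathbf{H}_{\mathcal{M}_g}$ of $\mathbb{V}_{\mathrm{Ce}}$ is strictly contained in the generic Mumford--Tate group of $M$. The hard part of the proof is to rule this out by computing $\mathbf{H}_{\mathcal{M}_g}$ explicitly: the reductive quotient is $\mathrm{Sp}_{2g}$ acting tautologically on $V=H^1$ and on the irreducible summand $\Lambda^3 V/V$ (classical); the unipotent radical must be shown to coincide with the full abelian unipotent group appearing in the generic Mumford--Tate of $M$. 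The latter is controlled by the Johnson/Hain cocycle associated with the Ceresa extension; its non-triviality at a generic curve is essentially Ceresa's theorem that the generic Ceresa cycle is not algebraically equivalent to its negative, and the $\mathrm{Sp}_{2g}$-irreducibility of $\Lambda^3 V/V$ upgrades this non-triviality to fullness of the unipotent part (any $\mathrm{Sp}_{2g}$-stable subrepresentation is $0$ or the whole module). Once this monodromy computation is in hand, the sought contradiction is reached and the theorem follows.
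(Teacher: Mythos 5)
First, a point of reference: the paper does not prove this theorem. It is stated as a cited result of Gao--Zhang, Kerr--Tayou, and Hain, with only the remark that Hain argues by induction on the genus while the other proofs are of Zilber--Pink type. Your overall architecture (mixed Ax--Schanuel plus a monodromy computation whose unipotent part is forced to be full by $\operatorname{Sp}_{2g}$-irreducibility of $\Lambda^3 V/V$) is indeed the skeleton of the Gao--Zhang/Kerr--Tayou approach, so you have chosen the right route; but the execution has a genuine gap at the crucial reduction step.

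The gap is the identification ``rank of $\nu$ $=$ $\dim \varphi(\mathcal{M}_g)$'' and the ensuing claim that non-maximal rank forces $\varphi$ to have positive-dimensional fibres. The mixed period map $\varphi$ dominates the pure period (Torelli) map, which is already generically finite for $g\geq 3$; hence $\varphi$ is generically finite and $\dim\varphi(\mathcal{M}_g)=3g-3$ unconditionally, which would make the theorem vacuous. The rank of a normal function is not the dimension of the image of its section: it is the generic rank of the derivative of a local lift of $\nu$ measured against the flat (Betti) trivialization of the intermediate Jacobian bundle, i.e.\ a statement about the fibre direction of $M\to\mathcal{A}_g$. Converting ``rank $<3g-3$'' into an atypical intersection to which \Cref{thm:newAS} applies is exactly the hard content of the cited proofs: one must analyze the degeneracy locus of the associated Betti map (à la Gao, or via the graph of the lifted normal function inside $\widetilde{S}\times(\text{fibre})$), and that analysis is absent from your sketch. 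A secondary issue: the non-triviality of the unipotent monodromy should be sourced from Johnson's surjectivity theorem for the Johnson homomorphism $\tau:\mathcal{I}_g\to\Lambda^3 H/H$ (a topological fact), not from Ceresa's theorem on algebraic non-triviality of the cycle --- the latter is logically downstream of the normal function being non-torsion and makes your dependency chain look circular even if it is not strictly so. With the Betti-map reduction supplied and the monodromy input corrected, your plan would recover the Gao--Zhang argument.
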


Gao and Zhang have proved a version of the above theorem and used it to prove that, for all $g\geq 3$, there is a Zariski open subset of $\Mg /\Q$ on which the Bloch-Beilinson height of the Ceresa and Gross-Schoen cycles have the Northcott property, see indeed \cite[Thm. 1.1 and Cor. 1.2]{2024arXiv240701304G} .

\subsection{Compact families of curves}
We describe some results on the maximal compact subvarieties of Siegel modular varieties \cite{2024arXiv240406009G}. We study the maximal dimension of a compact subvariety of $V$, which we denote $\operatorname{mdim}_c(V)$. We define also the maximal dimension of a compact subvariety of $V$ passing through a very general point of $V$: $\operatorname{mdim}_{c,gen}(V)$. The next theorem also uses \Cref{dense} which is once more related to typical intersections, as described in \Cref{tyin}. Keel and Sadun \cite{zbMATH01963977} proved that $\operatorname{mdim}_c(\Ag) \leq  g(g-1)/2 -1$ for any $g\geq 3$, see \cite[Thm. A]{2024arXiv240406009G} for stronger bounds.

Denote by $\Mg^{ct}$ the moduli space of curves of compact type. In \cite[Cor. C]{2024arXiv240406009G}, the authors prove:
\begin{thm}[Grushevsky, Mondello, Salvati Manni, Tsimerman]
For $p \in  \Ag$ a very general point, the maximal dimension of a compact subvariety of $\Ag $ containing p is equal to $g -1$.

For $\Mg^{ct}$ the following equality holds:
\[
\mathrm{mdim}_{\C}(\mathcal{M}_g^{\mathrm{ct}}) = \left\lfloor \frac{3g}{2} \right\rfloor - 2 \quad \text{for any } 2 \leq g \leq 23,
\]
and for the image of $\mathcal{M}_g^{\mathrm{ct}}$ along the (proper) extended Torelli map $J: \mathcal{M}_g^{\mathrm{ct}} \to \Ag$ we have the following upper bound:
\[
\mathrm{mdim}_{\C}(J(\mathcal{M}_g^{\mathrm{ct}})) \leq g - 1 \quad \text{for any } 2 \leq g \leq 15.
\]
\end{thm}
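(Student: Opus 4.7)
The theorem contains three independent assertions that I would address sequentially, each requiring different techniques.

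First, for the sharp bound $g-1$ on compact subvarieties through a very general point of $\mathcal{A}_g$, I would separate construction from upper bound. For the construction of a compact subvariety of dimension exactly $g-1$, the natural candidates are iterated Kodaira--Parshin type families: starting from a smooth compact base $B$ of dimension $g-1$, construct a non-isotrivial family of curves over $B$ whose Jacobians (or suitable Prym factors) give a generically finite moduli map $B \to \mathcal{A}_g$ whose image is Hodge generic. For the upper bound, my first attempt would invoke \Cref{dense} applied to the sub-Hodge datum corresponding to the decomposable locus $\mathcal{A}_{g-1} \times \mathcal{A}_1 \subset \mathcal{A}_g$: a direct computation gives $\dim D - \dim D_H = g-1$, so any Hodge generic compact subvariety $V$ of dimension $d$ with $d \geq g-1$ contains a dense set of points with decomposable Jacobians. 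I would then argue that if $d \geq g$ this density would push $V$ into the closure of the decomposable locus, iterating the argument on $\mathcal{A}_{g-1}$ and obtaining a contradiction via induction on $g$.

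Second, for the equality $\operatorname{mdim}_{\C}(\mathcal{M}_g^{ct}) = \lfloor 3g/2 \rfloor - 2$, the lower bound I would obtain by constructing explicit compact families parametrizing compact-type stable curves with many separating nodes: each separating node contributes a one-parameter degree of freedom via the gluing data, and the compact-type constraint prevents the family from running into the non-separating boundary. For the upper bound in the range $g \leq 23$, I would work on $\overline{\mathcal{M}}_g$ with the Hodge class $\lambda$ and the boundary divisors, exploiting the effectivity of explicit tautological divisor classes valid in this Kodaira dimension regime.

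Third, the bound $\operatorname{mdim}_{\C}(J(\mathcal{M}_g^{ct})) \leq g-1$ for $g \leq 15$ I would deduce from Part 1 by arguing that any compact $W \subset J(\mathcal{M}_g^{ct})$ has a generic point which is very general in $\mathcal{A}_g$: the Torelli image is Hodge generic, and for $g \leq 15$ one can rule out intermediate Mumford--Tate drops using the sub-Shimura classification available in this range (cf. \Cref{thmmoonenetal} and \Cref{yeungthm}). The main obstacle I anticipate is the sharp upper bound in Part 1: the Keel--Sadun bound $g(g-1)/2 - 1$ is comparatively soft, and sharpening to $g-1$ at a very general point appears to require combining the typical Hodge locus density with precise boundary-accumulation control in the Baily--Borel compactification together with an inductive descent along the decomposable stratification. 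The numerical thresholds $g \leq 23$ and $g \leq 15$ look like genuine obstructions, reflecting respectively the known effectivity range of tautological classes on $\overline{\mathcal{M}}_g$ and the range where no exotic sub-Shimura behaviour occurs in the Torelli locus.
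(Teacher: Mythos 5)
A preliminary remark: the survey does not prove this theorem. It is quoted from \cite{2024arXiv240406009G} (Cor.\ C), and the only indication given about the argument is that it uses \Cref{dense}; so your proposal must be measured against the strategy of the cited paper, not against an in-text proof.

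Your Part 1 has two genuine gaps. For the lower bound, a Kodaira--Parshin/Prym construction cannot reach a very general point: the image in $\Ag$ of any family of Jacobians or Prym(--Tyurin) varieties of a fixed combinatorial type lies in a proper closed subvariety of $\Ag$ once $g\geq 4$, and there are only countably many such constructions, so their union avoids every very general point by definition. The correct construction is elementary and works through \emph{every} point $p$: cut the Baily--Borel compactification $\mathcal{A}_g^{\mathrm{Sat}}$ by $\dim\Ag-(g-1)$ general hyperplanes through $p$; since the Satake boundary has codimension exactly $g$, the resulting $(g-1)$-dimensional linear section misses it and is a compact subvariety of $\Ag$ through $p$. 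For the upper bound, starting from \Cref{dense} applied to $\mathcal{A}_1\times\mathcal{A}_{g-1}$ (codimension $g-1$) is right, but ``density pushes $V$ into the closure of the decomposable locus'' is a non sequitur: that locus is a dense countable union of proper subvarieties of $V$, so its closure is $V$ and nothing follows. What one actually uses is that for $\dim V\geq g$ the intersection components with Hecke translates of $\mathcal{A}_1\times\mathcal{A}_{g-1}$ are positive-dimensional and \emph{compact}, hence have constant $j$-invariant on the elliptic factor (a complete variety maps to a point of $\mathcal{A}_1\cong\mathbb{A}^1$), which forces them into codimension-$g$, i.e.\ atypical, weakly special loci $\{E\}\times\mathcal{A}_{g-1}$; the contradiction comes from this tension (via the geometric Zilber--Pink theorem, or a refined expected-dimension form of \Cref{dense}), not from the induction on $g$ you describe, which does not close: the intersection components are not very general in $\mathcal{A}_{g-1}$, and the dimension drop per step is $g-1$, not enough to terminate from $\dim V=g$.

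Part 3 rests on a false premise: for $g\geq 4$ the locus $J(\Mg^{ct})$ has dimension $3g-3<g(g+1)/2$, so it is a proper closed subvariety of $\Ag$ containing \emph{no} very general point, and Part 1 cannot be invoked for it; one must instead stratify $J(\Mg^{ct})$ by products of Jacobians and bound compact subvarieties stratum by stratum. In Part 2 your lower bound misses the essential input: gluing data alone (marked points on the components) only yields compact dimension about $g-2$; to reach $\lfloor 3g/2\rfloor-2$ one must let the components themselves move in compact families, which is possible because $\mathcal{M}_2^{ct}\cong\mathcal{A}_2$ contains compact curves, so that a chain of genus-$2$ blocks contributes $1+(\#\text{marked points})$ compact dimensions per block. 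Finally, the thresholds $g\leq 23$ and $g\leq 15$ almost certainly arise from comparing these linear-in-$g$ combinatorial quantities with the quadratic upper bounds on $\operatorname{mdim}_c(\mathcal{A}_h)$ from \cite[Thm.\ A]{2024arXiv240406009G} applied across the strata; they have nothing to do with the Kodaira dimension of $\overline{\mathcal{M}}_g$, and the sub-Shimura classification results in the Torelli locus (\Cref{thmmoonenetal}, \Cref{yeungthm}) have thresholds $8$ and $5$, not $15$.
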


In contrast (there's also a more recent simple algebraic proof by D. Chen), the situation on strata of holomorphic differentials presents a difference:
\begin{thm}[Gendron \cite{zbMATH07226776}]
The stratum $\Og(\kappa)$ of holomorphic differentials of type $\kappa$ does not contain complete curves.
\end{thm}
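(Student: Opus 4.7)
Suppose for contradiction that $\Omega\mathcal{M}_g(\kappa)$ contains a complete curve $C$; after normalizing and passing to a finite étale cover if necessary, we may assume $C$ is smooth projective of genus $g_C$, and that the zero divisors of the tautological differential $\omega$ form honest sections $\Sigma_1,\dots,\Sigma_n : C \to \mathcal{X}$ of the universal family $\pi : \mathcal{X} \to C$. The nowhere-vanishing section $\omega \in H^0(\mathcal{X},\omega_{\mathcal{X}/C})$ yields $\omega_{\mathcal{X}/C} \cong \mathcal{O}_{\mathcal{X}}(\sum \kappa_i \Sigma_i)$, and in particular an injection $\mathcal{O}_C \hookrightarrow \mathbb{E} := \pi_\ast \omega_{\mathcal{X}/C}$.

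The first step is a numerical calculation on the surface $\mathcal{X}$. Adjunction along the section $\Sigma_j$ gives $\omega_{\mathcal{X}/C}\cdot \Sigma_j = -\Sigma_j^{\,2}$, so intersecting $\omega_{\mathcal{X}/C} = \sum_i \kappa_i \Sigma_i$ with $\Sigma_j$ yields
\[
(1+\kappa_j)\Sigma_j^{\,2} = -\sum_{i\neq j} \kappa_i (\Sigma_i\cdot\Sigma_j) \leq 0,
\]
so each $\Sigma_j^{\,2} \leq 0$. Mumford's formula for smooth families then gives $12\lambda = \omega_{\mathcal{X}/C}^{\,2} = -\sum_j \kappa_j \Sigma_j^{\,2} \geq 0$, where $\lambda = \deg \mathbb{E}|_C$. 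One wants to conclude $\lambda = 0$, in which case (by Arakelov/Szpiro) the family $\pi$ is \emph{isotrivial}; the image of $C$ in $\mathcal{M}_g$ is then a single point $[X_0]$, so $C$ embeds into the fiber of $\Omega\mathcal{M}_g \to \mathcal{M}_g$ over $[X_0]$. That fiber is an open subset of the vector space $\Omega(X_0) \cong \mathbb{C}^g$ (modulo a finite automorphism group), hence is Stein and admits no positive-dimensional compact analytic subvariety --- the desired contradiction.

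The key step is therefore to rule out $\lambda > 0$. Here I would invoke the curvature properties of the Hodge bundle: the inclusion $\mathcal{O}_C \hookrightarrow \mathbb{E}$ combined with Griffiths's formula for the Hodge metric shows $\log \|\omega\|^2$ is (pluri)subharmonic on the compact $C$, hence constant. Equality in the curvature bound forces the second fundamental form, i.e.\ the Kodaira--Spencer map applied to $\omega$, to vanish identically, so the line $\mathcal{O}_C \cdot \omega \subset \mathbb{E}$ is Gauss--Manin flat. By Deligne's theorem on the fixed part, this flat line generates a sub-variation of Hodge structure $\mathbb{L}\subset H^1$ of rank $2$, which is necessarily isotrivial, corresponding to a common elliptic factor $E$ of all Jacobians $\mathrm{Jac}(\mathcal{X}_c)$. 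Consequently $\omega$ is pulled back via an Albanese-type map $f_c : \mathcal{X}_c \to E$, and the prescribed zero partition $\kappa$ is the ramification profile of $f_c$.

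The main obstacle is the concluding step: one must show that no complete family of degree-$d$ covers $\mathcal{X}_c \to E$ with fixed ramification type $\kappa$ can exist. This is a statement about the corresponding Hurwitz space, which is a finite cover of an open subset of $\mathrm{Sym}^n E$ parametrizing the branch divisor; one needs to exclude the existence of complete $C$ projecting into this Hurwitz space. I would attack this by combining two ingredients: (i) when the image in the branch-divisor space is zero-dimensional, a translation-symmetry argument on $E$ reduces to the isotrivial case treated above; (ii) when it is positive-dimensional, one can apply the same subharmonicity argument to the \emph{quotient} VHS $\mathbb{E}/\mathbb{L}$, or equivalently iterate the construction on the varying factor $A_c = \mathrm{Jac}(\mathcal{X}_c)/E$, and exploit the fact that the relative $\omega_{\mathcal{X}/E}$ together with the branch sections forces non-trivial positivity that contradicts compactness of $C$. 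Making step (ii) precise is the crux of the proof: one needs either an effective slope inequality sharper than the Arakelov bound, or a direct non-Archimedean / tropical argument showing that collisions of the branch points are unavoidable along any complete family.
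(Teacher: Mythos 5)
The paper does not reproduce a proof of this statement (it only cites Gendron, and mentions in passing a ``more recent simple algebraic proof by D.\ Chen''), so I compare your proposal with that standard algebraic argument. Your setup is exactly right, but there is a genuine gap: from $(1+\kappa_j)\Sigma_j^{\,2}=-\sum_{i\neq j}\kappa_i(\Sigma_i\cdot\Sigma_j)$ you only extract $\Sigma_j^{\,2}\le 0$ and $\lambda\ge 0$, and the entire remainder of the argument is devoted to excluding $\lambda>0$ by a Hodge-theoretic route that you yourself concede is unfinished (``making step (ii) precise is the crux''). That route also has a sign problem at its first move: the Hodge bundle $\mathbb{E}=\pi_*\omega_{\mathcal{X}/C}$ is Griffiths \emph{semi-positive}, not semi-negative, so $\log\|\omega\|^2$ for a holomorphic section $\omega$ of $\mathbb{E}$ is not plurisubharmonic for free (psh log-norms come from sections of semi-negative bundles, e.g.\ of $\mathbb{E}^{*}$), and the subsequent ``flat line, fixed part, elliptic factor, Hurwitz space'' chain does not reach a contradiction.

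The missing observation that closes the argument at the point you already reached is that the sections $\Sigma_1,\dots,\Sigma_n$ are \emph{pairwise disjoint}: a point of $\Omega\mathcal{M}_g(\kappa)$ is a differential with $n$ distinct zeros, so $\Sigma_i\cdot\Sigma_j=0$ for $i\neq j$. Your displayed identity then gives $(1+\kappa_j)\Sigma_j^{\,2}=0$, hence $\Sigma_j^{\,2}=0$ for every $j$, hence $12\lambda=\omega_{\mathcal{X}/C}^{\,2}=-\sum_j\kappa_j\Sigma_j^{\,2}=0$. Since $\deg\lambda|_C=0$ forces the family of Jacobians, and by Torelli the family of curves, to be isotrivial (Arakelov; or: $\lambda$ is pulled back from an ample class on the Baily--Borel compactification of $\mathcal{A}_g$), you land in the fiber case you already handled correctly: a complete curve inside the quasi-affine locus $\Omega(X_0)(\kappa)$ modulo a finite group, which is impossible. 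This completed computation is essentially Chen's proof; Gendron's original argument is different in flavour, proceeding via degeneration to the boundary of the incidence variety compactification rather than via intersection numbers on the total space.
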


\subsection{A digression on (non-arithmetic) ball quotients}\label{digballquotients}
We briefly discussed ball quotients in \Cref{knowncas} and we wish now to further elaborate on this topic. We will actually consider quotients by arbitrary lattices $\Gamma \subset \operatorname{PU}(1,n)=\operatorname{Aut}(\mathbb{B}^n_\C)$. The study of such lattices has indeed fascinating connections with the material exposed so far.

\begin{rmk}
It's interesting to observe that weight one $\C$-Hodge structures of signature $(1,n) $ are parameterized by the complex ball $\mathbb{B}^n_{\C}$, and weight one $\R$-Hodge structures (on a vector space of dimension $2g$) are parameterized by the Siegel space $\mathbb{H}_g$. 
\end{rmk}

The Margulis super-rigidity theorem \cite{margulisbook} implies the
arithmeticity of irreducible lattices in all noncompact semisimple Lie groups except $\operatorname{SO}(1,n)$
and $\PU(1, n)$, i.e., for all but real and complex hyperbolic lattices. Specifically, Margulis proved
superrigidity of irreducible lattices in semisimple Lie groups of real rank at least two. Using
the theory of harmonic maps, this was later extended to the rank one groups $\operatorname{Sp}(1, n)$ and $F^{(-20)}_ 4$
by Corlette \cite{MR1147961} and Gromov–Schoen \cite{MR1215595}. Real hyperbolic lattices are known to be softer and more flexible than their complex counterpart and non-arithmetic lattices in $\operatorname{SO}(1,n)$ can be constructed for every $n$. Non-arithmetic complex hyperbolic lattices have been found in $\PU(1,n)$, only for $n=1,2,3$. 

We briefly survey some of the results on complex hyperbolic lattices that are closest to the narrative of our text.
\begin{itemize}
\item Deligne-Mostow \cite{zbMATH03996010}. Every configuration of distinct points $\underline{t}=(t_1,\dots , t_n)\in \C^n$ determines a curve $X_{d,\underline{t}}$ by setting $y^d=\prod (x-t_i)$. The braid group $B_n$ acts on the cohomology of $X_{d,\underline{t}}$. Restricting attention to a single eigenspace, one obtains an irreducible
unitary representation 
\begin{displaymath}
\rho_q: B_n \to U(H^1(X_{d,\underline{t}})_q)\cong U(r,s)
\end{displaymath}
They identified numerical criteria to guarantee when the image is a discrete subgroup of $U(r,s)$, when it is arithmetic or non-arithmetic (in $U(1,n)$). 
\item Thurston \cite{zbMATH01260617} concerning moduli spaces of flat Euclidean structures with conical singularities on the two dimensional sphere. This gives a geometric interpretation of the Deligne-Mostow complex hyperbolic structures. 
\item In \cite{zbMATH00404254}, Veech extends to compact (oriented) surfaces of arbitrary genus several basic results of Thurston’s approach. 
\item Ghazouani and Pirio  \cite{zbMATH07362604, zbMATH06824843} studied moduli spaces of flat tori with cone singularities and prescribed holonomy by means of geometrical methods relying on surgeries on flat surfaces.
\item Another link between the \teic viewpoint on $\Og$ and ball quotients appear in the work of Kappes-M\"{o}ller \cite{zbMATH06543257}.
\item In \cite{arXiv:2303.17929}, the authors give formulas for the Chern classes of linear submanifolds of the moduli spaces of Abelian differentials and hence for their Euler characteristic. As an application, they obtain an algebraic proof of the theorems of Deligne-Mostow and Thurston that suitable compactifications of moduli spaces of $k$-differentials on the 5-punctured projective line are ball quotients.
\end{itemize}

\begin{rmk}
In \cite{zbMATH06149478}, McMullen offers an account of the unitary representations of the braid group that arise via the Hodge theory of cyclic branched coverings of the projective line.
\end{rmk}

The following was proved independently by Bader, Fisher, Miller and Stover and Baldi--Ullmo by completely different methods (dynamics and Hodge theory, respectively) and resembles \Cref{wrightthm} (see also earlier work \cite{BFMS}). 

%%%%%%%%%%%%%%%%%%%%
\begin{thm}[{\cite[Thm.\ 1.1]{BFMS2}}, {\cite[Thm.\ 1.2.1]{BU}}]\label{thm:Finite}
Suppose that $M$ is a non-arithmetic finite-volume complex hyperbolic $n$-manifold, $n \ge 2$. Then $M$ contains only finitely many irreducible totally geodesic divisors.
\end{thm}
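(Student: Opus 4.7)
The plan is to follow the Hodge-theoretic route of Baldi--Ullmo, casting the problem as an instance of the Zilber--Pink philosophy developed in \Cref{part3}. To the ball quotient $M = \Gamma \backslash \mathbb{B}^n_\C$ one attaches a polarizable $\mathbb{R}$-variation of Hodge structures $\V$ of weight $1$ and signature $(1,n)$ whose period domain is $\mathbb{B}^n_\C$ and whose $\mathbb{R}$-algebraic monodromy is the adjoint $\mathbb{R}$-form of $\operatorname{SU}(1,n)$ cut out by $\Gamma$. By Margulis's commensurator criterion, non-arithmeticity of $\Gamma$ is equivalent to the discreteness of $\operatorname{Comm}_{\PU(1,n)}(\Gamma)$; the goal is then to show that an infinite collection of totally geodesic divisors contradicts this discreteness.

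First I would show that each irreducible totally geodesic divisor $N \subset M$ is weakly special for $\V$ in the sense of \Cref{weakspe}. By a result of Mok (already invoked in the sketch of \Cref{yeungthm}), a totally geodesic complex submanifold of $\mathbb{B}^n_\C$ is itself a Hermitian symmetric subdomain $D_N \cong \mathbb{B}^{n-1}_\C$, and each such subdomain arises from a sub-Hodge datum $(\mathbf{H}_N, D_N) \subset (\mathbf{G}, \mathbb{B}^n_\C)$. The bi-algebraicity characterization \Cref{thmweaksp} then identifies $N$ as a weakly special subvariety of $M$ of codimension one.

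Suppose for contradiction that infinitely many distinct irreducible totally geodesic divisors $N_i \subset M$ exist. Applying \Cref{thm:mixedgeomzp} to $(M, \V)$, adapted to the $\mathbb{R}$-Hodge setting, weakly special subvarieties of $M$ of a fixed type fall into finitely many algebraic families, so after extracting a subsequence the $N_i$ share a common type $(\mathbf{H}, D_H)$ and sweep out a positive-dimensional algebraic family $\{N_t\}_{t\in T}$ of totally geodesic divisors. The extraction of this family relies on the over-parametrization discussed in \Cref{axschanfamsec} combined with the Ax--Schanuel theorem \Cref{astheorem}, applied to the natural $\mathbf{G}$-torsor above $M$. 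Each $N_t$ then lifts to a translate $g_t \cdot D_N$ of a fixed codimension-one Hermitian subdomain by some $g_t \in \PU(1,n)$, and the condition that $g_t \cdot D_N$ descends to a closed divisor of $M$ forces $g_t$ to commensurate $\Gamma$; a positive-dimensional variation of $t$ therefore produces a non-discrete family of commensurator elements, contradicting Margulis's criterion.

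The main obstacle is precisely this last bridge: translating the Hodge-theoretic conclusion (a positive-dimensional family of sub-Hodge data of the same type) into the group-theoretic one (density of $\operatorname{Comm}_{\PU(1,n)}(\Gamma)$, and not merely of $\operatorname{Comm}_{\PU(1,n)}(\Gamma \cap \mathbf{H}_N(\mathbb{R}))$). The classical Zilber--Pink framework is phrased over $\mathbb{Q}$, where every weakly special subvariety carries an arithmetic structure and the Mumford--Tate formalism of \Cref{sec71} together with the monodromy theorem \Cref{fixedpart} directly produce commensurator elements; non-arithmetic ball quotients only afford an $\mathbb{R}$-structure, so these tools have to be carefully recast in the real-analytic setting. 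This $\mathbb{R}$-adapted variant of the Zilber--Pink/Ax--Schanuel machinery, together with the commensurability upgrade, is the principal technical innovation of the Baldi--Ullmo proof; the Bader--Fisher--Miller--Stover proof bypasses it entirely via unipotent dynamics on an associated homogeneous space.
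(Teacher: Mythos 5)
The survey does not prove this theorem --- it is quoted from Bader--Fisher--Miller--Stover and Baldi--Ullmo --- but it does describe the Baldi--Ullmo strategy: ``build a $\Z$VHS on the ball quotient $M$ realizing the totally geodesic subvarieties as \emph{atypical} intersections and then apply a result of ZP type.'' Your proposal diverges from this in a way that breaks the argument. If the variation you attach to $M$ has period domain exactly $\mathbb{B}^n_\C$ (an $\R$-VHS of signature $(1,n)$ with monodromy the full real form of $\operatorname{SU}(1,n)$), then a totally geodesic divisor $N$ satisfies $\codim_M N = 1 = \codim_{\mathbb{B}^n}\mathbb{B}^{n-1}$: it is a \emph{typical} intersection, and no Zilber--Pink or Ax--Schanuel statement yields finiteness --- on the contrary, \Cref{conj-typical} would predict density. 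The actual mechanism is the opposite of what you describe: local rigidity (Weil, Garland--Raghunathan) shows the adjoint trace field $k$ of $\Gamma$ is a number field and $\Gamma$ sits inside $\mathbf{G}(k)$ for a $k$-form $\mathbf{G}$ of $\operatorname{PU}(1,n)$; taking $\Res_{k/\Q}\mathbf{G}$ one obtains a genuine $\Q$- (indeed $\Z$-) VHS on $M$ whose period domain is $\mathbb{B}^n_\C$ times the symmetric spaces of the non-compact Galois-conjugate factors. Non-arithmeticity is precisely the statement that at least one such extra non-compact factor exists, and it is this excess codimension in the larger period domain that renders every totally geodesic divisor \emph{atypical}; the geometric Zilber--Pink theorem (\Cref{thm:mixedgeomzp}-type statement in the pure setting) then gives finiteness of the maximal ones. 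So there is no ``$\R$-adapted ZP machinery'' to invent: the point of the construction is to land back in the standard $\Q$-linear framework.

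Your concluding step is also not viable as written. A positive-dimensional algebraic family of totally geodesic divisors inside $M$ does not arise (totally geodesic subvarieties of a ball quotient are rigid and countable; the families in \Cref{axschanfamsec} parametrize candidate subvarieties of the compact dual or of the torsor $P$, not actual divisors of $M$), and the inference ``$g_t\cdot D_N$ descends to a closed divisor $\Rightarrow$ $g_t\in\operatorname{Comm}(\Gamma)$'' is false: closedness only says $\Gamma\cap g_t\mathbf{H}(\R)g_t^{-1}$ is a lattice in the stabilizer, which does not produce commensurator elements of $\Gamma$ in $\operatorname{PU}(1,n)$. Margulis's commensurator criterion is not the engine of either published proof; Bader--Fisher--Miller--Stover instead use superrigidity/ergodic-theoretic arguments for a skew-product action attached to the geodesic boundary data, while Baldi--Ullmo conclude by the atypicality count just described.
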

%%%%%%%%%%%%%%%%%%%%

An irreducible totally geodesic divisor is simply an immersed complex codimension one totally geodesic complex submanifold. The approach of the latter team \cite{BU} is actually to build a $\Z$VHS on the ball quotient $M$ realizing the totally geodesic subvarieties as atypical intersections and then apply a result of ZP type. The VHS generalizes the modular embeddings of triangle groups that we have seen, behind the scenes, in \Cref{dejongnoot} and \Cref{mcmullenexa}.
\begin{rmk}
This circle of ideas linked to ZP actually leads to interesting results also in the arithmetic case. This new line of investigation was considered in \cite{2024arXiv240203601B}.
\end{rmk}

Interestingly both \Cref{thm:Finite} and  \Cref{wrightthm} can now be proven via atypical intersection techniques. It is natural to wonder whether there's a deeper link between the two world. For example:
\begin{question}
How do Kobayashi totally geodesic subvarieties (in the sense of \Cref{kobgeod}) distribute in general? 
\end{question}
For example, on a general smooth hypersurface $X$ in $\mathbb{P}^n$ of degree big enough (in fact $>O(n^{2n+6})$), the Kobayashi psuedo-metric is a metric, as proven by Brotbek \cite{zbMATH06827883}. Does $X$ contain only finitely many maximal totally geodesic subvarieties?

\subsubsection{Some remarks about Shimura varieties}\label{secfingeod}

We recall some facts about the Kobayashi (pseudo)-metric, along with a brief overview of McMullen's work \cite{zbMATH06260641}, and \cite{zbMATH06748171}. The first fact is that that we have already encountered is that the Kobayashi metric on Teichmüller spaces agrees with the \textbf{Teichmüller metric}.

The Kobayashi metric of a bounded symmetric domain $X$ does not coincide with its Hermitian symmetric metric, unless $X$ has rank one, i.e., $X \cong \mathbb{B}^n$. Let $X$ be a bounded \textbf{symmetric} domain and $x \in X$. As explained in \Cref{rmklink}, there is an essentially unique holomorphic embedding $i: X \to \C^N$ whose image is a strictly convex circular domain centered at $0=i(x)$ — the so-called Harish-Chandra realization of $X$ (centered at $x$). A useful description is as follows: There exists a finite-dimensional linear subspace $V_X \subset M_{n,m}(\C)$, the space of complex $n \times m$ matrices, such that 
\begin{displaymath}
X=\{v \in V_X : ||v||<1\}
\end{displaymath}
is the unit ball for the operator norm on $V_X$, where $||v||= \sup_{||\xi||_2=1} ||v(\xi)||_2$. There is a natural identification $T_x(X) \cong V_X \cong \C^N$. 
The Kobayashi norm, as discussed in \cite[(p. 462)]{zbMATH06748171}, on $V_X$ coincides with the operator norm, and the Kobayashi distance from the origin is given by $2d(0,v) = \log\left(\frac{1+||v||}{1-||v||}\right)$. 
\begin{question}
How do the \kob geodesic subvarieties of a Shimura variety behaved? Do they share similar properties of their smaller class of ``standard geodesics'' (described in \Cref{thmweaksp})?
\end{question}
\subsection{Other special families of curves}
Weakly special subvarieties were described in terms of their algebraic monodromy. In this short section we try to look for finer monodromic properties.
\begin{question}
Is there a curve $C\subset \mathcal{A}_g$ (projective or not) which is Hodge generic and $\pi_1$-injective? Can we find $C$ in $\mathcal{M}_g$? 
\end{question}
In certain Hilbert modular varieties, a positive answer follows from \Cref{mcmullenexa}.

We now briefly recall an example of \emph{thin} monodromy after Nori \cite{MR832040}, i.e. monodromy that have infinite index in the $\Z$-points of the Zariski closure of their monodromy. (In contrast, for example, to $\Mg$ whose $\pi_1$ dominates $\operatorname{Sp}_{2g}(\Z)$.)

\begin{thm}[Nori]
The family over $S_c:=\C-\{0,1,c^{-1}\}$ given by $A_\lambda= E_\lambda \times E_{c \lambda}$. The image of the associated monodromy representation is Zariski dense in $\Sl_2\times \Sl_2$ but it has infinite index in $\Sl_2(\Z)\times \Sl_2(\Z)$.
\end{thm}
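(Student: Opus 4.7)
Proof proposal.

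The plan is to compute the monodromy of the family $A_\lambda = E_\lambda\times E_{c\lambda}$ explicitly, then prove Zariski density via Goursat's lemma and infinite index by exhibiting an infinitely generated subgroup of the kernel of a natural projection.

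\medskip

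\textbf{Step 1 (Monodromy generators).} Fix a basepoint in $S_c$. Since $S_c=\C-\{0,1,c^{-1}\}$ has fundamental group $F_3=F(\gamma_0,\gamma_1,\gamma_{c^{-1}})$ generated by loops around the three punctures, and the Legendre family $E_t\colon y^2=x(x-1)(x-t)$ has monodromy on $\C-\{0,1\}$ equal to $\Gamma(2)\subset \Sl_2(\Z)$ freely generated by $T_0,T_1$ (around $t=0$ and $t=1$), we obtain
\[
\rho(\gamma_0)=(T_0,T_0'),\qquad \rho(\gamma_1)=(T_1,I),\qquad \rho(\gamma_{c^{-1}})=(I,T_1'),
\]
where the second coordinate records the monodromy of $E_{c\lambda}$: the change of variable $\mu=c\lambda$ shows this family degenerates over $\lambda=0$ and $\lambda=c^{-1}$, with no degeneration at $\lambda=1$. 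Let $H:=\rho(\pi_1(S_c))\subset \Gamma(2)\times \Gamma(2)$.

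\medskip

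\textbf{Step 2 (Zariski density).} Let $G$ be the Zariski closure of $H$ in $\Sl_2\times \Sl_2$. The two projections of $H$ equal $\langle T_0,T_1\rangle=\Gamma(2)$ and $\langle T_0',T_1'\rangle=\Gamma(2)$ respectively, each Zariski dense in $\Sl_2$, so $G$ surjects onto each factor. Applying Goursat's lemma to the identity component $G^0$: the only proper closed subgroups of $\Sl_2\times \Sl_2$ that surject onto both factors are graphs of automorphisms of $\Sl_2$. If $G^0$ were the graph of such an automorphism $\phi$, then a suitable power $(T_1^n,I)\in G^0$ would force $\phi(T_1^n)=I$; since $T_1$ has infinite order and $\phi$ is an automorphism, this is impossible. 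Hence $G=\Sl_2\times \Sl_2$.

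\medskip

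\textbf{Step 3 (Infinite index).} Consider the first projection $p_1\colon \Gamma(2)\times \Gamma(2)\to \Gamma(2)$ restricted to $H$. For each $m\in\Z$, the loop $\gamma_0^{m}\gamma_{c^{-1}}\gamma_0^{-m}\in\pi_1(S_c)$ maps under $\rho$ to
\[
\bigl(T_0^m\cdot I\cdot T_0^{-m},\; T_0'^{m}T_1'T_0'^{-m}\bigr)=\bigl(I,\; T_0'^{m}T_1'T_0'^{-m}\bigr),
\]
and in particular lies in $H\cap\bigl(\{I\}\times \Gamma(2)\bigr)=\ker(p_1|_H)$. In the free group $\Gamma(2)=F(T_0',T_1')$, the family $\{T_0'^{m}T_1'T_0'^{-m}\}_{m\in\Z}$ freely generates a free subgroup of countably infinite rank, so $\ker(p_1|_H)$ is \emph{not} finitely generated. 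Now suppose, for contradiction, that $H$ had finite index in $\Sl_2(\Z)\times \Sl_2(\Z)$; then $H\cap(\Gamma(2)\times \Gamma(2))$ would have finite index in $\Gamma(2)\times \Gamma(2)$, and $H\cap(\{I\}\times \Gamma(2))$ would have finite index in $\{I\}\times \Gamma(2)\cong F_2$. By Nielsen--Schreier this intersection would be a free group of finite rank, contradicting the infinite-rank subgroup constructed above. Hence $[\Sl_2(\Z)\times \Sl_2(\Z):H]=\infty$.

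\medskip

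The main obstacle is the bookkeeping in Step 3: one must recognize that, although $H$ is only $3$-generated (as a quotient of $F_3$), it contains an infinitely generated subgroup sitting inside a single factor, which cleanly obstructs finite index. Everything else (identification of $\rho$ via Legendre monodromy, and the two-line Goursat argument) is routine once the generators are written down correctly.
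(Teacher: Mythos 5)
Your Steps 1 and 2 are fine: the identification of the three monodromy generators $(T_0,T_0')$, $(T_1,I)$, $(I,T_1')$ is correct, and the Goursat argument for Zariski density is a legitimate (slightly different) route from the paper, which instead observes that the image is the kernel of a surjection onto $\Z$, hence contains the commutator subgroup of a Zariski-dense group and so is itself Zariski dense.

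Step 3, however, has a genuine gap at the final "contradiction". A free group of finite rank can perfectly well contain free subgroups of infinite rank: in $F(a,b)$ the conjugates $\{a^m b a^{-m}\}_{m\in\Z}$ already freely generate an infinite-rank subgroup (they are a basis of the normal closure of $b$), yet $F(a,b)$ has rank $2$. So exhibiting the infinite-rank free subgroup $\langle T_0'^{m}T_1'T_0'^{-m}\rangle_{m\in\Z}$ inside $H\cap(\{I\}\times\Gamma(2))$ does not show that this intersection fails to be finitely generated, and does not contradict its being a finite-index (hence finite-rank free) subgroup of $F_2$. Indeed, nothing in your Step 3 rules out a priori that $(I,T_0')\in H$, in which case $H$ would be all of $\langle T_0,T_1\rangle\times\langle T_0',T_1'\rangle$. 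The missing ingredient is a homomorphism certifying infinite index: define $\phi\colon \langle T_0,T_1\rangle\times\langle T_0',T_1'\rangle\to\Z$ on the free generators by $T_0\mapsto 1$, $T_0'\mapsto -1$, $T_1\mapsto 0$, $T_1'\mapsto 0$. All three of your generators of $H$ lie in $\ker\phi$, so $H\subseteq\ker\phi$, which has infinite index since $\phi$ is surjective onto $\Z$; as $\langle T_0,T_1\rangle\times\langle T_0',T_1'\rangle$ has finite index in $\Sl_2(\Z)\times\Sl_2(\Z)$, the infinite index of $H$ follows. This $\phi$ is exactly what the paper produces geometrically: it is $\pi_1$ of the map $(x,y)\mapsto x^{-1}y$ from $(\C-\{0,1\})^2$ to $\C^*$, whose fibre over $c$ is $S_c$, and the homotopy exact sequence identifies the image of $\pi_1(S_c)$ with $\ker\phi$. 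Once you have $\phi$, the conjugate elements you construct are no longer needed.
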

Nori also shows that such image is not finitely presented, as a consequence of a group theoretic result of Bieri.

\begin{proof}[Sketch of the proof]
Consider the family of elliptic curves (for $\lambda\in \C -\{ 0,1\}$)
\begin{displaymath}
E_\lambda =\{y^2=x(x-1)(x-\lambda)\}.
\end{displaymath}
Let $c\in\C -\{ 0,1\} $ and $S_c:=\mathbb{P}^1_{\C}- \{0,1,c^{-1}, \infty\}$. There is a family of abelian surfaces $\mathcal{A}\to S_c$  whose fiber at $\lambda$ is 
\begin{displaymath}
\mathcal{A}_\lambda = E_\lambda \times E_{\lambda \cdot c}
\end{displaymath}
with associated monodromy representation
\begin{displaymath}
\rho : \pi_1(S_c)\to \Sl_2(\Z)\times \Sl_2(\Z).
\end{displaymath}
Nori shows (1) that the image of $\rho$ is Zariski dense in $\Sl_2\times \Sl_2$ (so the family is Hodge generic in $\mathcal{A}_1\times \mathcal{A}_1\subset \mathcal{A}_2$) and (2) that $\operatorname{Im}(\sigma)$ has infinite index in $\Sl_2(\Z)\times \Sl_2(\Z)$.

Both claims follow from the fact that inclusion of $S_c$ into $(\C -\{ 0,1\})^2$ given by $\lambda \mapsto (\lambda, \lambda \cdot c)$ induces an exact sequence of groups
\begin{equation}
\pi_1(S_c)\to \pi_1((\C -\{ 0,1\})^2)\to \Z \to 1.
\end{equation}
Define $f: (\C -\{ 0,1\})^2\to \C^*$, $(x,y)\mapsto x^{-1}y$. Since $S_c=f^{-1}(c)$, we find a complex of groups
\begin{displaymath}
\pi_1(S_c)\to \pi_1((\C -\{ 0,1\})^2)\to \pi_1(\C^*) \to 1
\end{displaymath}
such that all fibres of $f$ are smooth and non-empty, and the restriction of the complement of $f^{-1}(1)$ is a fibre bundle over $\C-\{0,1\}$ with connected fibres. This is enough to show exactness.
\end{proof}
\subsection{Further questions on the Hodge locus}

Two questions for which we expect a positive answer and that are perhaps within reach:
\begin{question}
Does every Shimura variety contain a $\overline{\Z}$-Hodge generic point? Does $\Mg$ contain a $\overline{\Z}$-Hodge generic point?
\end{question}

\begin{question}
For each $g\geq 3$, is there a curve $C\subset \Ag$, defined over $\overline{\Q}$ (resp. $\overline{\Z}$) with empty Hodge locus (i.e. $\MT(c)= \operatorname{GSp}_{2g}$, for all $c\in C$)? Can $C$ be found in $\Mg$?
\end{question}
In the first part of the question $C$ is implicitly required to be irreducible and Zariski closed, otherwise one can simply remove a finite number of points from $C$.

\newpage

\appendix

\section{Quick introduction to Shimura varieties and Hodge theory}\label{notationshimura}

We collect here some standard notation we often use.
\begin{itemize}
\item Given an algebraic group $G/\Q$, $G^{\text{der}} \subset G$ denotes its derived subgroup, $Z(G)$ its center, $G\to G^{\text{ab}}:=G/G^{\text{der}}$ its abelianization and $G\to G^{\text{ad}}:=G/Z(G)$ its adjoint quotient. Moreover by reductive group we always mean \emph{connected reductive};

\item We denote by $\A_f$ the (topological) group of finite $\Q$-adeles, i.e. $\A_f = \widehat{\Z}\otimes \Q$, endowed with the adelic topology. Given a subgroup $K \subset G(\A_f)\subset \prod_\ell G(\Q_\ell)$, we write $K_\ell \subset G(\Q_\ell)$ for the projection of $K$ along $G(\A_f)\to G(\Q_\ell)$. If a compact open subgroup of $G(\A_f)$ may be confused with a number field $K$, we denote the former by $\widetilde{K}$;
\item As in \cite[0.2]{deligneshimura} we write $(-)^0$ to denote an algebraic connected component and $(-)^+$ for a topological connected component, e.g. $G(\R)^+$ is the topological connected component of the identity of the group of the real points of $G$. We write $G(\R)_+$ for the subgroup of $G(\R)$ of elements that are mapped into the connected component $\Gad(\R)^+\subset \Gad(\R)$. Finally we set $G(\Q)^+:=G(\Q)\cap G(\R)^+$ and $G(\Q)_+ := G(\Q) \cap G(\R)_+$. 
\end{itemize}
Given a complex algebraic variety $S$, we denote by $S^{\an}$ the complex points $S(\C)$ with its natural structure of a complex analytic variety. We denote by $\pi_1(S)$ the topological fundamental group of $S^{\an}$ and by $\piet{(S)}$ the étale one. Unless it is necessary, we omit the base point in the notation. Given an algebraic variety $S$ defined over a field $K$, $S/K$ from now on, we write $S_{\overline{K}}$ for the base change of $S$ to $\operatorname{Spec} (\overline{K})$.
\subsection{Shimura varieties} 

Special instances of Shimura varieties were originally introduced by Shimura in the 60's. Deligne outlined the theory of Shimura varieties in \cite{deligneshimura, delignetravaux}. For introductory notes on this vast subject we  mention \cite{milnenotes}. In this short
appendix, we try to summarise some of the aspects the theory of Shimura varieties that are relevant to the main text, roughly following \cite[Sec. 2.2]{Baldi2020}.

Let $\DT$ denote the real torus $\Res_{\C / \R} (\Gm)$, which is usually called the \emph{Deligne torus}. 
\begin{defi}\label{shimuradatum}
A \emph{Shimura datum} is pair $(G,X)$ where $G$ is a reductive $\Q$-algebraic group and $X$ a $G(\R)$-orbit in the set of morphisms of $\R$-algebraic groups $\Hom(\DT, G_\R)$, such that for some (equivalently all) $h \in X$ the following axioms are satisfied:
\begin{itemize}
\item[SD1.] $\text{Lie}(G)_\R$ is of type $\{(-1,1), (0,0), (1,-1)\}$;
\item[SD2.] The action of the inner automorphism associated to $h(i)$ is a Cartan involution of $G^{\text{ad}}_\R$. That is, the set
\begin{displaymath}
\{g \in G^{\text{ad}}(\C) \ : \ h(i)g h(i)^{-1}=g\}
\end{displaymath}is compact;
\item[SD3.] For every simple $\Q$-factor $H$ of $G^{\text{ad}}$, the composition of $h: \DT \to G_\R$ with $G_\R \to H_\R$ is non trivial.
\end{itemize}
\end{defi}

Let $(G,X)$ be a Shimura datum and $\widetilde{K}$ a compact open subgroup of $G(\A_f)$. Notice that $G(\Q)$ acts on the left on $X \times G(\A_f)$ by left multiplication on both factors and $\widetilde{K}$ acts on the right just by right multiplication on the second factor. We set
\begin{displaymath}
\Sh_{\widetilde{K}}(G,X) := G(\Q)  \backslash \left ( X \times G(\A_f) / \widetilde{K} \right).
\end{displaymath}
Let $X^+$ be a connected component of $X$ and $G(\Q)^+$ be the stabiliser of $X^+$ in $G(\Q)$. The above double coset set is a disjoint union of quotients of $X^+$ by the arithmetic groups $\Gamma_g:=G(\Q)^+ \cap g\widetilde{K}g^{-1}$ where $g$ runs through a set of representatives for the finite double coset set $G(\Q)^+ \backslash G(\A_f )/\widetilde{K}$. Baily and Borel \cite{MR0216035} proved that $\Sh_{\widetilde{K}}(G,X)$ has a unique structure of a quasi-projective complex algebraic variety. Moreover if $\widetilde{K}$ is neat, then $\Sh_{\widetilde{K}}(G,X)$ is smooth.

\begin{rmk}
Arithmetic subgroups of $G$ are in particular lattices in $G(\R)$. That is discrete subgroups of finite covolume. 
\end{rmk}
For every inclusion $K_1 \subset K_2$ we have a map $\Sh_{K_1}(G,X)\to \Sh_{K_2}(G,X)$, which is an algebraic map again by a result of Borel. If $K_1$ is normal in $K_2$ then it is the quotient for the action by the finite group $K_2/K_1$ and therefore these morphisms are finite (as morphisms of schemes). That means that we can take the limit (projective limit) of the system of these, in the category of schemes, which we denote by $\Sh(G,X)$. We have
\begin{displaymath}
\Sh(G,X)(\C)= G(\Q)  \backslash X \times G(\A_f).
\end{displaymath}
Denote by $\pi$ the projection  
\begin{displaymath}
\pi: \Sh(G,X)\to \Sh_{\widetilde{K}}(G,X)= G(\Q ) \backslash (X \times G(\A_f)/\widetilde{K}).
\end{displaymath}

Given a Shimura datum $(G,X)$ there exists an \emph{adjoint Shimura datum} $(\Gad,\Xad)$ where $\Xad$ is the $\Gad(\R)$-conjugacy class of the morphism $h^{\text{ad}}$, defined as the composition of any $h:\DT \to G$ and $G\to \Gad$. The construction gives a natural morphism of Shimura data $(G,X)\to (\Gad, \Xad)$ and, choosing a compact open $\widetilde{K}^{\text{ad}}$ in $\Gad (\A_f)$ containing the image of $\widetilde{K}$, we obtain also a finite morphism of Shimura varieties
\begin{displaymath}
\Sh_{\widetilde{K}}(G,X)\to \Sh_{\widetilde{K}^{\text{ad}}}(\Gad, \Xad).
\end{displaymath}

\subsection{Shimura varieties as moduli spaces of Hodge structures}\label{sectionhodgeshimu}
 We discuss how conditions SD1-3 of Definition \ref{shimuradatum} imply that connected components of $X$ are Hermitian symmetric domains and faithful representations of $G$ induce variations of polarisable $\Q$-Hodge structures. For an introduction to Hodge theory we refer to the book \cite{MR2918237}.

\subsubsection{Hodge structures}\label{hodgesection}
Let $V_\R$ be a finite dimensional $\R$-vector space and $V_\C= \C \otimes_\R V_\R$ it complexification. The complex conjugation acts on $V_\C$ by $\lambda \otimes v \mapsto \overline{\lambda} \otimes v$. A \emph{Hodge decomposition} of $V_\R$ is a direct sum decomposition of $V_\C$ into $\C$-subspaces $V ^{p,q}$ indexed by $\Z^2$:
\begin{displaymath}
V_\C = \oplus_{(p,q)\in \Z\times \Z} V^{p,q},
\end{displaymath}
such that $\overline{ V^{p,q}}=V^{q,p}$. An $\R$-\emph{Hodge structure} is a finite dimensional $\R$-vector space $V_\R$ with a Hodge decomposition. The type of this Hodge structure is the set of $(p,q)$ for which $V^{p,q}\neq 0$. The datum of an $\R$ Hodge structure is equivalent to the one of a real representation of the Deligne torus
\begin{displaymath}
h: \DT \to \Gl(V_\R).
\end{displaymath}
With this interpretation we have natural notions of dual, homomorphism, tensor product, direct sum and irreducibility of Hodge structures. Fixing an $n\in \Z$, the subspace
\begin{displaymath}
V_n :=\oplus_{p+q=n} V^{p,q}
\end{displaymath}
is stable under complex conjugation and it is referred to as a \emph{Hodge structure of weight n}.

A $\Q$-\emph{Hodge structure} (HS from now on) is a finite dimensional $\Q$-vector space $V$ together with an $\R$-HS on $V\otimes \R$ and a $\Z$-HS is a free $\Z$-module of finite rank $V_\Z$ together with a $\Q$-HS on $V_\Z \otimes \Q$.

\begin{defi}
Let $(V,h)$ be a $\Q$-HS of weight $n$. A polarisation of $(V,h)$ is a bilinear map of $\Q$-Hodge structures $q: V \otimes V \to \Q(-n)$, that is $q_\R(h(z)v,h(z)w)= (z\overline{z})^nq_\R(v,w)$, such that
\begin{displaymath}
q(v,h(i)w)
\end{displaymath}
defines a positive definite symmetric form on $V_\R$. Finally $(V,h)$ is called \emph{polarisable} if there exists a polarisation $q$ on it.
\end{defi}
The category of $\Q$-HS is abelian and the category of polarisable $\Q$-HS is semisimple.
\begin{rmk}
Since $h(i)^2=(-1)^n$, $q$ is symmetric if the weight is even, alternating if it is odd.
\end{rmk}

All the HSs we will consider will be polarisable (and often pure). We therefore simply say \emph{Hodge structure} to mean polarisable Hodge structure. If it is clear from the context, by HS we could also mean $\Q$-HS. Moreover a weight and a type will in general be fixed. We conclude this short section with an important definition.
\begin{defi}\label{mumofrdtategrouo}
Let $(V_\Q,h)$ be a $\Q$-HS. The \emph{Mumford--Tate group} of $(V,h)$, simply denoted by $\MT(h)$, is the $\Q$-Zariski closure of the image of
\begin{displaymath}
h: \DT \to \Gl(V_\R)
\end{displaymath}
in $\Gl(V_\Q)$. That is the smallest $\Q$-subgroup $H$ of $\Gl(V_\Q)$ such that $H_\R$ contains $h(\DT)$.
\end{defi}
\subsubsection{Variations of Hodge structures}\label{sectionvhs}
Let $S$ be a smooth quasi-projective variety and let $\mathcal{V}=(\mathcal{V}_\Z,\F,\mathcal{Q}_\Z)$ a \emph{polarized variation of} $\Z$-\emph{Hodge structure} on $S$ (VHS from now on). That is the data of
\begin{itemize}
\item A local system $\mathcal{V}_\Z$ with a flat quadratic form $\mathcal{Q}_\Z$;\
\item A holomorphic locally split filtration $\F$ of $\mathcal{V}_\Z \otimes_\Z \Oo_{S^{\an}} $ such that the flat connection $\triangledown$ satisfies Griffiths transversality:
\begin{displaymath}
\triangledown(\F^p)\subset \F^{p-1} \text{  for all  }p;
\end{displaymath}
\item $(\mathcal{V}_\Z,\F,\mathcal{Q}_\Z)$ is fiberwise a $\Z$-HS.
\end{itemize}
In the same way we have definitions of $\Z, \Q$, $K$ and $\R$-VHS, where $K$ is a sub-field of $\R$.

Let $\lambda: \widetilde{S}\to S$ be the universal cover of $S$ and fix a trivialisation $\lambda^* \mathcal{V}\cong \widetilde{S}\times V$. 

It is well know that there exists a countable union $\Sigma \subsetneq S$ of proper analytic subspaces of $S$ such that:
\begin{itemize}
\item For $s\in S - \Sigma $, $\MT_s \subset \Gl(V)$ does not depend on $s$, nor on the choice of $\tilde{s}$. We call this group \emph{the generic Mumford--Tate group of} $\mathcal{V}$ and we simply write it as $G$;
\item For all $s$ and $\tilde{s}$ as above, with $s\in \Sigma$, $\MT_s$ is a proper subgroup of the generic Mumford--Tate group of $\mathcal{V}$.
\end{itemize}
To be more precise $\Sigma$ is also known to be a countable union of algebraic subvarieties of $S$. This follows indeed from the work of Cattani, Deligne and Kaplan \cite{CDK}.

\subsubsection{Period domains}\label{recapperioddomains}
Let $(V_\Z,q_\Z)$ be a polarized $\Z$-Hodge structure. Let $G$ be the $\Q$-algebraic group $\Aut(V_\Q,q_\Q)$. Consider the space $D$ of $q_\Z$-polarized Hodge structures on $V_\Z$ with specified Hodge numbers (it is homogeneous for $G$). Fixing a reference Hodge structure, we write $D=G(\R)/M$ where $M$ is a subgroup of the compact unitary subgroup $G(\R)\cap U(h)$ with respect to the Hodge form $h$ of the reference Hodge structure.

Let $S$ be a smooth quasi-projective complex variety. By \emph{period map}
\begin{displaymath}
S^{\an}\to \Gamma \backslash D
\end{displaymath} 
we mean a holomorphic locally liftable Griffiths transverse map, where $\Gamma$ is a finite index subgroup of $G(\Z)=\Aut (V_\Z,q_\Z)$. A period map $S^{\an}\to G(\Z) \backslash D$ is equivalent to the datum of a $\Z$-VHS on $S$ with generic Mumford--Tate group $G/\Q$. The period map lifts to $\Gamma \backslash D$ if $\Gamma$ contains the image of the monodromy representation of the corresponding $\Z$-VHS. 

\begin{rmk}
Shimura varieties are particular cases of period domains. See indeed \cite[Prop. 1.1.14 and Cor. 1.1.17]{delignetravaux}. Period domains, in the generality defined above, do not have a natural algebraic structure \cite{MR3234111}. Shimura varieties could indeed be thought as the \emph{most special} example of period domains. 
\end{rmk}

\subsection{Canonical models and reflex field}\label{canonicalmodel}
To be of arithmetic interest, Shimura varieties must admit models over a number field. Thanks to the work of Borovoi, Deligne, Milne and Milne-Shih, among others, the $\C$-scheme
\begin{displaymath}
\Sh (G,X)= G(\Q)  \backslash \left ( X \times G(\A_f) \right),
\end{displaymath}
together with its $G(\A_f)$-action, can be canonically defined over a number field $E:= E(G,X) \subset \C$ called the \emph{reflex field} of $(G,X)$. That is there exists an $E$-scheme $\Sh (G,X)_E$ with an action of $G(\A_f)$ whose base change to $\C$ gives $\Sh (G,X)$ with its $G(\A_f)$-action. For the precise definition of \emph{canonical model} we refer to \cite[section 2.2]{deligneshimura}. For the easier fact that Shimura varieties can be defined over $\Qbar$, we refer for example to \cite{MR791585}.

It follows that for every compact open subgroup $\widetilde{K}$ of $G(\A_f)$, the variety $\Sh_{\widetilde{K}}(G,X)$ admits a canonical model over $E$ in such a way that Hecke correspondences commute with the Galois action. Moreover the map $\pi: \Sh (G,X)\to\Sh_{\widetilde{K}}(G,X)$ is defined over $E$. In general we write $K$ for a finite extension of $E$ such that $\Sh_{\widetilde{K}}(G,X)(K)$ is not empty.

\subsection{Some examples of Shimura varieties}\label{examples}
We briefly discuss some of the main motivating examples of Shimura varieties. For brevity we actually describe here five examples of \emph{connected} Shimura varieties. 
\begin{exs}[Modular curves]\label{modularcurvesex}
Denote by $ \mathbb{H}$ the complex upper half plane. It becomes a Hermitian symmetric space when endowed with the metric $y^{-2}dxdy$. The $\Sl_2$-action
\begin{displaymath}
\left( \begin{matrix} 
a & b \\
c & d 
\end{matrix}\right) \cdot z =\frac{az+b}{cz+d}, \text{ where  } \left( \begin{matrix} 
a & b \\
c & d 
\end{matrix}\right) \in \Sl_2(\R) \text{  and  } z \in \mathbb{H},
\end{displaymath}
identifies $\Sl_2(\R)/\{\pm I\}$ with the group of holomorphic automorphisms of $\Hh$, where $I\in \Sl_2(\R)$ denotes the identity. For any $x+iy \in \Hh$
\begin{displaymath}
x+iy= \left( \begin{matrix} 
\sqrt{y} & x/\sqrt{y} \\
0 & 1/\sqrt{y} 
\end{matrix}\right) \cdot i,
\end{displaymath}
and so $\Hh$ is a homogeneous space. Let $\Gamma$ be a congruence subgroup of $\Sl_2(\Z)$, that is a subgroup containing 
\begin{displaymath}
\Gamma_0(N):=\{M \in \Sl_2(\Z) : M \equiv I \operatorname{mod} N \},
\end{displaymath}
for some $N$. The curve $\Gamma \backslash \Hh$ is a connected modular curve and can be realized as a moduli variety for elliptic curves with some level structure.
\end{exs}

Let $L$ be a totally real number field.  What if we want to take the quotient of $\Hh$ by (subgroups of) $\Sl_2(\Oo_L)$? The problem is that $\Sl_2(\Oo_L)$ is in general not discrete in $\Sl_2(\R)$. This can be circumnavigated by a construction called \emph{Weil restriction}. 
\begin{exs}[Hilbert modular varieties]\label{hilbertex}
Let $n_L$ be the degree of $L$ over $\Q$. The subgroup $\Sl_2(\Oo_L)$ can naturally be identified as $G(\Z)$, where $G$ is the Weil restriction from $L$ to $\Q$ of $\Sl_2/L$. Indeed, using the $n_L$-real embeddings of $L$, we see that $\Sl_2(\Oo_L)$ acts on the product of $n_L$ copies of $\Hh$. Hilbert modular varieties are obtained as quotients $\Gamma \backslash \Hh^{n_L}$, where $\Gamma$ is a congruence subgroup of $G(\Z)$ and naturally parametrise principally polarized $n_L$-dimensional abelian varieties with $\Oo_L$-multiplication (with some level structure).
\end{exs}

\begin{exs}[Siegel modular varieties]\label{agex}
Let $g$ be an integer $\geq 1$. Consider the Siegel upper half space
\begin{displaymath}
\Hh_g:= \{M \in M_g(\C): M \text{  is symmetric and  } \operatorname{Im}(M) \text{  is positive definite } \}.
\end{displaymath}
By Riemann every matrix in $\Hh_g$ is the period matrix of some principally polarized abelian variety, unique up to isomorphism of polarized abelian varieties. The Siegel modular variety, also denoted as $\mathcal{A}_g$, is the quotient $\operatorname{Sp}_{2g}(\Z) \backslash \Hh_g$. As in the case of modular curves one can see that there is a bijection between the complex points of the Siegel modular variety and isomorphism classes of principally polarized $g$-dimensional abelian varieties. 
\end{exs}

Finally one of the oldest and simplest examples of Shimura varieties. They were first studied by Picard in 1881.
\begin{exs}[Picard modular surfaces]\label{picardmodular}
Let $E$ be an imaginary quadratic extension of $\Q$ and $V$ be a $3$-dimensional $E$-vector space (which we consider as a vector space over $\Q$). Fix an integral structure on $V$, given by an $\Oo_E$-lattice $L \subset V$ and let
\begin{displaymath}
J : V \times V\to E
\end{displaymath}
be a non-degenerate Hermitian form on $V$, satisfying $J(\alpha u ,\beta v)=\overline{\alpha}\beta \overline{J(u,v)}$ and which is $\Oo_E$-valued on $L$ and has signature $(1,2)$ over $V\otimes \R$. Let $G':= \operatorname{SU}(J,V)/\Q$ be the special unitary group of $J$, viewed as a semisimple algebraic group over $\Q$. The group $G'(\R)/K'$, for any compact maximal $K' \subset G'(\R)$ can be identified with the complex two-dimensional ball. The Picard modular group of $E$ is
\begin{displaymath}
G'(\Z):= \{\gamma \in G'(\Q):  \gamma L =L\},
\end{displaymath}
and given a finite index subgroup $\Gamma$ of $G'(\Z)$ we obtain a \emph{Picard modular surface} $\Gamma \backslash X$. Picard modular surfaces parametrise $3$-dimensional homogeneously polarized abelian varieties with $\Oo_E$-multiplication, signature $(1,2)$ and some level structure. 
\end{exs}

Part of the beauty of the theory of Shimura varieties is that many problems stated in the moduli language translate in the ``$(G, X)$-language''. Such translation often allows us to use in a more transparent way powerful group-theoretical tools and, in many cases, it is the first step towards understanding more general period domains. 

A remark regarding the difference between Shimura varieties and connected Shimura varieties is in order.
\begin{rmk}
Already in the example of modular curves we described the connected case when we decided to consider the upper half plane $\Hh$, rather than
\begin{displaymath}
\Hh^{\pm}:=\C - \R,
\end{displaymath} 
with its natural $\Gl_2$-action. The main advantage of Shimura varieties, over their connected counterparts, is that their canonical models do not depend on a realisation of $G$ as the derived group of a reductive group nor on the level structure. For example the curve $\Gamma_0(N)\backslash \Hh$ admits a (geometrically connected) model over $\Q(\mu_N)$, rather than over $\Q$.
\end{rmk}

\subsection{Sub-Shimura varieties of $\Ag$, some examples}
Special curves in $\mathcal{A}_2$ are of three types, as we now recall. Curves parametrising abelian surfaces
\begin{enumerate}
\item with quaternionic multiplication;
\item isogenous to the square of an elliptic;
\item isogenous to the product of two elliptic
curves, at least one of which has complex multiplication.
\end{enumerate}

More generally, Moonen \cite{2024arXiv240520673M} recently gave a  classification of all 1-dimensional Shimura subvarieties of $\Ag$,  in terms of Shimura data.

\subsection{Mixed Shimura varieties}\label{mixedshim}
Since, in \Cref{diagramatyp}, we discussed mixed Shimura varieties (namely $\mathcal{A}_{g,n}$ and its special subvarieties), we end by recalling their definition and some properties.

\begin{defi}
 A \emph{(connected) mixed Shimura datum} is a pair $(G,D^+)$ where
\begin{itemize}
\item $G$ is a connected linear algebraic group defined over $\Q$, with unipotent radical $W$, and an algebraic subgroup $U \subset W$ which is normal in $G$;
\item $D^+\subset \Hom (\DT_{\C}, G_\C)$ is a connected component of an orbit under the subgroup $G(\R)\cdot U(\C)\subset G(\C)$;
\end{itemize}
satisfying axioms (i)-(vi) in \cite[Def. 2.1]{zbMATH05013101}. A \emph{(connected) mixed Shimura variety} associated to $(g,D^+)$ is a complex manifold of the form $\Lambda \backslash D^+$ where $\Lambda$ is a congruence subgroup of $G(\Q)_+$ acting freely on $D^+$.
\end{defi}

A mixed Shimura datum allows to take into consideration groups of the form $\operatorname{GSp}_{2g} \ltimes \Ga^{2g}$. For suitable congruence subgroups the associated connected mixed Shimura variety is the universal family of abelian varieties over the moduli space of principally polarised abelian varieties (with some $n$-level structure). The point is that every (principally polarised) abelian variety can be realised as a fibre of such a family. This is indeed what we used in \Cref{diagramatyp} to take care of the torsion conditions appearing in the Hodge theoretic description of orbit closures.

As for the pure case, there are notions of special and weakly special subvarieties of mixed Shimura varieties (see \cite[Sec. 4]{zbMATH05013101}). As the reader may expect every irreducible component of the intersection of special subvarieties (resp. weakly special) is again special (resp. weakly special), and a weakly special subvariety containing a special point is itself special. For example special points in the universal family of abelian varieties correspond to torsion points in the fibers $A_s$ over all special points $s\in \Ag$.

\newpage

\bibliographystyle{abbrv}
\bibliography{biblio.bib}

\Addresses

\end{document}